\pgfplotsset{compat=1.16}
\numberwithin{equation}{section}
\newtheorem{theorem}{Theorem}[section]
\newtheorem{lemma}[theorem]{Lemma} 
\newtheorem{proposition}[theorem]{Proposition}
\newtheorem{corollary}[theorem]{Corollary}
\theoremstyle{definition} 
\newtheorem{definition}[theorem]{Definition}
\newtheorem{remark}[theorem]{Remark}
\newtheorem{example}[theorem]{Example}
\DeclareMathOperator{\Pic}{Pic}
\DeclareMathOperator{\Imag}{Im}	
\DeclareMathOperator{\Sp}{Sp}	
\DeclareMathOperator{\GL}{GL}	
\DeclareMathOperator{\Ext}{Ext}
\DeclareMathOperator{\Hom}{Hom}
\DeclareMathOperator{\ind}{ind}
\DeclareMathOperator{\Coh}{Coh}
\DeclareMathOperator{\Fuk}{Fuk}
\newcommand{\bC}{\mathbb C}
\newcommand{\bR}{\mathbb R}
\newcommand{\bT}{\mathbb T}
\newcommand{\bZ}{\mathbb Z}
\newcommand{\bbL}{\mathbb L}
\newcommand{\bi}{\mathsf{i}}
\newcommand{\bk}{\mathsf{k}}
\newcommand{\cA}{\mathcal A}
\newcommand{\cB}{\mathcal B}
\newcommand{\cE}{\mathcal E}
\newcommand{\cF}{\mathcal F}
\newcommand{\cH}{\mathcal H}
\newcommand{\cI}{\mathcal I}
\newcommand{\cL}{\mathcal L}
\newcommand{\cM}{\mathcal M}
\newcommand{\cO}{\mathcal{O}}
\newcommand{\cP}{\mathcal P}
\newcommand{\cS}{\mathcal S}
\newcommand{\cT}{\mathcal T}
\newcommand{\cY}{\mathcal Y}
\newcommand{\ft}{\mathfrak{t}}
\newcommand{\sk}{\mathsf{k}}
\newcommand{\szero}{\mathsf{0}}
\newcommand{\Proj}{\mathbf{Proj}}
\newcommand{\Diff}{\mathrm{Diff}} 
\newcommand{\Symp}{\mathrm{Symp}} 
\newcommand{\Ham}{\mathrm{Ham}}
\newcommand{\Hol}{\mathrm{Hol}}
\newcommand{\trop}{ {\mathrm{trop}} }
\newcommand{\tropp}{ {\mathrm{trop,p}} } 
\newcommand{\SYZ}{ {\mathrm{SYZ}} }
\newcommand{\NS}{\mathrm{NS}}
\newcommand{\aff}{\mathrm{aff}}
\newcommand{\Mir}{\mathrm{Mir}}
\newcommand{\tC}{\widetilde{C}}
\newcommand{\tL}{\widetilde{L}}
\newcommand{\tm}{\widetilde{m}}
\newcommand{\tn}{\widetilde{n}}
\newcommand{\la}{\lambda} 
\newcommand{\dd}{\partial}
\newcommand{\lra}{\longrightarrow} 
\definecolor{Cgreen}{RGB}{77,175,74}
\definecolor{Cblue}{RGB}{0, 150, 255}
\definecolor{Corange}{RGB}{255,127,0}
\title{Global SYZ mirror symmetry and homological mirror symmetry for principally polarized abelian varieties}
\author{Haniya Azam, Catherine Cannizzo, Heather Lee, Chiu-Chu Melissa Liu}
\date{}
\begin{document}

\maketitle

\noindent

\begin{abstract}
For any positive integer $g$, we introduce the moduli space $\cA^F_g =[\cH_g/P_g(\bZ)]$ 
parametrizing $g$-dimensional principally polarized abelian varieties $V_\tau$ together with a Strominger-Yau-Zalsow (SYZ) fibration, where $\tau \in \cH_g$ is the genus-$g$ Seigel upper half space and $P_g(\bZ) \subset \Sp(2g,\bZ)$ is the integral Siegel parabolic subgroup. We study global SYZ mirror symmetry over the global moduli $\cH_g$ and $\cA^F_g$, relating the B-model on $V_\tau$ and the A-model on its mirror, a compact $2g$-dimensional torus $\bT^{2g}$ equipped with a complexified symplectic form.

For each $V_\tau$, we establish a homological mirror symmetry (HMS) result at the cohomological level over $\bC$. This implies  core HMS at the cohomological level over $\bC$ and a graded $\bC$-algebra isomorphism known as Seidel's mirror map. We  study global HMS where Floer cohomology groups $HF^*(\hat{\ell}, \hat{\ell}')$ form coherent sheaves over a complex manifold parametrizing triples $(\tau, \hat{\ell}, \hat{\ell}')$ where $\tau \in \cH_g$ defines
a complexified symplectic form $\omega_\tau$ on $\bT^{2g}$ and
$\hat{\ell}$, $\hat{\ell}'$ are affine Lagrangian branes in $(\bT^{2g}, \omega_\tau)$.

\color{black}
   
\end{abstract}
\tableofcontents

\section{Introduction}

\subsection{Background and motivation} \label{sec:background}

Mirror symmetry relates the A-model on a Calabi-Yau $n$-manifold $X$, defined in terms of the symplectic structure $\omega$  on $X$, to the B-model on another  Calabi-Yau  $n$-manifold $Y$ (the ``mirror" of $X$),  defined in terms of the complex  structure $J$ on $Y$.   
Under mirror symmetry, genus-zero Gromov-Witten invariants counting holomorphic spheres in $X$ correspond to period integrals of a
nowhere vanishing holomorphic $n$-form on $Y$; Lagrangian submanifolds in $(X,\omega)$ correspond to complex submanifolds in $(Y,J)$ or more generally coherent sheaves on $(Y,J)$.
The mirror map relates local holomorphic coordinates on the  complexified K\"{a}hler moduli of $X$ to local holomorphic coordinates on the complex moduli of $Y$; a class in the complexified K\"{a}hler moduli is represented
by a complexified K\"{a}hler structure $\omega_\bC = B + i\omega$ where $B$ is a real closed 2-form on $X$ known as a B-field, and $\omega$ is a K\"{a}hler form (which is in particular a symplectic form). 

The homological mirror symmetry (HMS) conjecture proposed
by Kontsevich in his 1994 ICM talk \cite{hms} relates Fukaya's $A_\infty$-category $\Fuk(X,\omega)$ of Lagrangian submanifolds in $(X,\omega)$, also known as the category of A-branes in $(X,\omega)$, to the bounded derived category $D^b\Coh(Y,J)$ of coherent sheaves on $(Y,J)$, also known as the category of B-branes in $(Y,J)$. More precisely, the HMS conjecture asserts that there is an equivalence of triangulated categories: 
\begin{equation} \label{eqn:hms}
D^b \Fuk(X, \omega) \cong  D^b \Coh(Y, J),
\end{equation}
where $D^b \Fuk(X,\omega) = H^0(\mathrm{Tw}\Fuk(X,\omega))$ is
a triangulated category obtained by taking cohomology of the triangulated envelope $\mathrm{Tw} \Fuk(X,\omega)$ of  
the $A_\infty$-category $\Fuk(X,\omega)$. The enhanced  $A_\infty$-version of \eqref{eqn:hms} asserts that there is
a quasi-equivalence of triangulated $A_\infty$-categories:
\begin{equation}\label{eqn:hms-dg}
\mathrm{Tw} \Fuk(X,\omega) \cong D^b_{dg}\Coh(Y,J) 
\end{equation}
where $D^b_{dg}\Coh(Y,J)$ is a triangulated dg category which is a dg-enhancement of $D^b\Coh(Y,J)$.  

In 1996, Strominger-Yau-Zaslow \cite{SYZ} proposed a geometric construction of the mirror $Y$ based on T-duality. They proposed  that there exists a 
fibration $\mu: X\to \Delta$ by special Lagrangian
tori in $X$, and that the mirror $Y$ is the total space
of the dual torus fibration $\check{\mu}: Y \to \Delta$: for any $b\in \Delta$, 
$$
Y_b = \check{\mu}^{-1}(b) =\Hom(\pi_1(X_b), U(1))\cong H^1(X_b,\bR)/H^1(X_b;\bZ)
$$ 
is the dual torus of $ X_b = \mu^{-1}(b) \cong H_1(X_b,\bR)/H_1(X_b;\bZ)$. 
In other words, the mirror $Y$ is the moduli of special Lagrangian tori in $X$ together with flat $U(1)$-connections. The complexified symplectic structure $\omega_\bC = B + i \omega$ on $X$ determines the complex structure $J$ on $Y$. Moreover, the T-dual of an A-brane in $X$ is a B-brane in $Y$, so the Strominger-Yau-Zaslow (SYZ) transform also provides a geometric construction of the equivalence \eqref{eqn:hms} and the quasi-equivalence \eqref{eqn:hms-dg} at the level of objects. In general there are singular fibers so one can only apply the SYZ construction to an open subset $\Delta_0\subset \Delta$ where
the fibers are smooth tori $\bT^n=(S^1)^n$. When $\Delta_0\neq \Delta$, there exist holomorphic disks in $X$ with boundaries in a smooth fiber $\mu^{-1}(b)$; such a disk intersects smooth fibers along circles, and intersects a singular fiber at a point. The quantum correction of the mirror map is obtained by counting such holomorphic disks. The SYZ mirror symmetry is best understood when $\Delta^\circ =\Delta$ and there are no quantum corrections \cite{leung_without_corrections}, for example when 
$X= \bT^{2g}= (S^1)^{2g}$ is a symplectic torus and $Y$ is an abelian variety $V$ of (complex) dimension $g$.
Under SYZ mirror symmetry, a Lagrangian in $\bT^{2g}=\bR^{2g}/\bZ^{2g}$ (equipped with a $U(1)$-connection) which is a degree $r$ cover of the base of the SYZ fibration $\mu: \bT^{2g}\to (S^1)^g$ corresponds to a rank $r$ holomorphic vector bundle on mirror abelian vareity $V$. In particular, a Lagrangian section of $\mu:\bT^{2g}\to (S^1)^g$
corresponds to a holomorphic line bundle on the mirror $V$. 

Homological mirror symmetry (HMS) for abelian varieties has been extensively studied.  One of the earliest examples of HMS was worked out 
in the $g=1$ case for elliptic curves in \cite{zp} at the level of cohomology (see \cite{Polishchuk_massey, Polishchuk_Ainfinity} about higher products). 
The approach of \cite{zp} involves explicit counts of holomorphic triangles bounded by Lagrangian sections to compute the product $\mu^2$ in the Fukaya category of $\bT^2$ and match that  with the product formulas for theta functions which are morphisms between  line bundles on the mirror elliptic curve. A different approach was taken in  \cite{Fuk02} to prove HMS at the $A_\infty$-level, for abelian varieties of any dimension $g$. 
Following the approach of \cite{zp}, a cohomological level HMS result for abelian surfaces (i.e. the $g=2$ case) was shown in \cite{Ca} when the B-field is zero and Lagrangian objects are equipped with trivial $U(1)$-connections. In all the aforementioned work, affine Lagrangians (i.e. Lagrangians in $\bT^{2g}=\bR^{2g}/\bZ^{2g}$ defined by $g$ affine functions on $\bR^{2g}$) were used to compute the Fukaya category. For affine Lagrangians, the Floer homology and higher products $\mu^k$  can be defined over $\bC$ \cite[Remark 0.4]{Fuk02} and indeed the coefficient ring is $\bC$ in \cite{zp, Fuk02}. On the B-side, the derived category $D^b\Coh(V)$ is generated by line bundles. However, on the A-side, when $g>1$, it is not clear whether affine Lagrangians generate the entire Fukaya category. 
In \cite{Fuk02} and \cite{Ca},  explicit embeddings of (subcategories) of $D^b\Coh(V)$ into the Fukaya category were constructed, but it was not obvious that these embeddings actually induce equivalences of triangulated categories.  

A tautologically unobstructed Lagrangian is one for which there is a choice of almost complex structure $J$ such that it bounds no nonconstant $J$-holomoprhic disc.  In particular, affine Lagrangians are tautologically unobstructed. For a general pair
of tautological unobstructed Lagrangians, Floer homology is defined over an appropriate Novikov ring or Novikov field.  The technology in \cite{ab_sm} can be used to proved
the following equivalence of triangulated categories defined over the Novikov field for any positive integer $g$: 
$$
D^\pi \Fuk_{\mathfrak{tu}}(\bT^{2g}, \omega_{\text{std}}) \cong D^b\Coh(E^g)
$$
where $D^\pi \Fuk_{\mathfrak{tu}}(\bT^{2g}, \omega_{\text{std}} )$ is
the split-closed derived Fukaya category consisting of tautological unobstructed
Lagrangians in the standard symplectic torus $(\bT^{2g},\omega_{\text{std}})$, and $E$ is an elliptic curve.

Given a symplectic manifold $(X,\omega)$ equipped with a Lagrangian torus fibration which admits a Lagrangian section, Kontsevich-Soibelman \cite{ks_abel} associated a rigid analytic space $\cY$ which is the analogue of the SYZ mirror, and conjectured
that the Fukaya category of $X$ is equivalent to the derived category of (rigid analytic) coherent sheaves on $\cY$.  Given a closed symplectic manifold $(X,\omega)$ equipped with a Lagrangian torus
fibration whose base has vanishing second homotopy group, Abouzaid \cite{ab17, abouzaid2021homological}
proved that the family Floer functor defines a fully faithful embedding from the Fukaya category of tautologically unobstructed and graded Lagrangians in $(X,\omega)$ into the derived category of coherent sheaves on its rigid analytic mirror $\cY$. The embedding induces an equivalence when $X=\bT^{2g}$.

Working over the Novikov {\it ring} amounts to working on a formal neighborhood of the large radius limit in the  K\"{a}hler moduli, while working over the Novikov {\it field} amounts to working on a {\it punctured} formal neighborhood in the K\"{a}hler moduli.  The rigid analytic mirror constructed in \cite{ks_abel, ab17, abouzaid2021homological} is defined over the Novikov field. On the other hand, the mirror Calabi-Yau manifold $Y$ in the physics literature constructed by T-duality is often defined over $\bC$. 
Moreover, in some cases, the global complex moduli of $Y$ is known, and one may consider global mirror symmetry over the complex moduli of $Y$. See \cite{Chiodo-Ruan} for a version of global mirror symmetry when $X$ is a quintic Calabi-Yau threefold. 

A long-standing problem is to extend global mirror symmetry from the classical version in terms of quantum cohomology and the Dubrovin connection  on the A-model and period integrals and the classical Gauss-Manin connection on the B-model, to the categorical version in terms of the Fukaya category on the A-model and derived category of coherent sheaves on the B-model. When $X$ is a symplectic torus, the quantum cohomology is simply the classical cohomology (no quantum correction). 
Beyond affine Lagrangians, it is not clear if  the product operations in the Fukaya category can be defined over $\bC$. In this paper, following \cite{zp, Fuk02}, we  work over $\bC$ by considering 
the full subcategory of the Fukaya category generated by the affine Lagrangians, and study SYZ and homological mirror symmetry over the global complex moduli of the mirror abelian variety $Y$.  
Interestingly, moduli of tropical abelian varieties arise naturally in our study of global SYZ mirror symmetry; see Section \ref{sec: globalSYZ-intro} below. 

We now briefly describe some construction in Chapter 1 (Section 1-5) of \cite{Fuk02}.
Let $(\bT^{2g}, \omega_\bC = B + i\omega)$ be a symplectic torus with a B-field.  Given $\hat{\ell}_0 = (\ell_0, \varepsilon_0)$ and $\hat{\ell}_1 = (\ell_1, \varepsilon_1)$, where $\ell_0$ and $\ell_1$ are affine Lagrangian submanifolds of $(\bT^{2g},\omega)$ intersecting each other transversely at $r$ points, and $\varepsilon_j$ is a flat $U(1)$-connection on $\ell_j$, the Floer cohomology group $HF^*(\hat{\ell}_0, \hat{\ell}_1)$ (with complex coefficient) forms a holomorphic vector bundle $\cE(\hat{\ell}_0, \hat{\ell}_1)$ of rank $r$ over the complex manifold $\cM(\hat{\ell}_0)\times \cM(\hat{\ell}_1)$, where $\cM(\hat{\ell}_j)$ is the moduli space of $\hat{\ell}_j$ which is a complex torus of dimension $g$.  When $\ell_0$ is a fiber of $\bT^{2g}\to \bT^g = (S^1)^g$, $\cM(\hat{\ell}_0)$ can be identified with the mirror abelian variety $V$ of $(\bT^{2g},\omega_\bC)$; see Section \ref{sec:SYZ} for more details.  The holomorphic vector bundle 
$\cE(\hat{\ell}_0, \hat{\ell}_1)\big|_{\cM(\hat{\ell}_0)\times \{\hat{\ell}_1\} }$ over
$V=\cM(\hat{\ell}_0)\times \{\hat{\ell}_1\}$ is a B-brane on $V$; it is the SYZ mirror  of the A-brane $\hat{\ell}_1$ in $(\bT^{2g},\omega_\bC)$, and is also the image of $\hat{\ell}_1$ under the family Floer functor $D^\pi \Fuk_{\mathfrak{tu}}(\bT^{2g}, \omega_{\bC}) \to D^b\Coh(V, J)$. In this paper, we consider
the universal vector bundle $\cE(\hat{\ell_0}, \hat{\ell_1})$ over the universal moduli 
$\cM(\hat{\ell}_0)\times \cM(\hat{\ell}_1)$, where  $\ell_0$ and $\ell_1$ are sections or fibers of a 
$\bT^{2g} \to \bT^g$, as we vary $J$ in the global complex moduli of $V$.

\subsection{Statements of main results} \label{sec:main-results}
In this paper, we consider an abelian variety $V$ of complex dimension $g$ that is topologically the compact real $2g$-dimensional torus $(S^1)^{2g}$.  In our case
$V$ is a principally polarized abelian variety (ppav) $(V_\tau, \omega_{V_\tau})$ determined by a point $\tau$ in the genus-$g$ Siegel upper half-space 
\begin{equation} 
\cH_g :=\{ \tau=(\tau_{jk})   \in S_g(\bC)\mid  \Imag \tau >0  \text{ (i.e. $\Imag \tau$ is positive definite)}\},
\end{equation}
where $S_g(\bC)$ is the set of complex symmetric $g\times g$ matrices, and $\omega_{V_\tau}$ is the polarization.  
Denote by $M_g(\bC)$ the space of $g\times g$ matrices 
with entries in $\bC$.
Then $M_g(\bC)$ is a complex vector space of dimension $g^2$ and $S_g(\bC)$ is a complex vector subspace
of $M_g(\bC)$ of dimension $g(g+1)/2$. For any $\tau\in \cH_g$,  we define $V_\tau=(\bC^*)^g/\Gamma_\tau$,
where $\Gamma_\tau =\tau \bZ^g$ acts multiplicatively on $(\bC^*)^g$.  More explicitly, any element in $\Gamma_\tau$ is of the form
$$
\tau n  = ( \sum_{k=1}^g  \tau_{1k} n_k,\ldots, \sum_{k=1}^g  \tau_{gk }n_k)
$$
for some $n=(n_1 \ldots ,n_g)\in \bZ^g$. Let $\bi=\sqrt{-1}$. The action of $\tau n\in \Gamma_\tau$ on $(x_1,\ldots, x_g)\in (\bC^*)^g$ is given by 
\begin{equation}\label{eq:multiplicative action}
  (\tau n)\cdot (x_1,\ldots, x_g) = (e^{2\pi \bi \sum_{k=1}^g \tau_{1k} n_k} x_1, \ldots,
  e^{2\pi \bi \sum_{k=1}^g \tau_{gk} n_k}  x_g).
\end{equation}
Equivalently, we define $V^+_\tau:= \bC^g/(\bZ^g + \tau \bZ^g)$,
where the lattice $\bZ^g + \tau\bZ^g$ acts additively on $\bC^g$.
The surjective group homomorphism 
\begin{equation} \label{eqn:exp}
\bC^g \to (\bC^*)^g,   \quad (z_1,\ldots,z_g)\mapsto (e^{2\pi \bi z_1}, \ldots, e^{2\pi \bi z_g}) = (x_1, \ldots, x_g)
\end{equation} 
from the additive group $\bC^g$ to the multiplicative group $(\bC^*)^g$  descends to an isomorphism
\begin{equation} \label{eq: exponential identification}
V^+_\tau = \bC^g/(\bZ^g+\tau\bZ^g) \stackrel{\cong}{\longrightarrow}  V_\tau=(\bC^*)^g/\Gamma_\tau.
\end{equation}

The complex manifold $V_\tau$, with $\tau=B+\bi \Omega$, corresponds under mirror symmetry to the symplectic torus $(\bT^{2g}=\bR^{2g}/\bZ^{2g}, \omega_\tau)$, where $\omega_\tau$ is the complexified symplectic form 
\begin{equation}
\omega_\tau=\omega_B+ \bi \omega_\Omega, \quad \omega_B=\sum_{j,k=1}^gB_{jk}dr_j\wedge d\theta_j, \quad \omega_\Omega=\sum_{j,k=1}^g\Omega_{jk}dr_j\wedge d\theta_k, 
\end{equation}
with $\{(r_j,\theta_j): 1\leq j\leq g\}$ being coordinates on $\bR^g$.  In the above, $\omega_B$ is a real closed 2-form  known as the B-field, and $\omega_\Omega$ is a real, closed, and nondegenerate $2$-form, i.e.  a  real symplectic form.

\subsubsection{Global SYZ mirror symmetry} \label{sec: globalSYZ-intro}

In the framework of SYZ mirror symmetry \cite{SYZ, Gross}, this mirror duality can be understood as 
\begin{equation} \label{eq: SYZ fibration} 
\begin{tikzcd}
 V_\tau = (\bC^*)^g/\Gamma_\tau \ar[dr, "\pi_\tau^{\SYZ}"'] && \bT^{2g}=\bR^{2g}/\bZ^{2g} \ar[dl, "\widecheck{\pi}^{\SYZ}_\tau"] \\
 &T_\Omega = \bR^g/\Omega \bZ^g  &
\end{tikzcd}
\end{equation}

In this paper, we vary $\tau$ in $\cH_g$ to obtain a global SYZ mirror symmetry. In \eqref{eq: SYZ fibration}, the base $T_\Omega$ of the torus fibrations
$\pi_\tau^{\SYZ}$ and $\widecheck{\pi}^{\SYZ}_\tau$
is a genus-$g$ pure tropical principally
polarized abelian variety. As we vary $\tau$ in $\cH_g$, its imaginary part 
$\Omega =\mathrm{Im}\tau$ varies in the genus-$g$ pure tropical Siegel space 
$$
\cH_g^{\tropp} =\{ \Omega \in S_g(\bR)\mid \Omega \text{ is positive definite} \},
$$
the set of $g\times g$ real symmetric positive definite matrices. 
There is a map $\cH_g \lra \cH_g^{\tropp}$ given by $\tau\mapsto \mathrm{Im}\tau$. We have the following global SYZ mirror symmetry over $\cH= \cH_g$:
\begin{equation}\label{eqn:globalSYZ-H}
\xymatrix{
V_{\cH} \ar[dr]_{\pi_{\cH}^{\SYZ}}  &  &  \widecheck{V}_{\cH} \ar[dl]^{\widecheck{\pi}_{\cH}^{\SYZ}}\\
& \cT_{\cH} \ar[d] &  \\
& \cH &}
\end{equation}
The restriction of the above diagram to a point $\tau\in \cH$ is \eqref{eq: SYZ fibration}. Moreover, we introduce the moduli 
$$
\cA^F_g = [\cH_g/P_g(\bZ)]
$$
of principally polarized abelian varieties together with an SYZ fibration, where $P_g(\bZ)\subset \Sp(2g,\bZ)$ is the (integral) Siegel parabolic subgroup (see Section \ref{sec: universal SYZ} for more details). The map $\cH_g \lra \cH_g^\tropp$ induces a map $\cA^F_g\lra \cA_g^\tropp$, where 
$$
\cA_g^\tropp =[\cH_g^\tropp/\GL_g(\bZ)]
$$
is the moduli of pure tropical principally polarized abelian varieties of dimension $g$. The group $P_g(\bZ)$ acts on 
$V_{\cH}$, $\widecheck{V}_{\cH}$, and $T_{\cH}$ such that all the arrows in \eqref{eqn:globalSYZ-H}
are $P_g(\bZ)$-equivariant. Taking the (stacky) quotient of \eqref{eqn:globalSYZ-H}
by these $P_g(\bZ)$-actions, we obtain the following global SYZ mirror symmetry over $\cA^F=\cA^F_g$: 
\begin{equation}\label{eqn:globalSYZ-A}
\xymatrix{
V_{\cA^F} \ar[dr]_{\pi_{\cA^F}^{\SYZ}}  &  &  \widecheck{V}_{\cA^F} \ar[dl]^{\widecheck{\pi}_{\cA^F}^{\SYZ}}\\
& \cT_{\cA^F} \ar[d] &  \\
& \cA^F &}
\end{equation}

To state our results on homological mirror symmetry, we first introduce various categories of 
B-branes on $V_\tau$ and categories of A-branes on $(\bT^{2g}, \omega_\tau)$.

\subsubsection{Categories of B-branes} \label{sec:B-branes}
We introduce various categories of B-branes on the abelian variety $V_\tau$. 
\begin{itemize}
\item Let $D^b \Coh(V_\tau)$ be the bounded derived category of coherent sheaves on $V_\tau$. It is equivalent to
$\mathrm{Perf}(V_{\tau})$, the triangulated category of perfect complexes on $V_\tau$. 
\item Let $\cB_\tau$ be the full subcategory of $D^b\Coh(V_\tau)$ whose 
objects are of the form $L[j]$ where $L$ is an element in the group
$\Pic^{\bZ\omega_{V_\tau}}(V_\tau)$ (line bundles whose first Chern classes are integral multiples of $\omega_{V_\tau}$) and $j$ is the shift in grading.  Any element in $\Pic^{\bZ\omega_{V_\tau}}$ is of the form
$$
\cL_{\bk,[v]} := \cL^{\bk} \otimes \bbL_{[v]}
$$
where $\cL$ is the principal polarization (in particular
$c_1(\cL)=\omega_{V_\tau}$), $\bk\in \bZ$, $[v]\in V_\tau^+$, and
$\bbL_{[v]} \in \Pic^0(V_\tau)$. 
 
\item Given any $L\in \Pic(V_\tau)$ with $c_1(L) =\omega$, let
$\cB_L$ denote the full subcategory of $\cB$ whose objects are $\{ L^{\otimes \bk}[j]: j, \bk\in \bZ\}$.  
In particular, $\cB_{\cL}$ in this paper corresponds to 
$D^b_{\cL}\Coh(V_\tau)$ in \cite{Ca}.
\end{itemize}
Note that the subcategories $\cB_L$ and $\cB_\tau$ are not triangulated categories.

Let $\cT$ be a triangulated cateogory, and let $\cI$ be a full subcategory 
of $\cT$; we do not assume $\mathcal{I}$ is a triangulated category. 
Let $\langle \mathcal{I}\rangle$ denote the smallest triangulated subcategory
of $\cT$ which contains $\cI$ and is closed under direct summand.  By \cite{Orlov_generation}, 
$$
\langle \cB_L\rangle =\langle \cB_\tau \rangle = D^b\Coh(V_\tau).
$$

\subsubsection{Categories of A-branes}\label{sec:A-branes}
Given any $\bk\in \bZ$ and $[v = a +\tau b]\in V_\tau^+$, where $a, b\in \bR^g$, we construct
$$
\hat{\ell}_{\bk,[v]} = (\ell_{\bk,b}, \varepsilon_a)
$$
where $\ell_{\bk,b} = \{ (r,\theta) \in \bR^{2g}/\bZ^{2g}: \theta = b-\bk r\}$ 
is an affine Lagrangian in the symplectic torus $\bT^{2g}= \bR^{2g}/\bZ^{2g}$
and  $\varepsilon_a$ is a flat $U(1)$-connection on the trivial line bundle on $\ell_{\bk,b}$.   The Lagrangian
$\ell_{\bk,b}$ depends only on $[b]\in \bR^g/\bZ^g$, and the gauge equivalence
class of $\varepsilon_a$ depends only on $a\in \bR^g/\bZ^g$. 

We now introduce various categories of A-branes in $(\bT^{2g},\omega_\tau)$.
We consider strictly unobstructed Lagrangians so that the differential of the Lagrangian Floer complex squared to zero and the Lagrangian Floer cohomology groups $HF^*$ are defined. 
Moreover, we consider affine Lagrangians (which are in particular strictly unobstructed) so that $HF^*$ is defined not only over a Novikov ring/field but is defined over $\bC$, so that we may study global mirror symmetry over $\cH_g$ or
$\cA_g^F$.

\begin{itemize}
\item Let $\Fuk_{\aff}(\bT^{2g},\omega_\bC)$ be the category whose objects are affine Lagrangians in $(\bT^{2g},\omega_\bC)$ equipped with flat $U(1)$-connections and their shifts, 
$\Hom(\hat{\ell}[j], \hat{\ell}'[j']) = HF^*(\hat{\ell}, \hat{\ell}')[j'-j]$, the Lagrangian 
intersection Floer cohomology group with complex coefficients, which is a graded
vector space over $\bC$. 
\item Let $\cA_\tau$ be the full subcategory of $\Fuk_{\aff}(\bT^{2g},\omega_\tau)$ whose objects
are $\{ \hat{\ell}_{\bk,[v]}[j]: \bk, j\in \bZ, [v]\in V_\tau^+\}$, i.e. SYZ mirrors of objects in $\cB_\tau$.
\item Let $\cA_L$ be the full subcatgory of $\Fuk_{\aff}(\bT^{2g}, \omega_\tau)$ whose objects
are SYZ mirrors of objects in $\cB_L$. 
\end{itemize}

\subsubsection{Homological Mirror Symmetry}\label{sec:HMS-intro}
In this paper, following the explicit approach of \cite{zp} (in the $g=1$ case) and \cite{Ca} (in the $g=2$ and $a=b=0$ case), we demonstrate HMS for principally polarized abelian varieties of any dimension $g$ at the cohomological level. Let $\tau \in \cH_g$.
\begin{enumerate}
\item[(i)] Given a pair of elements $\cL_{\bk_1,[z_1]}$ and $\cL_{\bk_2, [z_2]}$ in 
$\Pic^{\bZ\omega_{V_\tau}}(V_\tau)$, where $\bk_1, \bk_2\in \bZ$  and $[z_1], [z_2]\in V_\tau^+$, we provide an explicit isomorphism
$$
\Phi_\tau^1:
HF^*(\hat{\ell}_{\bk, [z]}, \hat{\ell}_{\bk,[z']})
\stackrel{\cong}{\longrightarrow} \Ext^*(\cL_{\bk,[z]}, \cL_{\bk',[z']}) =
H^*(V_\tau, \cL_{\bk_2-\bk_2, [z_2-z_1]} )
$$
as a graded vector space over $\bC$.
\item[(ii)] Given a triple of elements $\cL_{\bk_1,[z_1]}$, $\cL_{\bk_2, [z_2]}$, 
$\cL_{\bk_3, [z_3]}$ in $\Pic^{\bZ \omega_{V_\tau}}(V_\tau)$, the following diagram commute
\[\begin{tikzcd}
 HF^{j_2}(\hat \ell_{\sk_2,[z_2]},\hat \ell_{\sk_3,[z_3]}) \otimes HF^{j_1}(\hat \ell_{\sk_1,[z_1]},\hat \ell_{\sk_2,[z_2]}) \arrow[r, "\mu^2"]\arrow[d, "\Phi^1_\tau\otimes \Phi^1_\tau"'] & HF^{j_1+j_2}(\hat \ell_{\sk_1,[z_1]},\hat \ell_{\sk_3,[z_3]})\arrow[d,"\Phi^1_\tau"] \\
 \Ext^{j_2}(\cL_{\sk_2,[z_2]},\cL_{\sk_3,[z_3]}) \otimes \Ext^{j_1}(\cL_{\sk_1,[z_1]},\cL_{\sk_2,[z_2]}) \arrow[r, "\otimes"]& \Ext^{j_1+j_2}(\cL_{\sk_1,[z_1]},\cL_{\sk_3,[z_3]})
 \end{tikzcd}.
 \]
\end{enumerate}

By (i) and (ii), we have the following version of homological mirror symmetry at the cohomological level: 
\begin{theorem}[HMS at the cohomological level] For every $\tau\in \cH$, the mirror functor 
$$
\Mir_\tau: \cA_\tau \lra \cB_\tau
$$
sending $\hat{\ell}_{\sk, [z]}[j]$ to its SYZ mirror $\cL_{\sk, [z]}[j]= \cL^{\otimes \sk}\otimes \bbL_{[z]} [j]$
(where $\sk, j\in \bZ$ and $[z]\in V_\tau^+$) is an equivalence of categories. Therefore, we have
a fully faithful embedding
$$
\Phi_\tau : \cA_\tau \longrightarrow D^b \Coh(V_\tau)
$$
of cohomological categories. In particular, the product structures match under $\Phi_\tau$: 
\begin{equation} 
\Phi_\tau \circ \mu^2_{\cA_\tau} = \mu^2_{D^b\Coh(V_\tau)} \circ \Phi_\tau.
\end{equation}
\end{theorem}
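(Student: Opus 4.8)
The plan is to assemble the functor $\Mir_\tau$ from its object assignment together with the cohomological-level morphism map $\Phi^1_\tau$, and then to verify the three defining properties of an equivalence — functoriality, full faithfulness, and essential surjectivity — by invoking (i) and (ii). Throughout I work at the cohomological level, where the induced product $\mu^2$ on Floer cohomology is associative and unital, so that $\cA_\tau$ is an honest $\bC$-linear category (a full subcategory of $\Fuk_{\aff}(\bT^{2g},\omega_\tau)$), and likewise $\cB_\tau$ is a full subcategory of $D^b\Coh(V_\tau)$.

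First I would set up $\Mir_\tau$ on objects by $\hat{\ell}_{\sk,[z]}[j]\mapsto \cL_{\sk,[z]}[j]$ and on morphisms by $\Phi^1_\tau$, using the identifications $\Hom_{\cA_\tau}(\hat{\ell}_{\sk_1,[z_1]}[j_1], \hat{\ell}_{\sk_2,[z_2]}[j_2]) = HF^*(\hat{\ell}_{\sk_1,[z_1]}, \hat{\ell}_{\sk_2,[z_2]})[j_2-j_1]$ and $\Hom_{\cB_\tau}(\cL_{\sk_1,[z_1]}[j_1], \cL_{\sk_2,[z_2]}[j_2]) = \Ext^*(\cL_{\sk_1,[z_1]}, \cL_{\sk_2,[z_2]})[j_2-j_1]$. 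Functoriality has two parts. Compatibility with composition is precisely the commuting square (ii): it asserts $\Phi^1_\tau(\mu^2(\beta,\alpha)) = \Phi^1_\tau(\beta)\cdot \Phi^1_\tau(\alpha)$, where $\cdot$ denotes the composition of $\Ext$-classes written $\otimes$ in (ii), and this persists under the shift bookkeeping since the shifts $[j]$ only regrade. Preservation of identities then follows formally: the cohomological unit $e_{\hat{\ell}}\in HF^0(\hat{\ell},\hat{\ell})$ is characterized by $\mu^2(e_{\hat{\ell}},-)=\mathrm{id}$, so applying $\Phi^1_\tau$ and using (ii) together with the surjectivity of $\Phi^1_\tau$ from (i) shows that $\Phi^1_\tau(e_{\hat{\ell}})$ is a left unit on all of $\Ext^*(\cL_{\sk,[z]},\cL_{\sk,[z]})$, hence equals $\mathrm{id}_{\cL_{\sk,[z]}}$.

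Next I would record full faithfulness and essential surjectivity. Full faithfulness is exactly (i): $\Phi^1_\tau$ is an isomorphism of graded $\bC$-vector spaces on each unshifted $\Hom$-space, and since both sides are regraded identically by the shifts, the induced map on every $\Hom_{\cA_\tau}(\cdot,\cdot)\to \Hom_{\cB_\tau}(\cdot,\cdot)$ is an isomorphism. Essential surjectivity is immediate from the construction: every object of $\cB_\tau$ has the form $\cL_{\sk,[z]}[j]$ with $\cL_{\sk,[z]}=\cL^{\otimes \sk}\otimes \bbL_{[z]}\in \Pic^{\bZ\omega_{V_\tau}}(V_\tau)$, which is by definition $\Mir_\tau(\hat{\ell}_{\sk,[z]}[j])$, so $\Mir_\tau$ is in fact a bijection on objects. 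Hence $\Mir_\tau$ is a fully faithful, essentially surjective $\bC$-linear functor, i.e. an equivalence $\cA_\tau\simeq\cB_\tau$. Composing with the full inclusion $\cB_\tau\hookrightarrow D^b\Coh(V_\tau)$ yields the fully faithful embedding $\Phi_\tau\colon \cA_\tau\to D^b\Coh(V_\tau)$, and the displayed identity $\Phi_\tau\circ\mu^2_{\cA_\tau}=\mu^2_{D^b\Coh(V_\tau)}\circ\Phi_\tau$ is a restatement of (ii).

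Given that (i) and (ii) are in hand, the theorem is essentially formal, so the genuine difficulty is upstream, in establishing those two inputs — in particular the holomorphic-triangle count matching the theta-function product formula underlying (ii). Within the present argument the only points requiring care are bookkeeping ones: checking that the grading and shift conventions on the two sides are compatible so that $\Phi^1_\tau$ intertwines the $[j]$-shifts, and verifying unit preservation as above. I do not expect a substantive obstacle here once (i) and (ii) are granted.
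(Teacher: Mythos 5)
Your proposal is correct and follows essentially the same route as the paper: the paper's own proof of this theorem is a one-line citation of the morphism-level isomorphisms (Equations \eqref{eq:mirror morphisms} and \eqref{eqn:finishing}, which constitute item (i)) together with Proposition \ref{prop: mu 2 commute} (item (ii)), exactly the two inputs you invoke. Your write-up merely makes explicit the formal categorical bookkeeping (shift compatibility, unit preservation, essential surjectivity via bijectivity on objects) that the paper leaves implicit, and you correctly locate the real mathematical content upstream in the triangle-counting and theta-function identities.
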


Restricting the above mirror functor to the fully faithful subcategory $\cA_L$ of $\cA_\tau$, we obtained
the following core HMS (in the sense of \cite[Definition 1.6]{PS23}) at the cohomological level. 
\begin{corollary}[core HMS at the cohomological level] 
For every $\tau\in \cH$, and any $L =\cL\otimes \bbL_{[z]} \in \Pic^{\mathsf{1}}(V_\tau)$, 
the functor 
\begin{equation} \label{eqn:A-to-B-L}
\Mir_{\tau,L}: \cA_L \lra \cB_L
\end{equation}
sending $\hat{\ell}_{\sk,[kz]}$ to 
$L^{\otimes k} = \cL_{\sk, [kz]}$
is an equivalence of categories. 
Therefore, we have
a fully faithful embedding
\begin{equation}\label{eqn:AL-to-DCoh}
\Phi_{\tau,L} : \cA_L \longrightarrow D^b \Coh(V_\tau)
\end{equation}
of cohomological categories. In particular, the product structures match under $\Phi_{\tau, L}$: 
\begin{equation}
\Phi_{\tau, L}\circ \mu^2_{\cA_L} = \mu^2_{D^b\Coh(V_\tau)} \circ \Phi_{\tau,L}.
\end{equation}
\end{corollary}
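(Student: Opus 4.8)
The plan is to deduce the Corollary purely by restriction from the preceding Theorem, exploiting the fact that $\cA_L$ and $\cB_L$ are full subcategories of $\cA_\tau$ and $\cB_\tau$ that are carried into one another by the mirror functor $\Mir_\tau$. First I would pin down the objects. A line bundle $L = \cL \otimes \bbL_{[z]} \in \Pic^{\mathsf{1}}(V_\tau)$ satisfies, under the group isomorphism $\Pic^0(V_\tau) \cong V_\tau^+$ sending $\bbL_{[z]}$ to $[z]$ (additively), the identity
\[
L^{\otimes k} = \cL^{\otimes k} \otimes \bbL_{[z]}^{\otimes k} = \cL^{\otimes k} \otimes \bbL_{[kz]} = \cL_{k, [kz]}.
\]
Hence the objects of $\cB_L$, namely $\{L^{\otimes k}[j] : k, j \in \bZ\} = \{\cL_{k, [kz]}[j]\}$, form a subfamily of the objects $\{\cL_{\bk, [v]}[j]\}$ of $\cB_\tau$, and their SYZ mirrors $\{\hat{\ell}_{k, [kz]}[j]\}$ form a subfamily of the objects of $\cA_\tau$. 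This is the only place requiring a small computation: confirming $\bbL_{[z]}^{\otimes k} = \bbL_{[kz]}$ in $\Pic^0(V_\tau)$ and hence that the indexing of $\cA_L$ inside $\cA_\tau$ is consistent with the mirror map.

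Next I would observe that $\Mir_\tau$ restricts to $\Mir_{\tau,L}$. Since $\cA_L$ and $\cB_L$ are by definition full subcategories and $\Mir_\tau(\hat{\ell}_{k,[kz]}[j]) = \cL_{k,[kz]}[j] = L^{\otimes k}[j]$ lands in $\cB_L$, the functor $\Mir_\tau$ sends objects of $\cA_L$ to objects of $\cB_L$ and acts on morphism spaces via the isomorphisms $\Phi^1_\tau$; restricting these to the relevant $\Hom$-spaces yields a well-defined functor $\Mir_{\tau,L} : \cA_L \to \cB_L$. Because the Theorem asserts $\Mir_\tau$ is fully faithful, its restriction to the full subcategory $\cA_L$ is again fully faithful, and by construction $\Mir_{\tau,L}$ is surjective on objects (every $L^{\otimes k}[j]$ is hit), hence essentially surjective; therefore it is an equivalence of categories.

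Finally, the fully faithful embedding $\Phi_{\tau,L}$ is obtained by composing $\Mir_{\tau,L}$ with the inclusion $\cB_L \hookrightarrow D^b\Coh(V_\tau)$ of a full subcategory, which preserves full faithfulness. The compatibility with $\mu^2$ follows from the corresponding statement in the Theorem: since $\cA_L \subset \cA_\tau$ and $\cB_L \subset \cB_\tau$ are full subcategories, the products $\mu^2_{\cA_L}$ and $\mu^2_{\cB_L}$ are the restrictions of $\mu^2_{\cA_\tau}$ and $\mu^2_{D^b\Coh(V_\tau)}$, so the identity $\Phi_{\tau,L} \circ \mu^2_{\cA_L} = \mu^2_{D^b\Coh(V_\tau)} \circ \Phi_{\tau,L}$ is immediate from $\Phi_\tau \circ \mu^2_{\cA_\tau} = \mu^2_{D^b\Coh(V_\tau)} \circ \Phi_\tau$. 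The whole argument is formal once the object-matching of the first step is verified; I do not expect any genuine analytic obstacle here, the only care needed being the bookkeeping of the translation and polarization indices so that $\cA_L$ sits inside $\cA_\tau$ exactly as the mirror map prescribes.
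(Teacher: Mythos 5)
Your proposal is correct and takes essentially the same route as the paper: the paper deduces this corollary from the HMS theorem in a single line, precisely because $\cA_L$ and $\cB_L$ are full subcategories of $\cA_\tau$ and $\cB_\tau$ and $\Mir_\tau$ carries the objects $\hat{\ell}_{\sk,[\sk z]}[j]$ onto the objects $L^{\otimes \sk}[j]$ of $\cB_L$. Your explicit check that $L^{\otimes \sk}=\cL^{\otimes \sk}\otimes\bbL_{[\sk z]}=\cL_{\sk,[\sk z]}$, using that $\phi_{\cL}$ is a group homomorphism (cf.\ Equation \eqref{semi-homogeneous}), is exactly the bookkeeping the paper leaves implicit.
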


\subsubsection{Ring of sections and Seidel's mirror map}
For each $\tau\in \cH$, let $\hat{\ell}_{\sk}= \hat{\ell}_{\sk,[0]}$, and define a graded vector space over $\bC$:
\begin{equation}\label{eqn:widecheck-S-tau}
\widecheck{S}_\tau := \bigoplus_{\sk\geq 0} HF^0(\hat \ell_\szero, \hat \ell_{\sk}).
\end{equation}
where $\deg s =\sk$ if $s\in HF^0(\hat\ell_\szero, \hat\ell_{\sk})$.  

Let 
$$
S_\tau =\bigoplus_{\sk\geq 0} H^0 (V_\tau, \cL^{\otimes \sk})
$$
be the ring of sections of $\cL$, and  $\deg s =\sk$
if $s\in H^0(V_\tau, \cL^{\otimes \sk})$. Then $(S_\tau, \otimes)$ is a graded ring which is a graded commutative $\bC$-algebra. 

The special case $L=\cL$  of Corollary \ref{cor:coreHMS} implies
the following:  
\begin{theorem}\label{thm:ring} $(\widecheck{S}_\tau, \mu^2)$ is a graded 
commutative $\bC$-algebra, and 
$$
\Phi_\tau: (\widecheck{S}_\tau, \mu^2) \lra (S_\tau, \otimes)
$$
is an isomorphism of graded commutative $\bC$-algebras. 
\end{theorem}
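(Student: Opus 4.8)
The plan is to deduce Theorem~\ref{thm:ring} from the case $L=\cL$ of Corollary~\ref{cor:coreHMS} by restricting the equivalence $\Mir_{\tau,\cL}\colon \cA_{\cL}\to\cB_{\cL}$ to degree-zero morphisms between nonnegative powers of $\cL$ and their mirrors. First I would record the purely additive statement. Since $\hat\ell_\szero=\hat\ell_{0,[0]}$ and $\hat\ell_\sk=\hat\ell_{\sk,[0]}$ are the SYZ mirrors of $\cO=\cL^{\otimes 0}$ and $\cL^{\otimes\sk}$, the map $\Phi^1_\tau$ of part (i), specialized to $[z]=[z']=[0]$ and restricted to cohomological degree $0$, gives for every $\sk\ge 0$ an isomorphism
$$
\Phi^1_\tau\colon HF^0(\hat\ell_\szero,\hat\ell_\sk)\ \xrightarrow{\ \cong\ }\ \Ext^0(\cO,\cL^{\otimes\sk})=\Hom(\cO,\cL^{\otimes\sk})=H^0(V_\tau,\cL^{\otimes\sk}).
$$
Summing over $\sk\ge 0$ produces the graded $\bC$-linear isomorphism $\Phi_\tau\colon\widecheck S_\tau\to S_\tau$ underlying the theorem, so all that remains is to match the two multiplications.

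Second, I would make both products explicit and compare them. On the B-side the section-ring product is Yoneda composition in $\cB_{\cL}$: the multiplication $H^0(\cL^{\otimes m})\otimes H^0(\cL^{\otimes n})\to H^0(\cL^{\otimes(m+n)})$ sends $(s,t)$ to $(t\otimes \mathrm{id}_{\cL^{\otimes m}})\circ s=s\cdot t$, where the identification $H^0(\cL^{\otimes n})=\Hom(\cL^{\otimes m},\cL^{\otimes(m+n)})$ is tensoring by $\cL^{\otimes m}$. On the A-side the corresponding identification $HF^0(\hat\ell_\szero,\hat\ell_n)\cong HF^0(\hat\ell_m,\hat\ell_{m+n})$ is induced by the linear symplectomorphism $\phi_m\colon(r,\theta)\mapsto(r,\theta-mr)$ of $(\bT^{2g},\omega_\tau)$, which is genuinely symplectic because $\Omega$ and $B$ are symmetric and which carries $\hat\ell_\sk$ to $\hat\ell_{\sk+m}$; the product on $\widecheck S_\tau$ is then $\mu^2$ precomposed with this translation. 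With these identifications, the commuting square of part (ii), taken with $(\sk_1,\sk_2,\sk_3)=(0,m,m+n)$ and all $[z_i]=[0]$, $j_i=0$, reads exactly
$$
\Phi^1_\tau\bigl(\mu^2(t,s)\bigr)=\Phi^1_\tau(t)\otimes\Phi^1_\tau(s),
$$
i.e.\ $\Phi_\tau$ intertwines $\mu^2$ with the tensor product of sections, so it is a homomorphism of graded rings.

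The step I expect to be the main obstacle is the identification just used: one must verify that the \emph{monoidal} operation giving the section-ring product (tensoring by $\cL^{\otimes m}$, which is extra data on $\Coh(V_\tau)$ and not part of its bare triangulated structure) is intertwined by $\Phi^1_\tau$ with the intrinsic symplectic translation $\phi_m$, equivalently that $\Mir_{\tau,\cL}$ sends $-\otimes\cL^{\otimes m}$ to $(\phi_m)_*$. This is the only genuinely new input beyond parts (i) and (ii), and it should follow from the explicit form of $\Phi^1_\tau$ together with the coherence $\phi_m\circ\phi_n=\phi_{m+n}$. Granting it, $\Phi_\tau$ is a bijective homomorphism of graded $\bC$-algebras onto $(S_\tau,\otimes)$; since the latter is graded commutative (the product of sections is commutative and everything sits in cohomological degree $0$, so no Koszul signs intervene), both associativity and commutativity of $(\widecheck S_\tau,\mu^2)$ transport back along $\Phi_\tau^{-1}$. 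Hence $(\widecheck S_\tau,\mu^2)$ is a graded commutative $\bC$-algebra and $\Phi_\tau$ is an isomorphism of such, with no further Floer-theoretic computation required.
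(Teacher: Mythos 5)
Your proposal is correct and, at its core, is the paper's own argument: Theorem \ref{thm:ring} is obtained by specializing the compatibility of $\Phi^1_\tau$ with $\mu^2$ (Proposition \ref{prop: mu 2 commute}, i.e.\ parts (i)--(ii), i.e.\ the $L=\cL$ case of Corollary \ref{cor:coreHMS}) to $(\sk_1,\sk_2,\sk_3)=(0,m,m+n)$, all $[z_i]=[0]$ and $j_i=0$, and then transporting associativity and commutativity across the resulting bijective ring homomorphism. Where you genuinely differ is in how you realize the identification $HF^0(\hat\ell_\szero,\hat\ell_{n})\cong HF^0(\hat\ell_{m},\hat\ell_{m+n})$ needed to make $\mu^2$ a product on $\widecheck{S}_\tau$: you use pushforward along the linear symplectomorphism $\phi_m(r,\theta)=(r,\theta-mr)$ and flag its compatibility with $-\otimes\cL^{\otimes m}$ as the main unproved step, whereas the paper makes this identification by matching canonical bases, which renders the compatibility a tautology. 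Indeed, by its explicit definition $\Phi^1_\tau$ sends the intersection point of $\ell_{m,0}\cap\ell_{m+n,0}$ with label $\lambda\in I_{g,n}$ to the theta section $s_{n,0,\lambda}$, the very same section that is the image of the label-$\lambda$ generator of $CF^0(\hat\ell_\szero,\hat\ell_{n})$; since $\phi_m$ carries labeled intersection points to labeled intersection points preserving $\lambda$ (and preserves the connections $\nabla_a$), your identification agrees with the paper's, so the step you granted is true and amounts to a one-line inspection of the formulas rather than new input. If you keep the symplectomorphism formulation, one caveat should be added: $\phi_m$ preserves $\omega_\tau$ but is \emph{not} holomorphic for the standard $J$ used to cut out the triangles, so to conclude $(\phi_m)_*\circ\mu^2=\mu^2\circ\big((\phi_m)_*\otimes(\phi_m)_*\big)$ you must either invoke independence of $\mu^2$ from the choice of compatible almost complex structure, or note that the paper's product formula depends only on the combinatorics of intersection points, the triangle areas $\int u^*\omega_\tau$, and the holonomies, all of which $\phi_m$ preserves; with that remark in place, both routes are complete, yours being slightly more structural and the paper's more computational.
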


The above isomorphism has been studied in \cite{Zaslow05, Aldi-Zaslow, Aldi09} where
it is called Seidel's mirror map.  
Theorem \ref{thm:ring} is an analogue of \cite[Theorem 1.1]{AbouzaidThesis}.

\subsubsection{Family Floer cohomology and global HMS}
We first fix a complexified symplectic structure $\omega_\tau$ on $\bT^{2g}$ where
$\tau \in \cH_g$. For each integer $\sk$, we define the moduli space
$$
\cM(\hat{\ell}_{\sk})_\tau := 
\big\{ \hat{\ell}_{\sk, [z]} \mid [z]\in V_\tau^+ \big\}
$$
of objects in $\Fuk_{\aff}(\bT^{2g}, \omega_\tau)$ mirror to line bundles in 
$$
\Pic^{\sk}(V_\tau):= \{ L\in \Pic(V_\tau)\mid c_1(L)= \sk \omega_{V_\tau} \}.
$$
There is a bijection
$$
\Mir_\tau^{\sk}: \cM(\hat{\ell}_{\sk})_\tau\lra \Pic^{\sk}(V_\tau),
\quad \hat{\ell}_{\sk,[z]} \mapsto \cL_{\sk, [z]}.
$$
We equip the set $\cM(\hat{\ell}_{\sk})_\tau$ with the structure of a complex manifold such that the above bijection is an isomorphism of complex manifolds.  Item (i) of 
Section \ref{sec:HMS-intro} is equivalent to the following statement. 
\begin{theorem} \label{thm:HF-Ext}
For any $\sk, \sk'\in \bZ$, any $w\in \{0,1,\ldots, g\}$ and any pair
$(\hat{\ell}, \hat{\ell}') \in \cM(\hat{\ell}_{\sk})_\tau \times \cM(\hat{\ell}_{\sk'})_\tau$, we have
$$
HF^w(\hat{\ell}, \hat{\ell}') \cong \Ext^w\big(\Mir^{\sk}_\tau(\hat{\ell}), 
\Mir_\tau^{\sk'}(\hat{\ell}')\big).
$$
\end{theorem}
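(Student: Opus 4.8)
The plan is to compute both sides explicitly and match them grade by grade as graded $\bC$-vector spaces. Set $d=\sk'-\sk$ and $[v]=[z'-z]\in V_\tau^+$, so that on the B-side
$\Ext^w\!\big(\Mir^{\sk}_\tau(\hat\ell),\Mir^{\sk'}_\tau(\hat\ell')\big)\cong H^w(V_\tau,\cL_{d,[v]})$
is the sheaf cohomology of a single line bundle with $c_1=d\,\omega_{V_\tau}$ twisted by the degree-zero bundle $\bbL_{[v]}$. By the standard theory of line bundles on abelian varieties (Mumford's index/vanishing theorem together with Serre duality), this cohomology is concentrated in one degree: for $d>0$ it sits in degree $0$ with $\dim H^0=|d|^g$; for $d<0$ it sits in degree $g$ with $\dim H^g=|d|^g$; for $d=0$ it equals $H^*(V_\tau,\bbL_{[v]})$, which has total dimension $2^g$ with $\binom{g}{w}$ in degree $w$ when $[v]=0$ and vanishes identically when $[v]\neq 0$. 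First I would record these four cases as the target of the isomorphism.

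Next I would compute $HF^w(\hat\ell,\hat\ell')$. Since $\bT^{2g}$ carries a Lagrangian torus fibration with no singular fibers, there are no nonconstant holomorphic disks and the Floer theory of affine Lagrangians is ``classical.'' When $d\neq 0$ the Lagrangians $\ell_{\sk,b}$ and $\ell_{\sk',b'}$ meet transversely, and solving $d\,r\equiv b'-b\pmod{\bZ^g}$ gives exactly $|d|^g$ intersection points. Viewing both as graphs of $dF$ over the base torus, the relative generating function is the quadratic $F=(b'-b)\cdot r-\tfrac{d}{2}|r|^2$, whose Hessian $-d\,I$ is the same constant nondegenerate matrix at every critical point. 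Hence all $|d|^g$ generators lie in a single Floer degree (degree $0$ if $d>0$ and degree $g$ if $d<0$, after fixing the brane gradings), the differential vanishes for degree reasons, and $HF^*$ is $|d|^g$-dimensional concentrated in that degree. When $d=0$ the two Lagrangians are parallel: if $b\neq b'$ they are disjoint and $HF^*=0$; if $b=b'$ they coincide and $HF^*(\hat\ell,\hat\ell')$ computes the cohomology of $\ell\cong\bT^g$ with coefficients in the flat line bundle determined by the holonomy difference of $\varepsilon_a$ and $\varepsilon_{a'}$. This local system is trivial exactly when $a=a'$, giving $H^*(\bT^g;\bC)$, and is nontrivial otherwise, forcing $HF^*=0$ by the vanishing of twisted torus cohomology.

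Comparing the two computations case by case then yields the isomorphism. The connection and position data $(a,b)$ on the A-side are precisely the $\Pic^0$-twist $[v]=a+\tau b$ on the B-side, so the vanishing loci agree ($[v]\neq 0$ corresponds to either disjoint Lagrangians or a nontrivial local system), and the nonvanishing degrees and dimensions match in all four cases. To upgrade this to the explicit map $\Phi^1_\tau$ of Item (i), I would, in the ample case $d>0$, send each intersection point to the theta function in $H^0(V_\tau,\cL_{d,[v]})$ supported near the corresponding point of the SYZ base of \eqref{eq: SYZ fibration}, reduce the $d<0$ case to this one by matching Serre duality on the B-side with the Poincar\'e duality $HF^w(\hat\ell,\hat\ell')\cong HF^{g-w}(\hat\ell',\hat\ell)^*$ on the A-side, and identify the $d=0$, $[v]=0$ case with $H^*(\bT^g)\cong H^*(V_\tau,\cO_{V_\tau})$.

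The main obstacle is bookkeeping the gradings: I must fix a single convention for the brane grading (phase) of each $\hat\ell_{\sk,[z]}$ so that the Maslov/Floer degree of the $|d|^g$ generators matches the Dolbeault degree in which $H^w(V_\tau,\cL_{d,[v]})$ is concentrated, consistently for positive, negative, and zero $d$. The Hessian-sign computation pins down the relative grading, but the absolute normalization across all pairs $(\sk,\sk')$ must be chosen compatibly, and this is where the $d<0$ case and the interplay of Serre and Poincar\'e duality demand the most care. The remaining, more routine, work is to verify that the holonomies of $\varepsilon_a$ reproduce the automorphy factor of $\bbL_{[v]}$, so that the identification of generators with theta functions is canonical over $\bC$ rather than merely an equality of dimensions.
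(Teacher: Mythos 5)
Your proposal is correct as a proof of the stated isomorphism of graded vector spaces, and its case decomposition (by $d=\sk'-\sk$ and by whether $[v]=[z'-z]$ vanishes) is exactly the paper's; the B-side computation also coincides with the paper's Theorem \ref{thm: Ext on V}. Where you genuinely diverge is in the two A-side steps. For $d\neq 0$, you kill the differential ``for degree reasons'' after showing all $|d|^g$ generators share one degree via the constant relative Hessian $-dI$; the paper instead kills it geometrically (lifts of the two affine Lagrangians to $\bR^{2g}$ are planes meeting in a single point, so no bigons exist), which has the advantage of not depending on first pinning down the grading conventions you correctly identify as the delicate bookkeeping. The larger divergence is the case $d=0$, $[b]=[b']$: you outsource the computation to the general principle that self-Floer cohomology of a tautologically unobstructed Lagrangian torus with two flat $U(1)$-connections is the cohomology of $\bT^g$ twisted by the difference local system, plus vanishing of twisted torus cohomology for nontrivial characters. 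The paper does not take this as given: it performs an explicit Hamiltonian perturbation $H=\epsilon\sum_j\cos(2\pi r_j)$, computes the $2^g$ generators and their degrees, works out the bigon areas, B-field contributions, and holonomy factors $\Delta_k$ by hand, and proves exactness when $[a_1]\neq[a_2]$ by an induction on $g$ using the splitting $\binom{g}{w}=\binom{g-1}{w}+\binom{g-1}{w-1}$; it cites the PSS/twisted-Morse comparison (BHS19, Abouzaid's Appendix C) only as a consistency remark. Your route is shorter and conceptually cleaner, but it leans on a black box whose applicability in this precise setting --- coefficients in $\bC$ rather than the Novikov ring, a complexified form $\omega_\tau$ with B-field --- is exactly what needs justification; the paper's hands-on computation shows this works because $\omega_B$ restricts to zero on the Lagrangians and the two conjugate bigons carry a common area factor $e^{2\pi\bi(A+A')}$ that cannot affect exactness. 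If you make that verification explicit (or restrict the cited twisted-coefficient result to one proved over $\bC$ with B-field), your argument is complete; the final paragraph about theta functions and Serre duality is not needed for this theorem, which only asserts an isomorphism of graded vector spaces and so follows from the dimension counts.
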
 

As we vary $(L, L')$ in $P:= \Pic^{\sk}(V_\tau)\times \Pic^{\sk'}(V_\tau)$, the extension groups  $\Ext^w(L, L')$ form a coherent sheaf $\cE^w_{\tau,\sk, \sk'}$ of $\cO_P$-modules on $P$, i.e., 
the fiber of the sheaf $\cE^w_{\tau, \sk, \sk'}$ over $(L, L') \in P$ is 
the $w$-th extension group  $\Ext^w(L, L')$ which is a complex vector space.  
We now vary $(\hat{\ell}, \hat{\ell}')$ in $\widecheck{P}:= \cM(\hat{\ell}_{\sk})_\tau\times 
\cM(\hat{\ell}_{\sk'})_\tau$ to obtain a family version of Theorem \ref{thm:HF-Ext}. There is an isomorphism 
$$
\Mir^{\sk}_\tau\times \Mir^{\sk'}_\tau:
\widecheck{P} = \cM(\hat{\ell}_{\sk})_\tau\times \cM(\hat{\ell}_{\sk'})_\tau
\lra P= \Pic^{\sk}(V_\tau)\times \Pic^{\sk'}(V_\tau)
$$
of complex projective manifolds.  

\begin{theorem}
For any $\tau\in \cH_g$,  any $\sk, \sk'\in \bZ$, and any $w\in \{0,1,\ldots,g\}$, there
is a coherent sheaf $\widecheck{\cE}^w_{\sk, \sk'}$ of $\cO_{\widecheck{P}}$-modules
on $\widecheck{P}$ whose fiber over $(\hat{\ell}, \hat{\ell}') \in \widecheck{P}$ is the $w$-th Floer cohomology group $HF^w(\hat{\ell}, \hat{\ell}')$, such that
$$
\widecheck{\cE}^w_{\tau,\sk, \sk'} \cong (\Mir^{\sk}_\tau \times \Mir^{\sk'}_{\sk})^*\cE^w_{\tau,\sk,\sk'}.
$$
\end{theorem}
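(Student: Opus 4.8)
The plan is to \emph{define} $\widecheck{\cE}^w_{\tau,\sk,\sk'}$ as the pullback of the B-side sheaf under the mirror map, so that the asserted isomorphism becomes tautological, and then to read off its fibers. Write $\Psi := \Mir^{\sk}_\tau \times \Mir^{\sk'}_\tau : \widecheck{P} \to P$, which by the construction recalled just before the theorem is an isomorphism of complex projective manifolds, and let $\cE^w_{\tau,\sk,\sk'}$ be the coherent $\cO_P$-module on $P = \Pic^{\sk}(V_\tau) \times \Pic^{\sk'}(V_\tau)$ with fiber $\Ext^w(L,L')$ over $(L,L')$, also produced there. I would set
\[
\widecheck{\cE}^w_{\tau,\sk,\sk'} := \Psi^* \cE^w_{\tau,\sk,\sk'},
\]
whence $\widecheck{\cE}^w_{\tau,\sk,\sk'} \cong \Psi^* \cE^w_{\tau,\sk,\sk'}$ holds by definition and only the fiberwise statement remains.

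Coherence is then automatic, since the pullback of a coherent sheaf along a morphism of complex manifolds is coherent (and $\Psi$ is even an isomorphism). For the fibers, I would use that $\Psi$ is an isomorphism, so the induced map $\cO_{P,\Psi(\hat\ell,\hat\ell')} \to \cO_{\widecheck{P},(\hat\ell,\hat\ell')}$ on local rings is an isomorphism and the residue fields are identified. Consequently the fiber
\[
\widecheck{\cE}^w_{\tau,\sk,\sk'} \otimes_{\cO_{\widecheck{P}}} \kappa(\hat\ell,\hat\ell')
\]
is canonically the fiber of $\cE^w_{\tau,\sk,\sk'}$ at $\Psi(\hat\ell,\hat\ell')$, namely $\Ext^w\big(\Mir^{\sk}_\tau(\hat\ell), \Mir^{\sk'}_\tau(\hat\ell')\big)$. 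Invoking Theorem \ref{thm:HF-Ext}, this is isomorphic to $HF^w(\hat\ell,\hat\ell')$ for every $(\hat\ell,\hat\ell') \in \widecheck{P}$ and every $w \in \{0,1,\ldots,g\}$, which identifies the fibers of $\widecheck{\cE}^w_{\tau,\sk,\sk'}$ with the Floer cohomology groups.

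The subtler point, which I expect to be the main obstacle, is whether this transported sheaf coincides with the \emph{intrinsic} family Floer cohomology---i.e. whether the fiberwise isomorphisms $\Phi^1_\tau$ of Theorem \ref{thm:HF-Ext} assemble into a holomorphic, hence $\cO_{\widecheck{P}}$-linear, isomorphism onto Fukaya's holomorphic bundle $\cE(\hat\ell_{\sk},\hat\ell_{\sk'})$ over $\cM(\hat\ell_{\sk})_\tau \times \cM(\hat\ell_{\sk'})_\tau$, rather than merely a bijection on each fiber. To settle this I would inspect the explicit form of $\Phi^1_\tau$: its matrix entries are built from theta functions whose arguments depend on the moduli parameters $[z] = [a+\tau b]$ and $[z'] = [a'+\tau b']$ in $V_\tau^+$, and I would verify that, in the standard holomorphic trivializations of both families, these entries are holomorphic in the moduli coordinates. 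Since theta functions are holomorphic in all of their variables and the trivializing data vary holomorphically over $\widecheck{P}$ and $P$, this is a direct check rather than new geometric input; once it is in place, $\Phi^1_\tau$ upgrades to an isomorphism of holomorphic vector bundles and the two descriptions of $\widecheck{\cE}^w_{\tau,\sk,\sk'}$ agree. If instead one simply takes the pullback as the definition, this final step is unnecessary and the theorem follows formally from the coherence of pullback together with Theorem \ref{thm:HF-Ext}.
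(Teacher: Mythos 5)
Your proposal is correct and takes essentially the same route as the paper: the paper states this theorem immediately after establishing the fiberwise isomorphisms $HF^w(\hat\ell,\hat\ell')\cong \Ext^w\big(\Mir^{\sk}_\tau(\hat\ell),\Mir^{\sk'}_\tau(\hat\ell')\big)$ (Sections on $\sk\neq\sk'$ and $\sk=\sk'$) and the construction of $\cE^w_{\tau,\sk,\sk'}$ as a higher direct image, with $\widecheck{\cE}^w_{\tau,\sk,\sk'}$ taken to be the pullback under the isomorphism $\Mir^{\sk}_\tau\times\Mir^{\sk'}_\tau$, exactly as you define it. Your closing caveat about upgrading the pointwise isomorphisms to a holomorphic identification with Fukaya's intrinsic family Floer bundle is a genuine further question, but, as you correctly note, it is not required for the theorem as stated.
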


Finally, we vary $\tau\in \cH=\cH_g$ to obtain a global and universal family version of Theorem \ref{thm:HF-Ext}. We  have universal moduli of objects
$$
\Mir^{\sk}_\cH: \cM(\hat{\ell}_{\sk})_\cH 
\stackrel{\cong}{\lra} \Pic^{\sk}(V_\cH)\to \cH. 
$$
The fiber of $\cM(\hat{\ell}_{\sk})_\cH \to \cH$ over
$\tau\in \cH$ is $\cM(\hat{\ell}_{\sk})_\tau$; the fiber
of $\Pic^{\sk}(V_\cH)\to \cH$ over $\tau\in \cH$ is
$\Pic^{\sk}(V_\tau)$. There is a coherent sheaf
$\cE^w_{\cH, \sk, \sk'}$ of $\cO_P$-modules on 
$$
P = \Pic^{\sk}(V_{\cH})\times_{\cH}
\Pic^{\sk'}(V_{\cH}) 
$$
such that the fiber of $\cE^w_{\cH, \sk, \sk'}$ 
over $(\tau, L, L') \in P$ is
the $w$-th extension group $\Ext^w(L, L')$. 
There is an isomorphism 
$$
\Mir^{\sk}_{\cH}\times_{\cH} \Mir^{\sk'}_{\cH}: 
\widecheck{P} = \cM(\hat{\ell}_{\sk})_{\cH}\times_{\cH} \cM(\hat{\ell}_{\sk'})_{\cH}
\lra
P = \Pic^{\sk}(V_{\cH})\times_{\cH}\Pic^{\sk'}(V_{\cH}) 
$$
of complex manifolds.
\begin{theorem} For any $\sk, \sk'\in \bZ$ and $w\in \{0,1,\ldots, g\}$ there is a coherent sheaf $\widecheck{\cE}^w_{\cH, \sk, \sk'}$ of $\cO_{\widecheck{P}}$-modules on $\widecheck{P}$ whose
fiber over $(\tau, \hat{\ell}, \hat{\ell}') \in \widecheck{P}$
is the $w$-th Floer cohomology group $HF^w(\hat{\ell}, \hat{\ell}')$, such that 
$$
\widecheck{\cE}^w_{\cH, \sk, \sk'}\cong 
(\Mir^{\sk}_{\cH}\times_{\cH} \Mir^{\sk'}_{\cH})^*\cE^w_{\cH, \sk, \sk'}.
$$

\end{theorem}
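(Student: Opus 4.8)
The plan is to treat this as the $\tau$-varying enhancement of the preceding fixed-$\tau$ family statement. I would construct both the A-side and B-side objects as coherent sheaves on the total spaces $\widecheck{P}$ and $P$ (now fibered over $\cH = \cH_g$), reduce the desired isomorphism to the fiberwise content of Theorem~\ref{thm:HF-Ext} and its fixed-$\tau$ family refinement, and then upgrade the pointwise isomorphisms $\Phi^1_\tau$ to a genuine morphism of $\cO$-modules by verifying that all the data depend holomorphically on $\tau \in \cH_g$.

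First, on the B-side, I would construct $\cE^w_{\cH,\sk,\sk'}$ as a relative cohomology sheaf. Over the base $P = \Pic^{\sk}(V_\cH)\times_\cH \Pic^{\sk'}(V_\cH)$ there is the universal abelian variety $\pi\colon V_\cH\times_\cH P \to P$ together with a universal line bundle $\mathcal{P}$ whose restriction to the fiber over $(\tau,L,L')$ is $L^\vee\otimes L'$, a line bundle on $V_\tau$ with $c_1 = (\sk'-\sk)\,\omega_{V_\tau}$. I set $\cE^w_{\cH,\sk,\sk'} := R^w\pi_*\mathcal{P}$, which is coherent by Grauert's theorem since $\pi$ is proper. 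When $\sk'-\sk>0$ the cohomology is concentrated in degree $0$ and the sheaf is locally free of rank $(\sk'-\sk)^g$; when $\sk'-\sk<0$ it is concentrated in degree $g$ by Serre duality; and when $\sk=\sk'$ the cohomology jumps along the locus $\{L\cong L'\}$, so $\cE^w$ is genuinely only coherent, and one must invoke cohomology and base change to identify its fiber over each point with $\Ext^w(L,L')=H^w(V_\tau,L^\vee\otimes L')$.

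On the A-side I would exhibit the family Floer complex as a holomorphic family of complexes of vector spaces over $\widecheck{P} = \cM(\hat\ell_\sk)_\cH\times_\cH\cM(\hat\ell_{\sk'})_\cH$. For $\sk\neq\sk'$ the generators are the $|\sk'-\sk|^g$ transverse intersection points of $\ell_{\sk,b}$ and $\ell_{\sk',b'}$, which move holomorphically with $(\tau,[z],[z'])$, and the structure constants of $\Phi^1_\tau$ are the explicit theta-type sums of $\exp$ of the complexified areas computed in item~(i), which are manifestly holomorphic in $\tau\in\cH_g$; for $\sk=\sk'$ the parallel Lagrangians force $HF^*$ to vanish off the diagonal locus and to equal $H^*(\ell;\bC)$ there, matching $\Ext^*(L,L)=\wedge^* H^1(V_\tau,\cO)$. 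This realizes the family Floer cohomology as a coherent sheaf $\widecheck{\cE}^w_{\cH,\sk,\sk'}$ on $\widecheck{P}$ whose fiber over $(\tau,\hat\ell,\hat\ell')$ is $HF^w(\hat\ell,\hat\ell')$. Since $\Mir^{\sk}_\cH\times_\cH\Mir^{\sk'}_\cH\colon\widecheck{P}\to P$ is a biholomorphism, the collection $\{\Phi^1_\tau\}_{\tau\in\cH}$ then defines a morphism $\widecheck{\cE}^w_{\cH,\sk,\sk'}\to (\Mir^{\sk}_\cH\times_\cH\Mir^{\sk'}_\cH)^*\cE^w_{\cH,\sk,\sk'}$ of coherent sheaves which is an isomorphism on every fiber, hence an isomorphism of sheaves.

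The main obstacle is precisely this holomorphicity-in-$\tau$ together with the behavior along the cohomology-jumping loci. Over the strata where $\cE^w$ fails to be locally free (most severely when $\sk=\sk'$), I must argue with coherent rather than locally free sheaves and ensure, via semicontinuity and base change, that the fiber of the assembled sheaf is the honest Floer cohomology group and not merely its naive specialization $\widecheck{\cE}^w\otimes k(p)$. Establishing that the theta-function formulas for $\Phi^1_\tau$ glue \emph{holomorphically} across $\cH_g$, rather than only pointwise, is the crux that distinguishes this universal statement from the already-proven fixed-$\tau$ family version; once that holomorphic dependence is in hand, the fiberwise isomorphisms of Theorem~\ref{thm:HF-Ext} promote directly to the claimed isomorphism of $\cO_{\widecheck{P}}$-modules.
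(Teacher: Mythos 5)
Your B-side setup coincides with the paper's (there $\cE^w_{\cH,\sk,\sk'}$ is defined in \eqref{eqn:cE-k-k} as a higher direct image of universal line bundles, coherent by properness/Grauert), but on the A-side your proposal diverges from the paper's proof in a way that leaves the central object unconstructed. The paper never builds $\widecheck{\cE}^w_{\cH,\sk,\sk'}$ intrinsically from Floer data: it simply \emph{defines} it as the pullback $(\Mir^{\sk}_{\cH}\times_{\cH}\Mir^{\sk'}_{\cH})^*\cE^w_{\cH,\sk,\sk'}$ along the biholomorphism $\widecheck{P}\to P$, so the displayed isomorphism holds by construction, and the entire content of the theorem collapses to the pointwise fiber identification $HF^w(\hat{\ell},\hat{\ell}')\cong \Ext^w\big(\Mir^{\sk}_\tau(\hat{\ell}),\Mir^{\sk'}_\tau(\hat{\ell}')\big)$, which is exactly what the explicit computations \eqref{eqn:HF-k-kprime} and \eqref{eqn:HF-k-k} already provide. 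You instead posit an intrinsically defined ``holomorphic family of Floer complexes'' on $\widecheck{P}$ and never supply it. For $\sk\neq\sk'$ this is essentially the vector-bundle construction of \cite[Chapter 1]{Fuk02} (and note the intersection points depend on the \emph{real} parameters $b,b'$, so their motion is not what furnishes the holomorphic structure; that structure comes from the theta-type frames, i.e.\ in effect from the mirror identification). For $\sk=\sk'$ the situation is worse: the Floer complexes exist only after Hamiltonian perturbation, their rank jumps discontinuously (zero when $[b_1]\neq[b_2]$, rank $2^g$ when $[b_1]=[b_2]$), and the resulting cohomology is supported on the diagonal; since this A-side object has no presentation as a derived pushforward, ``semicontinuity and cohomology-and-base-change,'' which are theorems about $R^w\pi_*$ of coherent sheaves, have nothing to apply to. This is not a deferrable technicality—it is the existence claim of the theorem.

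Second, your concluding inference—that a morphism of coherent sheaves which is an isomorphism on every fiber is an isomorphism—fails precisely in the case where you need it. It is valid when the target is locally free (fiberwise surjectivity gives surjectivity by Nakayama; the kernel is then a local direct summand with vanishing fibers, hence zero), which covers $\sk\neq\sk'$; but for $\sk=\sk'$ the sheaf $\cE^w_{\cH,\sk,\sk}$ is torsion supported on the diagonal (Theorem \ref{thm:E-k-k}(3)), and in that generality fiberwise isomorphism does not detect thickenings: over $\Spec \bC[t]$ the quotient $\bC[t]/(t^2)\to\bC[t]/(t)$ induces an isomorphism on every fiber yet has nonzero kernel. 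So your argument could terminate with two non-isomorphic sheaves having identical fibers. I will add that the worry you flag about ``honest fibers versus naive specialization'' is genuinely well placed: at diagonal points the base-change map for $R^w\pi_{23*}$ fails to be an isomorphism for $0\leq w<g$ (the Fourier--Mukai transform of the Poincar\'e bundle is concentrated in degree $g$), so identifying fibers of the higher-direct-image sheaf with $\Ext^w(L,L)$ requires genuine care in the $\sk=\sk'$ case. But your proposal does not resolve this—it only names the tools—whereas the paper's pullback definition at least makes the stated isomorphism tautological and isolates all remaining content in the pointwise Theorem \ref{thm:HF-Ext}.
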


\paragraph{Acknowledgments.}
We thank Mohammed Abouzaid and Kenji Fukaya for explaining  their work \cite{abouzaid2021homological} and \cite{Fuk02}. C. Cannizzo was partially supported by NSF DMS-2316538. 

\section{Principally polarized abelian varieties \texorpdfstring{$V_\tau$}{V tau} and their SYZ mirrors \texorpdfstring{$(\bT^{2g}, \omega_\tau)$}{(T2g Omega tau)}} \label{sec: complex structure}

\subsection{Principally polarized abelian variety and their moduli}\label{sec: ppav}

We use the notation in Section \ref{sec:main-results}.  For $j\in \{1,\ldots,g\}$, let $\alpha^j:[0,1] \lra (\bC^*)^g$ be given by 
\begin{equation}\label{eq: alpha basis}
\alpha^j(s) = (1,\ldots,1, \underbrace{e^{2\pi \bi s}}_{j\text{-th}}, 1, \ldots, 1) 
\end{equation}
and let $\beta_j: [0,1] \to (\bC^*)^g$ be paths given by 
\begin{equation}\label{eq: beta basis}
\beta_j (s) = ( e^{2\pi \bi \tau_{j1}s} ,  \ldots, e^{2\pi \bi \tau_{jg} s}).
\end{equation}
Then $\alpha^j, \beta_j$ are loops in the quotient $V_\tau =(\bC^*)^g/\Gamma_\tau$;  we relax notation
and use the same symbols $\alpha^j, \beta_j$ to denote their classes in $H_1(V_\tau;\bZ)$. Then 
$\{ \alpha^j ,\beta_j \mid j=1,\ldots,g\}$ is an integral basis of $H_1(V_\tau;\bZ)\cong \bZ^{2g}$. Let
$\{ a_j ,b^j \mid j=1,\ldots,g \}$ be the dual integral basis of $H^1(V_\tau;\bZ)$, so that
$$
\int_{\alpha^j} a_k = \int_{\beta_k} b^j = \delta^j_k,\quad \int_{\alpha^j}b^k = \int_{\beta_k}a_j=0.
$$

Recall that $z_1,\ldots, z_g$ are holomorphic coordinates on $\bC^g$, while
$x_1=e^{2\pi \bi z_1},\ldots x_g = e^{2\pi \bi z_g}$ are holomorphic coordinates on $(\bC^*)^g$. The holomorphic 1-forms 
$$
dz_1,\ldots, dz_g
$$  on $\bC^g$ descend to  holomorphic 1-forms 
$$
\frac{1}{2\pi \bi} \frac{dx_1}{x_1} ,\ldots, \frac{1}{2\pi \bi}\frac{dx_g}{x_g}
$$
on $(\bC^*)^g=\bC^g/\bZ^g$ , which 
further descend to $g$ holomorphic 1-forms  on  $V_\tau=(\bC^*)^g/\Gamma_\tau$. We relax notation and
use the same symbols $dz_1,\ldots, dz_g$ to denote these $g$ holomorphic 1-forms on $V_\tau$. Then
\begin{equation}
\int_{\alpha^j} dz_ k=\delta^j_k, \quad  \int_{\beta_j} dz_k= \tau_{jk}.
\end{equation}

For $j=1,\ldots, g$, let $\hat{a}_j$ (resp.  $\hat{b}^j$)  be the unique real harmonic 1-form which represents
the class $a_j$ (resp. $b^j$) in $H^1(V_\tau;\bZ)\subset H^1(V_\tau;\bR)$. Then
\begin{equation}\label{eqn:dv-ab}
dz_j = \hat{a}_j +  \sum_{k=1}^g \tau_{jk} \hat{b}^k
\end{equation}

Define a real 2-form\footnote{More generally, 
we may consider
$\omega_{(\delta_1,\ldots,\delta_g)} =\sum_{j=1}^g \delta_j \hat{a}_j\wedge \hat{b}_j$, 
where $\delta_1,\ldots, \delta_g \in \bZ$ satisfy
$\delta_j \mid \delta_{j+1}$. See e.g. Section 6 in \cite[Chapter 2]{GH}. In this paper we will consider principal polarization
$\omega_{(1,\ldots,1)}$.}  
\begin{equation}\label{eqn:omega-ab}
\omega_{V_\tau} :=\sum_{j=1}^g  \hat{a}_j\wedge \hat{b}^j 
\end{equation}
on $V_\tau$.  Then 
$$
[\omega_{V_\tau}] = \sum_{j=1}^g a_j\cup  b^j \in H^2(V_\tau;\bZ)\subset H^2(V_\tau;\bR).
$$

Let $B:= \mathrm{Re}\, \tau$ and $\Omega:= \Imag \tau$ be the real and imaginary 
parts of $\tau$, respectively, so that $\tau= B+\bi\Omega$. Then
$\Omega= (\Omega_{jk})$ is a real symmetric positive definite
$g\times g$ matrix. Note that $\int_{\alpha^j} d\bar{z}_ k=\delta^j_k$ and $ \int_{\beta_j} d\bar{z}_k= \bar{\tau}_{jk}$, so $d\bar{z}_j=\hat{a}_j+\sum_{k=1}^g\bar{\tau}_{jk}\hat{b}^k$, then along with Equation \eqref{eqn:dv-ab}, we see that
\begin{equation}\label{eqn:omega-dv}
\omega_{V_\tau} =  \frac{\bi}{2} \sum_{j,k=1}^g \Omega^{jk}(\tau)dz_j\wedge d\bar{z}_k   
\end{equation}
where $(\Omega^{jk})$ is the inverse matrix of  $\Omega = (\Omega_{jk})$,
and $d\bar{z}_k$ is the complex conjugate of $dz_k$. So $\omega_{V_\tau}$ is a positive real (1,1)-form. 
Let $\phi_j=\mathrm{Re}z_j$
and $\xi_j = -\mathrm{Im}z_j$, so that $z_j =\phi_j - \bi \xi_j$ and $x_j = e^{2\pi (\xi_j +\bi \phi_j)}$. Then
\begin{equation}\label{eqn:omega-real-coordinate}
\omega_{V_\tau} = \sum_{j,k=1}^g \Omega^{jk}d\xi_j\wedge d\phi_k.
\end{equation}

The pair $(V_\tau,\omega_{V_\tau})$ is a principally polarized abelian variety of dimension $g$.
The polarization $\omega_{V_\tau}$ defines a symplectic form on $H_1(V_\tau;\bZ)\cong \bZ^{2g}$, and
$(H_1(V_\tau;\bZ),\omega_{V_\tau})$ is isomorphic to $(\bZ^{2g},J_g)$, where
\begin{equation}\label{eq: J_g}
J_g= \begin{bmatrix}
0  & I_g \\
-I_g  &  0 
\end{bmatrix}
\end{equation}
and $I_g$ is the $g\times g$ identity matrix. 
A {\em Torelli  structure} on a ppav $(V,\omega_V)$ is a choice of an integral symplectic basis of $(H_1(V;\bZ),\omega_V)$, or equivalently, an isometry
$\phi: (H_1(V;\bZ),\omega_V)\longrightarrow (\bZ^{2g},J_g)$, because such a choice is equivalent to specifying $\tau$ so that $V=V_\tau$.  Then $\cH_g$ is the moduli of $g$-dimensional principally polarized abelian varieties $(V,\omega_V)$ with Torelli structure.  
The universal family over $\cH_g$ is given by 
$$
\pi_{\cH}: V_{\cH} = (\cH_g \times (\bC^*)^g)/\bZ^g  \longrightarrow \cH_g,
$$
where $n=(n_1,\ldots,n_g)\in \bZ^g$ acts on $\left(\tau,x_1,\ldots,x_g \right)\in \cH_g\times (\bC^*)^g$ by
$$
(n_1,\ldots,n_g ) \cdot (\tau, x_1, \ldots, x_g)  = (\tau, e^{2\pi \bi \sum_{k=1}^g \tau_{1k} n_k} x_1,\ldots,
 e^{2\pi \bi  \sum_{k=1}^g \tau_{gk} n_k} x_g). 
$$

Let $M^T$ denote the transpose of a matrix $M$. The group
$$
\Sp(2g, \bZ) = \{ M \in \GL(2g,\bZ) \mid  M^T J_g M = J_g\}
$$
acts on the Siegel  space $\cH_g$ on the left by 
\begin{equation}\label{eq:Sp(2g,Z) action}
\left[ \begin{array}{cc}
A & B\\
C & D
\end{array} 
\right] \circ \tau =(A\tau+B)(C\tau +D)^{-1},
\end{equation}
where $\left[\begin{array}{cc}
A & B\\
C & D
\end{array}\right] \in \Sp(2g,\bZ)$ and $\tau \in \cH_g$. The quotient  
$\cA_g = [\Sp(2g, \bZ)\backslash \cH_g]$ is the moduli of $g$-dimensional principally polarized abelian varieties (ppavs). 

By \cite[Proposition 4.4 p177]{MumfordTheta}, $\Sp(2g,\bZ)$ acts on $\cH_g\times \bC^g$ by 
\begin{equation}\label{eqn:Sp(2g,Z)-Hg-Cg}
\begin{bmatrix} A & B\\ C& D \end{bmatrix} \cdot (\tau, z) = \left((A\tau +  B)(C\tau+D)^{-1}, ((C\tau +  D)^T)^{-1} z\right)
\end{equation}
where $z=\begin{bmatrix} z_1\\ \vdots \\ z_g\end{bmatrix}\in \bC^g$. 
We have a cartesian diagram
\begin{equation}
\xymatrix{
  V_{\cH_g} \ar[r]\ar[d]_{\pi_{\cH_g}} 
& V_{\cA_g}  \ar[d]^{\pi_{\cA_g}}  \\
\cH_g  \ar[r] & \cA_g
}
\end{equation}
where 
\begin{equation} \label{eqn:universal-Ag}
\pi_{\cA_g}: V_{\cA_g} =\left[ \left(\Sp(2g,\bZ)\ltimes \bZ^{2g}\right)\backslash 
\left(\cH_g\times \bC^g\right)\right] \lra 
\cA_g = [\Sp(2g,\bZ)\backslash \cH_g]
\end{equation}
is the universal family of $g$-dimensional ppavs.

\subsection{The Riemann theta function} \label{sec:theta}
In this subsection, we provide a brief review of the Riemann theta function
and the theta divisor. The main references for this subsection (Section \ref{sec:theta}) and the next three subsections (Section \ref{sec:picard}, 
Section \ref{sec:poincare}, and Section \ref{sec:shift}) are \cite{MumfordTheta} and \cite{cx_abel}. 

We first introduce some notation. We view an element $n\in \bZ^g$ as a column vector
$$
n = \begin{bmatrix} n_1\\ \vdots  \\ n_g\end{bmatrix},
$$
and let $n^T =[ n_1 \cdots n_g]$  be the transpose of $n$. Then 
$$
n^T \tau n =  \sum_{j,k=1}^g \tau_{jk} n_j n_k. 
$$

Define a holomorphic line bundle $\cL_\tau$ over $V_\tau$ by 
$$
\cL_\tau =  ( (\bC^*)^g \times \bC)/\Gamma_\tau \lra V_\tau =(\bC^*)^g/\Gamma_\tau,
$$
where $\tau n \in \Gamma_\tau$ acts on $(\bC^*)^g \times \bC$ by
$$
(\tau n) \cdot (x_1,\ldots,x_g , v ) = (e^{2\pi \bi \sum_{k=1}^g \tau_{1k}n_k}x_1,\ldots, e^{2\pi \bi \sum_{k=1}^g \tau_{gk} n_k }x_g,  e^{-\pi \bi(n^T \tau n)} 
\prod_{j=1}^g x_j ^{-n_j}v).
$$
Then $c_1(\cL_\tau) =[\omega_{V_\tau}]$. Recall that $\omega_{V_\tau}$ is a positive  real (1,1)-form, so $\cL_\tau$ is ample.  As we vary $[\tau]\in \cA_g$, we obtain a line bundle over the universal family:
\begin{equation}\label{eqn:LA}
\cL_{\cA} \longrightarrow V_{\cA} \longrightarrow  \cA_g.
\end{equation}
The restriction of \eqref{eqn:LA} to a point $[\tau]\in \cA_g$ is $\mathcal{L}_\tau \longrightarrow V_\tau \longrightarrow [\tau]$.
The (universal) Riemann theta function is
\begin{equation}
\vartheta: \cH_g \times (\bC^*)^g  \to \bC,\quad \vartheta(\tau,x) := \sum_{n\in  \bZ^g} e^{\pi \bi n^T \tau n} \prod_{j=1}^g x_j^{n_j}
=\sum_{n\in \bZ^g} e^{\pi \bi n^T \tau n} e^{2\pi \bi \sum_{j=1}^g n_j z_j},
\end{equation}
where 
$$
n=\begin{bmatrix} n_1\\ \vdots \\ n_g \end{bmatrix} \in \bZ^g, \quad 
z =\begin{bmatrix} z_1\\ \vdots \\ z_g \end{bmatrix} \in \bC^g, \quad
x_j = e^{2\pi\bi z_j}. 
$$  
The Riemann theta function is $\tau \bZ^g$-quasi-periodic on $(\bC^*)^g$ (or $(\bZ^g+\tau \bZ^g)$-quasi-periodic on $\bC^g$) in the sense that,
for $a,b\in \bZ^g$, 
\begin{equation}\label{eq: quasiperiodicity}
\vartheta(\tau, xe^{2\pi \bi(a+\tau b)})=\vartheta(\tau, e^{2\pi \bi(z+ a+\tau b)})=e^{-\bi \pi b^T\tau b-2\pi \bi b^T z} 
    \vartheta(\tau, e^{2\pi\bi z})=e^{-i\pi b^T\tau b}x^{-b} 
    \vartheta(\tau, x). 
\end{equation}
In fact, for a fixed $\tau$, any such quasi-periodic holomorphic function is a constant multiple of the $\vartheta(\tau, x)$. The map $(\tau,x)\mapsto (\tau,x,\vartheta(\tau,x))$ descends to a holomorphic section
$$
s: V_\cA = \left[ \left(\Sp(2g, \bZ)\ltimes \bZ^{2g}\right)\backslash \left(\cH_g\times \bC^g\right)\right] \lra \cL_{\cA} =\left[  
\left(\Sp(2g ,\bZ)\ltimes \bZ^{2g}\right)\backslash 
\left(\cH_g\times \bC^g\times\bC\right)\right]. 
$$ 
For fixed $[\tau]\in \cA_g$, we obtain a holomorphic section $s_\tau: V_\tau\lra \cL_\tau$.

\subsection{Principally polarized tropical abelian varieties and their moduli}\label{sec:trop_ppav}
The main reference of this subsection is \cite{CMV13}.

Let $S_g(\bR)$ be the set of real symmetric $g\times g$ matrices. It is a $g(g+1)/2$-dimensional real vector space. 
The {\em pure tropical Siegel space}  is
\begin{equation}\label{eqn:Htropp}
\cH_g^\tropp    := \{ \Omega \in S_g(\bR)\mid  \Omega \textup{ is positive definite}\} .
\end{equation}
The {\em tropical Siegel space} is 
\begin{equation} \label{eqn:Htrop}
\cH_g^\trop:= \left \{\Omega \in S_g(\bR) \left|  \begin{array}{cc} \Omega \textup{ is positive semi-definite, and there exists }  h\in \GL(g,\bZ) \\  
\text{ such that } h \Omega h^T   = \begin{bmatrix} \Omega' & 0\\ 0 & 0 \end{bmatrix} 
\text{ for some } \Omega'\in \cH^\tropp_{g'} \text{ with } 0\leq g'\leq g. \end{array}  \right. \right\} 
\end{equation}
The closure of $\cH^\trop_g$ in $S_g(\bR)$ is 
$$
\overline{\cH^{\tropp}_g  }= \left \{ \Omega \in S_g(\bR) \mid \Omega \textup{ is positive semi-definitie}  \right\}. 
$$ 
We note that $\cH_g^\tropp\subset \cH_g^\trop \subset\overline{\cH^\tropp_g}$, and
\begin{itemize}
\item  $\cH_g^\tropp$ is an open cone in $S_g(\bR)$, while  $\cH_g^\trop$
is a subcone of $\overline{\cH_g^\tropp}$; 
\item  $\cH_g^{\tropp}$  is closed under multiplication by elements in $\bR_{>0}$, while $\cH_g^\trop$ and 
$\overline{\cH_g^\tropp}$ are closed under multiplication by elements in $\bR_{\geq 0}$. 
\end{itemize} 

A {\em tropical principally polarized abelian variety} of dimension $g$ is a $g$-dimensional real torus $(S^1)^g$ equipped with a positive semi-definite flat metric.
It is of the form
$$
(\bR^g/\bZ^g, \sum_{j,k=1}^g \Omega_{jk} dr_j dr_k)
$$
where $\Omega \in \cH_g^\trop$. We say it is {\em pure}  if $\Omega \in \cH_g^\tropp$.

If $\Omega$ is positive definite (or equivalent, if $\Omega\in \cH^\tropp_g$)  then $\sum_{j,k=1}^g \Omega_{jk} dr_j dr_k$ is
a positive definite metric on $\bR^g/\bZ^g$, and there is an isometry
$$
(\bR^g/\bZ^g,\sum_{j,k=1}^g \Omega_{jk} d r_j d r_k) \lra  (T_\Omega = \bR^g/\Omega \bZ^g, \sum_{j,k=1}^g \Omega^{jk}d\xi_j d\xi_k)
$$
where  $(\Omega^{jk})$ is the inverse matrix of $(\Omega_{jk})$, and the map $\bR^g/\bZ^g \lra \bR^g/\Omega\bZ^g$ is given by
$$
r=(r_1,\ldots,r_g) \mapsto \xi = (\xi_1,\ldots,\xi_g) = (\sum_{j=1}^g \Omega_{1j}r_j,\ldots,\sum_{j=1}^g \Omega_{gj}r_j) 
$$

\subsection{The universal SYZ fibration}\label{sec: universal SYZ}
In this subsection we assume $\Omega$ is positive definite. Let  
\begin{equation}\label{eqn:log}
\log:(\bC^*)^g \lra \bR^g, \quad (x_1,\ldots,x_g) \mapsto (\xi_1,\ldots,\xi_g) =\frac{1}{2\pi}(\log|x_1|,\ldots, \log|x_g|)
\end{equation} 
be the logarithm map. Then
\begin{equation}\label{eqn:log-equivariant} 
\log(\tau n \cdot x) = \Omega n +  \log(x).
\end{equation}
So \eqref{eqn:log} descends to an SYZ fibration 
\begin{equation} \label{eqn:SYZ} 
\pi^\SYZ_\tau: V_\tau = (\bC^*)^g/\Gamma_\tau \lra  T_\Omega = \bR^g/\Omega \bZ^g. 
\end{equation}

Consider the sublattice $\Gamma_F:=H_1(T_F;\bZ)$ of $H_1(V_\tau; \bZ)\cong \bZ^{2g}$, where $T_F$ is the fiber of the SYZ fibration $\pi^{\SYZ}$ above. Then, $\Gamma_F\cong \bZ^g$ and a $\bZ$-basis $\{\alpha^j\}_{j=1}^g$ of $\Gamma_F$ can be extended to an integral symplectic basis  $\{\alpha^j, \beta_j\}_{j=1}^g$ of the symplectic lattice
$$
\big( H_1(V_\tau; \bZ),\omega_{V_\tau}\big) \cong (\bZ^{2g}, J_g).
$$
Therefore,  $\Gamma_F\otimes_\bZ \bR\cong \bR^g$ is a Lagrangian subspace of the real
symplectic vector space
$$
\big( H_1(V_\tau;\bR), \omega_{V_\tau} \big)\cong (\bR^{2g}, J_g). 
$$
Under the $\Sp(2g, \bZ)$ action on $\cH_g$ as in Equation \eqref{eq:Sp(2g,Z) action}, $\Gamma_F$ is preserved by the Siegel parabolic group $P_g(\bZ)\subset \Sp(2g, \bZ)$ defined by  
\begin{equation}\label{def: Siegel parabolic}
P_g(\bZ) =\left\{ \begin{bmatrix} A & B \\ C & D \end{bmatrix} \in \Sp(2g, \bZ)\Big| C=0 \right\}
=\left\{ \begin{bmatrix} A & B \\ 0 & (A^T)^{-1} \end{bmatrix}  \Big| A\in \GL(g,\bZ), B\in M_g(\bZ), A B^T =  B A^T \right\}.
\end{equation}
The group
  $P_g(\bZ)$ is generated by the following two subgroups:
\begin{equation}
\left\{ \begin{bmatrix} A & 0 \\ 0 & (A^T)^{-1} \end{bmatrix} \ \Big| \ A\in \GL(g,\bZ) \right\} \cong \GL(g, \bZ)\quad\text{and} \quad
\left\{ \begin{bmatrix} I & B \\ 0 & I \end{bmatrix} \ \Big| \  B\in M_g(\bZ),  B^T =  B \right\} \cong S_g(\bZ).
\end{equation}
If
$$
\begin{bmatrix} A & B \\ 0 & (A^T)^{-1} \end{bmatrix} \in P_g(\bZ),\quad \tau \in \cH_g 
$$
then
\begin{equation}\label{eq: P_g(Z) acts on tau}
\begin{bmatrix} A & B \\ 0 & (A^T)^{-1} \end{bmatrix} \circ \tau = (A\tau+B)A^T \quad \text{and} \quad \mathrm{Im} \left( (A\tau + B) A^T\right)  = A \mathrm{Im} \tau A^T.  
\end{equation}

Now let us discuss the various moduli spaces involved and how they are related.  Recall that $\cH_g$ is the moduli space of $g$-dimensional ppav's $(V,\omega_V)$ with Torelli structure.  The map from $\cH_g$ to the pure tropical Siegel space $\cH_g^{\tropp}$, which is
\begin{equation}\label{eqn:H-Htrop}
\cH_g = S_g(\bR) \times \cH^\tropp_g  \cong \bR^{g(g+1)/2} \times \cH^\tropp_g \lra \cH^\tropp_g, \quad \tau \mapsto \mathrm{Im}(\tau)
\end{equation}
descends to 
\begin{equation} \label{eqn:torus-bundle}
\cH_g/S_g(\bZ) = \left(S_g(\bR)/S_g(\bZ)\right)\times \cH^\tropp_g  \cong (S^1)^{g(g+1)/2}\times \cH^\tropp_g \lra \cH^\tropp_g,
\end{equation}
which further descends to 
\begin{equation} \label{eqn:Ag} 
\cA^F_g = [\cH_g/P_g(\bZ)]   \lra  \cA^\tropp_g = [\cH^\tropp_g/\GL(g, \bZ)] \subset \cA^\trop_g = [\cH^\trop_g/\GL(g,\bZ)], 
\end{equation} 
where $A\in \GL(g, \bZ)$ acts on $\Omega \in \cH^\trop_g$ by
$$
A\cdot \Omega = A \Omega A^T.
$$

In Equation \eqref{eqn:Ag},  $\cA^\trop_g$ is the moduli of  tropical principally polarized abelian varieties of dimension $g$, and
$\cA^\tropp_g$ is the submoduli of pure tropical principally polarized abelian variety of dimension $g$. 
$\cA^F_g$ is the moduli space of pairs $(V, \Gamma_F \subset H_1(V;\bZ))$, where
$V$ is a ppav of dimension $g$, and $\Gamma_F\cong \bZ^g$ is a rank $g$ sublattice
of $H_1(V;\bZ)\cong \bZ^{2g}$ such that a $\bZ$-basis  $\{\alpha^1,\ldots,\alpha^g\}$ of $\Gamma_F$ can be extended to an integral symplectic basis $\{\alpha^j,\beta_j\mid j=1,\ldots,g\}$ of $(H_1(V_\tau;\bZ), \omega_{V_\tau}) \cong (\bZ^{2g}, J)$.  
We may also view $\cA^F_g$ as the moduli of ppav together with a SYZ fibration $V_\tau =(\bC^*)^g/\Gamma_\tau \lra  \bR^g/\Omega \bZ^g$. Then the map $\cA^F_g \lra \cA^{\tropp}_g$ sends an SYZ fibred ppav to its base. There is a commutative diagram
\begin{equation}
\xymatrix{
\cH_g =   S_g(\bR)\times \cH_g^\tropp \cong \bR^{g(g+1)/2}\times \cH_g^\tropp   \ar[d] \ar[dr]  & \\
\cH_g/S_g(\bZ) =\left(S_g(\bR)/S_g(\bZ)\right) \times \cH_g^\tropp \cong (S^1)^{g(g+1)/2}\times \cH_g^\tropp   \ar[d] \ar[r] & \cH_g^\tropp \ar[d]\\
\cA_g^F=[\cH_g/P_g(\bZ)]  \ar[d] \ar[r]  & \cA_g^\tropp =[\cH^\tropp_g/\GL(g,\bZ)] \\
\cA_g =[\cH_g/\Sp(2g,\bZ) ] & 
}
\end{equation}
where the vertical arrows are covering maps.

We now describe universal families over various moduli spaces. Let $V_{\cA_g}\lra \cA_g$ be the universal family of principally polarized abelian varieties of dimension $g$, 
as in Equation \eqref{eqn:universal-Ag}. Let $V_{\cA_g^F}$ be the fiber product
of $\cA_g^F\to \cA_g$ and $V_{\cA_g}\to \cA_g$, so that we have the following cartesian diagram:
\begin{equation}
\xymatrix{
V_{\cA_g^F} \ar[r] \ar[d]_{\pi_{\cA^F_g}} & V_{\cA_g }  \ar[d]^{\pi_{\cA_g}} \\
\cA^F_g \ar[r] & \cA_g
}
\end{equation}
Let $\cT_{\cA^\tropp_g} \to \cA^\tropp_g$ be the universal family of pure tropical principally polarized abelian varieties of (real) dimension $g$.  Let
$\cT_{\cA^F_g}$ be the fiber product of $\cA_g^F\to \cA^\tropp_g$ and $\cT_{\cA^\tropp_g} \to \cA^\tropp_g$, so that we have the following cartesian diagram:
\begin{equation}
\xymatrix{
\cT_{\cA_g^F} \ar[r] \ar[d]_{p_{\cA_g^F}} & \cT_{\cA^\tropp_g }  \ar[d]^{p_{\cA_g^\tropp}} \\
\cA^F_g \ar[r] & \cA_g^\tropp
}
\end{equation}
The map $\pi_{\cA^F_g}:V_{\cA_g^F}\lra \cA_g^F$ is a composition 
\begin{equation}\label{eqn:universal-SYZ}
\xymatrix{
V_{\cA^F_g}\ar[r]^{\pi^{\SYZ}_{\cA_g^F}} & \cT_{\cA^F_g}\ar[r]^{p_{\cA^F_g}} & \cA^F_g
}
\end{equation}
where $\pi^{\SYZ}_{\cA^F_g}:V_{\cA^F_g}\lra \cT_{\cA^F_g}$ is the universal SYZ fibration. The restriction of \eqref{eqn:universal-SYZ} to a point
$[\tau]\in \cA_g^F$ is $V_\tau \stackrel{\pi^{\SYZ}_\tau}{\lra} T_\Omega \lra [\tau]$. We may pullback \eqref{eqn:universal-SYZ}
under the map $\cH_g\to \cA_g^F$ to obtain 
\begin{equation}\label{eqn:univeral-SYZ-H}
\xymatrix{
V_{\cH_g}\ar[r]^{\pi^{\SYZ}_{\cH_g}} & \cT_{\cH_g}\ar[r]^{p_{\cH_g}} & \cH_g
}
\end{equation}

\subsection{The Calabi-Yau metric and the metric SYZ}
As a complex manifold,  $V_\tau = \bC^g/(\bZ^g +\tau \bZ^g) = (\bC^*)^g/\Gamma_\tau$. 
The complex coordinates on $\bC^g$ are $(z_1,\ldots,z_g)$ where $z_j= \phi_j -\bi \xi_j = -\bi(\xi_j+\bi\phi_j)$, and 
the complex coordinates on $(\bC^*)^g$ are $(x_1,\ldots, x_g)$ where $x_j = e^{2\pi z_j} = e^{2\pi(\xi_j +\bi \phi_j)}$. The complex structure is given by 
$$
J \frac{\partial}{\partial \xi_j} =\frac{\partial}{\partial\phi_j},
\quad J\frac{\partial}{\partial \phi_j} = -\frac{\partial}{\partial \xi_j}.
$$
Recall that $r_k =\sum_{j=1}^g \Omega^{jk}\xi_j$. The symplectic form on $V_\tau$ is 
\begin{equation}
\omega_{V_\tau} = \frac{\bi}{2}\sum_{j,k=1}^g \Omega^{jk} dz_j\wedge d\bar{z}_k =
\sum_{j,k=1}^g \Omega^{jk} d\xi_j \wedge d\phi_k = \sum_{k=1}^g dr_k\wedge d\phi_k.
\end{equation}
So $(r_j,\phi_j)$ are Darboux coordinates on the symplectic manifold 
$(V_\tau, \omega_{V_\tau})$. Let $g_V$ be the Riemannian metric
determined by the symplectic from $\omega_{V_\tau}$ and the complex structure $J$:
$$
g_V(v,w) =\omega_{V_\tau}(v, Jw).
$$
Then 
\begin{equation}
g_V = \sum_{j,k=1}^g \Omega^{jk} (d\xi_j d\xi_k + d\phi_j d\phi_k)
=\sum_{j,k=1}^g \Omega_{jk}dr_jdr_k + \sum_{j,k=1}^g \Omega^{jk} d\phi_jd\phi_k.
\end{equation}
The holomorphic volume form $dz_1\wedge \cdots  \wedge dz_g$ on $V_\tau$ is covariant constant with respect to this flat K\"{a}hler metric
with constant norm $\det(\Omega)^{1/2}$. Let 
\begin{equation}
\Phi:= (\det \Omega)^{-1/2} dz_1\wedge \cdots \wedge dz_g.
\end{equation}
Then $\Phi$ is a holomorphic and unitary frame of the canonical line bundle
$\Lambda^g T^*_{V_\tau} = \det(T^*_{V_\tau})$ of $V_\tau$.  
The fibers of $\pi_\tau^{\SYZ}: V_\tau \to T_\Omega$  are flat tori $(T_F =\bR^g/\bZ^g, \sum_{j,k=1}^g \Omega^{jk}d\phi_j d\phi_k)$ which are isometric to $T_{\Omega^{-1}}$. 
We have
$$
\Phi|_{T_F}= \det(\Omega)^{-1/2} d\phi_1\wedge \cdots \wedge d\phi_g = \mathrm{vol}_{T_F}
$$
where $\mathrm{vol}_{T_F}$ is the volume form of $T_F$. So 
$$
\mathrm{Re}\Phi|_{T_F} = \mathrm{vol}_{T_F},\quad
\mathrm{Im}\Phi|_{T_F} = 0.
$$
Therefore, the fibers of $\pi_\tau^{\SYZ}: V_\tau\to T_\Omega$ are special Lagrangian submanifolds of the Calabi-Yau manifold
$(V_\tau, \omega_{V_\tau}, \Phi)$.

As a complex manifold,  the mirror of the Calabi-Yau manifold $V_\tau$ is $\bT^{2g} = \bR^{2g}/\bZ^{2g}=\bC^g/(\bZ^g+\bi \bZ^g)$.  The complex coordinates on $\bC^g$ are $u_j = \theta_j -\bi r_j =-\bi (r_j +\bi \theta_j)$. 
The complex structure is given by 
$$
\widecheck{J} \frac{\partial}{\partial r_j} =\frac{\partial}{\partial\theta_j},
\quad \widecheck{J}\frac{\partial}{\partial \theta_j} = -\frac{\partial}{\partial r_j}.
$$

The real part $B =\mathrm{Re}\tau$
and the imaginary part $\Omega =\mathrm{Im}\tau$ of $\tau\in \cH_g$ determine a B-field $\omega_B$ (a closed 2-form) and a symplectic form $\omega_\Omega$ on $\bT^{2g}$ on $V_\tau$, respectively:
\begin{equation}
\omega_B :=\sum_{j,k=1}^g B_{jk}dr_j\wedge d\theta_k, \quad
\omega_\Omega := \sum_{j,k=1}^g \Omega_{jk} dr_j\wedge d\theta_k
= \sum_{k=1}^g d\xi_k \wedge d\theta_k.
\end{equation}
So $(\xi_j, \theta_j)$ are Darboux coordinates on the symplectic torus $(\bT^{2g}, \omega_\Omega)$. The projection  $(\xi, \theta)\mapsto \xi$ defines a group-valued moment map $\pi: \bT^{2g}\to T_\Omega$.

The complexified K\"{a}hler form  on $\bT^{2g}$ is 
\begin{equation}
\omega_\tau := \omega_B +\bi \omega_\Omega =
\sum_{j,k=1}^g (B_{jk} +\bi \Omega_{jk}) dr_j\wedge d\theta_k. 
\end{equation}
Let $g_\Omega$ be the Riemannian metric on $\bT^{2g}$
determined by the symplectic from $\omega_\Omega$ and the complex structure $\widecheck{J}$:
$$
g_\Omega(v,w) =\omega_\Omega(v, \widecheck{J}w).
$$
Then 
\begin{equation}
g_\Omega = \sum_{j,k=1}^g\Omega_{jk}  (dr_j dr_k +  d\theta_j d\theta_k)    = \sum_{j,k=1}^g\Omega^{jk} d\xi_j d\xi_k + \sum_{j,k=1}^g\Omega_{jk}d\theta_j d\theta_k. 
\end{equation}
The holomorphic volume form $du_1\wedge \cdots \wedge du_g$ on $\bT^{2g}$ is covariant constant with respect to this flat K\"{a}hler metric with constant norm $\det(\Omega)^{-1/2}$. Let 
\begin{equation}
\widecheck{\Phi} : = \det(\Omega)^{1/2} du_1\wedge \cdots \wedge du_g.
\end{equation}
Then $\widecheck{\Phi}$ is a holomorphic and unitary frame of the canonical line bundle $\det(T^*\bT^{2g})$  of  $(\bT^{2g}, \widecheck{J})$. 
The fibers of 
\begin{equation}\label{eqn:SYZ-dual}
    \widecheck{\pi}^{\SYZ}_\tau:\bT^{2g}\to T_\Omega
\end{equation} 
sending $(\xi,\theta)$ to $\xi$ are flat tori $(\widecheck{T}_F =\bR^g/\bZ^g, \sum_{j,k=1}^g 
\Omega_{jk} d\theta_j d\theta_k)$ which are isometric to $T_\Omega$. We have
$$
\widecheck{\Phi}|_{\widecheck{T}_F}= \det(\Omega)^{1/2} d\theta_1\wedge \cdots \wedge d\theta_g = \mathrm{vol}_{\widecheck{T}_F}
$$
where $\mathrm{vol}_{T^\vee_F}$ is the volume form of $\widecheck{T}_F$.  So 
$$
\mathrm{Re}\widecheck{\Phi}|_{\widecheck{T}_F} = \mathrm{vol}_{\widecheck{T}_F},\quad
\mathrm{Im}\widecheck{\Phi}|_{\widecheck{T}_F}=0.
$$
Therefore, the fibers of $\widecheck{\pi}^{\SYZ}_\tau: \bT^{2g} \to T_\Omega$ are special Lagrangian submanifolds of the Calabi-Yau manifold
$(\bT^{2g}, \omega_{\Omega}, \widecheck{\Phi})$. The special Lagrangian torus fibration $\widecheck{\pi}^{\SYZ}_\tau: \bT^{2g}\to T_\Omega$ is the dual of the special Lagrangian 
torus fibration $\pi^{\SYZ}_\tau: V_\tau\to T_\Omega$.
In Section \ref{sec:SYZ}, we will reconstruct the torus fibration
$\pi^{\SYZ}_\tau:V_\tau \to T_\Omega$ and the complex structure on $V_\tau$
from the torus fibration $\widecheck{\pi}^{\SYZ}_\tau: \bT^{2g}\to T_\Omega$
and the complexified symplectic structure on 
$\bT^{2g}$.
The Calabi-Yau manifold
$(\bT^{2g}, \omega_\tau, \widecheck{\Phi})$ is the SYZ mirror
of the Calabi-Yau manifold $(V_\tau, \omega_{V_\tau}, \Phi)$ \cite{SYZ, leung_without_corrections}. As we vary $\tau\in \cH_g$, we obtain the following mirror family 
\begin{equation}\label{eqn:universal-SYZ-H-mirror}
\xymatrix{
\widecheck{V}_{\cH_g}\ar[r]^{\widecheck{\pi}^{\SYZ}_{\cH_g}} & \cT_{\cH_g}\ar[r]^{p_{\cH_g}} & \cH_g.
}
\end{equation}
Let $P_g(\bZ)$ act on $\widecheck{V}_{\cH_g}$ by 
\begin{equation}\label{eqn:P_g(Z)-action-on-mirror}
\begin{bmatrix} A & B \\ 0 & (A^T)^{-1} \end{bmatrix}
\circ (\tau, \xi, \theta) = \left( (A \tau + B) A^T, A\xi, (A^T)^{-1}\theta\right).
\end{equation}
Then $\widecheck{\pi}^{\SYZ}_{\cH_g}$ is $P_g(\bZ)$-equivariant.  The quotient of \eqref{eqn:universal-SYZ-H-mirror} by the above $P_g(\bZ)$-action is 
\begin{equation}\label{eqn:universl-SYZ-mirror}
\xymatrix{
\widecheck{V}_{\cA^F_g}\ar[r]^{\widecheck{\pi}^{\SYZ}_{\cA^F_g}} & \cT_{\cA^F_g}\ar[r]^{p_{\cA^F_g}} & \cA^F_g. 
}
\end{equation}

\subsection{Mirror symmetry of isomorphisms}
\subsubsection{Categorical equivalences induced by isomorphisms of the abelian variety}

Recall that  the moduli of $g$-dimensional ppavs is $\cA_g =[\cH_g/\Sp(2g,\bZ) ]$
where the $\Sp(2g;\bZ)$-action is given by 
$$
  \phi \circ \tau =(A\tau+B)(C\tau +D)^{-1}, \quad
  \phi = \begin{bmatrix} A & B\\ C & D \end{bmatrix} \in \Sp(2g,\bZ),
  \tau\in \cH_g.
$$
There is an isomorphism 
$$
\phi: V_\tau \stackrel{\cong}{\lra} V_{\phi\circ \tau},
\quad [z] \mapsto [ \left((C\tau +D)^T\right)^{-1}z]
$$
of abelian varieties, which induces a derived equivalence
 \begin{equation}\label{eqn:Coh} 
\phi^*: D^b \Coh ( V_{\phi\circ \tau}) \stackrel{\cong}{\lra} D^b \Coh(V_\tau)
\end{equation}
sending a coherent sheaf $\cF$ on $V_{\phi\circ \tau}$ to its pullback $\phi^*\cF$
on $V_\tau$. In particular, if
$\phi\circ \tau = \tau$ then we obtain an autoequivalence
$$
\phi^*: D^b \Coh(V_\tau)\stackrel{\cong}{\lra} D^b\Coh(V_\tau).
$$

By homological mirror symmetry for principally polarized abelian varieties, we should also have the following equivalence of Fukaya categories
\begin{equation}\label{eq: Fuk auto}
H^0 (D^\pi \Fuk_{\aff}(\bT^{2g}, \omega_{\phi\circ \tau})) \stackrel{\cong}{\lra} H^0 (D^\pi \Fuk_{\aff}(\bT^{2g},\omega_{\tau}))
\end{equation}
for $\Fuk_{\aff}$ defined in Subsection \ref{sec:A-branes}. The equivalence between derived categories of coherent sheaves
in Equation \eqref{eqn:Coh} is given by pullback under
the isomorphism  $\phi: V_\tau\to V_{\phi\circ \tau}$ of abelian varieties, but the equivalence between the Fukaya categories in Equation \eqref{eq: Fuk auto} is not obvious without homological mirror symmetry since for
general $\phi \in \Sp(2g;\bZ)$,  $(\bT^{2g},\omega_\tau)$ and $(\bT^{2g},\omega_{\phi\circ \omega})$ are not symplectomorphic, so the equivalence is not induced by symplectomorphisms.

By \cite[Proposition A5, p210]{MumfordTheta}, $\Sp(2g;\bZ)$ is generated by $P_g(\bZ)$ defined in Equation \eqref{def: Siegel parabolic}, which acts on $\cH_g$ by $\tau\mapsto \widetilde \tau=(A\tau+B)A^T$ where $A\in \GL(g,\bZ)$ and $B\in M_g(\bZ)$ (see Equation \eqref{eq: P_g(Z) acts on tau}), and the element 
$$
\begin{bmatrix} 0 & -I_g\\ I_g & 0 \end{bmatrix}, 
$$ which acts on $\cH_g$ by $\tau\mapsto \widetilde \tau = -\tau^{-1}$ (in particular, if $\tau=\bi\Omega$, then $\widetilde \tau=\bi \Omega^{-1}$).  

For the action by $P_g(\bZ)$, where $\widetilde \tau=(A\tau+B)A^T$, as mentioned in Section \ref{sec:trop_ppav}, geometrically this is the  action that preserves the fiber of the SYZ fibration $\pi^{\SYZ}$  and acts on the base $T_\Omega=\bR^g/\Omega\bZ^g$ by $\Omega\mapsto \widetilde \Omega=A\Omega A^T$ (according to Equation \eqref{eq: P_g(Z) acts on tau}).
Note that $\omega_\Omega$ and $\omega_{\widetilde \Omega}$ are the real symplectic forms.  So  $A\Omega A^T=\Omega$ is the condition for the tori $(\bT^{2g}, \omega_\Omega)$ and $(\bT^{2g}, \omega_{\widetilde \Omega})$ to be symplectomorphic and the equivalence of the Fukaya category is induced by this symplectomorphism.   

As for the latter case where $\widetilde \tau=-\tau^{-1}$,  \cite[Section 6]{Qi22} offered a geometric interpretation of the relationship between $(\bT^{2g},\omega_\tau)$ and $(\bT^{2g},\omega_{\widetilde\tau})$, that their respective twisted doubling tori are the same up to a B-field twist.

\subsubsection{Symplectomorphisms of $(\bT^{2g}, \omega_\Omega)$}
In this subsection, we discuss the symplectomorphisms of $(\bT^{2g},\omega_\Omega)$, which would induce autoequivalences of the Fukaya category (when the $B$-field is zero), 
and by homological mirror symmetry implies equivalences of the derived categories. 
\begin{lemma}
Let $\bT^{n} =\bR^{n} /\bZ^{n}$ be a  $n$-dimensional torus, where $n$ is any positive integer. We have a short exact sequence of groups
$$
1\to \Diff_0(\bT^{n}) \to \Diff(\bT^{n}) \to  \GL(n,\bZ)\to 1,
$$
where the map $\Diff(\bT^n) \to \GL(n,\bZ)$ sends $\phi:\bT^n\to \bT^n$ to $\phi_*: H_1(\bT^n;\bZ)=\bZ^n \to H_1(\bT^n;\bZ)$.
So 
$$
MCG(\bT^n):= \pi_0(\Diff(\bT^n))\to \GL(n,\bZ)
$$
is an isomorphism.
\end{lemma}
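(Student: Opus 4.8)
The plan is to analyze the homomorphism $\rho:\Diff(\bT^n)\to \GL(n,\bZ)$, $\phi\mapsto \phi_*$, in three stages: that $\rho$ is a well-defined group homomorphism, that it is surjective and admits a splitting, and that its kernel is exactly $\Diff_0(\bT^n)$. That $\rho$ is a homomorphism is just functoriality of $H_1(-;\bZ)$ together with the fact that any diffeomorphism induces an automorphism of $H_1(\bT^n;\bZ)\cong \bZ^n$, i.e. an element of $\GL(n,\bZ)$. Since $\GL(n,\bZ)$ is discrete and $\rho$ is continuous, $\rho$ is locally constant, hence constant on the identity component $\Diff_0(\bT^n)$; as $\rho(\mathrm{id})=I_n$ this already gives $\Diff_0(\bT^n)\subseteq \ker\rho$ and shows that $\rho$ descends to $\bar\rho:\pi_0(\Diff(\bT^n))\to \GL(n,\bZ)$.

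For surjectivity, each $A\in \GL(n,\bZ)$ acts linearly on $\bR^n$ preserving the lattice $\bZ^n$, so it descends to a diffeomorphism $\bar A\in \Diff(\bT^n)$ with $\bar A_*=A$; moreover $A\mapsto \bar A$ is a group homomorphism, providing a splitting $s:\GL(n,\bZ)\to \Diff(\bT^n)$ of the sequence.

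The heart of the proof is the reverse inclusion $\ker\rho\subseteq \Diff_0(\bT^n)$, i.e. that $\phi_*=\mathrm{id}$ forces $\phi$ to be isotopic to the identity. First I would reduce to the case $\phi(0)=0$ by precomposing with the translation by $-\phi(0)$, which lies in $\Diff_0(\bT^n)$ because the translation subgroup is connected (and does not change $\rho$). Lifting $\phi$ to the universal cover $\bR^n\to \bT^n$ as the unique $\tilde\phi$ with $\tilde\phi(0)=0$, the hypothesis $\phi_*=\mathrm{id}$ yields deck-equivariance $\tilde\phi(x+m)=\tilde\phi(x)+m$ for all $m\in \bZ^n$, so $u(x):=\tilde\phi(x)-x$ is $\bZ^n$-periodic and the affine interpolation $\tilde\phi_t(x):=x+(1-t)u(x)$ descends to a homotopy on $\bT^n$ from $\phi$ to $\mathrm{id}$.

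This shows that $\phi$ is homotopic to the identity; the remaining and genuinely hard step is to upgrade this homotopy to an isotopy through diffeomorphisms, since the interpolation $\tilde\phi_t$ need not be a diffeomorphism for intermediate $t$ (its differential $(1-t)D\tilde\phi+tI$ is a convex combination of invertible matrices and can degenerate). This is exactly where one invokes the classical structural input that homotopic diffeomorphisms of $\bT^n$ are isotopic: for $n=2$ it is part of surface mapping-class-group theory, and in general it reflects the identification of the path components of $\Diff(\bT^n)$ with $\GL(n,\bZ)$. Granting this, $\ker\rho=\Diff_0(\bT^n)$, the displayed sequence is exact, and since $\MCG(\bT^n)=\pi_0(\Diff(\bT^n))=\Diff(\bT^n)/\Diff_0(\bT^n)$, the induced map $\bar\rho$ is the asserted isomorphism, with inverse induced by the splitting $s$.
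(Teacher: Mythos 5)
Your proposal tracks the paper's own proof essentially step for step through all the substantive constructions: surjectivity via the linear representatives of elements of $\GL(n,\bZ)$ (the paper uses the same maps, without phrasing them as a splitting), reduction to a fixed point by composing with a translation, lifting to the universal cover $\bR^n$, the deck-equivariance $\tilde{\phi}(x+m)=\tilde{\phi}(x)+m$, and the straight-line interpolation, which is exactly the paper's homotopy $h(t,x)=(1-t)x+t\tilde{\phi}(x)$. Up to that point your write-up is correct, and in places slightly more careful than the paper (e.g., the observation that the homomorphism to the discrete group $\GL(n,\bZ)$ is locally constant, hence trivial on $\Diff_0(\bT^n)$, is left implicit there).

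The gap is the final step. You close the argument by invoking "the classical structural input that homotopic diffeomorphisms of $\bT^n$ are isotopic," justified on the grounds that it "reflects the identification of the path components of $\Diff(\bT^n)$ with $\GL(n,\bZ)$." But that identification is precisely the lemma being proved: once you have the straight-line homotopy, injectivity of $\pi_0(\Diff(\bT^n))\to\GL(n,\bZ)$ is logically equivalent to the homotopy-implies-isotopy statement you are granting, so for general $n$ the argument is circular. As written it is a complete proof only for $n=2$ (where surface mapping class group theory is a legitimate external citation), and with more classical $3$-manifold input for $n=3$. The paper instead tries to close this step directly, asserting in one sentence that $h$ can be perturbed rel endpoints so that every $h_t$ is a diffeomorphism; that claim is given no justification either, and it is in fact the genuinely hard content of the lemma --- your remark that the differential $(1-t)D\tilde{\phi}+tI$ can degenerate is exactly the obstruction, and for $n\ge 5$ it is known (Hsiang--Shaneson, Hatcher) that $\pi_0(\Diff(\bT^n))\to\GL(n,\bZ)$ has nontrivial kernel in the smooth category, so no perturbation of this kind can exist in general. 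In short: you correctly locate where the difficulty lives but resolve it only by assuming the conclusion, while the paper's proof glosses over the same difficulty with an unjustified perturbation claim.
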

\begin{proof} Let  $\pi:\bR^n \to \bT^n$ be the projection, which is also the universal covering map. Given any 
$A\in \GL(n,\bZ)$ the linear map $\bR^n\to\bR^n$ given by $x\mapsto Ax$ descends to a diffeomorphism
$\phi_A: \bT^n \to \bT^n $ such that $(\phi_A)_* = A: H_1(\bT^n;\bZ)\to H_1(\bT^n;\bZ)$. So the map
$\Diff(\bT^n) \to \GL(n,\bZ)$ is surjective. 

It remains to show that if $\phi: \bT^n\to \bT^n$ is a diffeomorphism and $\phi_*: H_1(\bT^n;\bZ)\to H_1(\bT^n;\bZ)$ is the identity map, then 
$\phi \in \Diff_0(\bT^n)$. By composing with a translation which is in $\Diff_0(\bT^n)$, we may assume that  $\phi(x_0)=x_0$, where $x_0=\pi(0)$.
Then $\phi_*:\pi_1(\bT^n,x_0)\to \pi_1(\bT^n,x_0)$ is the identity map. The map $\phi\circ \pi:\bR^n \to \bT^n$ has a unique lifting 
$\tilde{\phi}:\bR^n\to \bR^n$ such that $\tilde{\phi}\circ \pi = \phi\circ \pi$ and $\tilde{\phi}(0)=0$. The condition that
$\phi_*:\pi_1(\bT^n)\to \pi_1(\bT^n)$ is the identity map implies
$\tilde{\phi}(x+n) =\tilde{\phi}(x)+n$ for any $x\in \bR^n$. Define
$$
h: [0,1]\times \bR^n \to \bR^n,  (t,x)\mapsto  (1-t)x + t\tilde{\phi}(x)
$$
The for any $t\in [0,1]$, $h_t: \bR^n\to \bR^n$ descends to a smooth map $h_t:\bT^n\to \bT^n$ such that $h_t(x_0)=x_0$. By perturbing $h$ we can make
all $h_t$ diffeomorphisms without changing $h_0 = id$ and $h_1 =\phi$. So $\phi\in\Diff(\bT^n)$.
\end{proof}

Now let us consider the symplectic torus $\left(\bT^{2g}, \omega_\Omega = \sum_{j,k=1}^g \Omega_{jk} dr_j\wedge d\theta_k\right)$ where $(\Omega_{jk})$ is a real symmetric positive definite $g\times g$ matrix. 
Suppose that $\phi\in \Symp(\bT^{2g},\omega_\Omega)$ is a symplectomorphism. Then
$$
\phi^* : H^1(\bT^{2g};\bZ)\to H^1(\bT^{2g};\bZ) 
$$
is given by 
$$
\phi^*[dr_j] =  \sum_{k=1}^g A_{jk} [dr_k] + \sum_{k=1}^g B_{jk} [d\theta_k],\quad
\phi^*[d\theta_j]= \sum_{k=1}^g C_{jk} [dr_k] + \sum_{k=1}^g  D_{jk} [d\theta_k],
$$
where
$$
\begin{bmatrix} A & B\\ C& D \end{bmatrix} \in \GL(2g,\bZ).
$$
The condition $\phi^*\omega_\Omega=\omega_\Omega$ implies $\phi^*[\omega_\Omega]=[\omega_\Omega]\in H^2(\bT^{2g};\bR)$. Therefore, 
\begin{equation}
A^T\Omega D - C^T \Omega B = \Omega,\quad 
A^T\Omega C = C^T \Omega A, \quad B^T \Omega D = D^T \Omega B.
\end{equation}
Let $\Sp(\Omega, \bZ)$ be the subgroup of $\GL(2g, \bZ)$ satisfying the above equalities. In particular, $\Sp(J_g;\bZ)=\Sp(2g;\bZ)$. 
Note that
$$
\left\{  \begin{bmatrix} aI_g & bI_g\\ cI_g & dI_g \end{bmatrix} \Big|\   ad-bc=1 \right\} \cong \mathrm{SL}(2,\bZ)
$$
is contained in $\Sp(\Omega;\bZ)$ for any $ \Omega \in \cH_g^{\trop}$. 
Given any
$$
M= \begin{bmatrix} A & B\\ C& D \end{bmatrix} \in \Sp(\Omega, \bZ)
$$
there is a unique linear isomorphism
$\tilde{\phi}:\bR^{2g}\to \bR^{2g}$ such that 
$$
\tilde{\phi}^*dr_j  =  \sum_{k=1}^g A_{jk} dr_k + \sum_{k=1}^g B_{jk} d\theta_k,\quad
\tilde{\phi}^*d\theta_j = \sum_{k=1}^g C_{jk}  dr_k + \sum_{k=1}^g D_{jk} d\theta_k. 
$$
Then $\tilde{\phi}:\bR^{2g}\to\bR^{2g}$ descends to a symplectomorphism $\phi: (\bT^{2g},\omega)\to (\bT^{2g},\omega)$ such 
that $\phi_* = M \in \Sp(\Omega;\bZ) \subset \GL(2g;\bZ)$. Therefore, we have a surjective group homomorphism
$$
\Symp(\bT^{2g},\omega_\Omega) \to \Sp(\Omega;\bZ)
$$
and the following proposition.
\begin{proposition}
We have the following short exact sequence of groups.
\begin{equation}
1\to \Symp_0(\bT^{2g},\omega_\Omega) \to \Symp(\bT^{2g},\omega_\Omega) \to \Sp(\Omega;\bZ) \to 1.
\end{equation}
\begin{equation}
1\to \Ham(\bT^{2g},\omega_\Omega)\to \Symp_0(\bT^{2g},\omega_\Omega) \to \bT^{2g}\to 1.
\end{equation}
\end{proposition}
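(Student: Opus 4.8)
The plan is to treat the two short exact sequences separately; in each case the only substantive issue is recognizing the relevant normal subgroup as the kernel of an explicit homomorphism. For the first sequence, the preceding discussion already supplies two of the three ingredients. The assignment $\phi\mapsto\phi_*$ on $H_1(\bT^{2g};\bZ)$ is a group homomorphism $\Symp(\bT^{2g},\omega_\Omega)\to\GL(2g,\bZ)$ by functoriality of homology; the condition $\phi^*\omega_\Omega=\omega_\Omega$ forces $\phi^*[\omega_\Omega]=[\omega_\Omega]$ in $H^2(\bT^{2g};\bR)$, which is precisely the system of matrix identities defining $\Sp(\Omega,\bZ)$, so the image lands in $\Sp(\Omega,\bZ)$; and the explicit linear models $\tilde\phi$ constructed above show the map is surjective. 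It remains to identify the kernel with $\Symp_0$. One inclusion is immediate: a symplectomorphism isotopic to the identity through symplectomorphisms is in particular smoothly isotopic to the identity, hence induces the identity on $H_1$, so $\Symp_0\subseteq\ker$.

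The reverse inclusion $\ker\subseteq\Symp_0$ is the heart of the matter. First, any $\phi\in\ker$ acts trivially on homology, so by the preceding Lemma (applied with $n=2g$) it is already smoothly isotopic to the identity, i.e. $\ker\subseteq\Symp\cap\Diff_0(\bT^{2g})$. To upgrade this to a \emph{symplectic} isotopy, I would reduce to a standard linear symplectic torus: a real linear change of coordinates $S\in\GL(2g,\bR)$ with $S^*\omega_\Omega=\omega_0$ (Darboux) conjugates $\Symp(\bT^{2g},\omega_\Omega)$ isomorphically onto $\Symp(\bR^{2g}/S\bZ^{2g},\omega_0)$ and carries $\Sp(\Omega,\bZ)$ onto the $\omega_0$-symplectic automorphisms of the lattice $S\bZ^{2g}$. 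For a linear symplectic torus the identity component of the symplectomorphism group coincides with the kernel of the action on $H_1$, which is the classical identification $\pi_0\Symp\cong(\text{integral symplectic lattice automorphisms})$ recorded in the literature on symplectomorphism groups of tori (e.g. McDuff--Salamon). I would flag \textbf{this classical input as the main obstacle}: a naive Moser argument applied to the cohomologous forms $\omega_t=\psi_t^*\omega_\Omega$ along a smooth isotopy only shows that $\phi$ is symplectically isotopic to the time-one Moser map $\rho_1$, and since $\rho_1$ again acts trivially on $H_1$ it again lies in $\ker$; thus Moser alone is circular, and genuine information about the homotopy type of $\Symp(\bT^{2g},\omega_\Omega)$ is needed to close the gap.

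For the second sequence I would use the flux homomorphism $\mathrm{Flux}\colon\Symp_0(\bT^{2g},\omega_\Omega)\to H^1(\bT^{2g};\bR)/\Gamma$, where $\Gamma$ is the flux group, i.e. the image of $\pi_1(\Symp_0)$ under flux. Exactness at $\Symp_0$ is Banyaga's theorem that $\ker(\mathrm{Flux})=\Ham(\bT^{2g},\omega_\Omega)$, valid for any closed symplectic manifold. For surjectivity and the identification of the target with $\bT^{2g}$, I would use translations: the path $t\mapsto T_{tv}$ lies in $\Symp_0$ with constant generating vector field $v$, so it has flux $[\iota_v\omega_\Omega]$, and nondegeneracy of $\omega_\Omega$ makes $v\mapsto[\iota_v\omega_\Omega]$ an isomorphism $\bR^{2g}\xrightarrow{\cong}H^1(\bT^{2g};\bR)$. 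Under this identification $\Gamma$ is the image of the period lattice $\bZ^{2g}$ (the fluxes of the loops $T_{tv}$, $v\in\bZ^{2g}$), whence $H^1(\bT^{2g};\bR)/\Gamma\cong\bR^{2g}/\bZ^{2g}=\bT^{2g}$ and the translations already surject onto it. The one remaining point is that $\Gamma$ is \emph{exactly} this lattice and not larger: discreteness of $\Gamma$ is Ono's flux conjecture, and for the torus the explicit computation pins $\Gamma$ down to the period lattice. Assembling exactness at all three terms then yields $1\to\Ham(\bT^{2g},\omega_\Omega)\to\Symp_0(\bT^{2g},\omega_\Omega)\to\bT^{2g}\to1$.
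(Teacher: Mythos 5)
Your reconstruction of what the paper actually proves is faithful: the paper's entire argument consists of defining the map $\Symp(\bT^{2g},\omega_\Omega)\to\GL(2g,\bZ)$ via the action on $H^1$, checking that $\phi^*\omega_\Omega=\omega_\Omega$ forces the image into $\Sp(\Omega;\bZ)$, and exhibiting linear models to get surjectivity — after which the proposition is simply asserted, with no argument that the kernel equals $\Symp_0(\bT^{2g},\omega_\Omega)$ and no proof of the second sequence. Your proposal reproduces the surjectivity argument exactly, and your diagnosis of where the real content lies is correct: the inclusion $\ker\subseteq\Symp_0$ is the heart of the matter, and the Moser trick is genuinely circular there, since it only replaces $\phi$ by another symplectomorphism in the kernel.

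The gap is the ``classical input'' you use to close the first sequence. The statement that for a linear symplectic torus $\Symp_0$ coincides with the kernel of the action on $H_1$ — equivalently $\Symp(\bT^{2g},\omega_\Omega)\cap\Diff_0(\bT^{2g})=\Symp_0(\bT^{2g},\omega_\Omega)$ — is \emph{not} a theorem recorded in McDuff--Salamon for $g\geq 2$: it is precisely the symplectic isotopy problem for tori, which their book lists among the open problems; only the case $g=1$ is classical. So the citation you lean on does not exist, and the step you yourself flag as the main obstacle remains unproved (the paper shares this gap, since it offers nothing at this step; note also that the preceding Lemma you invoke, $\pi_0\Diff(\bT^n)\cong\GL(n,\bZ)$, is itself false for $n\geq 5$ by pseudo-isotopy/smoothing theory, so even the reduction to $\Symp\cap\Diff_0$ is delicate in high dimensions). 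Your route for the second sequence (Banyaga's theorem $\Ham=\ker(\mathrm{Flux})$, translations for surjectivity) is the right one, but the identification of the flux group is not finished by ``$\Gamma$ contains the period lattice and $\Gamma$ is discrete'': a discrete subgroup of $H^1(\bT^{2g};\bR)\cong\bR^{2g}$ containing a full-rank lattice may contain it with finite index, which would change the quotient torus. The clean fix avoids Ono entirely: since $\bT^{2g}$ is aspherical with abelian fundamental group, the identity component of the space of self-maps of $\bT^{2g}$ is homotopy equivalent to $\bT^{2g}$, so every loop in $\Symp_0$ is homotopic, as a loop of maps, to a loop of translations; the flux of a loop paired with $\gamma\in H_1$ is the $\omega_\Omega$-area of the trace $2$-cycle and depends only on this homotopy class, whence $\Gamma=\{\iota_v\omega_\Omega : v\in\bZ^{2g}\}$ exactly and $H^1(\bT^{2g};\bR)/\Gamma\cong\bT^{2g}$.
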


\section{Category of coherent sheaves on \texorpdfstring{$V_\tau$}{V tau}}

\subsection{Line bundles and their moduli} \label{sec:picard} 

In this subsection (Section \ref{sec:picard})
and the next two subsections (Section \ref{sec:poincare} and Section \ref{sec:shift}), we fix $\tau\in \cH_g$ and let $\cL$ denote the holomorphic line bundle $\cL_\tau$ over $V_\tau$. 

Let $\Pic(V_\tau)$ be the Picard group of $V_\tau$ which parametrizes isomorphism classes of line bundles on $V_\tau$. 
The first Chern class defines a map
\begin{equation}\label{eqn:chern}
c_1: \Pic(V_\tau) \lra H^2(V_\tau;\bZ)\cong \bZ^{2g}  \quad L\mapsto c_1(L)
\end{equation} 
which is a group homomorphism: $c_1(L_1\otimes L_2) = c_1(L_1) + c_1(L_2)$. In particular, for any $\sk\in \bZ$, $c_1(\cL^{\otimes \sk}) =\sk \omega_{V_\tau}$. 

The image of \eqref{eqn:chern}  is  $\NS(V_\tau)$, the N\'{e}ron-Severi group of $V_\tau$. We have
$$
\NS(V_\tau) = H^2(V_\tau;\bZ)\cap H^{1,1}(V_\tau).
$$ 
The rank  $\rho(V_\tau)$ of $\NS(V_\tau)$ is called the Picard number of $V_\tau$. 
For generic $\tau \in \cH_g$, $\rho(V_\tau)=1$ and  $\NS(V_\tau) = \bZ \omega_{V_\tau}$.  

Let $\Pic^0(V_\tau)$ be the kernel of \eqref{eqn:chern}. Then $\Pic^0(V_\tau)$  is isomorphic to the dual abelian variety of $V_\tau$.  We have a short exact sequence of abelian groups
$$
1 \to \Pic^0(V_\tau) \to \Pic(V_\tau) \to \NS(V_\tau) \to 0. 
$$

The abelian variety $V_\tau\cong V_\tau^+ = \bC^g/(\bZ^g+\tau \bZ^g)$ is an additive group. Given any
$z  \in \bC^g$, let $[z] \in V_\tau^+$ denote its equivalence class. Let
\begin{equation}\label{eq: shift}
\top_{[z]}: V^+_\tau\lra V^+_\tau, \quad [u]\mapsto [u+z] = [u]+[z]
\end{equation}
be translation by $[z]$.  Composing with the exponential function, we can define 
\begin{equation} \label{eq: Lu}
\phi_{\cL}: V_\tau \to \Pic^0(V_\tau),\quad  [z] \mapsto  \bbL_{[z]}:= \top_{[z]}^*\cL\otimes \cL^{-1}. 
\end{equation}
The condition that $\cL$ is a {\em principal} polarization on $V_\tau$, that is, $c_1(\cL)=[\omega_{V_\tau}]$, implies that $\phi_{\cL}$ is an isomorphism of abelian varieties. In particular  
$$
\bbL_{ [z]+[z'] } = \bbL_{[z]}\otimes \bbL_{[z']}. 
$$
For any $z\in \bC^g$ and $\sk \in \bZ$, 
\begin{equation}\label{semi-homogeneous}
\top_{[z]}^*(\cL^{\otimes \sk}) = (\top_{[z]}^* \cL)^{\otimes \sk} =  \left(\cL\otimes\bbL_{[z]}\right)^{\otimes \sk} =  \cL^{\otimes \sk}\otimes \bbL_{[\sk z]}. 
\end{equation}

\subsection{The Poincar\'{e} line bundle \texorpdfstring{$\cP$}{P} and universal line bundles \texorpdfstring{$\cP_{\sk}$}{P k}} \label{sec:poincare}
Let $\cP \lra V_\tau \times \Pic^0(V_\tau)$ be the Poincar\'{e} line bundle, characterized by 
\begin{enumerate}[(i)]
\item $\cP\big|_{V_\tau \times \{ L\} } =L$ for every $L\in \Pic^0(V_\tau)$, and 
\item  $\cP\big|_{ \{ [0] \} \times \Pic^0(V_\tau) }$ is trivial and equals $\cO_{V_\tau}$. 
\end{enumerate}
Then 
$$
\left(\mathrm{id}_{V_\tau} \times \phi_{\cL}\right)^* \cP \cong  m^*\cL \otimes p_1^* \cL^{-1},  
$$
where $p_1: V_\tau\times V_\tau \to V_\tau$  is the projection to the first factor,  and $m:V_\tau\times V_\tau \to V_\tau$ is
the addition: $m([z],[z']) = [z]+[z'] =[z+z']$. 

Let $\Pic^{\bZ \omega_{V_\tau}}(V_\tau)$ denote the subgroup of $\Pic(V_\tau)$ generated by 
$\Pic^0(V_\tau)$ and $\cL$; for generic $\tau$,  $\Pic^{\bZ \omega_{V_\tau}} (V_\tau)=\Pic(V_\tau)$. 
If $L$ is in  $\Pic^{\bZ\omega_{V_\tau}}(V_\tau)$ then  there exist a unique $\sk\in \bZ$ and a unique $[z]\in V_\tau$ such that $L\cong \cL^{\otimes \sk}\otimes \bbL_{[z]}$.  
Write $z = d+ \tau c $, where $d, c \in \bR^g$.

Given any $\sk\in \bZ$, let $\Pic^{\sk}(V_\tau)\subset \Pic^{\bZ\omega_\tau}(V_\tau)$ be the connected component corresponding to line bundles $L$ with 
$c_1(L) = \sk \omega_{V_\tau}$. Tensoring by 
$\cL^{\sk}$ defines a map $t_{\sk}: \Pic^0(V_\tau)\to \Pic^{\sk}(V_\tau)$ which is an isomorphism of smooth varieties. Let $p_1: V_\tau\times \Pic^{\sk}(V_\tau)\to V_\tau$ be the projection to the first factor, and let
$\cP_{\sk}$ be the line bundle on $V_\tau\times \Pic^{\sk}(V_\tau)$ defined by 
\begin{equation}\label{eqn:universal-line-bundle}
\cP_{\sk}:=  (t_{\sk}^{-1})^*\cP \otimes p_1^*\cL^{\otimes \sk}. 
\end{equation}
Then $\cP_{\sk}\big|_{V_\tau \times \{ L\} } =L$ for every $L\in \Pic^{\sk}(V_\tau)$, and $\cP\big|_{ \{ [0] \} \times \Pic^0(V_\tau) }$ is trivial, so 
$\cP_{\sk}\to V_\tau\times \Pic^{\sk}(V_\tau)$ is a universal line bundle. The composition 
$$
\phi_{\cL}^{\sk}:= t_{\sk}\circ \phi_\cL: V_\tau \to \Pic^{\sk}(V_\tau), \quad [z]\mapsto \cL^{\otimes \sk}\otimes \bbL_{[z]}
$$
is an isomorphism of smooth varieties, and
$$
(\mathrm{id}_{V_\tau}\times \phi_{\cL}^{\sk})^*\cP_{\sk} \cong m^*\cL \otimes p_1^* \cL^{\sk-1}.
$$

\subsection{Shifted theta functions and cohomology of line bundles} \label{sec:shift}
In this subsection, we introduce the shifted theta functions corresponding to sections of line bundles on $V_\tau$. 

 A function $f: (\bC^*)^g \to \bC$ defines a section of the line bundle $\cL^{\otimes \sk} \otimes \bbL_{[ d+ \tau c ]}$  on $V_\tau = (\bC^*)^g/\Gamma_\tau$ if and only if
 it satisfies the following quasi-periodicity property: 
 \begin{equation}\label{eqn:shift} 
f(x e^{2\pi \bi ( a + \tau b)}) =    e^{- \sk \bi\pi b^T \tau b} x^{-\sk  b} e^{2\pi \bi (c^T a - d^T b)} f(x) \quad \forall a,b\in \bZ^g. 
\end{equation} 

For $c, d\in \bR^g$, shifted theta functions are defined by 
\begin{equation}
\begin{split}
 \vartheta[ c,  d](\tau,x=e^{2\pi \bi z}) & :=e^{\bi\pi c^T \tau c+2\pi \bi  c^T (z+d)}\vartheta(\tau, e^{2\pi \bi(z+ d + \tau c)})\\
 & \ = \sum_{n\in \bZ^g} e^{\pi \bi  (n+c)^T \tau (n+ c) + 2\pi \bi (n+ c)^T(z+ d)}\\
 & \ =\sum_{n\in \bZ^g} e^{\pi \bi (n+c)^T \tau (n+ c) + 2\pi \bi(n+ c)^T d} x^{n+ c}.\\
\end{split}
 \end{equation}
An integral change to the shifts changes the shifted theta functions in the following way 
\begin{equation} \label{eqn:integral-shift} 
\vartheta[ c+c', d+d'](\tau, x)=e^{2\pi \bi c^T d'}\vartheta[c,d](\tau, x), \quad \text{ for } c, d\in \bR^g, \ c', d'\in \bZ^g.
\end{equation}
Note that it preserves the theta function when $c^T d'$ is an integer.  It is straightforward to check that 
$$
\vartheta[c, d](\tau, xe^{2\pi \bi(a+\tau b)})
=e^{2\pi \bi \left(c^T a- d^T b\right)}e^{-\pi \bi b^T \tau b} x^{-b}\vartheta[c, d](\tau, x),\quad \text{ for } c,d \in \bR^g, \  a,b\in \bZ^g.
$$
Therefore, the following function defines a section of $\cL\otimes \bbL_{[d+ \tau c]} = \top_{[d+ \tau c]}^*\cL$: 
$$
(\bC^*)^g\to \bC, \quad \vartheta[c,d](\tau,x).
$$
 
In particular, given a positive integer $\sk$,  $c, d\in \bR^g$, and $\lambda \in I_{g,\sk}:= \{0,1,\ldots,\sk-1\}^g$, we define
\begin{equation}\label{eq: shifted theta section}
f_{\tau, \sk, d+ \tau c, \lambda}: (\bC^*)^g \to \bC,\quad
x\mapsto
f_{\tau,\sk, d+ \tau c,\lambda}(x) := \vartheta\Big[\frac{c+ \lambda}{\sk}, d \Big](\sk\tau, x^\sk).
\end{equation}
Then  for $a,b\in \bZ^g$, 
\begin{eqnarray*} 
f_{\tau,\sk, d+ \tau c,\lambda}(x e^{2\pi \bi(a+\tau b)}) &=&  \vartheta\Big[\frac{c+ \lambda}{\sk}, d \Big](\sk\tau, x^\sk e^{2\pi \bi (\sk a + (\sk \tau) b)}) \\
&=&  e^{2\pi \bi \left(c^T a- d^T b\right)}e^{- \pi \bi b^T \sk \tau b} x^{-\sk b} \vartheta\Big[\frac{c+ \lambda}{\sk}, d\Big](\sk\tau, x^\sk) \\
&=&  e^{2\pi \bi \left(c^T a- d^T b\right)}e^{- \pi \bi b^T \sk \tau b} x^{-\sk b} f_{\tau,\sk, d+ \tau c,\lambda}(x),
\end{eqnarray*} 
which is exactly the quasi-periodic property \eqref{eqn:shift}. Therefore, $f_{\tau,\sk, d+ \tau c,\lambda}$  defines a section 
$$
s_{\tau,\sk, d+\tau c,\lambda} \in H^0\left(V_\tau, \cL^{\sk}\otimes \bbL_{[ d+ \tau c]}\right).
$$

Indeed, we have the following theorem. 
\begin{theorem}\label{thm: basis sections}  
For any positive integer $\sk$ and any vector $z \in \bC^g$,  
$$
\big\{ s_{\tau,\sk, z,\lambda}: \lambda \in  I_{g,\sk}= \{0,1,\ldots,\sk-1\}^g \big\}
$$
is a basis of $H^0(V_\tau,\cL^{\otimes \sk}\otimes \bbL_{[z]} ) \cong \bC^{\sk^g}$, and
$$
H^w \left(V_\tau,\cL^{\otimes \sk}\otimes \bbL_{[z]}\right) =0 \quad\text{if } w>0.
$$
\end{theorem}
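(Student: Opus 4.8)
The plan is to handle the two assertions separately: first dispatch the higher cohomology vanishing using positivity together with triviality of the canonical bundle, and then pin down $H^0$ by a direct Fourier analysis that simultaneously computes its dimension and exhibits the listed sections as a basis. For the vanishing, I would begin by noting that since $\bbL_{[z]}\in\Pic^0(V_\tau)$ has trivial first Chern class,
\[
c_1\big(\cL^{\otimes\sk}\otimes\bbL_{[z]}\big)=\sk\,c_1(\cL)+c_1(\bbL_{[z]})=\sk\,\omega_{V_\tau}.
\]
By \eqref{eqn:omega-dv} the form $\omega_{V_\tau}$ is a positive $(1,1)$-form, so for every positive integer $\sk$ the line bundle $\cL^{\otimes\sk}\otimes\bbL_{[z]}$ is positive, hence ample. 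Since $V_\tau$ is an abelian variety its canonical bundle is trivial (the nowhere-vanishing holomorphic volume form $\Phi$ of Section \ref{sec: complex structure} is a global frame), so Kodaira--Nakano vanishing gives $H^w\big(V_\tau,K_{V_\tau}\otimes\cL^{\otimes\sk}\otimes\bbL_{[z]}\big)=H^w\big(V_\tau,\cL^{\otimes\sk}\otimes\bbL_{[z]}\big)=0$ for all $w>0$.

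For $H^0$, write $z=d+\tau c$ with $c,d\in\bR^g$ (a unique decomposition, since $\Omega=\Imag\tau$ is invertible) and pass from $(\bC^*)^g$ to its universal cover $\bC^g$ via $x=e^{2\pi\bi z}$, so that a section is a holomorphic function $\tilde f$ on $\bC^g$. Specializing the quasi-periodicity \eqref{eqn:shift} to $b=0$ yields $\tilde f(z+a)=e^{2\pi\bi c^T a}\tilde f(z)$ for $a\in\bZ^g$; hence $e^{-2\pi\bi c^T z}\tilde f(z)$ is $\bZ^g$-periodic and $\tilde f$ admits an expansion $\tilde f(z)=\sum_{m\in\bZ^g}a_m\,e^{2\pi\bi(m+c)^T z}$. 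Feeding this into the $a=0$ case of \eqref{eqn:shift} and matching the coefficient of each character $e^{2\pi\bi(m+c)^T z}$ produces the recursion
\[
a_{m+\sk b}=e^{\sk\bi\pi b^T\tau b+2\pi\bi(m+c)^T\tau b+2\pi\bi d^T b}\,a_m,\qquad b\in\bZ^g.
\]
Thus the coefficients on each coset of $\sk\bZ^g$ in $\bZ^g$ are determined by a single free value; because $\Omega>0$, the dominant factor $e^{-\sk\pi b^T\Omega b}$ in the modulus forces Gaussian decay, so the resulting series converges to an entire function. Therefore $\dim_\bC H^0\big(V_\tau,\cL^{\otimes\sk}\otimes\bbL_{[z]}\big)$ equals the number of cosets $|\bZ^g/\sk\bZ^g|=\sk^g$, consistent with Riemann--Roch ($\chi=\sk^g$) together with the vanishing above.

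It remains to identify the explicit sections. Expanding \eqref{eq: shifted theta section} gives $s_{\tau,\sk,z,\lambda}\leftrightarrow\sum_{n\in\bZ^g}(\ast)\,e^{2\pi\bi(\sk n+\lambda+c)^T z}$ with every coefficient $(\ast)$ a nonzero exponential, so the Fourier support of $s_{\tau,\sk,z,\lambda}$ is precisely the coset $\lambda+\sk\bZ^g$. As $\lambda$ ranges over $I_{g,\sk}=\{0,\dots,\sk-1\}^g$ these supports are pairwise disjoint and exhaust $\bZ^g$; hence the $s_{\tau,\sk,z,\lambda}$ are linearly independent, and each is precisely the solution of the recursion whose free value sits on the coset leader $\lambda$. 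Consequently every section is a unique $\bC$-linear combination of them, so $\{s_{\tau,\sk,z,\lambda}:\lambda\in I_{g,\sk}\}$ is a basis of the $\sk^g$-dimensional space $H^0\big(V_\tau,\cL^{\otimes\sk}\otimes\bbL_{[z]}\big)$.

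The bookkeeping of the recursion and the disjoint-support argument are routine. The one genuine analytic point, and the step I would be most careful with, is the convergence claim: solving the recursion freely on coset leaders must yield coefficients with the Gaussian decay dictated by $\Omega>0$, so that the formal solution is an honest holomorphic section rather than merely a formal Fourier series. This is exactly the classical convergence of the theta series and may be quoted from \cite{MumfordTheta}.
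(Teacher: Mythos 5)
Your proof is correct, but it cannot be compared line-by-line with an argument in the paper, because the paper does not actually prove this theorem: immediately before the statement it only verifies that each $f_{\tau,\sk,d+\tau c,\lambda}$ satisfies the quasi-periodicity \eqref{eqn:shift} and hence defines a section, and then quotes the dimension count and the higher vanishing as classical facts, citing \cite{MumfordTheta} and \cite{cx_abel}. What you have written is essentially the classical argument from those references, so in effect you have made this step of the paper self-contained. Both halves are sound: for $w>0$, the class $c_1(\cL^{\otimes\sk}\otimes\bbL_{[z]})=\sk[\omega_{V_\tau}]$ has the positive $(1,1)$-representative $\sk\,\omega_{V_\tau}$ of \eqref{eqn:omega-dv}, so on the compact K\"ahler manifold $V_\tau$ the bundle admits a positively curved metric, and Kodaira vanishing together with triviality of $K_{V_\tau}$ (the frame $\Phi$ of Section \ref{sec: complex structure}) gives the vanishing; for $H^0$, the recursion on Fourier coefficients shows that a section is determined by its coefficients on coset leaders of $\bZ^g/\sk\bZ^g$, giving $\dim\le\sk^g$, while the sections $s_{\tau,\sk,z,\lambda}$ (whose existence the paper's preceding computation establishes) have Fourier support exactly on the pairwise disjoint cosets $\lambda+\sk\bZ^g$ with nonzero coefficients, giving linear independence and hence equality and the basis claim. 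Two minor points to tighten: first, the consistency of your recursion $a_{m+\sk b}=e^{\sk\bi\pi b^T\tau b+2\pi\bi(m+c)^T\tau b+2\pi\bi d^T b}a_m$ under composing shifts $b_1,b_2\mapsto b_1+b_2$ rests on the symmetry of $\tau$ and deserves a line, although it is not load-bearing since you only use injectivity for the upper bound and the explicit theta sections for the lower bound; second, the convergence issue you flag at the end is needed only for the existence of the theta sections themselves, which the paper (and \cite{MumfordTheta}) already supplies, so it does not constitute a gap in your argument.
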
 

By Serre duality and triviality of the canonical line bundle of 
$V_\tau$, there is a perfect pairing 
$$
H^w(V_\tau, L)\otimes H^{g-w}(V_\tau, L^{-1}) \to H^g(V_\tau, \cO_{V_\tau})
\cong \bC,  
$$
so Theorem \ref{thm: basis sections} determines the cohomology of any line bundle  $L$ such that $c_1(L) = \sk \omega$ for some non-zero integer $\sk$.

If $c_1(L)=0$ then there is a unique $[z]\in V_\tau$ such that $L=\bbL_{[z]}$, and the cohomology of $L$ is given by the following theorem. 
\begin{theorem} \label{thm: picard zero}
$$
H^w(V_\tau,\bbL_{[0]}) = H^w(V_\tau,\cO_{V_\tau}) \cong \Lambda^w \bC^g
= \bC^{\binom{g}{w}}. 
$$
If $[z]\neq [0]\in V_\tau$ then 
$$
H^w(V_\tau,\bbL_{[z]})=0 \quad \text{for all }w.
$$
\end{theorem}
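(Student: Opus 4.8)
The plan is to recognize $\bbL_{[z]}$, for $[z]=[d+\tau c]$ with $c,d\in\bR^g$, as the holomorphic line bundle underlying a \emph{flat unitary} line bundle. Reading the quasi-periodicity \eqref{eqn:shift} with $\sk=0$ shows that a section of $\bbL_{[z]}$ is a function $f$ on $\bC^g$ satisfying $f(w+a+\tau b)=e^{2\pi\bi(c^Ta-d^Tb)}f(w)$ for $a,b\in\bZ^g$; the automorphy factors are constant of modulus one, so $\bbL_{[z]}$ carries a flat Hermitian connection and corresponds to the unitary character $\rho=\rho_{c,d}\colon H_1(V_\tau;\bZ)=\bZ^{2g}\to U(1)$, $\rho(a,b)=e^{2\pi\bi(c^Ta-d^Tb)}$. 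Since $\phi_\cL$ is an isomorphism, $\rho$ is trivial precisely when $[z]=[0]$, i.e. when $\bbL_{[z]}=\cO_{V_\tau}$.

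For $[z]=[0]$ the assertion is the computation of the Hodge numbers of the complex torus: the Dolbeault algebra of $V_\tau$ is the bigraded exterior algebra on $H^{1,0}\oplus H^{0,1}$, so $H^w(V_\tau,\cO_{V_\tau})=H^{0,w}(V_\tau)=\Lambda^w\overline{H^0(V_\tau,\Omega^1)}^\vee\cong\Lambda^w\bC^g$, of dimension $\binom{g}{w}$. I would exhibit this via the translation-invariant (harmonic) representatives $d\bar z_{i_1}\wedge\cdots\wedge d\bar z_{i_w}$. For $[z]\neq[0]$ the point is that $\rho$ is a nontrivial unitary character. I would invoke the Hodge decomposition for a unitary local system on the compact Kähler manifold $V_\tau$: writing $L_\rho$ for the associated rank-one local system, one has $H^k(V_\tau,L_\rho)\otimes\bC\cong\bigoplus_{p+q=k}H^q(V_\tau,\Omega^p_{V_\tau}\otimes\bbL_{[z]})$, so each sheaf cohomology group $H^w(V_\tau,\bbL_{[z]})$ (the $p=0$ summand) is a direct summand of the Betti cohomology $H^w(V_\tau,L_\rho)$. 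As $V_\tau$ is a $K(\bZ^{2g},1)$, the latter is the group cohomology $H^w(\bZ^{2g},\bC_\rho)$, which I would compute with the Koszul complex $(\Lambda^\bullet\bC^{2g},\delta)$ whose differential is contraction against the vector with entries $\rho(e_i)-1$; nontriviality of $\rho$ makes some $\rho(e_i)-1\neq0$, so this complex is acyclic and $H^*(V_\tau,L_\rho)=0$. Hence $H^w(V_\tau,\bbL_{[z]})=0$ for all $w$.

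The main obstacle is the vanishing in the middle degrees $0<w<g$. The triviality of the canonical bundle together with Serre duality only relates $H^w(\bbL_{[z]})$ to $H^{g-w}(\bbL_{[-z]})$, and an elementary Fourier argument kills only $H^0$ (no nonzero holomorphic section with nontrivial unitary periods) and, by duality, $H^g$; neither addresses the interior. The Hodge-theoretic reduction to the twisted Betti cohomology of the torus is what forces simultaneous vanishing in all degrees. Alternatively, I would argue directly with harmonic theory on the flat Kähler torus: after the unitary trivialization $g(s,t)=e^{-2\pi\bi(c^Ts-d^Tt)}f(s+\tau t)$, which turns sections into honest functions on $(S^1)^{2g}$, the $\bar\partial$-Laplacian on $\bbL_{[z]}$-valued $(0,w)$-forms becomes a constant-coefficient operator whose spectrum consists of the squared distances from the $\rho$-shifted dual lattice to the origin; nontriviality of $\rho$ gives a positive spectral gap, so there are no harmonic $(0,w)$-forms and $H^{0,w}(V_\tau,\bbL_{[z]})=0$. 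This harmonic route stays closest to the explicit theta-function and flat-metric methods already used for Theorem \ref{thm: basis sections}.
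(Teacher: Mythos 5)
Your proposal is correct, but there is nothing in the paper to compare it against: the paper states Theorem \ref{thm: picard zero} without proof, as a standard fact about abelian varieties taken from the references \cite{MumfordTheta} and \cite{cx_abel} cited at the start of Section \ref{sec:theta}, and then only uses it (together with Theorem \ref{thm: basis sections}, Serre duality, and \eqref{eqn:Ext}) to assemble Theorem \ref{thm: Ext on V}. What you have written is essentially the standard textbook argument that the paper is implicitly outsourcing. Your reduction is the right one: reading \eqref{eqn:shift} at $\sk=0$ exhibits $\bbL_{[z]}$, $[z]=[d+\tau c]$, as the flat unitary bundle of the character $\rho_{c,d}(a,b)=e^{2\pi\bi(c^Ta-d^Tb)}$, and $\rho_{c,d}$ is nontrivial exactly when $(c,d)\notin\bZ^{2g}$, i.e. when $[z]\neq[0]$ (your appeal to injectivity of $\phi_{\cL}$ is a slightly roundabout but valid way to close this loop: if $\rho_{c,d}$ were trivial, $\bbL_{[z]}$ would be trivial, forcing $[z]=[0]$). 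The case $[z]=[0]$ is the Hodge-theoretic computation with invariant representatives $d\bar z_{i_1}\wedge\cdots\wedge d\bar z_{i_w}$, which is correct. For $[z]\neq[0]$ both of your routes are sound: the unitary Hodge decomposition realizing $H^w(V_\tau,\bbL_{[z]})$ as a summand of $H^w(V_\tau,L_\rho)\cong H^w(\bZ^{2g},\bC_\rho)$, which vanishes since the Koszul complex for a nontrivial character is acyclic; and, independently, the spectral-gap argument, where your gauge transformation $g(s,t)=e^{-2\pi\bi(c^Ts-d^Tt)}f(s+\tau t)$ does produce honestly $\bZ^{2g}$-periodic functions and the twisted flat Laplacian has spectrum bounded away from zero because the $\rho$-shifted dual lattice misses the origin. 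You are also right about where the difficulty sits: Fourier expansion kills $H^0$, Serre duality then kills $H^g$, but the simultaneous vanishing in degrees $0<w<g$ genuinely requires one of these global inputs. Only cosmetic quibbles remain: the tensoring $H^k(V_\tau,L_\rho)\otimes\bC$ is redundant since $L_\rho$ is already a complex local system, and whether the Koszul differential is contraction by the vector with entries $\rho(e_i)-1$ or wedging by the corresponding covector is a matter of dualization convention — either complex is acyclic once that vector is nonzero.
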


For any two line bundles $L_1, L_2$ on $V$, 
\begin{equation}\label{eqn:Ext}
\Ext^w(L_1,L_2) = H^w(V_\tau, L_2\otimes L_1^{-1}). 
\end{equation} 
Combining Theorem \ref{thm: basis sections}, Theorem \ref{thm: picard zero}, Serre duality, and \eqref{eqn:Ext}, we obtain the following result.

\begin{theorem} \label{thm: Ext on V}
For integers $\sk, \sk'$ and any vectors $z,z'\in \bC^g$, 
\begin{eqnarray*}
&& \Ext^w(\cL^{\otimes \sk} \otimes\bbL_{[z]} ,\cL^{\otimes \sk'} \otimes \bbL_{[z']} ) = H^w(V_\tau,\cL^{ \otimes(\sk'-\sk)} \otimes \bbL_{[z'-z]})  \\
&= & \begin{cases}
\bC^{|\sk'-\sk|^g} & \textup{if $\sk' > \sk $ and $w=0$, or if $\sk' < \sk$ and $w=g$};\\
\bC^{\binom{g}{w}} & \textup{if $\sk= \sk' \in \bZ$ and $[z]=[z'] \in V_\tau$}; \\
0 & \textup{otherwise}. 
\end{cases}
\end{eqnarray*} 
\end{theorem}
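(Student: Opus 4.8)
The plan is to reduce the $\Ext$ computation to sheaf cohomology via \eqref{eqn:Ext}, simplify the resulting line bundle using the group law on $\Pic^0(V_\tau)$, and then split into three cases according to the sign of $\sk'-\sk$, invoking Theorem \ref{thm: basis sections} and Theorem \ref{thm: picard zero} together with Serre duality. First I would justify \eqref{eqn:Ext} itself. Since $L_1 := \cL^{\otimes \sk}\otimes \bbL_{[z]}$ and $L_2 := \cL^{\otimes \sk'}\otimes \bbL_{[z']}$ are line bundles, hence locally free of rank one, the sheaf-$\Ext$ terms $\mathcal{E}xt^{>0}(L_1, L_2)$ vanish and $\mathcal{H}om(L_1, L_2)\cong L_2\otimes L_1^{-1}$; the local-to-global $\Ext$ spectral sequence then collapses to give $\Ext^w(L_1, L_2)\cong H^w(V_\tau, L_2\otimes L_1^{-1})$. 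Next I would simplify $L_2\otimes L_1^{-1}$: by additivity of $c_1$ and the group-law identity $\bbL_{[z']}\otimes \bbL_{[z]}^{-1}=\bbL_{[z'-z]}$ from Section \ref{sec:picard} (a consequence of $\phi_{\cL}$ being a homomorphism), one obtains $L_2\otimes L_1^{-1}\cong \cL^{\otimes(\sk'-\sk)}\otimes \bbL_{[z'-z]}$. Set $m:=\sk'-\sk$.

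I would then run the case analysis on $m$. When $m>0$, the bundle $\cL^{\otimes m}\otimes\bbL_{[z'-z]}$ has $c_1=m\,\omega_{V_\tau}$ with $m$ a positive integer, so Theorem \ref{thm: basis sections} gives $H^0=\bC^{m^g}=\bC^{|m|^g}$ and $H^w=0$ for $w>0$. When $m<0$, I would apply the Serre-duality pairing recalled after Theorem \ref{thm: basis sections}: with $L:=\cL^{\otimes m}\otimes\bbL_{[z'-z]}$ we have $L^{-1}=\cL^{\otimes(-m)}\otimes\bbL_{[z-z']}$ and $-m>0$, so $H^w(V_\tau,L)$ is dual to $H^{g-w}(V_\tau,L^{-1})$, which by Theorem \ref{thm: basis sections} (applied to $-m$) vanishes unless $g-w=0$; this yields $H^g(V_\tau,L)\cong\bC^{(-m)^g}=\bC^{|m|^g}$ and $H^w=0$ for $w<g$. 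When $m=0$, the bundle is $\bbL_{[z'-z]}\in\Pic^0(V_\tau)$, and Theorem \ref{thm: picard zero} distinguishes the case $[z]=[z']$, giving $H^w=\bC^{\binom{g}{w}}$, from the case $[z]\neq[z']$, giving $H^w=0$ for all $w$. Assembling the three cases reproduces exactly the piecewise formula in the statement, with the ``otherwise'' clause absorbing $m>0,\ w\neq 0$, then $m<0,\ w\neq g$, and finally $\sk=\sk'$ with $[z]\neq[z']$.

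There is no deep obstacle here, since all the analytic input is already packaged in Theorem \ref{thm: basis sections} and Theorem \ref{thm: picard zero}, which I am assuming; the remaining work is bookkeeping. The two points that demand genuine care are: (i) keeping the Serre-duality degree shift straight, so that in the $m<0$ case the unique nonvanishing cohomology lands in degree $w=g$ rather than $w=0$, and matching the rank $(-m)^g$ with $|m|^g$; and (ii) confirming the identity $\bbL_{[z']}\otimes\bbL_{[z]}^{-1}=\bbL_{[z'-z]}$, which is what collapses the a priori dependence on the two classes $[z],[z']$ to the single class $[z'-z]$ and thereby makes the answer independent of $z,z'$ except through $[z'-z]$. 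Everything else follows immediately from the already-established cohomology of $\cL^{\otimes m}\otimes\bbL_{[z'-z]}$.
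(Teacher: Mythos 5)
Your proposal is correct and follows essentially the same route as the paper, which proves this theorem precisely by combining \eqref{eqn:Ext}, the group law $\bbL_{[z]+[z']}=\bbL_{[z]}\otimes\bbL_{[z']}$, Theorem \ref{thm: basis sections}, Theorem \ref{thm: picard zero}, and the Serre-duality pairing $H^w(V_\tau,L)\otimes H^{g-w}(V_\tau,L^{-1})\to H^g(V_\tau,\cO_{V_\tau})\cong\bC$. The only difference is that you spell out the bookkeeping (the local-to-global spectral sequence justifying \eqref{eqn:Ext}, and the explicit case analysis on $\sk'-\sk$) that the paper leaves implicit.
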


\begin{definition} \label{C-s-dual}
For any positive integers $\sk', \sk''$ and $z', z''\in \bC^g$, we have a map
$$
H^0(V_\tau, \cL^{\sk'}\otimes \bbL_{[z']})\times
H^0(V_\tau, \cL^{\sk''}\otimes \bbL_{[z'']}) \to 
H^0(V_\tau, \cL^{\sk'+\sk''} \otimes \bbL_{[z'+z'']})
$$
given by $(s',s'')\mapsto s'\otimes s''$. We fix $\tau \in \cH_g$ and 
let $s_{\sk,z,\lambda} = s_{\tau,\sk,z,\lambda}$. Define
$$
C^{(\sk'+\sk'', z'+z'',\lambda)}_{(\sk',z',\lambda'), (\sk'',z'',\lambda'')} \in \bC
$$
by 
$$
s_{\sk',z',\lambda'}\otimes s_{\sk'',z'',\lambda''} = \sum_{\lambda\in I_{g,\sk'+\sk''}}
C_{(\sk',z',\lambda'), (\sk'',z'',\lambda'')}^{(\sk'+\sk'', z'+z'',\lambda)} s_{\sk'+\sk'',z'+z'',\lambda}.
$$

For any positive integer $\sk$ and any $z\in \bC^g$, the map
\begin{equation}\label{eq:B-model Serre}
H^0(V_\tau, \cL^{\sk}\otimes \bbL_{[z]})
\times H^g(V_\tau, \cL^{-\sk}\otimes \bbL_{[-z]})
\to H^g(V_\tau, \cO_{V_\tau})\cong H^{0,g}(V_\tau) 
=\bC d\bar{z}_1\wedge\cdots\wedge d\bar{z}_g\cong \bC
\end{equation}
is a perfect pairing. Let $\{ s^{\sk,z,\lambda}: \lambda\in I_{g,\sk}\}$ be the $\bC$-basis
of $H^g(V_\tau,\cL^{-\sk}\otimes \bbL_{[-z]})\cong \bC^{\sk^g}$ dual to 
the $\bC$-basis $\{s_{\sk,z,\lambda}:\lambda\in I_{g,\sk}\}$ of 
$H^0(V_\tau, \cL^{\sk}\otimes \bbL_{[z]})$ with respect to the above pairing. More explicitly, $s^{\sk,z,\lambda}$ is uniquely characterized by 
\begin{equation}
s_{\sk,z,\lambda'}\otimes s^{\sk,z,\lambda} = \delta_{\lambda'}^\lambda d\bar{z}_1\wedge \cdots \wedge d\bar{z}_g,
\end{equation}
where
$$
\delta_{\lambda'}^\lambda= 
\begin{cases}
1 ,& \lambda'=\lambda,\\
0, & \lambda'\neq \lambda.
\end{cases}
$$
\end{definition}

\begin{lemma}\label{lem:C_s_dual_calc} Let $\sk'$ and $\sk''$ be positive integers such that $\bk'<\bk''$, and let
$z',z''\in \bC^g$. The map
\begin{equation}\label{eq:dual product}
H^0(V_\tau, \cL^{\sk'}\otimes \bbL_{[z']})\times
H^g(V_\tau, \cL^{-\sk''}\otimes \bbL_{[-z'']})
\to H^g(V_\tau, \cL^{\sk'-\sk''}\otimes \bbL_{[z'-z'']})
\end{equation}
is given by 
\begin{equation}
s_{\sk',z',\lambda'}\otimes s^{\sk'',z'',\lambda''} =
\sum_{\lambda\in I_{g,\sk''-\sk'}} C^{(\sk'',z'',\lambda'')}_{(\sk',z',\lambda'), (\sk''-\sk', z''-z',\lambda)}
s^{\sk''-\sk', z''-z',\lambda}.
\end{equation}
\end{lemma}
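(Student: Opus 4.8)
The plan is to exploit the defining property of the Serre-dual bases together with the associativity of section multiplication. Since $\sk' < \sk''$, the difference $\sk'' - \sk'$ is a positive integer, so by Theorem \ref{thm: basis sections} the space $H^0(V_\tau, \cL^{\sk''-\sk'}\otimes\bbL_{[z''-z']})$ carries the basis $\{s_{\sk''-\sk', z''-z', \lambda} : \lambda \in I_{g,\sk''-\sk'}\}$, and by the Serre pairing of Definition \ref{C-s-dual} the target $H^g(V_\tau, \cL^{\sk'-\sk''}\otimes\bbL_{[z'-z'']})$ carries the dual basis $\{s^{\sk''-\sk', z''-z', \lambda}\}$. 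Hence I may write
\[
s_{\sk',z',\lambda'}\otimes s^{\sk'',z'',\lambda''} = \sum_{\lambda\in I_{g,\sk''-\sk'}} a_\lambda\, s^{\sk''-\sk', z''-z',\lambda}
\]
for unique coefficients $a_\lambda\in\bC$, and the task reduces to identifying each $a_\mu$ with the asserted structure constant.

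To extract $a_\mu$, I would pair both sides against $s_{\sk''-\sk', z''-z',\mu}\in H^0(V_\tau, \cL^{\sk''-\sk'}\otimes\bbL_{[z''-z']})$ under the Serre pairing \eqref{eq:B-model Serre} for the bundle $\cL^{\sk''-\sk'}\otimes\bbL_{[z''-z']}$. By the characterization of the dual basis, the right-hand side collapses to $a_\mu\, d\bar z_1\wedge\cdots\wedge d\bar z_g$. For the left-hand side, the key move is associativity of the triple product,
\[
s_{\sk''-\sk', z''-z',\mu}\otimes\bigl(s_{\sk',z',\lambda'}\otimes s^{\sk'',z'',\lambda''}\bigr) = \bigl(s_{\sk''-\sk', z''-z',\mu}\otimes s_{\sk',z',\lambda'}\bigr)\otimes s^{\sk'',z'',\lambda''}.
\]
The inner product of the two holomorphic sections lands in $H^0(V_\tau, \cL^{\sk''}\otimes\bbL_{[z'']})$, and by Definition \ref{C-s-dual} it expands as $\sum_\nu C^{(\sk'',z'',\nu)}_{(\sk''-\sk', z''-z',\mu),(\sk',z',\lambda')}\, s_{\sk'',z'',\nu}$. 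Pairing this against $s^{\sk'',z'',\lambda''}$ via the Serre pairing for $\cL^{\sk''}\otimes\bbL_{[z'']}$ and invoking $s_{\sk'',z'',\nu}\otimes s^{\sk'',z'',\lambda''}=\delta_\nu^{\lambda''}\, d\bar z_1\wedge\cdots\wedge d\bar z_g$ isolates the term $\nu=\lambda''$, yielding $a_\mu = C^{(\sk'',z'',\lambda'')}_{(\sk''-\sk', z''-z',\mu),(\sk',z',\lambda')}$. Since multiplication of sections is commutative, the two lower labels of $C$ may be swapped, which reproduces exactly the ordering in the statement.

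The one point that genuinely requires care — and which I expect to be the main obstacle — is justifying the associativity step, namely that the Serre pairing is associative with respect to section multiplication, so that regrouping $(s\otimes s')\otimes\eta = s\otimes(s'\otimes\eta)$ is legitimate when $\eta\in H^g$. I would establish this by working at the cochain level: represent the $H^0$ classes by holomorphic sections, the $H^g$ classes by Dolbeault $(0,g)$-forms valued in the relevant line bundles, and the pairing by the wedge--multiplication map landing in $H^{0,g}(V_\tau)=\bC\, d\bar z_1\wedge\cdots\wedge d\bar z_g$. Associativity then descends from the associativity of the wedge product on the Dolbeault complex, after which the remaining argument is the purely formal bookkeeping above.
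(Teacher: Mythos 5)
Your proposal is correct and is essentially the paper's own argument: both proofs determine the unknown coefficients by forming the triple product $s_{\sk',z',\lambda'}\otimes s^{\sk'',z'',\lambda''}\otimes s_{\sk''-\sk',z''-z',\lambda}$ and evaluating it in two ways, once collapsing via the dual-basis identity $s_{\sk,z,\lambda'}\otimes s^{\sk,z,\lambda}=\delta_{\lambda'}^{\lambda}\,d\bar z_1\wedge\cdots\wedge d\bar z_g$ and once expanding the holomorphic product through the structure constants $C$, with commutativity matching the ordering of the lower labels. The only difference is that you explicitly flag and sketch a Dolbeault-level justification for the associativity of the pairing with respect to section multiplication, a step the paper's proof uses silently.
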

\begin{proof} We define $\tC^{(\sk'',z'',\lambda'')}_{(\sk',z',\lambda'), (\sk''-\sk', z''-z',\lambda)}\in \bC$ by 
$$
s_{\sk',z',\lambda'}\otimes s^{\sk'',z'',\lambda''} =
\sum_{\lambda\in I_{g,\sk''-\sk'}} \tC^{(\sk'',z'',\lambda'')}_{(\sk',z',\lambda'), (\sk''-\sk', z''-z',\lambda)}
s^{\sk''-\sk', z''-z',\lambda}.
$$
Then 
$$
s_{\sk',z',\lambda'}\otimes s^{\sk'',z'',\lambda''}\otimes s_{\sk''-\sk', z''-z', \lambda}
= \tC^{(\sk'',z'',\lambda'')}_{(\sk',z',\lambda'), (\sk''-\sk', z''-z',\lambda)} d\bar{z}_1\wedge \cdots \wedge d\bar{z}_g.
$$
On the other hand,
\begin{eqnarray*}
&& s_{\sk',z',\lambda'}\otimes s^{\sk'',z'',\lambda''}\otimes s_{\sk''-\sk', z''-z', \lambda} \\
&=& \sum_{s\in I_{g, \sk''}} C^{(\sk'',z'',s)}_{(\sk',z',\lambda'), (\sk''-\sk', z''-z',\lambda)}
s_{\sk'',z'',s} \otimes s^{\sk'',z'',\lambda''} \\
&=& \sum_{s\in I_{g,\sk''}} C^{(\sk'',z'',s)}_{(\sk',z',\lambda'), (\sk''-\sk', z''-z',\lambda)} \delta_s^{\lambda''}
d\bar{z}_1\wedge \cdots \wedge d\bar{z}_g \\
&=& C^{(\sk'',z'',\lambda'')}_{(\sk',z',\lambda'), (\sk''-\sk', z''-z',\lambda)} d\bar{z}_1\wedge \cdots \wedge d\bar{z}_g.
\end{eqnarray*}
Therefore, 
$$
\tC^{(\sk'',z'',\lambda'')}_{(\sk',z',\lambda'), (\sk''-\sk', z''-z',\lambda)}
=C^{(\sk'',z'',\lambda'')}_{(\sk',z',\lambda'), (\sk''-\sk', z''-z',\lambda)}.
$$
\end{proof}
\begin{remark}\label{rmk:dual product Serre}
Lemma \ref{lem:C_s_dual_calc} shows that, as a consequence of Serre duality, the computation of the product in Equation \eqref{eq:dual product} can be reduced to that of 
\begin{equation}\label{eq:dual product Serre}
H^0(V_\tau, \cL^{\sk'}\otimes \bbL_{[z']})\times
H^0(V_\tau, \cL^{\sk''-\sk'}\otimes \bbL_{[z''-z']})
\to H^0(V_\tau, \cL^{\sk''}\otimes \bbL_{[z'']}).
\end{equation}
\end{remark}

The coefficients  $C^{(\sk'+\sk'', z'+z'',\lambda)}_{(\sk',z',\lambda'), (\sk'',z'',\lambda'')}$ 
in Definition \ref{C-s-dual} can be determined explicitly from the following multiplication formula.

\begin{proposition}[multiplication formula] \label{multiplication} 
Let $\sk',\sk''$ be positive integers, and let  $c',d', c'',d''\in \bR^g$. 
Then
\begin{equation}  \label{eqn:multiply-i} 
\begin{aligned}
& \vartheta\left[\frac{c'}{\sk'},d'\right](\sk' \tau, x') \vartheta\left[\frac{c''}{\sk''}, d'' \right](\sk''\tau, x'')  \\
=& \sum_{w\in I_{g, \sk'+\sk''}}  \vartheta\left[\frac{ \sk'\sk'' w + \sk''c'-\sk' c''}{\sk'\sk''(\sk'+\sk'')}, \sk'' d' -\sk'd''\right] \left( \sk'\sk''(\sk'+\sk'')\tau, (x')^{\sk''} (x'')^{-\sk'} \right) \\
& \hspace{1.5cm} \cdot \vartheta\left[\frac{ \sk' w +c'+c''}{\sk'+\sk''}, d'+d'' \right]((\sk'+\sk'')\tau, x' x'') .
\end{aligned}
\end{equation} 

 In particular, when $x'= x^{\sk'}$ and $x''=x^{\sk''}$, 
\begin{equation} \label{eqn:multiply-ii} 
\begin{aligned}
& \vartheta\left[\frac{c'}{\sk'},d' \right](\sk'\tau, x^{\sk'}) \vartheta\left[\frac{c''}{\sk''}, d''\right](\sk''\tau, x^{\sk''})  \\
=& \sum_{w\in I_{g, \sk'+\sk''}}\vartheta\left[\frac{ \sk'\sk'' w + \sk'' c'-\sk' c''}{\sk'\sk''(\sk'+\sk'')}, \sk'' d'-\sk'd''\right] ( \sk'\sk''(\sk'+\sk'')\tau, 1)  \\
 &\hspace{1.5cm}\vartheta\left[\frac{\sk' w+c' +c''}{\sk'+\sk''}, d'+d''\right]((\sk'+\sk'')\tau, x^{\sk'+\sk''}).
 \end{aligned} 
\end{equation}
\end{proposition}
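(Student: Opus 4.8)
The plan is to prove the identity by expanding both factors on the left-hand side as absolutely convergent lattice sums and then reorganizing the resulting double sum through an explicit change of summation variables, i.e.\ an isogeny of lattices. Writing $x'=e^{2\pi\bi z'}$ and $x''=e^{2\pi\bi z''}$ and setting $u=m+\tfrac{c'}{\sk'}$, $v=n+\tfrac{c''}{\sk''}$, the definition of the shifted theta function gives
\[
\vartheta\left[\tfrac{c'}{\sk'},d'\right](\sk'\tau,x')\,\vartheta\left[\tfrac{c''}{\sk''},d''\right](\sk''\tau,x'')
=\sum_{m,n\in\bZ^g}\exp\Big\{\pi\bi\big(\sk'u^T\tau u+\sk''v^T\tau v\big)+2\pi\bi\big(u^T(z'+d')+v^T(z''+d'')\big)\Big\}.
\]
Because $\Imag\tau>0$, both theta series converge absolutely, so the double series may be freely rearranged.

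First I would diagonalize the quadratic form. Introduce $s=\tfrac{\sk'u+\sk''v}{\sk'+\sk''}$ and $t=\tfrac{u-v}{\sk'+\sk''}$, so that $u=s+\sk''t$ and $v=s-\sk't$. Using the symmetry of $\tau$, a direct computation shows the cross term cancels, giving
\[
\sk'u^T\tau u+\sk''v^T\tau v=(\sk'+\sk'')\,s^T\tau s+\sk'\sk''(\sk'+\sk'')\,t^T\tau t,
\]
while the linear term splits as $u^T(z'+d')+v^T(z''+d'')=s^T\big[(z'+z'')+(d'+d'')\big]+t^T\big[(\sk''z'-\sk'z'')+(\sk''d'-\sk'd'')\big]$. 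These two identities already exhibit the moduli $(\sk'+\sk'')\tau$ and $\sk'\sk''(\sk'+\sk'')\tau$, the arguments $x'x''=e^{2\pi\bi(z'+z'')}$ and $(x')^{\sk''}(x'')^{-\sk'}=e^{2\pi\bi(\sk''z'-\sk'z'')}$, and the shifts $d'+d''$ and $\sk''d'-\sk'd''$ appearing on the right-hand side.

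The heart of the argument is the lattice reindexing. I would show that the assignment $(m,n)\mapsto(w,N_1,N_2)$, where $w\in I_{g,\sk'+\sk''}$ is the representative of $(m-n)\bmod(\sk'+\sk'')$, $N_1=\tfrac{m-n-w}{\sk'+\sk''}$, and $N_2=n+\sk'N_1$, is a bijection from $\bZ^g\times\bZ^g$ onto $I_{g,\sk'+\sk''}\times\bZ^g\times\bZ^g$, with inverse $m=N_2+\sk''N_1+w$, $n=N_2-\sk'N_1$. One then verifies that $s=N_2+\tfrac{\sk'w+c'+c''}{\sk'+\sk''}$ depends only on $(w,N_2)$, while $t=N_1+\tfrac{\sk'\sk''w+\sk''c'-\sk'c''}{\sk'\sk''(\sk'+\sk'')}$ depends only on $(w,N_1)$; these are precisely the summation variables and characteristics of the two theta series on the right. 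Consequently, for each fixed $w$ the sum over $(N_1,N_2)$ factors as a product of two series which assemble into the two theta factors, and summing over $w\in I_{g,\sk'+\sk''}$ yields \eqref{eqn:multiply-i}. The second identity \eqref{eqn:multiply-ii} follows by specializing $z'=\sk'z$, $z''=\sk''z$: then $\sk''z'-\sk'z''=0$, so the first argument becomes $1$, and $z'+z''=(\sk'+\sk'')z$, so the second becomes $x^{\sk'+\sk''}$.

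I expect the main obstacle to be the lattice bookkeeping in the reindexing step. One must check both that $(m,n)\mapsto(w,N_1,N_2)$ is a genuine bijection --- the cosets $\{(m,n):m-n\equiv w \bmod(\sk'+\sk'')\}$ for $w\in I_{g,\sk'+\sk''}$ partition $\bZ^g\times\bZ^g$, and each is parametrized bijectively by $(N_1,N_2)\in\bZ^g\times\bZ^g$ since the integer matrix $\left[\begin{smallmatrix}\sk''&1\\-\sk'&1\end{smallmatrix}\right]$ has determinant $\sk'+\sk''$ --- and that $s$ and $t$ separate cleanly so that the double sum factors. Once this is in place, the diagonalization of the quadratic form and the matching of all characteristics and shifts are routine verifications.
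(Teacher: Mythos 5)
Your proposal is correct and follows essentially the same route as the paper's proof: your reindexing $(m,n)\mapsto(w,N_1,N_2)$ with inverse $m=N_2+\sk''N_1+w$, $n=N_2-\sk'N_1$ is precisely the paper's Lemma \ref{n-n}, and your variables $s,t$ coincide with the paper's $\mu,\nu$, with the same diagonalization of the quadratic form and the same factorization of the double sum. The only differences are presentational (you diagonalize before reindexing and explicitly record absolute convergence and the determinant argument), so there is nothing substantive to change.
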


Setting $d'=d''=0$ in Proposition \ref{multiplication} and noting that  $\vartheta[c+1,0](\tau,x) =\vartheta[c,0](\tau,x)$ by Equation \eqref{eqn:integral-shift},
we obtain the following corollary, which is
Proposition (6.4) in \cite[Chapter II]{MumfordTheta}. 
\begin{corollary}
Let $\sk',\sk''$ be positive integers, and let  $c',c'' \in \bR^g$. 
Then
\begin{eqnarray*}
&& \vartheta\left[\frac{c'}{\sk'},0 \right](\sk' \tau, x') \vartheta\left[\frac{c''}{\sk''}, 0 \right](\sk''\tau, x'')  \\
&=& \sum_{w \in  I_{g, \sk'+\sk''}}  \vartheta\left[\frac{ \sk'\sk'' w + \sk''c'-\sk' c''}{\sk'\sk''(\sk'+\sk'')}, 0\right] \left( \sk'\sk''(\sk'+\sk'')\tau, (x')^{\sk''} (x'')^{-\sk'} \right) 
\vartheta\left[\frac{ \sk' w +c'+c''}{\sk'+\sk''}, 0 \right]((\sk'+\sk'')\tau, x' x'').
\end{eqnarray*}
In particular, when  $x'= x^{\sk'}$ and $x''=x^{\sk''}$, \begin{eqnarray*}
&& \vartheta\left[\frac{c'}{\sk'},0 \right](\sk'\tau, x^{\sk'}) \vartheta\left[\frac{c''}{\sk''}, 0\right](\sk''\tau, x^{\sk''})  \\
&=& \sum_{w \in  I_{g, \sk'+\sk''} }  \vartheta\left[\frac{ \sk'\sk'' w + \sk'' c'-\sk' c''}{\sk'\sk''(\sk'+\sk'')},0\right] ( \sk'\sk''(\sk'+\sk'')\tau, 1)  
\vartheta\left[\frac{\sk' w+c' +c''}{\sk'+\sk''}, 0\right]((\sk'+\sk'')\tau, x^{\sk'+\sk''}).
 \end{eqnarray*}
\end{corollary}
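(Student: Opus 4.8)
The plan is to prove the identity by expanding both sides as absolutely convergent lattice sums (convergence is guaranteed since $\Imag\tau>0$) and exhibiting an explicit bijection of index sets under which the two sides agree term by term. First I would substitute $m'=\sk'n'+c'$ and $m''=\sk''n''+c''$, so that the left-hand side becomes a double sum over $(m',m'')$ ranging in the cosets $(c'+\sk'\bZ^g)\times(c''+\sk''\bZ^g)$, whose general term carries the quadratic exponent $\pi\bi\bigl(\tfrac{(m')^T\tau m'}{\sk'}+\tfrac{(m'')^T\tau m''}{\sk''}\bigr)$, the linear exponent $2\pi\bi\bigl(\tfrac{(m')^Td'}{\sk'}+\tfrac{(m'')^Td''}{\sk''}\bigr)$, and the monomial $(x')^{m'/\sk'}(x'')^{m''/\sk''}$.

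Next I would introduce the two linear combinations $P=m'+m''$ and $Q=\sk''m'-\sk'm''$. A short linear-algebra computation yields the quadratic decomposition
\begin{equation*}
\frac{(m')^T\tau m'}{\sk'}+\frac{(m'')^T\tau m''}{\sk''}=\frac{P^T\tau P}{\sk'+\sk''}+\frac{Q^T\tau Q}{\sk'\sk''(\sk'+\sk'')},
\end{equation*}
together with the matching identities for the linear exponent and the monomial, each of which reduces to the elementary scalar identity $\tfrac{P}{\sk'+\sk''}+\tfrac{Q}{\sk'(\sk'+\sk'')}=\tfrac{m'}{\sk'}$ (and its $m''$-counterpart). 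These show that the $(m',m'')$-term on the left equals the product of the term indexed by an integer vector $N$ in $\vartheta\bigl[\tfrac{\sk'w+c'+c''}{\sk'+\sk''},d'+d''\bigr]((\sk'+\sk'')\tau,x'x'')$ and the term indexed by an integer vector $K$ in $\vartheta\bigl[\tfrac{\sk'\sk''w+\sk''c'-\sk'c''}{\sk'\sk''(\sk'+\sk'')},\sk''d'-\sk'd''\bigr](\sk'\sk''(\sk'+\sk'')\tau,(x')^{\sk''}(x'')^{-\sk'})$, where $w\in I_{g,\sk'+\sk''}$ accounts for the finite outer sum.

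The crux is the bijection of index sets. Writing $m'=c'+\sk'a$, $m''=c''+\sk''b$ with $a,b\in\bZ^g$, I claim the map sending $(a,b)$ to $(N,K,w)$, with $w\in I_{g,\sk'+\sk''}$ the unique representative of $a-b$ modulo $\sk'+\sk''$, $K=\tfrac{(a-b)-w}{\sk'+\sk''}$, and $N=b+\sk'K$, is a bijection $\bZ^g\times\bZ^g\to\bZ^g\times\bZ^g\times I_{g,\sk'+\sk''}$, with inverse $(N,K,w)\mapsto(a,b)=(N+\sk''K+w,\;N-\sk'K)$. Under this correspondence one checks directly that $P=(\sk'+\sk'')N+\sk'w+c'+c''$ and $Q=\sk'\sk''(\sk'+\sk'')K+\sk'\sk''w+\sk''c'-\sk'c''$ are exactly the lattice vectors indexing the two theta factors on the right, so regrouping the double sum over $(a,b)$ reproduces the right-hand side.

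I expect the main obstacle to be verifying that this map is well defined and bijective: one must confirm that $w$ is uniquely pinned down in the fundamental box $I_{g,\sk'+\sk''}$ and, crucially, that $N$ and $K$ are genuinely integral. Integrality of $K$ is immediate from $w\equiv a-b\pmod{\sk'+\sk''}$, while integrality of $N=\tfrac{\sk'a+\sk''b-\sk'w}{\sk'+\sk''}$ uses the congruence $\sk''\equiv-\sk'\pmod{\sk'+\sk''}$ together with $a-b\equiv w$; injectivity and surjectivity then follow from the explicit inverse. Once the bijection is established the identity \eqref{eqn:multiply-i} follows termwise with no convergence issues. Finally, the \emph{in particular} statement is the specialization $x'=x^{\sk'}$, $x''=x^{\sk''}$: then $x'x''=x^{\sk'+\sk''}$ while $(x')^{\sk''}(x'')^{-\sk'}=1$, so the second theta factor is evaluated at $1$ and \eqref{eqn:multiply-ii} drops out at once.
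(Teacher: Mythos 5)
Your proposal is correct and follows essentially the same route as the paper: the paper proves the general multiplication formula (Proposition \ref{multiplication}) by expanding the double theta series and re-indexing via Lemma \ref{n-n} --- which is precisely your bijection $(a,b)\mapsto(N,K,w)$ with inverse $(N,K,w)\mapsto(N+\sk''K+w,\,N-\sk'K)$ --- and your vectors $P,Q$ are just the paper's $\mu,\nu$ rescaled by $\sk'+\sk''$ and $\sk'\sk''(\sk'+\sk'')$, respectively. The only cosmetic difference is that you merge the two steps (general formula, then specialization $d'=d''=0$) into one argument, keeping $d',d''$ general throughout and specializing at the end, exactly as the paper does when it deduces the corollary from the proposition.
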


The following lemma will be used in the proof of Proposition \ref{multiplication}. 
\begin{lemma}\label{n-n}
Let $\sk', \sk''$ be positive integers. 
Given any $n', n'' \in \bZ^g$ there exists a unique triple $(n, \tn, w)$, where $n,\tn \in \bZ^g$ 
and $w\in I_{g, \sk'+\sk''}$,  such that
$$
n'=  n + \sk'' \tn + w, \quad n'' =n  - \sk' \tn.  
$$
\end{lemma}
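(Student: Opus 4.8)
The plan is to reduce the problem to the Euclidean division algorithm. Every quantity appearing in the two defining equations is a vector in $\bZ^g$ (or, for $w$, a vector whose components lie in $\{0,1,\ldots,\sk'+\sk''-1\}$), and all the operations are taken componentwise; the constraint $w\in I_{g,\sk'+\sk''}$ is likewise imposed coordinate by coordinate rather than jointly. The system therefore decouples into $g$ independent scalar problems, and it suffices to treat the case $g=1$: given $n',n''\in\bZ$, find a unique triple $(n,\tn,w)\in\bZ\times\bZ\times\{0,\ldots,\sk'+\sk''-1\}$ with $n'=n+\sk''\tn+w$ and $n''=n-\sk'\tn$.

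The key step is to eliminate $n$. Subtracting the second scalar equation from the first gives
$$
n'-n'' = (\sk'+\sk'')\,\tn + w.
$$
This says precisely that $\tn$ is the quotient and $w$ the remainder of $n'-n''$ upon division by the positive integer $\sk'+\sk''$. By the division algorithm there is a unique such pair $(\tn,w)$ subject to $0\le w<\sk'+\sk''$, which pins down both $\tn$ and $w$.

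Having fixed $\tn$, the second equation forces $n=n''+\sk'\tn$, so $n$ is uniquely determined as well, giving uniqueness of the whole triple. For existence I would verify that this choice satisfies the first equation: substituting yields $n+\sk''\tn+w=n''+\sk'\tn+\sk''\tn+w=n''+(\sk'+\sk'')\tn+w=n''+(n'-n'')=n'$, as required. Running the argument in each of the $g$ coordinates then assembles the desired triple in $\bZ^g\times\bZ^g\times I_{g,\sk'+\sk''}$.

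I do not expect a genuine obstacle here: the content of the lemma is exactly the existence and uniqueness of quotient and remainder in $\bZ$. The only points demanding any care are the elimination step that isolates the combination $(\sk'+\sk'')\tn+w$, and the observation that the reduction to $g=1$ is legitimate because the range condition on $w$ is componentwise. After those two remarks the division algorithm does all of the work.
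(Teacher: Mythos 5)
Your proposal is correct and follows essentially the same route as the paper's proof: reduce to a single coordinate, subtract the two equations to obtain $n'_j - n''_j = (\sk'+\sk'')\tn_j + w_j$, apply uniqueness of quotient and remainder, and then recover $n_j = n''_j + \sk'\tn_j$. The paper's argument is identical in structure, so there is nothing to add.
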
 
\begin{proof}  Let $j \in \{1,\ldots,g \}$.  It suffices to prove that, for any $n'_j, n_j'' \in \bZ$ there exists a unique triple $(n_j, \tn_j, w_j)$, where $n_j, \tn_j, w_j \in \bZ$ and
$w_j \in [0, \sk'+\sk''-1]$, such that
$$
n'_j = n_j  + \sk'' \tn_j + w_j ,\quad n''_j = n_j - \sk' \tn_j.  
$$
Given any $n'_j, n''_j\in \bZ$, there exists a unique  $\tn_j \in \bZ$ and $w_j \in  \{0,1,\ldots,\sk'+\sk''-1\}$ such that
\begin{equation}\label{eq:remainder}
n'_j - n''_j  = (\sk'+\sk'')\tn_j  + w_j. 
\end{equation}
Let $n_j = n''_j + \sk'\tn_j  \in \bZ$. Then
$$
n'_j  = n_j + \sk''\tn_j  + w_j , \quad  n''_j = n_j  - \sk' \tn_j.  
$$
\end{proof}

\begin{proof}[Proof of Proposition \ref{multiplication}]
\begin{eqnarray*}
&& \vartheta\left[\frac{c'}{\sk'},d'\right](\sk'\tau, x') \vartheta\left[\frac{c''}{\sk''}, d''\right](\sk''\tau, x'')  \\
&=& \sum_{n',n''\in \bZ^g} \exp\Big( \bi \pi  (n'+\frac{c'}{\sk'})^T \sk'\tau(n'+\frac{c'}{\sk'}) + \bi\pi (n''+\frac{c''}{\sk''})^T \sk''\tau(n''+\frac{c''}{\sk''})\Big) \\
&& \hspace{1cm} \cdot  \exp\Big(2\pi \bi (n'+\frac{c'}{\sk'})^Td' +2\pi \bi (n''+\frac{c''}{\sk''})^T d''\Big)  \cdot (x')^{n'+ c'/\sk'} (x'')^{n''+ c''/\sk'' }  \\
&=& \sum_{w_1,\ldots, w_g=0}^{\sk'+\sk''-1} \sum_{n,\tn\in \bZ^g}
\exp\Big( \bi \pi (n +\sk''\tn + w +\frac{c'}{\sk'})^T \sk'\tau(n +\sk''\tn+ w +\frac{c'}{\sk'}) + \bi\pi (n-\sk'\tn +\frac{c''}{\sk''})^T \sk''\tau(n-\sk'\tn+\frac{c''}{\sk''}) \Big) \\
&&  \hspace{1.5cm} \cdot  \exp\Big(2\pi \bi (n + \sk''\tn+w +\frac{c'}{\sk'})^Td' + 2\pi \bi (n -\sk'\tn +\frac{c''}{\sk''})^T d'') \big)\Big)  
 \cdot (x')^{n +\sk''\tn +w + c'/\sk'} (x'')^{n-\sk'\tn + c''/\sk'}. 
\end{eqnarray*}
The last equality follows from  Lemma \ref{n-n}. We now rewrite each term in the above infinite sum.  We observe that
$$
n + \sk''\tn + w +\frac{c'}{\sk'} =\mu + \sk'' \nu, \quad n- \sk'\tn +\frac{c''}{\sk''} = \mu-\sk' \nu,
$$
where
$$
\mu =  n +\frac{\sk'w+ c'+c''}{\sk'+\sk''}, \quad
\nu =  \tn +\frac{w}{\sk'+\sk''} + \frac{c'}{\sk' (\sk'+\sk'')}- \frac{c''}{\sk''(\sk'+\sk'')}.
$$ 
We have
\begin{eqnarray*}
(\mu + \sk'' \nu)^T \sk'\tau (\mu +\sk''\nu) + (\mu-\sk'\nu) ^T \sk''\tau (\mu-\sk'\nu) 
&=& (\sk'+\sk'') \mu^T \tau \mu +  \sk'\sk''(\sk'+\sk'')  \nu^T \tau \nu ,\\
(\mu + \sk''\nu)^T d' + (\mu-\sk'\nu)^T d'' &=&  \mu^T(d'+d'') +\nu^T(\sk''d'-\sk'd''),\\
(x')^{\mu + \sk''\nu} (x'')^{\mu-\sk'\nu}  &=& (x'x'')^\mu \left( (x')^{\sk''} (x'')^{-\sk'}\right)^{\nu}  
\end{eqnarray*} 
Therefore, 
\begin{eqnarray*}
&& \vartheta\left[\frac{c'}{\sk'},d'\right](\sk'\tau, x') \vartheta\left[\frac{c''}{\sk''}, d''\right](\sk''\tau, x'') \\
&=& \sum_{w_1,\ldots, w_g=0}^{\sk'+\sk''-1} \sum_{n,\tn\in \bZ^g}
\exp\Big( \bi \pi (n +\frac{\sk'w+ c' +c''}{\sk'+\sk''})^T (\sk'+\sk'')\tau (n +\frac{\sk'w+ c' +c''}{\sk'+\sk''}) \Big) \\
&&\hspace{1cm} \cdot \exp\Big( \bi\pi (\tn+ \frac{ \sk'\sk''w  + \sk''c'-\sk' c''}{\sk'\sk''(\sk'+\sk'')})^T \sk' \sk'' (\sk'+\sk'') \tau
(\tn+\frac{ \sk'\sk'' w+ \sk''c'-\sk' c''}{\sk'\sk''(\sk'+\sk'')} )\big) \Big) \\
&& \hspace{1cm} \cdot  \exp\Big(2\pi \bi (n + \frac{\sk'w+ c'+c'' }{\sk'+\sk''})^T(d'+d'')  
+ 2\pi \bi (\tn+ \frac{ \sk'\sk'' w + \sk''c'-\sk' c''}{\sk'\sk''(\sk'+\sk'')} )^T(\sk''d'-\sk'd'')\Big)  \\
&& \hspace{1cm} \cdot  (x'x'')^{n +\frac{\sk'w+ c'+c''}{\sk'+\sk''}}  \left( (x')^{\sk''} (x'')^{-\sk'}\right)^{ \tn +\frac{w}{\sk'+\sk''} + \frac{c'}{\sk' (\sk'+\sk'')}- \frac{c''}{\sk''(\sk'+\sk'')}}  \\
&=& \sum_{w_1,\ldots, w_g = 0}^{\sk'+\sk''-1}  \vartheta\left[\frac{ \sk'\sk''w + \sk''c'-\sk' c''}{\sk'\sk''(\sk'+\sk'')}, \sk'' d' -\sk'd''\right] ( \sk'\sk''(\sk'+\sk'')\tau, (x')^{\sk''} (x'')^{-\sk'})  \\
&& \hspace{1.5cm} \cdot \vartheta\left[\frac{\sk'w+ c'+c''}{\sk'+\sk''}, d'+d'' \right]((\sk'+\sk'')\tau, x' x'') 
\end{eqnarray*}
\end{proof}

\subsection{Categories of B-branes and the ring of sections} \label{sec:categories-of-B-branes}
We recall from Section \ref{sec:B-branes}, categories of B-branes on the complex projective variety $V_\tau$. 
\begin{itemize}
\item Let $D^b \Coh(V_\tau)$ be the bounded derived category of coherent sheaves on $V_\tau$. It is equivalent to $\mathrm{Perf}(V_{\tau})$, the triangulated category of perfect complexes on $V_\tau$. 

\item Let $\cB_\tau$ be the full subcategory of $D^b\Coh(V_\tau)$ whose 
objects are of the form $L[j]$ where $L$ is an element in $\Pic^{\bZ\omega_{V_\tau}}(V_\tau)$. 
Recall that any element in $\Pic^{\bZ\omega_{V_\tau}}(V_\tau)$ is of the form
$$
\cL_{\sk,[z]} := \cL^{\otimes \sk} \otimes \bbL_{[z]}
$$
where $\sk\in \bZ$ and $[z]\in V_\tau$. We have
\begin{equation} \label{eqn:Hom-cB}
\Hom_{\cB_\tau}\left(\cL_{\sk,[z]}[j], \cL_{\sk',[z']}[j']\right)
=\Ext^*(\cL_{\sk,[z]}, \cL_{\sk',[z']})[j'-j]
=H^*(V_\tau, \cL_{\sk'-\sk, [z'-z]})[j'-j]
\end{equation}
which is a graded complex vector space given by Theorem \ref{thm: Ext on V}.

\item Let $L$ be a line bundle on $V_\tau$ such that $c_1(L)= \sk\omega_{V_\tau}$
for some positive integer $\sk$. Then $L$ is an ample line bundle on $V_\tau$. Let
$\cB_L$ denote the full subcategory of $\cB$ whose objects are $\{ L^{\sk}[j]: \sk, j\in \bZ\}$.  In particular, $\cB_{\cL}$ in this paper corresponds to 
$D^b_{\cL}\Coh(V_\tau)$ in \cite{Ca}. 
\end{itemize}

Note that the subcategories $\cB_L$ and $\cB_\tau$  of $D^b \Coh(V_\tau) =\mathrm{Perf}(V_\tau)$ are not triangulated categories.

Let $\cT$ be a triangulated category, and let $\cI$ be a full subcategory 
of $\cT$; we do not assume $\mathcal{I}$ is a triangulated category. 
Let $\langle \mathcal{I}\rangle$ denote the smallest triangulated subcategory
of $\cT$ which contains $\cI$ and is closed under direct summand.  By \cite[Theorem 4]{Orlov_generation}, 
$$
\langle \cB_L\rangle =\langle \cB_\tau \rangle = D^b\Coh(V_\tau).
$$

The {\em ring of sections} of an ample line bundle $L$ on a projective variety $Y$ is
the graded ring
$$
\bigoplus_{d\geq 0} H^0(Y, L^{\otimes d}).
$$
where $\deg s = d$ of $s\in H^0(Y,L^{\otimes d})$, and 
the product structure is given by the tensor product.
The projective variety $Y$ can be reconstructed
from the above ring: 
$$
Y = \Proj \left(\bigoplus_{d\geq 0} H^0(Y, L^{\otimes d}) \right) 
$$
In particular, the graded ring
\begin{equation}\label{eqn:coordinate-ring}
S_\tau := \bigoplus_{d\geq 0} H^0(V_\tau,\cL_\tau^{\otimes d})
\end{equation}
is described explicitly in Section \ref{sec:shift}. We have
\begin{equation}\label{eqn:Proj}
V_\tau = \Proj(S_\tau). 
\end{equation}

\subsection{Higher direct images} \label{sec:higher-direct-image}
For any integer $\sk$, let $\cP_{\sk}\to V_\tau \times \Pic^{\sk}(V_\tau)$ be the universal line bundle defined by \eqref{eqn:universal-line-bundle}. In particular, $\cP_0=\cP$ is the Poincar\'{e} line bundle. 

Given any  $\sk, \sk'\in \bZ$, and $1\leq i<j\leq 3$, we have the following projections $\pi_{ij}$:  
\begin{equation} \label{eqn:three-to-two}
\begin{aligned}
\pi_{12}:\ & V_\tau\times \Pic^{\sk}(V_\tau)\times \Pic^{\sk'}(V_\tau) \lra V_\tau\times\Pic^{\sk}(V_\tau), \\
\pi_{13}:\ & V_\tau\times \Pic^{\sk}(V_\tau)\times \Pic^{\sk'}(V_\tau) \lra
V_\tau\times\Pic^{\sk'}(V_\tau), \\
\pi_{23}:\ & V_\tau\times \Pic^{\sk}(V_\tau)\times \Pic^{\sk'}(V_\tau) \lra
\Pic^{\sk}(V_\tau) \times \Pic^{\sk'}(V_\tau),
\end{aligned}
\end{equation}
Let 
\begin{equation}\label{eqn:cE-tau-k-k}
\cE^w_{\tau, \bk, \bk'} := R^w\pi_{23 *} \left(\pi^*_{13}\cP_{\sk'} \otimes \pi^*_{12}\cP_{\sk}^{-1}\right).
\end{equation}
Then $\cE^w_{\tau, \bk, \bk'}$ is a coherent sheaf of $\cO_P$-modules on 
$P=\Pic^{\sk}(V_\tau)\times \Pic^{\sk'}(V_\tau)$ whose fiber over
$(L, L') \in \Pic^{\sk}(V_\tau)\times \Pic^{\sk'}(V_\tau)$ is
$$
H^w(V_\tau, L'\otimes L^{-1})= \Ext^w(L, L'). 
$$
Varying the pair $(L, L')$ in $\Pic^{\sk}(V_\tau)\times \Pic^{\sk'}(V_\tau)$,
we obtain the following family version of Theorem \ref{thm: Ext on V}.
\begin{theorem} \label{thm:E-tau-k-k}
\begin{enumerate}
\item If $\sk<\sk'$  then 
$\cE_{\tau, \sk,\sk'}^0$ is a locally free sheaf of rank $(\sk'-\sk)^g$ 
and $\cE_{\tau, \sk,\sk'}^w=0$ if $w>0$.
\item If $\sk >\sk'$  then 
$\cE_{\tau, \sk,\sk'}^g$ is a locally free sheaf of rank $(\sk-\sk')^g$ 
and $\cE_{\tau, \sk,\sk'}^w=0$ if $w\neq g$.
\item  $\cE_{\tau,\sk,\sk}^w$ is a torsion sheaf on $\Pic^{\sk}(V_\tau)\times \Pic^{\sk}(V_\tau)$ supported along the diagonal if $0\leq w\leq g$, and 
$\cE_{\tau,\sk,\sk}^w=0$ if $w>g$.
\end{enumerate}
\end{theorem}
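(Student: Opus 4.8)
The plan is to deduce this family statement from the pointwise computation already recorded in Theorem \ref{thm: Ext on V} by means of the theorem on cohomology and base change (Grauert's theorem). First I would record the geometric setup. The morphism $\pi_{23}: V_\tau\times\Pic^{\sk}(V_\tau)\times\Pic^{\sk'}(V_\tau)\to P=\Pic^{\sk}(V_\tau)\times\Pic^{\sk'}(V_\tau)$ is the projection forgetting the first factor; it is proper (its fibers are copies of the projective variety $V_\tau$) and flat (a product projection). The sheaf $\cF:=\pi^*_{13}\cP_{\sk'}\otimes\pi^*_{12}\cP_{\sk}^{-1}$ is a line bundle on the total space, hence locally free and, since $\pi_{23}$ is flat, flat over $P$. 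By the universal property of the bundles $\cP_{\sk},\cP_{\sk'}$ (property (i) of Section \ref{sec:poincare}), the restriction of $\cF$ to the fiber $V_\tau\times\{(L,L')\}$ is $L'\otimes L^{-1}$, so the fiber cohomology is $H^w(V_\tau,L'\otimes L^{-1})=\Ext^w(L,L')$ by \eqref{eqn:Ext}. The numbers $\dim_\bC H^w(V_\tau,L'\otimes L^{-1})$, as $(L,L')$ ranges over $P$, are thus exactly those tabulated in Theorem \ref{thm: Ext on V}.

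Next I would invoke cohomology and base change over the base $P$, which is a product of (dual) abelian varieties and in particular smooth, connected, and reduced. In the two ``off-diagonal'' regimes the fiber dimensions are constant, so Grauert's theorem applies directly and identifies $R^w\pi_{23*}\cF=\cE^w_{\tau,\sk,\sk'}$ with a locally free sheaf whose rank is the common fiber dimension. When $\sk<\sk'$, Theorem \ref{thm: basis sections} gives $\dim H^0=(\sk'-\sk)^g$ constantly and $\dim H^w=0$ for $w>0$, whence $\cE^0_{\tau,\sk,\sk'}$ is locally free of rank $(\sk'-\sk)^g$ and $\cE^w_{\tau,\sk,\sk'}$ is locally free of rank $0$, i.e.\ vanishes, for $w>0$; this is statement (1). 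When $\sk>\sk'$, Serre duality (as used in Theorem \ref{thm: Ext on V}) gives $\dim H^g=(\sk-\sk')^g$ constantly and $\dim H^w=0$ for $w\ne g$, yielding statement (2).

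The case $\sk=\sk'$ is the one that cannot be settled by constancy, and it is the main obstacle. By Theorem \ref{thm: picard zero} the fiber $H^w(V_\tau,L'\otimes L^{-1})$ equals $\bC^{\binom{g}{w}}$ when $L=L'$ but vanishes when $L\ne L'$, so the fiber dimension jumps across the diagonal and Grauert gives no local freeness there. Instead I would argue by support. Let $\Delta=\{(L,L)\}\subset P$ be the diagonal, which is precisely the locus where $L'\otimes L^{-1}\cong\cO_{V_\tau}$ (using that $\phi_{\cL}$ is injective, Section \ref{sec:picard}). On the reduced open set $U=P\setminus\Delta$ every fiber cohomology vanishes, so applying Grauert over $U$ with constant rank $0$ gives $\cE^w_{\tau,\sk,\sk}|_U=0$; hence $\cE^w_{\tau,\sk,\sk}$ is supported on $\Delta$. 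Since $\dim\Delta=g<2g=\dim P$, this makes $\cE^w_{\tau,\sk,\sk}$ a torsion sheaf supported along the diagonal for $0\le w\le g$. Finally, because $\dim_\bC V_\tau=g$, the fiber cohomology $H^w(V_\tau,-)$ vanishes identically for $w>g$, so the constant-rank-$0$ case of base change forces $\cE^w_{\tau,\sk,\sk}=0$ for $w>g$, completing statement (3).

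Thus every part of the theorem reduces, via base change, to the fiberwise computation of Theorem \ref{thm: Ext on V}, and the only genuinely non-formal point is the localization of $R^w\pi_{23*}$ to the diagonal in the $\sk=\sk'$ case, which is supplied by the off-diagonal vanishing of Theorem \ref{thm: picard zero}.
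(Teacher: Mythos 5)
Your proof is correct and is essentially the paper's own argument made rigorous: the paper gives no separate proof of this theorem, asserting it only as the "family version" of Theorem \ref{thm: Ext on V} obtained by varying $(L,L')$, and your Grauert/cohomology-and-base-change argument is exactly the standard machinery that implements that assertion (parts (1) and (2) by constancy of fiber cohomology, part (3) by off-diagonal vanishing plus a support argument). Your care in part (3) — using only the vanishing over $P\setminus\Delta$ rather than the paper's identification of the fibers of $\cE^w_{\tau,\sk,\sk}$ along the diagonal — is in fact necessary, since base change fails on the jump locus: by Mumford's computation for the Poincar\'e bundle one actually has $\cE^w_{\tau,\sk,\sk}=0$ for $w<g$, so the paper's claim that the fiber over a diagonal point $(L,L)$ is $\Ext^w(L,L)\cong\bC^{\binom{g}{w}}$ is literally false there, while the theorem as stated (and your proof of it) remains valid.
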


\subsection{The universal family and universal Picard varieties} \label{sec:universal-picard}
We view $\cH_g$ as a complex manifold, equipped with the standard complex structure as an open subset of $S_g(\bC)\cong \bC^{g(g+1)/2}$. In this subsection, we fix $g$ and let $\cO_{\cH}$ be the sheaf of germs of {\em holomorphic} functions on $\cH=\cH_g$.

As we vary $ \tau \in \cH$, we obtain a holomorphic line bundle over the universal family:
\begin{equation}\label{eqn:LA-I}
\cL_{\cH} \lra V_{\cH} \stackrel{\pi_{\cH}}{\longrightarrow}  \cH
\end{equation}
The restriction of \eqref{eqn:LA-I} to a point $\tau\in \cH$ is $\mathcal{L}_\tau \longrightarrow V_\tau \longrightarrow \tau$.

For any integer $d\geq 0$, define 
$$
(S_{\cH})_d  := \pi_{\cH *}\left( \cL_{\cH}^{\otimes{d} } \right)
$$
which is a locally free sheaf of $\cO_{\cH}$-modules on $\cH$. Then 
\begin{equation}\label{eqn:coordinate-ring-H}
S_{\cH} := \bigoplus_{d\geq 0}(S_{\cH})_d
\end{equation}
is a quasi-coherent sheaf of $\cO_{\cH}$-modules which has the structure of a sheaf
of graded $\cO_{\cH}$-algebras. The fiber of $\cS$ over $\tau\in \cH$ is the graded ring $S_\tau$ in \eqref{eqn:coordinate-ring}, so \eqref{eqn:coordinate-ring-H} can be viewed as the global version of \eqref{eqn:coordinate-ring}. We have
\begin{equation}\label{eqn:Proj-H}
V_{\cH}=\Proj_{\cH}(\cS)
\end{equation}
which is the global version of \eqref{eqn:Proj}. 

For any $\sk\in \bZ$, let 
$$
\Pic^{\sk}(V_{\cH})\to \cH
$$
be the universal Picard variety, which is a holomorphic fibration where the fiber
over $\tau\in \cH$ is $\Pic^{\sk}(V_\tau)$. In other words, a point in $\Pic^{\sk}(V_{\cH})$ is
a pair $(\tau, L)$ where $\tau\in \cH$ and $L$ is a line bundle on $V_\tau$ with $c_1(L)=\sk\omega_{V_\tau}$, and the projection $\Pic^{\sk}(V_{\cH})\to \cH$ is given by 
$(\tau, L)\mapsto \tau$. The fiber product
$$
V_{\cH}\times_{\cH} \Pic^{\sk}(V_{\cH})\to \cH
$$
is a holomorphic fibration where the fiber over $\tau\in \cH$ is $V_\tau\times \Pic^{\sk}(V_\tau)$. Let 
\begin{equation} \label{eqn:universal-universal-line}
    \cP_{\sk}(\cH)\to V_{\cH}\times_{\cH} \Pic^{\sk}(V_{\cH})\to \cH 
\end{equation}
be the universal line bundle. The restriction of \eqref{eqn:universal-universal-line} to a point $\tau\in \cH$ is $\cP_{\sk} \to V_\tau \times \Pic^{\sk}(V_\tau)\to \tau$.

Given any  $\sk, \sk'\in \bZ$, and $1\leq i<j\leq 3$, we have the following projections:  
\begin{equation} \label{eqn:three-to-two-H}
\begin{aligned}
\pi^{\cH}_{12}:\ & V_\cH \times_\cH \Pic^{\sk}(V_\cH)\times_\cH \Pic^{\sk'}(V_\cH) \lra V_\cH\times_\cH \Pic^{\sk}(V_\cH), \\
\pi^{\cH}_{13}:\ & V_\cH\times_\cH \Pic^{\sk}(V_\cH)\times_\cH \Pic^{\sk'}(V_\cH) \lra
V_\cH \times_{\cH}\Pic^{\sk'}(V_\cH), \\
\pi^{\cH}_{23}:\ & V_\cH\times_\cH \Pic^{\sk}(V_\cH)\times_\cH \Pic^{\sk'}(V_\cH) \lra
\Pic^{\sk}(V_\cH) \times_\cH \Pic^{\sk'}(V_\cH).
\end{aligned}
\end{equation}
Let $S_{ij}$ and $T_{ij}$ be the source and target of the projection $\pi^{\cH}_{ij}$
in  Equation \eqref{eqn:three-to-two-H}, respectively. We have a commutative diagram:
$$
\xymatrix{
S_{ij} \ar[r]^{\pi^{\cH}_{ij}} \ar[dr] & T_{ij} \ar[d] \\
& \cH
}
$$
where $S_{ij}\to \cH$ and $T_{ij}\to \cH$ are holomorphic fibrations.
The restriction of the projection $\pi_{ij}^{\cH}: S_{ij}\lra T_{ij}$ in Equation \eqref{eqn:three-to-two-H} to the fibers $S_{ij,\tau}\cong V_\tau\times V_\tau\times V_\tau \lra T_{ij,\tau}\cong V_\tau\times V_\tau$ over $\tau\in \cH$ is the projection $\pi_{ij}$ in Equation \eqref{eqn:three-to-two}. 

A point in $\Pic^{\sk}(V_\cH)\times_{\cH} \Pic^{\sk'}(V_\cH)$ is a triple
$(\tau, L, L')$ where $\tau\in \cH$, $L\in \Pic^{\sk}(V_\tau)$, and $L' \in \Pic^{\sk'}(V_\tau)$. Let
\begin{equation}\label{eqn:cE-k-k}
\cE^w_{\cH,\bk, \bk'} := R^w\pi^\cH_{23 *} \left( \left(\pi^\cH_{13}\right)^*\cP_{\sk'}(\cH) \otimes 
\left(\pi^\cH_{12}\right)^*\cP_{\sk}(\cH)^{-1}\right).
\end{equation}
Then $\cE^w_{\cH,\bk, \bk'}$ is a coherent sheaf of $\cO_P$-modules on 
$P=\Pic^{\sk}(V_\cH)\times_{\cH} \Pic^{\sk'}(V_\cH)$ whose fiber over
$(\tau, L, L') \in \Pic^{\sk}(V_\cH)\times_{\cH} \Pic^{\sk'}(V_\cH)$ is
$$
H^w(V_\tau, L'\otimes L^{-1})= \Ext^w(L, L'). 
$$
The restriction of $\cE^w_{\cH, \bk, \bk'}$ to $\Pic^{\sk}(V_\tau)\times \Pic^{\sk'}(V_\tau)$, 
which is the fiber of $\Pic^{\sk}(V_\cH)\times_\cH \Pic^{\sk'}(V_\cH) \to \cH$
over a point $\tau\in \cH$,  is the sheaf $\cE^w_{\tau,\bk, \bk'}$ in Equation \eqref{eqn:cE-tau-k-k}.  Varying $\tau$ in $\cH$ in Theorem \ref{thm:E-tau-k-k}, or equivalently varying the triple $(\tau, L, L')$ in $\Pic^{\sk}(V_\cH)\times_{\cH} \Pic^{\sk'}(V_\cH)$ in Theorem \ref{thm: Ext on V}, we obtain the following Theorem \ref{thm:E-k-k}, which is the most global and universal family version of Theorem \ref{thm: Ext on V}. 
\begin{theorem} \label{thm:E-k-k}
\begin{enumerate}
\item If $\sk<\sk'$  then 
$\cE_{\cH, \sk,\sk'}^0$ is a locally free sheaf of rank $(\sk'-\sk)^g$ 
and $\cE_{\cH, \sk,\sk'}^w=0$ if $w>0$.
\item If $\sk >\sk'$  then 
$\cE_{\cH, \sk,\sk'}^g$ is a locally free sheaf of rank $(\sk-\sk')^g$ 
and $\cE_{\cH, \sk,\sk'}^w=0$ if $w\neq g$.
\item  $\cE_{\cH, \sk,\sk}^w$ is a torsion sheaf on $\Pic^{\sk}(V_\cH)\times_{\cH} \Pic^{\sk}(V_\cH)$ supported along the diagonal (consisting of points $(\tau, L, L)$ where
$\tau \in \cH$ and $L\in \Pic^{\sk}(V_\tau)$) if $0\leq w\leq g$, and 
$\cE_{\cH, \sk,\sk}^w=0$ if $w>g$.
\end{enumerate}
\end{theorem}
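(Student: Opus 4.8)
The plan is to deduce this global statement from the pointwise computation of Theorem~\ref{thm: Ext on V} by applying the theorem on cohomology and base change (Grauert's theorem) directly to the proper holomorphic family $\pi^\cH_{23}$, thereby upgrading fiberwise dimension counts to statements about the higher direct image sheaves $\cE^w_{\cH,\sk,\sk'}=R^w\pi^\cH_{23*}\cF$.

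First I would record the geometric setup. Write $f:=\pi^\cH_{23}\colon S_{23}\to T_{23}=P$, where $P=\Pic^{\sk}(V_\cH)\times_\cH \Pic^{\sk'}(V_\cH)$, so that $f$ is a proper holomorphic submersion whose fiber over $(\tau,L,L')$ is the abelian variety $V_\tau$. Here $P$ is a connected complex manifold, hence reduced and irreducible, because $\cH$ is connected and the Picard fibrations are fiber bundles with connected (complex torus) fibers. Set $\cF:=\bigl(\pi^\cH_{13}\bigr)^*\cP_{\sk'}(\cH)\otimes\bigl(\pi^\cH_{12}\bigr)^*\cP_{\sk}(\cH)^{-1}$. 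Being a line bundle, $\cF$ is locally free and therefore flat over $P$, and its restriction to the fiber over $(\tau,L,L')$ is $L'\otimes L^{-1}$ on $V_\tau$; thus the fiber cohomology $H^w(V_\tau,L'\otimes L^{-1})=\Ext^w(L,L')$ is exactly what Theorem~\ref{thm: Ext on V} computes.

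The tool I would invoke is: for a proper holomorphic map with reduced base and a coherent sheaf flat over the base, if $(\tau,L,L')\mapsto \dim H^w(V_\tau,\cF_{(\tau,L,L')})$ is locally constant then $R^wf_*\cF$ is locally free of that rank and its formation commutes with base change; moreover, if the fiber cohomology vanishes identically over an open set $U$, then $R^wf_*\cF|_U=0$. Parts (1) and (2) then follow verbatim. When $\sk<\sk'$, Theorem~\ref{thm: Ext on V} gives $\dim H^0=(\sk'-\sk)^g$ (constant) and $H^w=0$ for $w>0$, so $\cE^0_{\cH,\sk,\sk'}$ is locally free of rank $(\sk'-\sk)^g$ and $\cE^w_{\cH,\sk,\sk'}=0$ for $w>0$. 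When $\sk>\sk'$, the cohomology is concentrated in degree $g$ with constant dimension $(\sk-\sk')^g$, giving local freeness of $\cE^g_{\cH,\sk,\sk'}$ and vanishing of $\cE^w_{\cH,\sk,\sk'}$ for $w\neq g$.

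Part (3) is where constancy fails, and this is the step I expect to demand the most care. For $\sk=\sk'$ the fiber carries $L'\otimes L^{-1}=\bbL_{[z]}$, whose cohomology (Theorem~\ref{thm: Ext on V}) vanishes in all degrees when $L\neq L'$ but jumps to dimension $\binom{g}{w}$ on the diagonal $\Delta=\{(\tau,L,L)\}$, so Grauert's local-freeness conclusion is unavailable. Instead, on the open complement $P\setminus\Delta$ all fiber cohomology vanishes, whence $R^wf_*\cF|_{P\setminus\Delta}=0$ and $\mathrm{Supp}\,\cE^w_{\cH,\sk,\sk}\subseteq\Delta$. Since $\Delta\cong\Pic^{\sk}(V_\cH)$ has fiberwise codimension $g\geq 1$ in the irreducible manifold $P$, it is nowhere dense, so any coherent $\cO_P$-module supported on $\Delta$ is torsion; this settles $0\leq w\leq g$. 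For $w>g$, every fiber $V_\tau$ has complex dimension $g$, so $H^w(V_\tau,-)=0$ identically and the base change theorem forces $\cE^w_{\cH,\sk,\sk}=0$. The one genuinely delicate point is to confirm the sheaf is supported along \emph{all} of $\Delta$ (not merely contained in it): non-vanishing along $\Delta$ is inherited from the fiberwise statement Theorem~\ref{thm:E-tau-k-k}(3), whose restriction to each $\tau$-slice already identifies $\cE^w_{\tau,\sk,\sk}$ as a nonzero torsion sheaf supported on the fiberwise diagonal, and which I would use to pin down the support of the global sheaf.
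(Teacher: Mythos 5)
Your reduction of parts (1) and (2) to Grauert's theorem, and your argument for part (3) via ``support contained in $\Delta$'' plus the observation that a coherent sheaf on an irreducible complex manifold supported on a nowhere dense analytic subset is torsion, is essentially the argument the paper leaves implicit (the paper simply says the theorem follows by varying $(\tau,L,L')$ in Theorem \ref{thm: Ext on V}, resp.\ $\tau$ in Theorem \ref{thm:E-tau-k-k}), and it is correct as far as it goes: flatness of the line bundle $\cF$ over $P$, constancy of $h^0$ (resp.\ $h^g$) when $\sk<\sk'$ (resp.\ $\sk>\sk'$), vanishing of all fiber cohomology over $P\setminus\Delta$ when $\sk=\sk'$, and vanishing above the fiber dimension are all that parts (1)--(3) require, provided ``supported along the diagonal'' is read as ``support contained in the diagonal.''

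However, the ``genuinely delicate point'' you single out at the end --- that $\cE^w_{\cH,\sk,\sk}$ is nonzero at \emph{every} point of $\Delta$ for \emph{every} $0\le w\le g$ --- is false for $w<g$, so this step should be deleted rather than completed. The diagonal is precisely the jump locus, and there cohomology and base change fails: the fiber of $R^wf_*\cF$ at a point of $\Delta$ need not be $H^w(V_\tau,\cO_{V_\tau})$. Concretely, for $w=0$ the sheaf $f_*\cF$ is torsion-free (a holomorphic section of the line bundle $\cF$ over $f^{-1}(U)$ that vanishes on the dense open set $f^{-1}(U\setminus\Delta)$ vanishes identically), so your own conclusion that its support lies in $\Delta$ forces $\cE^0_{\cH,\sk,\sk}=0$, even though $H^0(V_\tau,\cO_{V_\tau})=\bC$ at every diagonal point. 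More generally, the relative Fourier--Mukai computation of Mumford and Mukai ($R^wp_*\cP=0$ for $w<g$, while $R^gp_*\cP$ is a skyscraper at the origin) gives $\cE^w_{\cH,\sk,\sk}=0$ for all $w<g$, with only $\cE^g_{\cH,\sk,\sk}$ having support equal to $\Delta$. Your proposed justification is also shaky on its own terms: Theorem \ref{thm:E-tau-k-k}(3) does not assert nonvanishing, and identifying the restriction of $\cE^w_{\cH,\sk,\sk}$ to a $\tau$-slice with $\cE^w_{\tau,\sk,\sk}$ is itself a base-change claim that is not automatic on the jump locus. Fortunately none of this is needed: the theorem as stated (torsion, support in $\Delta$, vanishing for $w>g$) is already established by the first part of your argument, and the same caveat about fibers on the jump locus applies to the paper's own assertion, preceding the theorem, that the fiber of $\cE^w_{\cH,\sk,\sk'}$ over $(\tau,L,L')$ is always $\Ext^w(L,L')$.
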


We next consider the $\Sp(2g,\bZ)$-action. 
Recall that $\Sp(2g,\bZ)$ acts on $\cH_g\times \bC^g$ by 
$$
\phi\cdot (\tau, z) 
=\left(\phi\circ \tau, ((C\tau +D)^T)^{-1}z\right)
$$
where 
$$
\phi=\begin{bmatrix}A & B\\ C & D\end{bmatrix} \in\Sp(2g,\bZ), \quad
\tau\in \cH_g, \quad
\phi\circ \tau = (A\tau+B)(C\tau+D)^{-1}, \quad 
z = \begin{bmatrix} z_1\\\vdots \\ z_g\end{bmatrix}.
$$
The universal family of $g$-dimensional ppavs is
$$
V_{\cA} = \left[
\left( \Sp(2g,\bZ)\ltimes \bZ^{2g}\right)\backslash
\left(\cH\times \bC^g\right) \right]\lra
\cA = [\Sp(2g,\bZ)\backslash \cH].
$$
In particular, for any $\tau\in \cH$ and any $\phi\in \Sp(2g,\bZ)$, we have an isomorphism
$$
\phi: V_\tau \lra V_{\phi\circ \tau}, \quad
[z]\mapsto [\left((C\tau+D)^T\right)^{-1}z]
$$
of abelian varieties. For any $\sk\in \bZ$, we have
an isomorphism
$$
\phi^*: \Pic^{\sk}(V_{\phi\circ \tau})\lra 
\Pic^{\sk}(V_\tau), \quad L\mapsto \phi^*L. 
$$
of complex projective varieties. Let $\Sp(2g,\bZ)$ act on 
$\Pic^{\sk}(V_{\cH})\times_{\cH} \Pic^{\sk'}(V_{\cH})$ 
$$
\phi\cdot (\tau, L, L') = (\phi\circ \tau, (\phi^{-1})^*L, (\phi^{-1})^*L').
$$
Note that $((\phi_1\circ \phi_2)^{-1})^* = (\phi_2^{-1}\circ \phi_1^{-1})^*
=(\phi_1^{-1})^* (\phi_2^{-1})^*$, so this is a left action. We have a commutative diagram
\begin{equation}
\xymatrix{\Pic^{\sk}(V_\cH)\times_{\cH}\Pic^{\sk}(V_\cH) \ar[r] \ar[d]
& \Pic^{\sk}(V_{\cA^F})\times_{\cA^F}\Pic^{\sk'}(V_{\cA^F}) \ar[r] \ar[d] 
& \Pic^{\sk}(V_{\cA})\times_{\cA}\Pic^{\sk'}(V_{\cA}) \ar[d] 
\\
\cH \ar[r] & \cA^F=[P_g(\bZ)\backslash \cH] \ar[r] & \cA = [\Sp(2g,\bZ)\backslash \cH]
}
\end{equation}
where the middle (resp. right) column is the quotient of the left column by 
the left action of the discrete group $P_g(\bZ)$ (resp. $\Sp(2g,\bZ)$). In particular, the horizontal arrows are covering maps, and  the vertical arrows are holomorphic fibrations. 

For any $(L, L')\in \Pic^{\sk}(V_\tau)\times \Pic^{\sk'}(V_\tau)$, we have an isomorphism
$$
(\phi^{-1})^*: \Ext^w(L,L')
\lra \Ext^w((\phi^{-1})^*L, (\phi^{-1})^*L')
$$
of complex vector spaces. The sheaf $\cE^w_{\cH,\sk, \sk'}$ on $\Pic^{\sk}(V_\cH)\times_\cH \Pic^{\sk'}(V_\cH)$
descends to $\cE^w_{\cA^F,\sk, \sk'}$ on $\Pic^{\sk}(V_{\cA^F})\times_{\cA^F} \Pic^{\sk'}(V_{\cA^F})$ and 
$\cE^w_{\cA,\sk, \sk'}$ on $\Pic^{\sk}(V_\cA)\times_\cA \Pic^{\sk'}(V_\cA)$.

\section{The Fukaya category of \texorpdfstring{$(\bT^{2g}, \omega_\tau)$}{T 2g, omega tau}} 

\subsection{Objects}
\label{sec: fiber Fuk objects}

 As in Section \ref{sec:shift}, let $[v]$ denote the class of $v\in \bC^g$ in $V_\tau^+ =\bC^g/ (\bZ^g+\tau \bZ^g)\cong  V_\tau=(\bC^*)^2/\Gamma_\tau$. 
For each integer $\sk\in \bZ$ and each point $[v] \in V_\tau^+$, consider the holomorphic line bundle 
\begin{equation}\label{eq: line bundle mirror to slope k lag}
\cL_{\sk,[v]}:=\cL^{\otimes \sk} \otimes \bbL_{[v]} = \top^*_{[\frac{v}{\sk}]}\left(\cL^{\otimes \sk} \right).
\end{equation}
In Section \ref{sec: fiber slope Lagrangian} below, we will define
an object  $\hat{\ell}_{\sk,[v]}$ in $\Fuk(\bT^{2g},\omega_\tau)$ which is mirror to this line bundle.

For each point $[v]\in V_\tau^+$, consider the skyscraper sheaf
\begin{equation}\label{eq: skyscraper }
\cO_{[v]} =\top_{[-v]}^* \cO_{[0]}.
\end{equation}
In Section \ref{sec: fiber vertical Lagrangian}, we will define an object $\hat{\ell}_{\infty,[v]}$ in $\Fuk(\bT^{2g},\omega_\tau)$ which is mirror to the skyscraper sheaf.
Following \cite[Definition 1.1]{Fuk02} and \cite[Section 2]{ACLLb}, we consider
objects which are of the form $(\ell,\varepsilon)$, where $\ell$ is a Lagrangian submanifold of $(\bT^{2g}, \omega_\Omega)$ and $\varepsilon$  is a complex line bundle on $\ell$ 
equipped with a unitary connection whose curvature is equal to a multiple of the restriction of the B-field to $\ell$: 
\begin{equation}
2\pi \bi \sum_{j,k=1}^gB_{jk}dr_j\wedge d\theta_k\bigg \vert_{\ell}. 
\end{equation}
\subsubsection{The A-brane \texorpdfstring{$\hat{\ell}_{\sk,[v]}$}{ell hat k, v} mirror to the line bundle \texorpdfstring{$\cL_{\sk,[v]}$}{mathcal L k,v}}
\label{sec: fiber slope Lagrangian}

Given $\sk\in \bZ$ and $[v=a+\tau b] \in V_\tau^+$,  where $a, b \in \bR^g$, we will define a pair
$\hat{\ell}_{\sk, [v=a+\tau b]}= (\ell_{\sk, b}, \varepsilon_a)$,   where $\ell_{\sk, b}$ is a Lagrangian submanifold in $(\bT^{2g}, \omega_\Omega)$ 
and $\varepsilon_a$ is the trivial complex line  bundle $\ell_{\sk,b}\times \bC$  on $\ell_{\sk,b}$,  equipped with a flat $U(1)$ connection $\nabla_a$. (We will see that
the restriction of the B-field to the Lagrangian $\ell_{\sk,b}$ is zero.)

The Lagrangian submanifold $\ell_{\sk,b} \subset \bT^{2g} = \bR^{2g}/\bZ^{2g}$ is define by
\begin{equation}\label{eq:ell_k_definition}
\ell_{\sk,b}  :=\left\{ (r, \theta)\in \bR^{2g}/\bZ^{2g}: \theta \equiv b -\sk r \right\}
\end{equation} 
The notation $\equiv$ means modulo $\bZ^g$.  Note that
the Lagrangian $\ell_{\sk,b}$ depends only on the class $[b]=b+\bZ^g \in \bR^g/\bZ^g=(\bR/\bZ)^g$. 

Note that on $\ell_{\sk,b}$, we have $\theta_j= b_j -\sk r_j$, so
\begin{equation}
\left.\sum_{j,l=1}^gB_{jl}dr_j\wedge d\theta_l\right\vert_{\ell_{\sk,b}}=-\sk\sum_{j,l=1}^gB_{jl}dr_j\wedge dr_l=0
\end{equation}
because $B$ is symmetric.  Therefore, the $U(1)$ connection $\nabla_a$ is flat and the line bundle is topologically trivial. Up to gauge transformation, 
which is a $C^\infty$ map $\ell_{\sk,b} \to U(1)$,  the connection 1-form is 
\begin{equation}\label{eq: connection 1-form}
2\pi \bi adr, 
\end{equation}
where $a =(a_1,\ldots,a_g) \in [0,1)^g$ and $adr =\sum_{j=1}^g a_j dr_j$. Given any $a\in \bR^g$, let 

\begin{equation}\label{eq:epsilon-a}
\varepsilon_a := (\ell_{k,b}\times \bC, \nabla_a)
\end{equation}
be the trivial  complex line bundle
$\ell_{\sk,b}\times \bC$  on $\ell_{\sk,b}$ equipped with the flat $U(1)$-connection 
\begin{equation}\label{eq: flat U(1) connection}
\nabla_a = d+2\pi \bi adr. 
\end{equation}
The gauge equivalence class of $\nabla_a$ depends only on the class $[a] = a +\bZ^g \in \bR^g/\bZ^g =(\bR/\bZ)^g$. 

\begin{remark} \label{rmk: generation}  By \cite[Theorem 4]{Orlov_generation}, the set of line bundles $\{\cL_{\sk, [0]}: \sk\in \bZ\}$, generates $D^b\Coh(V_\tau)$.  That is, the images of the objects $\hat\ell_{\sk,[0]}$,  $\sk \in \bZ$, under the mirror functor $\Phi_\tau$, generate  $D^b\Coh(V_\tau)$.  \end{remark}

\subsubsection{The A-brane \texorpdfstring{$\hat{\ell}_{\infty,[v]}$}{ell hat infinity, v} mirror to the skycraper sheaf \texorpdfstring{$\cO_{[v]}$}
{O\_v}} \label{sec: fiber vertical Lagrangian}
Given $[v=a+\tau b] \in V_\tau^+$,  where $a, b \in \bR^g$, we will define a pair
$\hat{\ell}_{\infty,[v=a+\tau b]}= (\ell_{\infty,b}, \varepsilon_a)$,   where $\ell_{\infty,b}$ is a Lagrangian submanifold in $(\bT^{2g}, \omega_{\Omega})$ 
and $\varepsilon_a$ is the trivial complex line  bundle $\ell_{\infty, b} \times \bC$  on $\ell_{\infty,b}$,  equipped with a flat $U(1)$ connection $\nabla_a$. (We will see that
the restriction of the B-field to the Lagrangian $\ell_{\infty,b}$ is zero.)

The Lagrangian submanifold $\ell_{\infty,b} \subset \bT^{2g}=\bR^{2g}/\bZ^{2g}$ is define by
\begin{equation}\label{eq:t_b_definition}
\ell_{\infty,b} :=\left\{ (r, \theta)\in \bR^{2g}/\bZ^{2g}:  r=b \right\}.
\end{equation}
Note that Lagrangian $\ell_{\infty,b}$ depends only on the class $[b]=b+\bZ^g \in \bR^g/\bZ^g=(\bR/\bZ)^g$. 

On $\ell_{\infty,b}$, we have $d r_j =0 $, so
\begin{equation}
\left.\sum_{j,k=1}^gB_{jk}dr_j\wedge d\theta_k\right\vert_{\ell_{\infty,b}}= 0.
\end{equation}
Up to gauge transformation,  the connection 1-form is 
\begin{equation}\label{eq: connection 1-form vertical}
2\pi \bi ad\theta, 
\end{equation}
where $a \in [0,1)^g$. Given any $a\in \bR^g$, let $\varepsilon_a$ be the trivial  complex line bundle
$\ell_{\infty,b}\times \bC$  on $\ell_{\infty,b}$ equipped with the flat $U(1)$-connection 
\begin{equation}\label{eq: flat U(1) connection vertical}
\nabla_a = d+2\pi \bi a d\theta. 
\end{equation}
The gauge equivalence class of $\nabla_a$ depends only on the class $[a] = a +\bZ^g \in \bR^g/\bZ^g =(\bR/\bZ)^g$. 

\subsubsection{SYZ mirror symmetry for abelian varieties} \label{sec:SYZ}

Collecting the vertical Lagrangian objects previously defined into a family produces the SYZ mirror to $\bT^{2g}$. We use the word family because Family Floer theory (studied by Fukaya \cite{FukFF} and Abouzaid \cite{Ab14}) refers to the morphism groups, the Floer groups, in similar families. 

Starting with the symplectic torus $(\bT^{2g},\omega_\tau)$, we show that its complex mirror $(\bT^{2g},\omega_\tau)^\vee$, as a moduli space of the above vertical Lagrangians with connection, is $V_\tau$. The SYZ mirror prescription of \cite{SYZ} produces a mirror manifold given a special Lagrangian torus fibration. The idea, roughly, is to fix a torus fiber of the fibration, and then construct complex coordinates on a mirror chart where the norm of the complex coordinate is measured by the distance from the fixed fiber and the angle is measured by the holonomy of a flat connection on the fiber. 

Without B-field and with singular fibers, these are the complex coordinates $z_A$ defined in \cite[Lemma 2.7]{t_duality} for 
$A \in H_2(M,L;\bZ)$ a disc class in a symplectic manifold $M$ with boundary on Lagrangian $L$. For our torus $M=\bT^{2g}$, the Lagrangian torus fibration doesn't have singular fibers, so the rough idea is like that in \cite[Equation (2.3)]{AAK} where we have cylinders between elements of $H_1$ instead of one end of the cylinder pinching to a point at a singularity to produce a disc $A$. (The Lagrangians we consider are ``tautologically unobstructed" - they don't bound discs.) What was the area of the disc in defining $z_A$ is instead then the area of the cylinder, measuring the distance between two fibers, and corresponds to the moment map lengths $\xi_j$. 

These coordinates appear in \cite[Proposition 1.1]{Fuk02} specifically Equation (1.2) where $s,t$ parametrize the cylinder in $\phi_j$, where $j$ corresponds to the $j$th generator of $H_1(\bT^g;\bZ)$ for $L=\bT^g$. The proof that they are well-defined with the inclusion of the B-field is proven in \cite[Lemma 1.1]{Fuk02} for abelian varieties, by finding a symplectomorphic neighborhood of the fixed fiber Lagrangian in which nearby Lagrangians are graphs of closed 1-forms and we can explicitly write down the notion of distance between two fibers. (One place a general proof of the well-definedness of this quantity on a symplectic manifold with B-field is proven is in our paper \cite[Lemma 5.7]{ACLLb}.) With a B-field the connection now has curvature $2\pi \bi B|_L$.

Now we write down further detail in coordinates in our setting. Our Lagrangian torus fibration on $\bT^{2g}$ comes from the moment map. The prescription of \cite{SYZ} is that the SYZ mirror torus fibration has the same base; the SYZ dual fiber is the space of unitary connections on the trivial line bundle on the original fiber, with curvature $2\pi\bi B$. But as we saw above, $B$ restricted to linear and vertical Lagrangians is 0, so the connections are flat. The data needed to describe the connection is then a collection of $g$ complex numbers in $U(1)$ which describe the holonomy around each loop in $\pi_1(\bT^g)$ where $\bT^g$ is a Lagrangian torus fiber:
$$
\Hom(\pi_1(\bT^g)=\bZ^g, U(1)) \ni (e^{2\pi \bi \phi_1}, \ldots, e^{2\pi \bi \phi_g}).
$$
This corresponds to the connection
\begin{equation}
d-2\pi \bi\sum_{j=1}^g \phi_j d \theta_j
\end{equation}
on the trivial line bundle on $\bT^g$. Let $\xi_k = \sum_{j=1}^g \Omega_{jk} r_j$ be the moment map coordinates so
\begin{equation}
\omega_\Omega = \sum_{k=1}^g  d\xi_k \wedge d\theta_k, 
\end{equation}
and
\begin{equation}
(\xi_1,\ldots, \xi_g) + \Omega n = \Omega (r_1+n_1,\ldots, r_g+n_g).
\end{equation}
Then $(\phi_1,\ldots,\phi_g)$ describe coordinates on the dual fiber of the SYZ mirror $\pi_\tau^{\text{SYZ}}$ defined in Equation \eqref{eqn:SYZ}, up to a shift from the B-field as explained in Remark \ref{rem:SYZ_signs}: 
$$
V_\tau \xrightarrow[]{\pi_\tau^{\text{SYZ}}} T_\Omega
$$
\begin{equation}\label{eq:SYZ_complex}
(x_1,\ldots,x_g)=(e^{2\pi \bi (-\phi_1 - \sum_{j,k=1}^g B_{1j}\Omega^{jk}\xi_k-\bi\xi_1)}, \ldots,e^{2\pi \bi (-\phi_g - \sum_{j,k=1}^g B_{gj}\Omega^{jk}\xi_k-\bi\xi_g)})  \mapsto (\xi_1,\ldots,\xi_g).
\end{equation}
   
We can relate this to \cite[Chapter 1]{Fuk02}, where one SYZ mirror $(\bT^{2g},\omega_\tau^\bC)^\vee$ is $\cM(\tilde{L}_{\text{pt}})$ where $\tilde{L}_{\text{pt}}$ is the universal cover of what we have been calling a vertical Lagrangian
$$
{L}_{\text{pt}} = \ell_{\infty, 0}, \quad {L}_{\text{pt}}(b) = \ell_{\infty, b}
$$ 
so that $\text{pt} = 0 \in \bR^g$ and ${L}_{\text{pt}}(b)$ shifts the Lagrangian by adding $b \in \bR^g$. Then Fukaya defines the moduli space of Lagrangians with connection 
$$
\cM(\tilde L)=\{[L,\cL] \mid L \text{ is a flat Lagrangian submanifold of $\bT^{2g}$ parallel to $L(0)$} \}
$$
in other words, equivalence classes of pairs of Lagrangians $L$ with flat line bundle with connection $\cL$ that are parallel to $L(0)$ for linear Lagrangian $L(0)$ whose universal cover passes through all lattice points $\bZ^{g}$. Fukaya shows directly that this moduli space is connected and Hausdorff. Then again letting $\tilde L$ denote a lift to the universal cover, plugging in our notation to \cite[Proposition 1.3]{Fuk02} we see that the SYZ mirror to $(\bR^{2g}/\Omega \bZ^g \oplus \bZ^g, \bi\sum_{j=1}^g d\xi_j \wedge d\theta_j+\sum_{j,k=1}^g B_{jk}dr_j \wedge d\theta_k) = (\bR^{2g}/ \bZ^{2g}, \sum_{j,k=1}^g (B_{jk}+\bi\Omega_{jk})dr_j \wedge d\theta_k)$ is 
$$
\cM(\tilde L_{\text{pt}}) = \frac{\bR^{2g}/\tilde \ell_{\infty, 0} \oplus \Hom_\bR(\tilde \ell_{\infty, 0}, \bR)}{(\Omega \bZ^g \oplus \bZ^g)/(\Omega \bZ^g \oplus \bZ^g \cap \tilde\ell_{\infty, 0}) \oplus \Hom_\bR(\Omega \bZ^g \oplus \bZ^g \cap \tilde \ell_{\infty, 0},\bR)} = \left(\frac{\bR^{g}}{\Omega\bZ^g}\right)_{\{\xi_j\}} \oplus \frac{\Hom_\bR(\tilde \ell_{\infty, 0}, \bR)}{\Hom_\bR(\bZ^g ,\bR)}
$$
$$
\ni  -(\xi_1,\ldots,\xi_g)\oplus (\phi_1 + (B\Omega^{-1} \xi)_1,\ldots,\phi_g + (B\Omega^{-1} \xi)_g)
$$
$$
=(-\phi_1 - \sum_{j,k=1}^g B_{1j}\Omega^{jk}\xi_k-\bi\xi_1, \ldots,-\phi_g - \sum_{j,k=1}^g B_{gj}\Omega^{jk}\xi_k-\bi\xi_g) 
$$
because $(V,\Omega)$ in \cite{Fuk02} is $(\bR^{2g},\sum_{j,k=1}^g (B_{jk}+\bi\Omega_{jk})dr_j \wedge d\theta_k)$ and $\Gamma = \Omega \bZ^g \oplus \bZ^g\implies \Gamma \cap \tilde \ell_{\infty,0}=\bZ^g$ corresponding to the lattice in the $\theta$ coordinates. This calculation comes from the mirror expression above in Equation \eqref{eq:SYZ_complex}. The sign is due to our choice of $\tau=B+\bi \Omega_{\mathrm{ACLL}}$ for the coefficients in the complexified symplectic form, versus $\bi\Omega_{\mathrm{Fuk}}=-B+\bi \omega$ in \cite[p411]{Fuk02}. (A remark on notation: our $\Omega=\Omega_{\mathrm{ACLL}}$ is the real part while his 
$\Omega=\Omega_{\mathrm{Fuk}}$ is the whole complexified form.)

\subsubsection{Moduli spaces of objects}\label{sec:moduli-of-objects}
We first fix a complexified symplectic structure $\omega_\tau$ on $\bT^{2g}$ where $\tau\in \cH_g$. 
For each integer $\bk$, we define the moduli space
$$
\cM(\hat{\ell}_{\bk})_\tau := \{ \hat{\ell}_{\bk,[v]} \mid [v]\in V_\tau\}
$$
of objects in $\Fuk(\bT^{2g}, \omega_\tau)$ mirror to line bundles in $\Pic^{\bk}(V_\tau)$. There is a bijection 
\begin{equation} 
\Mir^{\bk}_\tau: \cM(\hat{\ell}_{\bk})_\tau   \lra \Pic^{\bk}(V_\tau), \quad \hat{\ell}_{\bk,[v]}\mapsto \cL_{\bk,[v]}
\end{equation}
We equip the set $\cM(\hat{\ell}_{\bk})_\tau$ with the structure of a complex manifold such that the above bijection is an isomorphism of complex manifolds. Let $\cM(\hat{\ell}_\infty)_\tau $ denote the moduli space $\cM(\tL_{\mathrm{pt}})$ defined in the previous subsubsection (Section \ref{sec:SYZ}). We have an isomorphism of complex manifolds:
\begin{equation}
\mathrm{Mir}^\infty_\tau: \cM(\hat{\ell}_\infty)_\tau \lra V_\tau, \quad  \hat{\ell}_{\infty, [v]}\mapsto [v].
\end{equation}

Varying $\tau\in \cH=\cH_g$, we obtain universal moduli spaces of objects
\begin{equation} \label{eqn:universal-M-H}
\cM(\hat{\ell}_{\bk})_{\cH} \cong \Pic^{\bk}(V_{\cH}) \lra \cH, \quad
\cM(\hat{\ell}_\infty)_{\cH} \cong V_{\cH} \lra \cH
\end{equation}
over $\cH$, and similarly 
\begin{equation} \label{eqn:universal-M-A}
\cM(\hat{\ell}_{\bk})_{\cA} \cong \Pic^{\bk}(V_{\cA}) \lra \cA, \quad
\cM(\hat{\ell}_\infty)_{\cA} \cong V_{\cA} \lra \cA
\end{equation}
over $\cA=\cA_g$ or $\cA_g^F$.

\subsection{Morphisms and differentials}
\label{subsec:mors_fiber}
Recall the notation for Lagrangian objects from Section \ref{sec: fiber Fuk objects} that $\hat{\ell}_{\sk,[a+\tau b]} = (\ell_{\sk,b},\varepsilon_{a})$, and for the vertical lagrangians $\hat{\ell}_{\infty,[a+\tau b]} = (\ell_{\infty,b},\varepsilon_{a})$, for $[a+\tau b]\in V_{\tau}^+$.  In this section, given two Lagrangian objects as above, we describe the morphism between them, which is a Floer cochain complex. 
 Then we discuss the differential on the Floer complex and  in turn the Floer cohomology.  We will discuss different cases separately below.

\subsubsection{Between $\hat{\ell}_{\sk_1,[a_1+\tau b_1]} = (\ell_{\sk_1,b_1},\varepsilon_{a_1})$ and $\hat{\ell}_{\sk_2,[a_2+\tau b_2]} = (\ell_{\sk_2,b_2},\varepsilon_{a_2})$ when $\sk_1\neq \sk_2$}
\label{subsubsec:k1-not-equal-to-k2}

When $\sk_1\neq \sk_2$, the Lagrangians $\ell_{\sk_1,b_1}$ and $\ell_{\sk_2,b_2}$ intersect transversely at $|\sk_2-\sk_1|^g$ points, which are
\begin{equation}
\ell_{\sk_1,b_1} \cap \ell_{\sk_2,b_2} = \left\{ p_{\sk_1,b_1,\sk_2,b_2}(\lambda): \lambda\in I_{g, |\sk_2-\sk_1|} =\{0,\ldots, |\sk_2-\sk_1|-1\}^g  \right\}, 
\end{equation}
where $p_{\sk_1, b_1, \sk_2,b_2}(\lambda) \in \bR^{2g}/\bZ^{2g}$ is given by 
\begin{equation}\label{eq:T4_Lag_intersec}
r\equiv \frac{\lambda + b_2-b_1}{\sk_2-\sk_1},\quad \theta\equiv \frac{-\sk_1\lambda  + \sk_2 b_1 -\sk_1 b_2}{\sk_2-\sk_1}.
\end{equation} The morphism between $\hat{\ell}_{\sk_1,b_1}$ and $\hat{\ell}_{\sk_2,b_2}$ is the Floer cochain complex 
\begin{equation}
\Hom_{\Fuk(\bT^{2g})}(\hat{\ell}_{\sk_1,[a_1+\tau b_1]}, \hat{\ell}_{\sk_2,[a_2+\tau b_2}]) = CF^*(\hat{\ell}_{\sk_1,[a_1+\tau b_1]}, \hat{\ell}_{\sk_2,[a_2+\tau b_2}]).
\end{equation}
As a complex vector space, 
\begin{equation}
CF^*(\hat{\ell}_{\sk_1,[a_1+\tau b_1]}, \hat{\ell}_{\sk_2,[a_2+\tau b_2}])= \bigoplus_{p\in \ell_{\sk_1,b_1} \cap \ell_{\sk_2,b_2}}  \Hom( (\varepsilon_{a_1})_p, (\varepsilon_{a_2})_p),
\end{equation}
where $\Hom((\varepsilon_1)_p, (\varepsilon_2)_p) \cong\bC$ is the space of complex  linear maps from 
the 1-dimensional complex vector space $(\varepsilon_{a_1})_p$ to $(\varepsilon_{a_2})_p$.  As shown in \cite[Example 4.6]{ACLLb}, a  $\bZ$-grading can be defined on $CF^*(\hat{\ell}_{\sk_1,[a_1 + \tau b_1]}, \hat{\ell}_{\sk_2,[a_2 + \tau b_2]})$, and the degree of a generator is 0 if $\sk_1 <\sk_2$, and $g$  if $\sk_1>\sk_2$.  Then, recalling notation from Equation \eqref{eq: line bundle mirror to slope k lag} and comparing with Theorem \ref{thm: Ext on V},  we see that when $\sk_1\neq \sk_2$, we indeed have 
\begin{equation}\label{eq:mirror morphisms}
CF^*(\hat{\ell}_{\sk_1,[a_1 + \tau b_1]}, \hat{\ell}_{\sk_2,[a_2 + \tau b_2]}) = \Ext^*(\cL_{\sk_1, [a_1+\tau b_1]} ,\cL_{\sk_2, [a_2+\tau b_2]} ) . 
\end{equation}

The differential on $CF^*(\hat{\ell}_{\sk_1,[a_1 + \tau b_1]}, \hat{\ell}_{\sk_2,[a_2 + \tau b_2]})$ is zero because it is given by the count of pseudoholomorphic bigons with boundaries on $\ell_{\bk_1, b_1}$ and $ \ell_{\bk_2, b_2}$, which lift to embedded bigons in the universal cover of $\bT^{2g}= \bR^{2g}/\bZ^{2g}$.  However, any lifts of  $\ell_{\bk_1,b_1}$ and $\ell_{\bk_2,b_2}$ to the universal cover are planes with only one intersection point, so they cannot bound a bigon.  So the Floer cochain complex is equal to the Floer cohomology $HF^*(\hat{\ell}_{\sk_1,[a_1 + \tau b_1]}, \hat{\ell}_{\sk_2,[a_2 + \tau b_2]})$.
Therefore, for any $\sk_1,\sk_2\in \bZ$ where $\sk_1\neq \sk_2$, and any 
$\hat\ell_{\sk_i,[v_i]} = \hat\ell_{\sk_i,[a_i +\tau b_i]} \in \cM(\hat{\ell}_{\bk_i})_\tau \cong\Pic^{\sk_i}(V_\tau)$ where $i=1,2$, we have
\begin{equation} \label{eq:mirror-morphisms-HF}
HF^*\big(\hat{\ell}_{\sk_1,[v_1]}, \hat{\ell}_{\sk_2,[v_2]}\big) \cong \Ext^*\big(\cL_{\sk_1, [v_1]} ,\cL_{\sk_2, [v_2]} \big) . 
\end{equation}
In other words, for any $\sk, \sk'\in \bZ$ where $\sk\neq \sk'$, any $w\in \{0,1,\ldots, g\}$, and any pair $(\hat{\ell}, \hat{\ell}')\in \cM(\hat{\ell}_{\sk})_\tau\times \cM(\hat{\ell}_{\sk'})_\tau$, we have
\begin{equation} \label{eqn:HF-k-kprime}
HF^w\big(\hat{\ell}, \hat{\ell}'\big) \cong 
\Ext^w\big(\Mir^{\sk}_\tau(\hat{\ell}) ,\Mir^{\sk'}_\tau(\hat{\ell}')\big)  
\end{equation}
where $\Mir^{\sk}_\tau$ is defined in Section \ref{sec:moduli-of-objects}. 

We now vary the pair $(\hat{\ell}, \hat{\ell}')$ in $\cM(\hat{\ell}_{\sk})_\tau\times \cM(\hat{\ell}_{\sk'})_\tau$ to obtain a family version of \eqref{eqn:HF-k-kprime}. 
We fix $\tau\in \cH$ and $\sk, \sk'\in \bZ$, where $\sk\neq \sk'$. There is an isomorphism
$$
\Mir^{\sk}_\tau\times \Mir^{\sk'}_\tau: 
\widecheck{P}:= \cM(\hat{\ell}_{\sk})_\tau\times \cM(\hat{\ell}_{\sk'})_\tau
\longrightarrow P:= \Pic^{\sk}(V_\tau) \times \Pic^{\sk'}(V_\tau)
$$
of complex projective manifolds. Let $\cE^w_{\tau,\sk,\sk'}$ be the locally free sheaf  of $\cO_P$-modules on $P$ in part 1 and part 2 of Theorem \ref{thm:E-tau-k-k}, so that the fiber of $\cE^w_{\tau, \sk, \sk'}$ over $(L,L')\in P$ is $\Ext^w(L,L')$.

\begin{theorem}  For any $w\in \{0,1,\ldots, g\}$ there is a locally free sheaf 
$\widecheck{\cE}_{\tau, \sk, \sk'}$ of $\cO_{\widecheck{P}}$-modules on $\widecheck{P}$
whose fiber over $(\hat{\ell}, \hat{\ell}')\in \widecheck{P}$ is the $w$-th Floer cohomology $HF^w(\hat{\ell}, \hat{\ell}')$, such that 
\begin{equation}\label{eqn:E-tau-k-kprime-mirror}
\widecheck{\cE}^w_{\tau, \sk, \sk'} \cong 
(\Mir^{\sk}_\tau \times \Mir^{\sk'}_\tau)^* \cE^w_{\tau, \sk, \sk'}.
\end{equation}
\end{theorem}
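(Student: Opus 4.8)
The plan is to take $\widecheck{\cE}^w_{\tau,\sk,\sk'}$ to be, by definition, the pullback $(\Mir^{\sk}_\tau \times \Mir^{\sk'}_\tau)^*\cE^w_{\tau,\sk,\sk'}$, so that the asserted isomorphism \eqref{eqn:E-tau-k-kprime-mirror} holds tautologically, and then to verify the two remaining claims: that this pullback is locally free, and that its fiber over a point $(\hat{\ell}, \hat{\ell}')$ is the Floer cohomology $HF^w(\hat{\ell}, \hat{\ell}')$. First I would record that $\Mir^{\sk}_\tau \times \Mir^{\sk'}_\tau : \widecheck{P} \to P$ is an isomorphism of complex projective manifolds, as noted immediately before the statement. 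Pullback along an isomorphism carries locally free sheaves to locally free sheaves of the same rank, and by parts (1) and (2) of Theorem \ref{thm:E-tau-k-k} the sheaf $\cE^w_{\tau,\sk,\sk'}$ is locally free on $P$ (of rank $|\sk'-\sk|^g$ in the single degree $w=0$ when $\sk<\sk'$, and $w=g$ when $\sk>\sk'$, and zero otherwise, since $\sk \neq \sk'$). Hence $\widecheck{\cE}^w_{\tau,\sk,\sk'}$ is locally free of the same rank.

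For the fiber, since $f := \Mir^{\sk}_\tau \times \Mir^{\sk'}_\tau$ is an isomorphism, the fiber of $f^*\cE^w_{\tau,\sk,\sk'}$ at $(\hat{\ell}, \hat{\ell}')$ is canonically the fiber of $\cE^w_{\tau,\sk,\sk'}$ at the image point $f(\hat{\ell},\hat{\ell}') = (\Mir^{\sk}_\tau(\hat{\ell}), \Mir^{\sk'}_\tau(\hat{\ell}'))$, which by the defining property of $\cE^w_{\tau,\sk,\sk'}$ in \eqref{eqn:cE-tau-k-k} is $\Ext^w(\Mir^{\sk}_\tau(\hat{\ell}), \Mir^{\sk'}_\tau(\hat{\ell}'))$. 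The pointwise mirror isomorphism \eqref{eqn:HF-k-kprime} then identifies this with $HF^w(\hat{\ell}, \hat{\ell}')$, giving the stated description of the fibers.

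The substantive point, and where care is needed, is to ensure that this fiberwise identification genuinely reflects a holomorphic family structure on the Floer side, rather than being a mere fiberwise coincidence. The cleanest route uses that for $\sk \neq \sk'$ the Lagrangians $\ell_{\sk,b}$ and $\ell_{\sk',b'}$ intersect transversely and the Floer differential vanishes (Section \ref{subsubsec:k1-not-equal-to-k2}), so $HF^* = CF^*$ is spanned by the intersection points $p_{\sk,b,\sk',b'}(\lambda)$ with $\lambda \in I_{g,|\sk'-\sk|}$ together with the connection data. I would invoke Fukaya's family Floer construction (the holomorphic bundle $\cE(\hat{\ell}_0,\hat{\ell}_1)$ of \cite{Fuk02} recalled in Section \ref{sec:background}) to assemble these complexes into a holomorphic vector bundle over $\widecheck{P}$, and then check that the isomorphisms \eqref{eqn:HF-k-kprime}, which on the $B$-side are realized by the explicit theta-function sections $s_{\tau,\sk,z,\lambda}$ of Section \ref{sec:shift} depending holomorphically on $z$, are the fiberwise restrictions of a single $\cO_{\widecheck{P}}$-linear isomorphism. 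This holomorphic dependence is exactly what upgrades \eqref{eqn:HF-k-kprime} from a pointwise statement to the sheaf isomorphism \eqref{eqn:E-tau-k-kprime-mirror}; verifying it is the only nontrivial step, and the rest is formal.
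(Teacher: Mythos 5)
Your proposal matches the paper's own (implicit) argument: the paper states this theorem without a separate proof precisely because $\widecheck{\cE}^w_{\tau,\sk,\sk'}$ is taken to be the pullback $(\Mir^{\sk}_\tau \times \Mir^{\sk'}_\tau)^*\cE^w_{\tau,\sk,\sk'}$, which is locally free since the mirror map is an isomorphism of complex projective manifolds and $\cE^w_{\tau,\sk,\sk'}$ is locally free by parts 1 and 2 of Theorem \ref{thm:E-tau-k-k}, with fibers identified with $HF^w(\hat{\ell},\hat{\ell}')$ via the pointwise isomorphism \eqref{eqn:HF-k-kprime}. The intrinsic family-Floer structure you raise in your final paragraph is a legitimate refinement, but it is neither needed for nor carried out in the statement as written, which only asserts the existence of a sheaf with the given fibers together with the pullback identification.
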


Finally, we vary $\tau\in \cH$ to obtain a global and universal
family version of \eqref{eqn:HF-k-kprime}. There is an isomorphism 
$$
\Mir^{\sk}_{\cH}\times_{\cH} \Mir^{\sk'}_{\cH}: 
\widecheck{P}:= \cM(\hat{\ell}_{\sk})_\cH\times_\cH \cM(\hat{\ell}_{\sk'})_\cH
\longrightarrow P:= \Pic^{\sk}(V_\cH) \times_\cH \Pic^{\sk'}(V_\cH)
$$
of complex manifolds. Let $\cE^w_{\cH,\sk,\sk'}$ be the locally free sheaf  of $\cO_P$-modules on $P$ in part 1 and part 2 of Theorem \ref{thm:E-k-k}, so that the fiber of $\cE^w_{\cH, \sk, \sk'}$ over $(\tau, L,L')\in P$ is $\Ext^w(L,L')$. We have the following global and universal family version of \eqref{eqn:HF-k-kprime}.

\begin{theorem}  For any $w\in \{0,1,\ldots, g\}$ there is a locally free sheaf 
$\widecheck{\cE}^w_{\cH, \sk, \sk'}$ of $\cO_{\widecheck{P}}$-modules on $\widecheck{P}$
whose fiber over $(\tau, \hat{\ell}, \hat{\ell}')\in \widecheck{P}$ is the $w$-th Floer cohomology $HF^w(\hat{\ell}, \hat{\ell}')$, such that 
\begin{equation}\label{eqn:E-k-kprime-mirror}
\widecheck{\cE}^w_{\cH, \sk, \sk'} \cong 
(\Mir^{\sk}_\cH\times_{\cH} \Mir^{\sk'}_\cH)^* \cE^w_{\cH, \sk, \sk'}.
\end{equation}
\end{theorem}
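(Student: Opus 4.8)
The plan is to produce $\widecheck{\cE}^w_{\cH, \sk, \sk'}$ by transport of structure across the mirror map and then to read off its fibers, rather than to construct a holomorphic family of Floer groups from scratch. Concretely, I would \emph{define}
\[
\widecheck{\cE}^w_{\cH, \sk, \sk'} := \left(\Mir^{\sk}_{\cH}\times_{\cH} \Mir^{\sk'}_{\cH}\right)^* \cE^w_{\cH, \sk, \sk'},
\]
so that the isomorphism \eqref{eqn:E-k-kprime-mirror} holds tautologically. The conceptual point is that on the symplectic side the groups $HF^w(\hat{\ell},\hat{\ell}')$ are a priori only complex vector spaces; the holomorphic family structure over $\widecheck{P}$ is not intrinsic to the Fukaya category but is imported from the B-side through the biholomorphism $\Mir^{\sk}_{\cH}\times_{\cH}\Mir^{\sk'}_{\cH}:\widecheck{P}\to P$ of complex manifolds constructed in Section \ref{sec:universal-picard} and Section \ref{sec:moduli-of-objects}.

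First I would record local freeness. Since we are in the case $\sk\neq\sk'$, Theorem \ref{thm:E-k-k} gives that $\cE^w_{\cH,\sk,\sk'}$ is a locally free $\cO_P$-module (nonzero only for a single value of $w$, where it has rank $|\sk-\sk'|^g$, and zero otherwise). The pullback of a locally free sheaf along a biholomorphism is again locally free of the same rank, and the pullback of the zero sheaf is zero, so in every degree $\widecheck{\cE}^w_{\cH,\sk,\sk'}$ is a locally free sheaf on $\widecheck{P}$. Next I would identify the fibers: since pullback commutes with restriction to points, the fiber of $\widecheck{\cE}^w_{\cH,\sk,\sk'}$ over $(\tau,\hat{\ell},\hat{\ell}')\in\widecheck{P}$ is the fiber of $\cE^w_{\cH,\sk,\sk'}$ over the image point $(\tau,\Mir^{\sk}_\tau(\hat{\ell}),\Mir^{\sk'}_\tau(\hat{\ell}'))\in P$, which by Theorem \ref{thm:E-k-k} is $\Ext^w(\Mir^{\sk}_\tau(\hat{\ell}),\Mir^{\sk'}_\tau(\hat{\ell}'))$. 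The fiberwise cohomological HMS \eqref{eqn:HF-k-kprime}, proved in this subsection by matching the intersection points $p_{\sk_1,b_1,\sk_2,b_2}(\lambda)$ with the theta-function basis $s_{\sk,z,\lambda}$, then identifies this $\Ext$-group with $HF^w(\hat{\ell},\hat{\ell}')$, as required.

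To confirm consistency with the preceding ($\tau$-fixed) theorem, I would restrict the construction over a single $\tau\in\cH$: the restriction of $\cE^w_{\cH,\sk,\sk'}$ to the fiber $\Pic^{\sk}(V_\tau)\times\Pic^{\sk'}(V_\tau)$ is $\cE^w_{\tau,\sk,\sk'}$ (Section \ref{sec:universal-picard}), and $\Mir^{\sk}_\cH\times_{\cH}\Mir^{\sk'}_\cH$ restricts to $\Mir^{\sk}_\tau\times\Mir^{\sk'}_\tau$, so $\widecheck{\cE}^w_{\cH,\sk,\sk'}\big|_{\tau}=\widecheck{\cE}^w_{\tau,\sk,\sk'}$ and \eqref{eqn:E-k-kprime-mirror} restricts to \eqref{eqn:E-tau-k-kprime-mirror}.

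The main obstacle is not the pullback formalism but the input it relies on: that the fiber of the higher direct image $\cE^w_{\cH,\sk,\sk'}=R^w\pi^\cH_{23*}(\cdots)$ over a point $(\tau,L,L')$ genuinely computes $H^w(V_\tau,L'\otimes L^{-1})=\Ext^w(L,L')$. This is a Grauert semicontinuity / cohomology-and-base-change statement, valid precisely because for $\sk\neq\sk'$ the cohomology of $L'\otimes L^{-1}$ is concentrated in a single degree with dimension $|\sk-\sk'|^g$ independent of $(\tau,L,L')$ (Theorem \ref{thm: Ext on V}); this constancy of fiber dimension is exactly what forces $R^w\pi^\cH_{23*}$ to be locally free with the expected fibers. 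Since this is already incorporated into Theorem \ref{thm:E-k-k}, which we may invoke, the remaining argument reduces to the formal transport of structure described above.
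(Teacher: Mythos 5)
Your proposal is correct and is essentially the paper's own (implicit) argument: the paper likewise obtains $\widecheck{\cE}^w_{\cH,\sk,\sk'}$ by transporting the B-side sheaf $\cE^w_{\cH,\sk,\sk'}$ of Theorem \ref{thm:E-k-k} across the biholomorphism $\Mir^{\sk}_{\cH}\times_{\cH}\Mir^{\sk'}_{\cH}$, with the fibers identified with $HF^w(\hat{\ell},\hat{\ell}')$ via the pointwise isomorphism \eqref{eqn:HF-k-kprime} established by the intersection-point/theta-function computation. Your explicit treatment of local freeness, base change, and compatibility with the fixed-$\tau$ statement \eqref{eqn:E-tau-k-kprime-mirror} fills in exactly the details the paper leaves tacit.
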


\subsubsection{Between $\hat{\ell}_{\sk_1,[a_1+\tau b_1]} = (\ell_{\sk_1,b_1},\varepsilon_{a_1})$ and $\hat{\ell}_{\sk_2,[a_2+\tau b_2]} = (\ell_{\sk_2,b_2},\varepsilon_{a_2})$  when $\sk_1=\sk_2=:\sk$}
\label{subsubsection:k1-equal-to-k2}

 Note that $\ell_{\sk, b_1}=\ell_{\sk, b_2}$  if and only if $b_1-b_2\in \bZ^g$, i.e. $[b_1]=[b_2]=[b]$.  When  $[b_1]\neq [b_2]$, the intersection $\ell_{\sk, b_1}\cap\ell_{\sk, b_2}$ is empty.   When $[b_1]=[b_2]=[b]$, we have $\ell_{\sk, b_1}=\ell_{\sk, b_2}=\ell_{\sk,b}$, and in order to define the Floer complex between the two objects $\hat \ell_{\sk, [a_1+\tau b_1]}$ and $\hat\ell_{\sk, [a_2+\tau b_2]}$, we can perturb  $\ell_{\sk, b_1}$  to $\phi_H^1(\ell_{\sk, b_1})$, so that it intersects transversely with $\ell_{\sk, b_1}$, where $\phi_H^1$ is the time-1 flow of a Hamiltonian $H$ on $\bT^{2g}$.  Then we can define $CF^*(\hat{\ell}_{\sk,[a_1 + \tau b_1]}, \hat{\ell}_{\sk,[a_2 + \tau b_2]})$ to be
 \begin{equation}\label{eq:perturbed morphism}
CF^*(\phi^1_{H}(\hat{\ell}_{\sk,[a_1 + \tau b_1]}), \hat{\ell}_{\sk,[a_2 + \tau b_2]}) = \bigoplus_{p\in \phi^1_H(\ell_{\sk_1,b_1}) \cap \ell_{\sk_2,b_2}}  \Hom( ((\phi^1_H)^*\varepsilon_{a_1})_p, (\varepsilon_{a_2})_p).
 \end{equation}
 
 The Lagrangian $\ell_{\sk,b}\cong (S^1)^g$ is topologically a product of $g$ circles, and we can construct a Hamiltonian $H$ so that each $S^1$ copy is perturbed in a way that results in two transverse intersection points, one of degree 0 and another of degree 1.   See Figure \ref{fig:k1=k2 in T2} for an illustration of this Hamiltonian perturbation when $g=1$, and one can also view this figure as the images of $\ell_{\sk,b}$ and $\phi^1_{H}(\ell_{\sk,b})$ under the projection to one of the $g$ components. Because the Maslov index is additive with respect to taking products, the number of degree $w$ intersection points in $\phi^1_H(\ell_{\sk,b})\cap \ell_{\sk,b} \subset \bT^{2g}$ is $\binom{g}{w}$, by choosing $w$ of the $g$ components to have degree 1 and the rest degree 0.  Hence, 
\begin{equation}\label{eq: CF for k1=k2}
CF^w(\hat{\ell}_{\sk,[a_1 + \tau b_1]}, \hat{\ell}_{\sk,[a_2 + \tau b_2]}) = \begin{cases}
0, & \text{if } [b_1]\neq [b_2],\\
\bC^{\binom{g}{w}}, & \text{if } [b_1]= [b_2].
\end{cases}
\end{equation}
 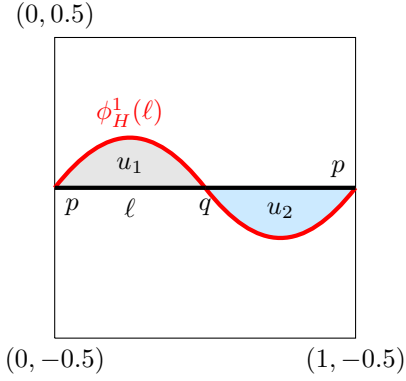
\begin{figure}
\centering                    \begin{tikzpicture}
   \draw (0,0) node[below] {$(0,-0.5)$} --(4,0) node[below ]{$(1,-0.5)$}--(4,4)--(0,4) node[above]{$(0,0.5)$}--(0,0);
   \filldraw[ultra thick, color=red, fill=gray!20] (0,2) .. controls (0.7, 2.89) and  (1.3,2.89).. (2,2) node[midway, above]{$\phi^1_H(\ell)$};
      \filldraw[ultra thick, color=red,fill=Cblue!20] (2,2) .. controls (2.7,1.11) and (3.3, 1.11).. (4,2);
         \draw[ultra thick] (0,2) node [below right] {$p$}--(2,2) node[below]{$q$} node[midway, below]{$\ell$};
         \draw[ultra thick] (2,2)--(4,2) node [above left]{$p$};
        \node at (1, 2.3) {$u_1$};
        \node at (3, 1.7) {$u_2$};
   \end{tikzpicture}
    \caption{The $1\times 1$ square above, with the opposite sides identified, illustrates $\bT^{2g}=\bR^{2g}/\bZ^{2g}$ in the case when $g=1$.  The thick black line illustrates a circle that is the  linear Lagrangian $\ell=\ell_{0,0}$ of slope zero.  The red curve is the perturbed Lagrangian $\phi^1_H(\ell)$.  These two circles, $\ell$ and $\phi^1_H(\ell)$, intersect at two points, $p$ and $q$, with $p$ of degree $0$ and $q$ of degree $1$.  They bound two bigons $u_1$ and $u_2$ of equal area but in the opposite directions, shaded by gray and blue colors, respectively.}
    \label{fig:k1=k2 in T2}
\end{figure}

More specifically, let
\begin{equation}\label{eq:y}
y_j=\sum_{l=1}^g \Omega_{jl} (\theta_l - b_l+  \sk r_l), \quad j=1,\ldots,g. 
\end{equation} 
Then $(r_1,\ldots, r_g, y_1,\ldots,  y_g)$ are 
local coordinates on $\bT^{2g}$. Note that 
$dr_j$, $d\theta_j$, $dy_j$ are {\em global} closed 1-forms on $\bT^{2g}$, and 
\begin{equation}\label{eq:dy-dtheta-dr}
dy_j =\sum_{l=1}^g \Omega_{jl}(d\theta_l+\sk dr_l), \quad j=1,\ldots, g. 
\end{equation}
We have
$$
\sum_{j=1}^g dr_j\wedge dy_j 
=\sum_{j,l=1}^g dr_j\wedge \Omega_{jl}(d\theta_l +\sk d r_l) 
= \omega_\Omega+\sk \sum_{j,l=1}^g \Omega_{jl} dr_j\wedge dr_l =\omega_\Omega,
$$
where the last equality holds because $\Omega_{jl}=\Omega_{lj}$. 
Therefore, $(r_j,y_j)$ are local Darboux coordinates on the symplectic torus $(\bT^{2g}, \omega_\Omega)$.
The Lagrangian $\ell_{\sk,b}$ is given by $y_j=0$ in coordinates $(r_j,y_j)$. Let $\epsilon>0$
be a small positive number.  Then we can choose the Hamiltonian to be 
\begin{equation}\label{eq: self intersection H}
H := \epsilon \sum_{j=1}^r \cos(2\pi r_j),
\end{equation}
which is a smooth function on $\bT^{2g}$, and 
$$
dH = -2\pi \epsilon \sum_{j=1}^r  \sin(2\pi r_j) dr_j = i_{X_H}\omega_\Omega
\quad \text{where} \quad
X_H = 2\pi\epsilon  \sum_{j=1}^g \sin(2\pi r_j) \frac{\partial}{\partial y_j}.
$$
So the flow of the Hamiltonian vector field $X_H$ is 
$\phi_H^t(r_j,y_j)= (r_j,  y_j+ 2\pi\epsilon \sin(2\pi r_j) t)$, and 
$\phi^1_H(\ell_{\sk,b})$ is given by $y_j = 2\pi\epsilon \sin(2\pi r_j)$, which
intersects $\ell_{\sk,b}$ transversely at $2^g$ points
\begin{equation}\label{eq: self intersection pts}
\left\{ p_\delta= \Big(\frac{\delta_1}{2},\ldots, \frac{\delta_g}{2}, 0,\ldots,0\Big) \mid \delta=(\delta_1,\ldots, \delta_g), \delta_j\in \{0,1 \} \right\}.
\end{equation}
 For each $j$, the points $(r_j, y_j)=(\frac{\delta_j}{2}, 0)$, with $\delta_j=0,1$, are the  two intersection points between $S^1$ and its perturbation, where $S^1\subset \bT^{2}$ is one of the $g$ components in the product $\ell\cong (S^1)^g\subset (\bT^2)^g$.

\begin{remark}\label{rmk: morse} The perturbed Lagrangian $\phi^1_H(\ell_{\sk,b})$ is contained in 
an open neighborhood $U$ of $\ell_{\sk,b}$ in $\bT^{2g}$ given by $y_j \in (-7\epsilon, 7\epsilon)$.
There is a symplectomorphism $\Psi: U\to U'$ where $U'$ is an open neighborhood
of the zero section in the cotangent $T^*\ell_{\sk,b}$ equipped with the symplectic form
$-d(\sum_{j=1}^g y_j dr_j) = \sum_{j=1}^g dr_j\wedge dy_j$ (the negative of the standard symplectic form on $T^*\ell_{\sk,b}$). 
Let $f = -H|_{\ell_{\sk,b}} = -\epsilon\sum_{j=1}^g \cos(2\pi r_j)$, which is a smooth function on $\ell_{\sk,b}$ (note that $\ell_{\sk,b}$ is the zero section in $T^*\ell_{\sk,b}$ defined by $y=0$, so $(r_1,\ldots, r_g)$ are  coordinates on $\ell_{\sk,b}$). Let $\Gamma(df)\subset T^*\ell_{\sk,b}$
be the graph of the exact 1-form $df = 2\pi \epsilon \sum_{j=1}^g \sin(2\pi r_j)$. Then $\Gamma(df)=\Psi(\phi^1_H(\ell_{\sk,b}))\subset U'$. 
The critical points of $f$, or equivalently the zeros of $df$, are
\begin{equation}
\left\{ r_\delta=\Big(\frac{\delta_1}{2},\ldots, \frac{\delta_g}{2} \Big): \delta_j \in \{ 0,1 \} \right\},
\end{equation}
which coincides with Equation \eqref{eq: self intersection pts}, which are the self-intersection points of $\ell_{\sk,b}\subset \bT^{2g}$ after the Hamiltonian perturbation.
\end{remark}

For each intersection point $p_\delta$ in Equation \eqref{eq: self intersection pts}, let us now discuss its degree, $\deg(p_{\delta})$, in the $\bZ$-graded Floer complex $CF^*(\phi^1_H(\ell_{\sk,b}), \ell_{\sk, b})$.  We compute below, culminating in Equation \eqref{eq: deg p},  that $\deg(p_\delta)=\sum_{j=1}^g \delta_j$, which is equal to the number of $j$ for which $\delta_j=1$.  Therefore, for each $1\leq w\leq g$, there are $\binom{g}{w}$ generators $p_\delta$ for $\deg(p_\delta)=w$, and hence $CF^w (\phi^1_H(\ell_{\sk,b}), \ell_{\sk, b})$ has dimension $\binom{g}{w}$ as stated in Equation \eqref{eq: CF for k1=k2}. (Note that $2^g=(1+1)^g=\sum_{w=0}^g \binom{g}{w}$, so this covers all $2^g$ intersection points.)

This degree computation is similar to that of \cite[Example 4.6]{ACLLb}, and see \cite[Section 4.5]{ACLLb} for a more detailed introduction on grading; we will use similar notations here.  The Lagrangian Grassmannian bundle $LGr:=LGr(\bT^{2g}, \omega_\Omega)$ is trivial, i.e. $LGr=\bT^{2g}\times U(g)/O(g)$.  Over the point $p_\delta \in \bT^{2g}$, under the trivialization of the tangent bundle of $\bT^{2g}$ using the $(r, y)$ coordinates and the identification with the cotangent bundle using the flat metric $\sum_{j=1}^g dr_j^2+dy_j^2$, the tangent space $T_{p_\delta}\ell_{\sk,b}\subset (\bR^{2g},\sum_{j=1}^gdr_j\wedge dy_j)$ is given by $\{y=0\}$ and $T_{p_\delta}\phi^1_H(\ell_{\sk,b})\subset (\bR^{2g},\sum_{j=1}^gdr_j\wedge dy_j)$ is given by 
\[
\{ y_j=4\pi^2\epsilon \cos (\pi \delta_j)r_j=4\pi^2\epsilon(-1)^{\delta_j}r_j,\  j=1,\ldots, g\}.
\]
Hence, under the identification  $LGr_{p_\delta}\cong LGr(\bR^{2g}, \sum_{j=1}^gdr_j\wedge dy_j)\cong U(g)/O(g)$, the tangent spaces   $T_{p_{\delta}}\ell_{\sk,b}$ and $T_{p_\delta}\phi^1_H(\ell_{\sk,b})$ correspond to the  following respective unitary matrices representing the cosets of $O(g)$ in $U(g)$:
\begin{equation}\label{eq: Lagrangian to matrix}
T_{p_\delta}\ell_{\sk,b} \mapsto I_g \quad \text{and} \quad T_{p_\delta}\phi^1_H(\ell_{\sk,b})\mapsto D:=\mathrm{diag}(e^{\bi \pi (-1)^{\delta_1}\psi_\epsilon}, 
\ldots,e^{\bi \pi(-1)^{\delta_g}\psi_\epsilon} ), 
\end{equation}
where $I_g$ denotes the identity matrix, and 
$
\psi_\epsilon=\frac{1}{\pi}\arctan(4\pi^2\epsilon)$. The squared phase map $U(g)/O(g)\to U(1)$ given by $aO(g)\mapsto \det(a)^2$ sends 
\begin{equation}\label{eq: squared phase map}
I_g\mapsto 1 \quad \text{and} \quad D\mapsto e^{2\pi \bi \psi_\epsilon\sum_{j=1}^g (-1)^{\delta_j}}.
\end{equation}
Composing Equations \eqref{eq: Lagrangian to matrix} and \eqref{eq: squared phase map}, we get that the squared phase functions $\alpha_L: L\to U(1)$, with $L$ being $\ell_{\sk,b}$ and $\phi^1_H(\ell_{\sk,b})$, evaluated at $p_\delta$ are
    $\alpha_{\ell_{\sk,b}}(p_\delta)=1$ and $\alpha_{\phi^1_H(\ell_{\sk,b})}=e^{2\pi \bi \psi_\epsilon\sum_{j=1}^g (-1)^{\delta_j}}$.
Then the gradings   $\widetilde \alpha_L: L\to \bR$ on the Lagrangians defined by $\alpha_L=e^{2\pi \bi \widetilde \alpha_L}$, with $L$ being $\ell_{\sk,b}$ and $\phi^1_H(\ell_{\sk,b})$,  evaluate at $p_\delta$ to
\begin{equation}
    \widetilde \alpha_{\ell_{\sk,b}}(p_\delta)=0 \quad \text{and} \quad \widetilde \alpha_{\phi^1_H(\ell_{\sk,b})}(p_\delta)=\psi_\epsilon\sum_{j=1}^g (-1)^{\delta_j}.
\end{equation}
The canonical short path $\lambda(t)$ from  $T_{p_\delta}\phi^1_H(\ell_{\sk,b})$ to $T_{p_{\delta}}\ell_{\sk,b}$ rotates clockwise for  each component.  Using the identification of the tangent spaces with the unitary matrices given in Equation \eqref{eq: Lagrangian to matrix}, $\lambda:[0,1]\to U(g)/O(g)$ can be explicitly written as
\begin{equation}
\lambda(t) = \mathrm{diag}(e^{-\bi\pi \left(\delta_1+(-1)^{\delta_1}\psi_\epsilon\right)t},\ldots,e^{-\bi\pi \left(\delta_g+(-1)^{\delta_g}\psi_\epsilon\right)t})D.
\end{equation}
Then for $\alpha_\lambda:[0,1]\to U(1)$ defined by post-composing $\lambda$ with the the squared phased map and for $\widetilde{\alpha_\lambda}:[0,1]\to \bR$ defined by $\alpha_\lambda=e^{2\pi \bi \widetilde{\alpha_\lambda}}$ and $\widetilde{\alpha_\lambda}(0)=\widetilde\alpha_{\phi^1_H(\ell_{\sk,b})}(p_{\delta})$, we have 
\begin{equation}
    \widetilde{\alpha_\lambda}(t)=\sum_{j=1}^g -\delta_j t+(1-t)(-1)^{\delta_j}\psi_\epsilon.  \end{equation}
Then 
\begin{equation}\label{eq: deg p}
\deg(p_\delta)=(\widetilde\alpha_{\ell_{\sk,b}}(p)-\widetilde\alpha_{\phi^1_H(\ell_{\sk,b})}(p)) - (\widetilde{\alpha_\la}(1) -\widetilde{\alpha_\la}(0))=\widetilde\alpha_{\ell_{\sk,b}}(p)- \widetilde{\alpha_\la}(1)= \sum_{j=1}^g \delta_j.
\end{equation}

\begin{remark}[Continuation of Remark \ref{rmk: morse}]\label{rmk: morse index}
The Hessian of $f$ with respect to the flat Riemannian metric
$\sum_{j=1}^g dr_j^2$ on $\ell_{\sk,b}$ is 
$$
\mathrm{Hess}(f) = 4\pi^2 \epsilon \sum_{j=1}^r \cos(2\pi r_j) dr_j^2. 
$$
The eigenvalues of $\mathrm{Hess}(f)(\frac{\delta_1}{2},\ldots, \frac{\delta_g}{2})$ are
$\{ 4\pi^2\epsilon (-1)^{\delta_j}: j=1\ldots, g\}$. In particular, $f$ is a Morse function, which is consistent with the fact that $\Gamma(df) = \Psi(\phi_H^1(\ell_{\sk,b}))$ intersects the zero section $\Gamma(0)= \Psi(\ell_{\sk,b})$ transversally. The Morse index of the critical point $(\frac{\delta_1}{2},\ldots, \frac{\delta_g}{2})$ of $f$ is equal to the number of $j$ for which $\delta_j=1$, which is equal to the Floer cohomology degree of the intersection point
$(\frac{\delta_1}{2},\ldots, \frac{\delta_g}{2}, 0,\ldots, 0)$ of the Lagrangians $\phi^1_H(\ell_{\sk,b})$  and $\ell_{\sk,b}$ in $\bT^{2g}$. 
\end{remark} 
 Recalling notation from Equation \eqref{eq: line bundle mirror to slope k lag} and comparing  Equation \eqref{eq: CF for k1=k2} with  Theorem \ref{thm: Ext on V}, we can see that in this case with $\sk_1=\sk_2=:\sk$, indeed the morphisms between the Lagrangian objects matches the extension groups which are the morphisms between the mirror objects in $V_\tau$, i.e.
\begin{equation}\label{morphisms_match}
CF^w(\hat{\ell}_{\sk,[a_1 + \tau b_1]}, \hat{\ell}_{\sk,[a_2 + \tau b_2]})=
\Ext^w(\cL_{\sk, [a_1+\tau b_1]} ,\cL_{\sk, [a_2+\tau b_2]} ),
\end{equation}
except when $[b_1]=[b_2]$ but $[a_1]\neq [a_2]$, in which case the extension group above is zero but the Floer cochain complex is nonzero.  This is because in the case when $[b_1]=[b_2]$, we used a Hamiltonian perturbation to define the morphism $CF^p(\hat{\ell}_{\sk,[a_1 + \tau b_1]}, \hat{\ell}_{\sk,[a_2 + \tau b_2]})$ as in Equation \eqref{eq:perturbed morphism}.  Hamiltonian perturbation preserves the Floer cohomology group but not the cochain complex.  As we discuss the differentials below, we shall see that the Floer cohomology group is indeed zero when $[a_1]\neq [a_2]$ and stays nonzero when $[a_1]=[a_2]$, and hence  \begin{equation}HF^w(\hat{\ell}_{\sk,[a_1 + \tau b_1]}, \hat{\ell}_{\sk,[a_2 + \tau b_2]})=
\Ext^w(\cL_{\sk, [a_1+\tau b_1]} ,\cL_{\sk, [a_2+\tau b_2]} )
\end{equation}
for all cases.

{\bf Parametrizing the bigons in the differential.} Consider the differential
$$
\dd: p \in \bC^{\binom{g}{w}} \cong CF^w(\phi^1(\ell_{\sk,b}), \ell_{\sk,b}) \to CF^{w+1}(\phi^1(\ell_{\sk,b}), \ell_{\sk,b}) \cong \bC^{\binom{g}{w+1}}\ni q
$$
corresponding to two intersection points labeled by a choice of $\delta, \delta' \in \bZ_2^g$. In each coordinate where they differ we can have a bigon parametrized by $t$ in the base and $s$ in the height as in Figure \ref{fig:k1=k2 in T2}:
$$
r_j(t) = \begin{cases}
    \frac{\delta_j}{2}, & \delta_j=\delta'_j\\
    \frac{t}{2} \text{ or } 1-\frac{t}{2}, & \delta_j'- \delta_j=1\\
    \frac{1-t}{2} \text{ or } \frac{1+t}{2}, & \delta_j'-\delta_j=-1
\end{cases}, \qquad  \theta_j(s,r) =  s \cdot \sin(2\pi r_j).
$$
However,  only pseudo-holomorphic bigons are counted in the differential. Since $\omega_{\Omega}$ is trivialized in $(r_j,y_j)$ coordinates, we can take the almost complex structure $J$ on $\bT^{2g}$ to be trivial as well, so the two structures are compatible. In particular, in each factor the bigon should be orientation-preserving. Since $\sin(\pi(1\pm t)) = \mp \sin(\pi t)$ reverses orientation, we cannot have a $J$-holomorphic bigon between $p$ and $q$ if $\delta_j'-\delta_j=-1$ for some $j$. Thus, because $\deg(q) = \deg(p)+1$, then $q$ and $p$ can only differ in one coordinate in which $p$ is 0 and $q$ is $\frac{1}{2}$. Let $j(pq)$ be this coordinate.

\begin{example}
    (1) For example, in Figure \ref{fig:k1=k2 in T2}, $g=1$, $\delta=0$, $\delta'=1$, with $p=(r,y)=(0,0)$ and $q=(\frac{1}{2},0)$ we have the two bigons
$$
u_1:[0,1] \times [0,2\pi \epsilon] \to \bT^{2}: (t,s) \mapsto (\frac{t}{2},s \cdot \sin(\pi t))
$$
and
$$
u_2:[0,1] \times [0,2\pi \epsilon] \to \bT^{2}: (t,s) \mapsto (1-\frac{t}{2},-s \cdot \sin(\pi t)).
$$

(2) For an example with $g=4$ if $p=(0,0,\frac{1}{2},0,0,0,0,0)=(r,y)$ and $q=(\frac{1}{2},\frac{1}{2},0,0,0,0,0,0)=(r,y)$ there are no $J$-holomorphic bigons because we can't go from $\frac{1}{2}$ to $0$ in the third coordinate, but if $p=(0,0,\frac{1}{2},0,0,0,0,0)$ and $q=(\frac{1}{2},0,\frac{1}{2},0,0,0,0,0)$ then $\langle \dd p, q\rangle$ counts the two bigons 
$$
u_1(t,s) =(\frac{t}{2},0 ,\frac{1}{2},0,s \cdot \sin(\pi t), 0,0,0)
$$
and
$$
u_2(t,s) =(1-\frac{t}{2},0 ,\frac{1}{2},0,-s \cdot \sin(\pi t), 0,0,0).
$$
\end{example}

{\bf Calculate the weights of discs in the differential.} To finish the calculation of the differential, we need the area of the bigons and the holonomy of the connection 1-form around the bigons. The area is: 
\[
A=\int_{[0,1] \times [0,2\pi \epsilon]} u_1^*\omega_\tau=\int_{[0,1] \times [0,2\pi \epsilon]} u_2^*\omega_\tau.
\] 

We first write the B-field $\omega_B =\sum_{i,j=1}^g B_{jk} dr_j\wedge d\theta_k$ in terms of the Darboux coordinates $(r_j,y_j)$. By 
Equation \eqref{eq:dy-dtheta-dr},
$$
dy_j = \sum_{l=1}^g \Omega_{jl}(d\theta_l+\sk dr_l) 
\implies d\theta_k = \sum_{l=1}^g \Omega^{kl} dy_l -\sk dr_k.
$$
So 
$$
\omega_B =\sum_{j,k=1}^g B_{jk} dr_j \wedge d\theta_k 
=\sum_{j,k,l=1}^g B_{jk}\Omega^{kl} dr_j\wedge dy_l
-\sk \sum_{j,k=1}^g B_{jk} dr_j\wedge dr_k
=\sum_{j,k,l=1}^g B_{jk}\Omega^{kl} dr_j\wedge dy_l
$$
where the last equality holds because $B_{jk}=B_{kj}$. The Hamiltonian flow $\phi_H: \bT^{2g}\times [0,1] \to \bT^{2g}$ is given by 
$$
\phi_H(r_j,  y_j,\ft) = 
\left(r_j, y_j+ \ft 2\pi\epsilon \sin(2\pi r_j)\right).
$$
(We use $\mathfrak{t}$ to parametrize the isotopy so as not to confuse with $t$ which parametrizes the boundary of the bigon.)
Therefore, 
\begin{equation}\label{eq:pullback-B}
\phi_H^*\omega_B = \sum_{j,k,l=1}^g B_{jk}\Omega^{kl} dr_j \wedge
d(y_l + \ft2\pi \epsilon \sin(2\pi r_l) ).
\end{equation}

We know the connection on $\ell_{\sk,b}$, so  it remains to find the connection on $\phi^1_H(\ell_{\sk,b})$. Under Hamiltonian isotopy, the connection on $\ell_{\sk,b}\times [0,1]$ is
$$
d-2\pi \bi (a_1-\mathfrak{t}2\pi \epsilon \sum_{i,j,k=1}^g B_{ij}\Omega^{jk}\sin(2\pi r_k) dr_i)
$$ 
so that it interpolates between that on $\ell_{\sk,b}$ at time $\ft=0$ and the connection on $\phi^1(\ell_{\sk,b})$ at $\ft=1$. This is a correct choice because its curvature equals the restriction of 
$-2\pi\bi \phi_H^*\omega_B$ to $\ell_{\sk,b}\times [0,1]$:  
\begin{equation}
    \begin{aligned}
        -2\pi \bi  \phi_H^*\omega_B \big|_{\ell_{\sk,b}\times [0,1]} 
    &=-2\pi \bi \sum_{j,k,l=1}^g B_{jk}\Omega^{kl} dr_j \wedge
d(y_l + \ft 2\pi \epsilon \sin(2\pi r_l)) \big|_{y=0} \\
& = -2\pi \bi \sum_{j,k,l=1}^g B_{jk}\Omega^{kl} dr_j \wedge
d(\ft2\pi \epsilon \sin(2\pi r_l)) \\
&=  d (-2\pi \bi (a_1-\mathfrak{t}2\pi \epsilon \sum_{j,k,l=1}^g B_{jk}\Omega^{kl}\sin(2\pi r_l) dr_j)).
    \end{aligned}
\end{equation}

Now we have all we need to compute the holonomy. Under the parametrization of the boundaries $\dd u_1,\dd u_2$ in $t$, along $\ell_{\sk,b}$ the holonomy contributes
\[
\int_{p\xrightarrow[\partial u_1]{\ell_{\sk,b}}q} 2\pi \bi a_{2,j(pq)}dr_{j(pq)} = \int_{0}^{1}2\pi \bi  a_{2,j(pq)} d(t/2) = \pi \bi a_{2,j(pq)}
\]
\[
\int_{p\xrightarrow[\partial u_2]{\ell_{\sk,b}}q} 2\pi \bi a_{2,j(pq)}dr_{j(pq)} =  \int_{0}^{1}2\pi \bi  a_{2,j(pq)} d(1-t/2)=-\pi \bi a_{2,j(pq)}.
\]
Going back along $\phi^1(\ell_{\sk,b})$ in the bigon, $t$ goes in the opposite direction:
\[
\int_{q\xrightarrow[\partial u_1]{\phi^1(\ell_{\sk,b})}p} 2\pi \bi ( a_{1,j(pq)} dr_{j(pq)} -2\pi \epsilon \sum_{j,k,l=1}^g B_{jk}\Omega^{kl}\sin(2\pi r_l) dr_j) =2\pi \bi (\int_{1}^0  a_{1,j(pq)} d(t/2) + A')=-\pi \bi a_{1,j(pq)} + 2\pi \bi A',
\]
\[
\int_{q\xrightarrow[\partial u_2]{\phi^1(\ell_{\sk,b})}p}2\pi \bi (a_{1,j(pq)} dr_{j(pq)}-2\pi \epsilon \sum_{j,k,l=1}^g B_{jk}\Omega^{kl}\sin(2\pi r_l) dr_j) =\pi \bi  a_{1,j(pq)} + 2\pi \bi A'.
\]
where
$$
A'=2\pi \epsilon \int_{t=0}^{1} \sum_{i,k=1}^g B_{ik}\Omega^{kj(pq)}\sin(\pi t) d(t/2)
$$
which has an integrand even in $t$ because both $\sin(\pi t)$ and $dt$ change sign, so it is counted the same in both bigons. 

The last step in finding the differential is to determine the signs on the counts of moduli spaces, where each moduli space has one bigon. If $j_1<\ldots<j_w$ are the indices where $\delta$ is $1/2$ in $p$, then define $k$ by $j_k<j(pq)<j_{k+1}$. We have a Koszul sign from identifying an intersection point $p$ labelled by $\delta$ with its orientation line $o_p=e_{j_1} \wedge \ldots \wedge e_{j_w}$ where $e_{j_l} \cong (T_{(1/2,0)}\phi^1(\ell_{\sk,b})_{j_l})=\{\frac{\dd}{\dd y_{j_l}}=-\frac{\dd}{\dd r_{j_l}}\}$ is the tangent space of $
\phi^1(\ell_{\sk,b})$ projected to the $j_l$ factor. We only consider those $j_l$ for which $\delta_{j_l} = \frac{1}{2}$ because they contribute index 1, \cite[Equation (13.6)]{seidel}. We count with trivial spin structure and follow \cite[Equation (11.20), Equation (12.18), Definition of $\mu^1$ p184]{seidel}, where $|c_u|$ of \cite[Equation (12.18)]{seidel} is the sign change $(-1)^{w+k}$ needed to move $e_{j(pq)}$ from the end to the correct ordered position. 

Putting it all together, the $pq$-th coefficient of the differential is
\begin{equation}\label{eq:diffl_calc}
\begin{aligned}
\langle \partial p,q \rangle & =(-1)^w(-1)^{w+k} (e^{2\pi \bi(A+A')} e^{\pi \bi  a_{2,j(pq)}}e^{-\pi \bi   a_{1,j(pq)}}  - e^{2\pi \bi (A+A')}e^{-\pi \bi a_{2,j(pq)}}e^{\pi \bi  a_{1,j(pq)}})\\
& =(-1)^{k} e^{2\pi \bi(A+A')}\left(e^{\pi \bi (a_{2,j(pq)}-a_{1,j(pq)})}-e^{-\pi \bi ( a_{2,j(pq)}-a_{1,j(pq)})}\right) .
\end{aligned}
\end{equation}

{\bf Find the cohomology of the differential.} We prove that
$$
HF^w(\hat{\ell}_{\sk,[a_1+\tau b]}, \hat{\ell}_{\sk,[a_2+\tau b]}) = \begin{cases}
    0, & [a_1] \neq [a_2]\\
    \bC^{\binom{g}{w}}, & [a_1] = [a_2]
\end{cases}, \qquad w=0,\ldots,g
$$
by induction on $g$. For ease of notation, let 
$$
C_g^w:=CF^w(\phi^1(\hat \ell_{\sk,[a_1+\tau b]}),\hat \ell_{\sk,[a_2+\tau b]})=\bC^{\binom{g}{w}}
$$
and
$$
\Delta_k:=e^{2\pi \bi(A+A')}\left(e^{\pi \bi (a_{2,k}-a_{1,k})}-e^{-\pi \bi  (a_{2,k}-a_{1,k})}\right).
$$

{\it Base case: $g=1$.}  For the base case with $g=1$, we need to show $\partial p =0$ iff $[a_1] \neq [a_2] \in \bR/\bZ$. Note $a_{j}=a_{j,1}$ have one component for $j=1,2$. The chain complex is
$$
0 \to \bC^{\binom{1}{0}}=\bC \cdot p \xrightarrow[]{\dd^1} \bC^{\binom{1}{1}} = \bC \cdot q \to 0
$$
where
$$
\dd p = e^{2\pi \bi(A+A')}\left(e^{\pi \bi (a_{2,1}-a_{1,1})}-e^{-\pi \bi  (a_{2,1}-a_{1,1})}\right) \cdot q.
$$
In particular, $\dd^1$ is surjective because $[a_2-a_1] \neq 0$; we have $x-\frac{1}{x} = 0 \iff x^2-1=0 \iff x=\pm 1$ $\iff$ $a_2-a_1 \in \bZ$. The sequence is exact in degree 0 because $\dd^1$ is injective and in degree 1 because $\dd^1$ is surjective. Thus the $g=1$ case is proved.

{\it General case: result for $g-1$ implies result for $g$.} Recall the identity
$$
{\binom{g}{w}} = \binom{g-1}{w} + \binom{g-1}{w-1}.
$$
We use this to split our chain complexes into two pieces, depending on if the last coordinate of the intersection points are 0 or 1/2. Thus the sequence $C_{g}^{w-1} \xrightarrow[]{\dd_g^{w-1}} C_{g}^w \xrightarrow[]{\dd_g^w} C_g^{w+1}$ decomposes
\begin{center}
\begin{tikzpicture}
\node (v1) at (-5,1) {$(C_{g-1}^{w-2}, \frac{1}{2})\ni(x', \frac{1}{2})$};
\node at (-5,0) {$\oplus$};
\node (v3) at (-5,-1) {$(C_{g-1}^{w-1}, 0)\ni(y',0)$};
\node (v2) at (-1.5,1) {$(C_{g-1}^{w-1}, \frac{1}{2})\ni(x,\frac{1}{2})$};
\node at (-1.5,0) {$\oplus$};
\node (v4) at (-1.5,-1) {$(C_{g-1}^{w}, 0)\ni(y,0)$};
\node (v5) at (1.5,1) {$(C_{g-1}^{w}, \frac{1}{2})$};
\node at (1.5,0) {$\oplus$};
\node (v6) at (1.5,-1) {$(C_{g-1}^{w+1}, 0)$};
\draw [{->}] (v1) edge (v2) node [at start, left=85pt, above=40pt] {$(\dd_{g-1}^{w-2}, 1)$};
\draw [{->}] (v3) edge (v2) node [at start, left=60pt, fill=white] {$\cdot (-1)^{w-1} \Delta_g$};
\draw [{->}] (v3) edge (v4) node [at start, left=90pt, below=40pt] {$(\dd_{g-1}^{w-1}, 1)$};
\draw [{->}] (v2) edge (v5) node [at start, right=10pt, above = 40pt] {$(\dd_{g-1}^{w-1}, 1)$};
\draw [{->}] (v4) edge (v5)  node [at start,   fill=white] {$\cdot (-1)^{w} \Delta_g$};
\draw [{->}] (v4) edge (v6) node [at start, right=10pt, below =40pt] {$(\dd_{g-1}^{w}, 1)$};
\end{tikzpicture}
\end{center}
Each column corresponds to a fixed degree of intersection points. We only label intersection points by their first $g$ coordinates $(r_1,\ldots,r_g)$, because all the $y$ coordinates are 0. The top row corresponds to intersection points where the last coordinate is 1/2. The bottom row corresponds to intersection points where the last coordinate is 0. The differential on the top row cannot count bigons in the last coordinate since it is already 1/2 there, so it is a differential on the first $g-1$ coordinates. The differential on the bottom row counts both bigons in the first $g-1$ coordinates, as well as one bigon in the last coordinate, hence we have a horizontal arrow and a diagonal arrow, respectively. An example of this diagram for $g=3, w=1$ is carried out below in Example \ref{ex:diffl}.

{\it Show that $\ker(\dd^w_g) \subset \text{im}(\dd^{w-1}_g)$.} We take an element in the kernel of the differential: $\dd_g^w((x,\frac{1}{2})+ (y,0))=0$ for $x \in C_{g-1}^{w-1}$ and $y \in C_{g-1}^w$. Then with the decomposition of the differential into three arrows we have three terms
\begin{equation}\label{eq:diff_ker}
(\dd_{g-1}^{w-1}x,\frac{1}{2}) + (\dd_{g-1}^w y, 0)+ (-1)^{w}\Delta_g (y,\frac{1}{2})=0.
\end{equation}
Since only the second term has basis elements ending in 0, that term must be zero and $\dd_{g-1}^w y = 0 \in C^w_{g-1}.$ We would like to use induction, but have to check one case before doing so. If all $a_{1,j}=a_{2,j}$ for $1 \leq j \leq g-1$ then we must have $a_{1,g} \neq a_{2,g}$, horizontal maps are 0 and diagonal maps are isomorphisms. In particular $\ker \dd^w|_{C^w_g}=(C^{w-1}_{g-1},\frac{1}{2})=\text{image}(\cdot\Delta_g)|_{(C^{w-1}_{g-1},0)})$ and so the sequence is exact in degree $w$. Otherwise, $[a_1] \neq [a_2]$ in the first $g-1$ coordinates and by induction in $g$
\begin{equation}\label{eq:y_induct_PSS}
 y=\dd_{g-1}^{w-1} y', \quad y' \in C^{w-1}_{g-1}.
\end{equation}
In Equation \eqref{eq:diff_ker} we are left with the remaining first and third terms summing to 0, so taking the first $g-1$ coordinates of that equation (the last coordinate being 1/2)
$$
\dd^{w-1}_{g-1}x + (-1)^{w}\Delta_g y=0 \in C^{w-1}_{g-1}.
$$
Plugging in for $y$ from Equation \eqref{eq:y_induct_PSS} we have
$$
\dd^{w-1}_{g-1}(x + (-1)^{w} \Delta_g y')=0.
$$
Again by induction in $g$ there is some $x' \in C^{w-2}_{g-1}$ so that
$$
x + (-1)^{w} \Delta_g y'=\dd^{w-2}_{g-1} x'.
$$
Putting it all together, we find that our original element in the kernel, $(x, \frac{1}{2}) + (y,0)$ is in fact also the image of $(x',\frac{1}{2})+(y',0)$:
$$
\dd^{w-1}_g((x',\frac{1}{2})+(y',0)) = (\dd^{w-2}_{g-1}x',\frac{1}{2}) + (-1)^{w-1} (\Delta_g y',\frac{1}{2}) + (\dd^{w-1}_{g-1}y',0)
$$
$$
=(x, \frac{1}{2})+\cancel{(-1)^{w} (\Delta_g y',\frac{1}{2})}+ \cancel{(-1)^{w-1} (\Delta_g y', \frac{1}{2})}+(y,0)=(x, \frac{1}{2})+(y,0).
$$
Hence since conversely the image is automatically contained in the kernel for our cochain complex ($\dd^2=0$ from \cite[top of p182]{seidel}), the kernel and image must be equal, and cohomology is 0 when $[a_1] \neq [a_2]$.

\begin{example}\label{ex:diffl}Take $g=3$, $w=1$.
    $$
    0\to \bC(0,0,0) \xrightarrow[]{\dd^0_3} \bC(\frac{1}{2},0,0) \oplus \bC(0,\frac{1}{2},0) \oplus \bC(0,0,\frac{1}{2}) \xrightarrow[]{\dd^1_3} \bC(\frac{1}{2}, \frac{1}{2},0) \oplus \bC(\frac{1}{2},0,\frac{1}{2}) \oplus \bC(0,\frac{1}{2},\frac{1}{2}) \xrightarrow[]{\dd^2_3} \bC(\frac{1}{2},\frac{1}{2},\frac{1}{2})\to 0
    $$
    By direct calculation: $  \text{image}(\dd^0_3) = \bC \cdot  ( \Delta_1 (\frac{1}{2}, 0,0) + \Delta_2 (0, \frac{1}{2},0) + \Delta_3 (0, 0,\frac{1}{2}))
    $.  The diagram above in degree $w=1$ here is
    \begin{center}
\begin{tikzpicture}
\node (v1) at (-5,1) {$0$};
\node at (-5,0) {$\oplus$};
\node (v3) at (-5,-1) {$\bC(0,0,0)\ni(y',0)$};
\node (v2) at (0,1) {$\bC(0,0,\frac{1}{2})\ni(x,\frac{1}{2})$};
\node at (0,0) {$\oplus$};
\node (v4) at (0,-1) {$\bC(\frac{1}{2},0,0) \oplus \bC(0,\frac{1}{2}, 0)\ni(y,0)$};
\node (v5) at (5,1) {$\bC(\frac{1}{2},0,\frac{1}{2})\oplus  \bC(0,\frac{1}{2},\frac{1}{2})$};
\node at (5,0) {$\oplus$};
\node (v6) at (5,-1) {$\bC(\frac{1}{2}, \frac{1}{2},0)$};
\draw [{->}] (v1) edge (v2) node [at start, left=80pt, above=30pt] {$\dd^{-1}=0$};
\draw [{->}] (v3) edge (v2) node [at start,left=70pt,  fill=white] {$\cdot \Delta_3$};
\draw [{->}] (v3) edge (v4) node [at start, left=80pt, below=30pt] {$(\dd_2^{0}, 1)$};
\draw [{->}] (v2) edge (v5) node [at start, right=70pt, above = 30pt] {$(\dd_2^{0}, 1)$};
\draw [{->}] (v4) edge (v5)  node [at start,right=50pt, fill=white] {$\cdot - \Delta_3$};
\draw [{->}] (v4) edge (v6) node [at start, right=90pt, below =30pt] {$(\dd_2^{1}, 1)$};
\end{tikzpicture}
\end{center}
    For an element in the kernel,
    $$\dd^1_3(a(\frac{1}{2},0,0) + b(0,\frac{1}{2},0) + c(0,0, \frac{1}{2})) =(\dd_{2}^{0}(c(0,0)),\frac{1}{2}) + (\dd_{2}^1 (a(\frac{1}{2},0)+b(0,\frac{1}{2})), 0)-\Delta_3 (a(\frac{1}{2},0)+b(0,\frac{1}{2}),\frac{1}{2})=0
    $$
    where comparing with the notation above 
    $(x,\frac{1}{2})=(c(0,0), \frac{1}{2})$ and $ (y,0)=(a(\frac{1}{2},0)+b(0,\frac{1}{2}),0)$. From the second term, with a 0 in the last coordinate, we deduce that
    $$
    (a\Delta_2 -b \Delta_1 )(\frac{1}{2}, \frac{1}{2},0)=0.
    $$
    By induction, $a(\frac{1}{2},0,0) + b(0,\frac{1}{2},0)$ is in the image $(\dd^0_2 y',0)$ where $y'=m(0,0)$ for some multiple $m$. (Or here we can see directly here that $a, b$ differ by a constant of proportionality from $\Delta_1 , \Delta_2$.) That is, $a=m \Delta_1 $, $b=m \Delta_2$. From the coefficients in the remaining terms we have
    $$
    (\dd^{0}_{2} c(0,0)) - \Delta_3 (a(\frac{1}{2},0)+b(0,\frac{1}{2}))=\dd^0_2(c(0,0) - m \Delta_3  (0,0))  = 0.
    $$
    By induction, $c(0,0) - m \Delta_3  (0,0)$ is in the image of the previous differential. The previous differential is in degree $-1$ from $0 \to CF^0_3$, so $c(0,0) - m \Delta_3  (0,0)$ must equal 0 and $c=m \Delta_3$. Thus $a, b, c$ indeed take on the values for which the original element in the kernel will also be in the image of $\dd^0_3$, and cohomology is 0 in degree 1. 
    $$
    a=m \Delta_1 , \quad b=m \Delta_2 , \quad c=m \Delta_3 \implies a(\frac{1}{2},0,0) + b(0,\frac{1}{2},0) + c(0,0, \frac{1}{2}) \in \text{image}(\dd^0_3) .
    $$
This concludes the example.
\end{example}

{\bf Finishing the proof: $[a_1]=[a_2]$.} If $[a_1]=[a_2]$ then all $\Delta_k=0$ so by Equation \eqref{eq:diffl_calc} we know $\dd=0$, and cohomology coincides with the cochain complex. In other words,
\begin{equation} \label{eqn:finishing}
HF^w(\hat{\ell}_{\sk,[a_1+\tau b]}, \hat{\ell}_{\sk,[a_2+\tau b]}) = \begin{cases}
    0, & [a_1] \neq [a_2]\\
    \bC^{\binom{g}{w}}, & [a_1] = [a_2]
\end{cases} \quad = \quad \Ext^w(\cL^{\otimes \sk} \otimes \bbL_{[a_1 + \tau b]}, \cL^{\otimes \sk} \otimes \bbL_{[a_2+\tau b]})
\end{equation}
where the last equality is from the $\sk=\sk'$ cases of Theorem \ref{thm: Ext on V}.

\begin{remark}
    This agrees with Morse homology with twisted coefficients (corresponding to our unitary connection 1-forms) under the PSS isomorphism \cite[Equation (12.14)]{seidel}, see  \cite[Example 2.14 for $S^1$]{BHS19}. See also \cite[Appendix C]{abouzaid2021homological}.
\end{remark}

By \eqref{eqn:finishing}, for any $\sk \in \bZ$, 
any $w\in \{0,1,\ldots, g\}$, and any pair
$(\hat{\ell}, \hat{\ell}')\in \cM(\hat{\ell}_{\sk})_\tau\times \cM(\hat{\ell}_{\sk})_\tau$, we have
\begin{equation} \label{eqn:HF-k-k}
HF^w\big(\hat{\ell}, \hat{\ell}'\big) \cong 
\Ext^w\big(\Mir^{\sk}_\tau(\hat{\ell}) ,\Mir^{\sk}_\tau(\hat{\ell}')\big). 
\end{equation}

We now vary the pair $(\hat{\ell}, \hat{\ell}')$ in $\cM(\hat{\ell}_{\sk})_\tau\times \cM(\hat{\ell}_{\sk})_\tau$ to obtain a family version of \eqref{eqn:HF-k-k}. We fix $\tau\in \cH$ and $\sk\in \bZ$. Then there is an isomorphism
$$
\Mir^{\sk}_\tau\times \Mir^{\sk}_\tau: 
\widecheck{P}:= \cM(\hat{\ell}_{\sk})_\tau\times \cM(\hat{\ell}_{\sk})_\tau
\longrightarrow P:= \Pic^{\sk}(V_\tau) \times \Pic^{\sk}(V_\tau)
$$
of complex projective manifolds. Let $\cE^w_{\tau,\sk,\sk}$ be the coherent sheaf  of $\cO_P$-modules on $P$ in part 3 of Theorem \ref{thm:E-tau-k-k}, so that the fiber of $\cE^w_{\tau, \sk, \sk}$ over $(L,L')\in P$ is $\Ext^w(L,L')$.

\begin{theorem}  For any $w\in \{0,1,\ldots, g\}$ there is a coherent sheaf
$\widecheck{\cE}^w_{\tau, \sk, \sk}$ of $\cO_{\widecheck{P}}$-modules on $\widecheck{P}$
whose fiber over $(\hat{\ell}, \hat{\ell}')\in \widecheck{P}$ is the $w$-th Floer cohomology $HF^w(\hat{\ell}, \hat{\ell}')$, such that 
\begin{equation}\label{eqn:E-tau-k-k-mirror}
\widecheck{\cE}^w_{\tau, \sk, \sk} \cong 
(\Mir^{\sk}_\tau \times \Mir^{\sk}_\tau)^* \cE^w_{\tau, \sk, \sk}.
\end{equation}
In particular, $\widecheck{\cE}_{\tau,\sk, \sk}$  is supported along the diagonal in 
$\cM(\hat{\ell}_{\sk})_\tau\times \cM(\hat{\ell}_{\sk})_\tau$. 
\end{theorem}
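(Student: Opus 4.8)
The plan is to build $\widecheck{\cE}^w_{\tau,\sk,\sk}$ by transport of structure along the mirror isomorphism rather than intrinsically from Floer data, so that all the real content is imported from the B-side sheaf $\cE^w_{\tau,\sk,\sk}$ together with the pointwise identification \eqref{eqn:HF-k-k}. Write $\Psi := \Mir^{\sk}_\tau\times\Mir^{\sk}_\tau\colon \widecheck{P}\to P$ for the isomorphism of complex projective manifolds recorded just before the statement, and set
\[
\widecheck{\cE}^w_{\tau,\sk,\sk} := \Psi^*\,\cE^w_{\tau,\sk,\sk},
\]
where $\cE^w_{\tau,\sk,\sk}$ is the coherent sheaf of part 3 of Theorem \ref{thm:E-tau-k-k}. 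Since $\Psi$ is an isomorphism of complex manifolds and $\cE^w_{\tau,\sk,\sk}$ is coherent, $\widecheck{\cE}^w_{\tau,\sk,\sk}$ is a coherent sheaf of $\cO_{\widecheck{P}}$-modules, and the asserted isomorphism \eqref{eqn:E-tau-k-k-mirror} holds tautologically.

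First I would verify the fiber identification. Because $\Psi$ is an isomorphism, pullback induces for each point $p=(\hat{\ell},\hat{\ell}')\in\widecheck{P}$ a canonical isomorphism of fibers $\widecheck{\cE}^w_{\tau,\sk,\sk}\otimes\kappa(p)\cong\cE^w_{\tau,\sk,\sk}\otimes\kappa(\Psi(p))$, with $\Psi(p)=(\Mir^{\sk}_\tau(\hat{\ell}),\Mir^{\sk}_\tau(\hat{\ell}'))$. By the fiber property of $\cE^w_{\tau,\sk,\sk}$ recorded in Section \ref{sec:higher-direct-image}, the right-hand side is $\Ext^w(\Mir^{\sk}_\tau(\hat{\ell}),\Mir^{\sk}_\tau(\hat{\ell}'))$, which by the pointwise A-side computation \eqref{eqn:HF-k-k} is precisely $HF^w(\hat{\ell},\hat{\ell}')$. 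This yields the claimed fiberwise description.

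Next I would deduce the diagonal support. The support of a pullback is the preimage of the support, and $\Psi$ carries the diagonal $\{(\hat{\ell},\hat{\ell})\}\subset\widecheck{P}$ isomorphically onto the diagonal $\{(L,L)\}\subset P$; hence $\operatorname{supp}\widecheck{\cE}^w_{\tau,\sk,\sk}=\Psi^{-1}(\operatorname{supp}\cE^w_{\tau,\sk,\sk})$ is contained in the diagonal of $\widecheck{P}$ by part 3 of Theorem \ref{thm:E-tau-k-k}.

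Finally, a word on where the difficulty actually sits. In the $\sk\neq\sk'$ case the sheaf $\widecheck{\cE}^w$ is locally free and the fiber statement is automatic from Grauert's base-change theorem, since $\dim HF^w$ is constant. Here, by contrast, $HF^w$ jumps from $0$ off the diagonal to $\bC^{\binom{g}{w}}$ on it, so $\widecheck{\cE}^w_{\tau,\sk,\sk}$ is a torsion sheaf and cohomology-and-base-change is genuinely delicate; all of that delicacy, however, has already been absorbed into Theorem \ref{thm:E-tau-k-k}(3), and the only new A-side input is the pointwise isomorphism \eqref{eqn:HF-k-k} established by the explicit differential computation of Section \ref{subsubsection:k1-equal-to-k2}. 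The main obstacle is therefore conceptual rather than computational: one is forced to define the family Floer sheaf by transport from the algebraic side, since a direct construction of a coherent $\cO_{\widecheck{P}}$-module out of the perturbed Floer complexes would require showing that the Hamiltonian perturbation data and the holomorphic-bigon counts vary holomorphically in $(\hat{\ell},\hat{\ell}')$ in a manner compatible with base change across the diagonal — exactly the step that pulling back $\cE^w_{\tau,\sk,\sk}$ circumvents.
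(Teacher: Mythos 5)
Your proposal coincides with the paper's own (implicit) argument: the paper likewise obtains $\widecheck{\cE}^w_{\tau,\sk,\sk}$ by pulling back the B-side sheaf $\cE^w_{\tau,\sk,\sk}$ of Theorem \ref{thm:E-tau-k-k}(3) along the isomorphism $\Mir^{\sk}_\tau\times\Mir^{\sk}_\tau$, identifies the fibers with $HF^w(\hat{\ell},\hat{\ell}')$ via the pointwise computation \eqref{eqn:HF-k-k} established by the explicit differential calculation, and reads off the diagonal support from the B-side statement. The proof is correct and takes essentially the same approach as the paper.
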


Finally, we vary $\tau\in \cH$ to obtain a global and universal
family version of \eqref{eqn:HF-k-k}. There is an isomorphism 
$$
\Mir^{\sk}_{\cH}\times_{\cH} \Mir^{\sk}_{\cH}: 
\widecheck{P}:= \cM(\hat{\ell}_{\sk})_\cH\times_\cH \cM(\hat{\ell}_{\sk})_\cH
\longrightarrow P:= \Pic^{\sk}(V_\cH) \times_\cH \Pic^{\sk}(V_\cH)
$$
of complex manifolds. Let $\cE^w_{\cH,\sk,\sk}$ be the coherent sheaf  of $\cO_P$-modules on $P$ in part 3 of Theorem \ref{thm:E-k-k}, so that the fiber of $\cE^w_{\cH, \sk, \sk}$ over $(\tau, L,L')\in P$ is $\Ext^w(L,L')$. We have the following global and universal family version of \eqref{eqn:HF-k-k}.

\begin{theorem}  For any $w\in \{0,1,\ldots, g\}$ there is a coherent sheaf 
$\widecheck{\cE}^w_{\cH, \sk, \sk}$ of $\cO_{\widecheck{P}}$-modules on $\widecheck{P}$
whose fiber over $(\tau, \hat{\ell}, \hat{\ell}')\in \widecheck{P}$ is the $w$-th Floer cohomology $HF^w(\hat{\ell}, \hat{\ell}')$, such that 
\begin{equation}\label{eqn:E-tau-k-k-mirror-global}
\widecheck{\cE}^w_{\cH, \sk, \sk} \cong 
(\Mir^{\sk}_\cH\times_{\cH} \Mir^{\sk}_\cH)^* \cE^w_{\cH, \sk, \sk}
\end{equation}
In particular, $\widecheck{\cE}^w_{\cH,\sk, \sk}$  is supported along the diagonal in 
$\cM(\hat{\ell}_{\sk})_\cH \times_{\cH}  \cM(\hat{\ell}_{\sk})_\cH$. 
\end{theorem}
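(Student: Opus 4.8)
The plan is to avoid building a family Floer sheaf from scratch and instead to \emph{define} $\widecheck{\cE}^w_{\cH, \sk, \sk}$ as the pullback of the already-constructed B-side sheaf along the global mirror isomorphism, so that the asserted isomorphism holds by construction, and then to read off both the fiberwise description and the support statement from the corresponding facts on the B-side. Concretely, I would set
$$
\widecheck{\cE}^w_{\cH, \sk, \sk} := (\Mir^{\sk}_\cH\times_{\cH} \Mir^{\sk}_\cH)^* \cE^w_{\cH, \sk, \sk},
$$
where $\cE^w_{\cH, \sk, \sk}$ is the coherent sheaf of $\cO_P$-modules on $P = \Pic^{\sk}(V_\cH)\times_\cH \Pic^{\sk}(V_\cH)$ from \eqref{eqn:cE-k-k} and Theorem \ref{thm:E-k-k}. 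Since $\Mir^{\sk}_\cH\times_{\cH}\Mir^{\sk}_\cH : \widecheck{P}\to P$ is an isomorphism of complex manifolds, its pullback carries coherent $\cO_P$-modules to coherent $\cO_{\widecheck P}$-modules, so $\widecheck\cE^w_{\cH,\sk,\sk}$ is coherent and the isomorphism \eqref{eqn:E-tau-k-k-mirror-global} is tautological.

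Next I would identify the fibers. Because $\Mir^{\sk}_\cH\times_\cH\Mir^{\sk}_\cH$ is a biholomorphism, pullback commutes with passing to the fiber $\otimes_{\cO}\kappa(\cdot)$ at a point: the fiber of $\widecheck\cE^w_{\cH,\sk,\sk}$ at $(\tau,\hat\ell,\hat\ell')$ is canonically the fiber of $\cE^w_{\cH,\sk,\sk}$ at $(\tau, \Mir^{\sk}_\tau(\hat\ell), \Mir^{\sk}_\tau(\hat\ell'))$, which by the construction of $\cE^w_{\cH,\sk,\sk}$ in the paragraph following \eqref{eqn:cE-k-k} is $\Ext^w(\Mir^{\sk}_\tau(\hat\ell), \Mir^{\sk}_\tau(\hat\ell'))$. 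The pointwise mirror identification \eqref{eqn:HF-k-k}, established above via the explicit Floer differential computation and the induction on $g$, then yields $\Ext^w(\Mir^{\sk}_\tau(\hat\ell),\Mir^{\sk}_\tau(\hat\ell'))\cong HF^w(\hat\ell,\hat\ell')$, so the fiber of $\widecheck\cE^w_{\cH,\sk,\sk}$ over $(\tau,\hat\ell,\hat\ell')$ is $HF^w(\hat\ell,\hat\ell')$, as claimed. For the support statement I would observe that the fiberwise bijection $\hat\ell_{\sk,[v]}\mapsto \cL_{\sk,[v]}$ carries the diagonal of $\widecheck P$ isomorphically onto the diagonal of $P$; since $\cE^w_{\cH,\sk,\sk}$ is a torsion sheaf supported along the diagonal by part (3) of Theorem \ref{thm:E-k-k}, its pullback under the isomorphism is supported along the diagonal of $\widecheck P$.

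The routine verifications (coherence under pullback, compatibility of fibers with the isomorphism, and the diagonal-to-diagonal statement) are formal, so the single genuinely delicate point is conceptual rather than computational: one must be confident that the family of Floer cohomology groups $HF^w(\hat\ell,\hat\ell')$ is correctly encoded by the pulled-back coherent sheaf in the torsion regime $\sk=\sk'$, where, unlike the locally free case $\sk\neq\sk'$, cohomology and base change can fail and the sheaf lives on the proper subvariety $\widecheck\Delta\subset\widecheck P$. This is precisely the reason I route the argument through the B-side: once $\widecheck\cE^w_{\cH,\sk,\sk}$ is defined as a pullback along a biholomorphism, its base-change behavior is inherited verbatim from $\cE^w_{\cH,\sk,\sk}$, whose structure is pinned down by Theorem \ref{thm:E-k-k}, and the only A-model input required is the already-proven pointwise isomorphism \eqref{eqn:HF-k-k}.
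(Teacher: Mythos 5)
Your proposal is correct and coincides with the paper's own (largely implicit) argument: the paper likewise obtains $\widecheck{\cE}^w_{\cH,\sk,\sk}$ by transporting the B-side higher-direct-image sheaf $\cE^w_{\cH,\sk,\sk}$ of \eqref{eqn:cE-k-k} across the biholomorphism $\Mir^{\sk}_\cH\times_{\cH}\Mir^{\sk}_\cH$, identifying its fibers through the pointwise isomorphism \eqref{eqn:HF-k-k} established by the explicit Floer differential computation, and reading off the diagonal support from part 3 of Theorem \ref{thm:E-k-k}. Your explicit flagging of the cohomology-and-base-change subtlety in the torsion regime is a careful touch, but the resolution you adopt---inheriting the sheaf structure verbatim from the B-side via the pullback definition---is exactly what the paper does, so no genuinely different idea is involved.
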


\subsubsection{Between $\hat{\ell}_{\infty,[a_1+\tau b_1]} = (\ell_{\infty,b_1},\varepsilon_{a_1})$ and $\hat{\ell}_{\infty,[a_2+\tau b_2]} = (\ell_{\infty,b_2},\varepsilon_{a_2})$ }
\label{subsubsection:infinity-infinity}

For the objects defined in Section \ref{sec: fiber vertical Lagrangian}, note that the Lagrangians $\ell_{\infty,b_1} = \ell_{\infty, b_2}$ if and only if $b_1-b_2\in \bZ^g$, in which case we do a similar perturbation by a Hamiltonian as above but use $\sin(2\pi \theta_j)$ instead of $\cos(2\pi r_j)$. If $b_1-b_2 \neq \bZ^g$, then $\ell_{\infty,b_1}\cap \ell_{\infty,b_2}$ is empty and $[a_1+\tau b_1]\neq [a_2+\tau b_2] \in V_\tau$, so 
\begin{equation}
CF^*(\hat{\ell}_{\infty,[a_1 + \tau b_1]}, \hat{\ell}_{\infty,[a_2 + \tau b_2]}) =0   = \Ext^*(\cO_{[a_1+\tau b_1]}, \cO_{[a_2+\tau b_2]}). 
\end{equation}

\subsubsection{Between $\hat{\ell}_{\sk,[a_1+\tau b_1]} = (\ell_{\sk,b_1},\varepsilon_{a_1})$ and $\hat{\ell}_{\infty,[a_2+\tau b_2]} = (\ell_{\infty,b_2},\varepsilon_{a_2})$ }
\label{subsubsection:k-infinity}

The vertical Lagrangian $\ell_{\infty, b_2}$ always intersects the affine Lagrangian $\ell_{\sk,b_1}$ at  exactly one point given by $(r, \theta)=(b_2, b_1-\sk b_2)$.  The morphism complex is $CF^*(\hat \ell_{\sk,[a_1+\tau b_1]}, \hat{\ell}_{\infty, [a_2+\tau b_2]})\cong \bC$, which is generated by the one intersection point in degree $0$. So
$HF^*(\hat \ell_{\sk,[a_1+\tau b_1]}, \hat{\ell}_{\infty, [a_2+\tau b_2]})= CF^*(\hat \ell_{\sk,[a_1+\tau b_1]}, \hat{\ell}_{\infty, [a_2+\tau b_2]})$.

On the mirror side we have 
$$
\Ext^w(\cL^{\otimes \sk}_{[a_1+\tau b_1]}, \cO_{[a_2+\tau b_2]})=\begin{cases}
    \cL^{\otimes (-\sk)}_{[a_1+\tau b_1]}|_{[a_2+\tau b_2]} \cong \bC,&  w =0\\
    0, & w \neq 0
\end{cases}
$$
where $\cO_{[a_2+\tau b_2]}$ is the skyscraper sheaf supported at $x = e^{-2\pi\bi(a_2+\tau b_2)} $.  
In other words, given $\tau\in \cH$, $L \in \Pic^{\sk}(V_\tau)$ and $[v]\in V_\tau$, we have
\begin{equation}\label{eqn:Ext-L-skyscraper}
\Ext^w(L, \cO_{[v]})=\begin{cases}
   \left. L^{-1}\right|_{[v]} \cong \bC,&  w=0;\\
    0, & w \neq 0,
\end{cases}
\end{equation} 
Therefore, for any $\sk\in \bZ$, any $w\in \{0,1,\ldots, g\}$, and any pair $(\hat{\ell}, \hat{\ell}')  \in \cM(\hat{\ell}_{\sk})_\tau\times \cM(\hat{\ell}_\infty)_\tau$, we have
\begin{equation} \label{eqn:HF-k-infinity}
HF^w(\hat{\ell}, \hat{\ell}') \cong \Ext^w\big(\Mir^{\sk}_\tau(\hat{\ell}),  \Mir^{\infty}_\tau(\hat{\ell}')\big)
\end{equation}
where $\Mir^{\sk}_\tau: \cM(\hat{\ell}_{\sk})_\tau \to \Pic^{\sk}(V_\tau)$ and $\Mir^\infty_\tau: \cM(\hat{\ell})_\tau\to V_\tau$ are defined in Section \ref{sec:moduli-of-objects}. 
See Section \ref{sec:prod_vert} and Remark \ref{rem:SYZ_signs} for an example of a mirror correspondence, with the vertical Lagrangian, on the product structures.

We now vary the pair $(\hat{\ell}, \hat{\ell}')$ in $\cM(\hat{\ell}_{\sk})_\tau\times \cM(\hat{\ell}_\infty)_\tau$ to obtain a family version of 
\eqref{eqn:HF-k-infinity}.  Let $\cP_{\sk} \to V_\tau \times \Pic^{\sk}(V_\tau)$ be the universal line bundle defined in Section \ref{sec:poincare}. To match convention, define
$$
\mathrm{Flip}_\tau: \Pic^{\sk}(V_\tau)\times V_\tau \to V_\tau\times \Pic^{\sk}(V_\tau), \quad (L, [v])\mapsto ([v], L).
$$
Given $w\in \{0,1,\ldots, g\}$,  define a locally free sheaf $\cE^w_{\tau, \sk,\infty}$ of 
$\cO_P$-modules on $P= \Pic^{\sk}(V_\tau)\times V_\tau$ by 
\begin{equation}\label{eqn:E-k-infinity-tau}
\cE^w_{\tau, \sk, \infty}=\begin{cases}
  \mathrm{Flip}_\tau^*\cP_k^{-1}, &  w=0\\
    0, & w \neq 0
\end{cases}
\end{equation} 
In particular, $\cE^0_{\tau, \sk,\infty}$ is a line bundle on $\cP_{\sk}(V_\tau)\times V_\tau$. The fiber
of $\cE^w_{\tau,\sk,\infty}$ over $(L,[v])\in \Pic^{\sk}(V_\tau)\times V_\tau$
is $\Ext^w(L, \cO_{[v]})$. There is an isomorphism
$$
\Mir^{\sk}_\tau\times \Mir^\infty_\tau: \widecheck{P}:=\cM(\hat{\ell}_{\sk})_\tau\times \cM(\hat{\ell}_\infty) \lra P= \Pic^{\sk}(V_\tau)\times V_\tau
$$
of complex projective manifolds. 
\begin{theorem} For any $w\in \{0,1,\ldots, g\}$ there is a locally free sheaf $\widecheck{\cE}_{\tau, \sk,\infty}$ of $\cO_{\widecheck{P}}$-modules
on $\widecheck{P}$ whose fiber over $(\hat{\ell}, \hat{\ell}')\in \widecheck{P}$ is the $w$-th Floer cohomology $HF(\hat{\ell}, \hat{\ell}')$, such that
\begin{equation}\label{eqn:E-k-infinity-tau-mirror}
\widecheck{\cE}^w_{\tau, \sk,\infty}\cong (\Mir^{\sk}_\tau\times \Mir^\infty_\tau)^* \cE^w_{\tau, \sk,\infty}.
\end{equation}
In particular $\widecheck{\cE}^0_{\tau, \sk,\infty}$ is a line bundle and $\widecheck{\cE}^w_{\tau, \sk,\infty}=0$ if $w\neq 0$.
\end{theorem}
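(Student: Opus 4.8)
The plan is to package the pointwise computation of Section~\ref{subsubsection:k-infinity} into a statement about holomorphic sheaves, using that $\Mir^{\sk}_\tau\times\Mir^\infty_\tau\colon\widecheck P\to P$ is already an isomorphism of complex manifolds. I would begin by recording rank and vanishing: by the discussion preceding \eqref{eqn:HF-k-infinity}, for every pair $(\hat\ell_{\sk,[v_1]},\hat\ell_{\infty,[v_2]})$ with $[v_i]=[a_i+\tau b_i]$ the Floer complex is generated by the single transverse intersection point $(r,\theta)=(b_2,\,b_1-\sk b_2)$, which lies in degree $0$. Being concentrated in one degree, its differential vanishes, so $HF^0\cong\bC$ and $HF^w=0$ for $w\neq0$. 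Once the sheaf $\widecheck\cE^w_{\tau,\sk,\infty}$ is constructed, this immediately gives $\widecheck\cE^w_{\tau,\sk,\infty}=0$ for $w\neq0$, so all of the content lies in the case $w=0$.

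Next I would construct the holomorphic line bundle $\widecheck\cE^0_{\tau,\sk,\infty}$ on $\widecheck P$. Its fiber over $(\hat\ell_{\sk,[v_1]},\hat\ell_{\infty,[v_2]})$ is the one-dimensional space $\Hom\big((\varepsilon_{a_1})_p,(\varepsilon_{a_2})_p\big)$ at the intersection point $p=p(b_1,b_2)$. Over a simply connected chart in the $(a_i,b_i)$-parameters the point $p$ and the fibers of the two flat bundles vary real-analytically, so the evaluation of parallel transport of $\varepsilon_{a_1}$ and $\varepsilon_{a_2}$ at $p$ provides a nonvanishing local frame. The gluing over the lattice action that identifies $(a_i,b_i)$ with its class in $V_\tau^+$ is given by the $U(1)$-holonomy factors of the connections; these are exactly the automorphy factors appearing in the quasi-periodicity relation \eqref{eqn:shift}, and they are holomorphic in the complex coordinates fixing the structures on $\cM(\hat\ell_\sk)_\tau\cong\Pic^\sk(V_\tau)$ and $\cM(\hat\ell_\infty)_\tau\cong V_\tau$ in Section~\ref{sec:moduli-of-objects}. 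This endows $\widecheck\cE^0_{\tau,\sk,\infty}$ with the structure of a holomorphic line bundle.

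It then remains to identify $\widecheck\cE^0_{\tau,\sk,\infty}$ with $(\Mir^\sk_\tau\times\Mir^\infty_\tau)^*\cE^0_{\tau,\sk,\infty}$, where $\cE^0_{\tau,\sk,\infty}=\mathrm{Flip}_\tau^*\cP_\sk^{-1}$ by \eqref{eqn:E-k-infinity-tau}. By \eqref{eqn:Ext-L-skyscraper} the fiber of the latter over $(L,[v])$ is $L^{-1}|_{[v]}$, and under the mirror map $(\hat\ell_{\sk,[v_1]},\hat\ell_{\infty,[v_2]})\mapsto(\cL_{\sk,[v_1]},[v_2])$, so \eqref{eqn:HF-k-infinity} canonically identifies the fibers of the two line bundles. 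To promote this to a global isomorphism I would compare transition cocycles: writing $\cP_\sk=(t_\sk^{-1})^*\cP\otimes p_1^*\cL^{\otimes\sk}$ as in \eqref{eqn:universal-line-bundle}, the automorphy factors of $\cP_\sk^{-1}$ coincide with the holonomy factors computed in the previous step, so the tautological fiberwise identification is holomorphic with holomorphic inverse.

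The step I expect to be the main obstacle is precisely this holomorphicity-and-cocycle matching. One must check that the fiberwise Floer generators assemble into a genuinely \emph{holomorphic} line bundle and that the holonomy transition cocycle agrees \emph{on the nose} with the automorphy cocycle of $\cP_\sk^{-1}$, rather than differing by a flat twist. This is where the B-field together with the sign and shift conventions of Section~\ref{sec:SYZ} (cf.\ Remark~\ref{rem:SYZ_signs}) must be tracked carefully, since any normalization error would surface as a spurious twist by an element of $\Pic^0(V_\tau)$.
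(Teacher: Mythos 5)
Your outline makes the theorem substantially harder than it is, and the step you yourself flag as ``the main obstacle'' is precisely the step the paper never needs. For the statement as written, one only has to exhibit \emph{some} locally free sheaf on $\widecheck{P}$ whose fibers are the Floer cohomology groups and which satisfies \eqref{eqn:E-k-infinity-tau-mirror}. The paper's proof is therefore essentially a definition: since $\Mir^{\sk}_\tau\times\Mir^{\infty}_\tau$ is an isomorphism of complex manifolds and $\cE^w_{\tau,\sk,\infty}$ is locally free (it is $\mathrm{Flip}_\tau^*\cP_{\sk}^{-1}$ for $w=0$ and zero otherwise, by \eqref{eqn:E-k-infinity-tau}), one simply \emph{defines} $\widecheck{\cE}^w_{\tau,\sk,\infty}:=(\Mir^{\sk}_\tau\times\Mir^{\infty}_\tau)^*\cE^w_{\tau,\sk,\infty}$. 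Its fiber over $(\hat{\ell},\hat{\ell}')$ is $\Ext^w\big(\Mir^{\sk}_\tau(\hat{\ell}),\Mir^{\infty}_\tau(\hat{\ell}')\big)$, which is identified with $HF^w(\hat{\ell},\hat{\ell}')$ by the pointwise computation \eqref{eqn:HF-k-infinity} (one transverse intersection point in degree $0$, vanishing differential --- your first paragraph, which is correct and matches the paper). The isomorphism \eqref{eqn:E-k-infinity-tau-mirror} then holds by construction, and the ``in particular'' clause is immediate. No holonomy cocycles, automorphy factors, or holomorphicity of families of Floer generators enter at any point.

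What you propose instead is genuinely different and strictly stronger: you want to endow the family $\{HF^0(\hat{\ell},\hat{\ell}')\}$ with an \emph{intrinsic} holomorphic structure built from the flat bundles $\varepsilon_{a_i}$ at the moving intersection point and the gauge identifications under $(a,b)\mapsto(a+m,b+n)$, and then prove this intrinsic line bundle agrees on the nose with $(\Mir^{\sk}_\tau\times\Mir^{\infty}_\tau)^*\mathrm{Flip}_\tau^*\cP_{\sk}^{-1}$. That is essentially Fukaya's construction of $\cE(\hat{\ell}_0,\hat{\ell}_1)$ in \cite{Fuk02}, and it is what makes the statement non-tautological; but your argument for it is incomplete exactly where it matters. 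In your third paragraph you assert that the gauge/holonomy cocycle ``coincides'' with the automorphy cocycle of $\cP_{\sk}^{-1}$, and in your fourth paragraph you concede this is unverified and could a priori fail up to a twist by an element of $\Pic^0(V_\tau)$. Note the asymmetry: if such a flat twist were present, your proof would collapse, while the theorem would remain true via the pullback definition. So as a proof of the stated theorem your proposal has an unexecuted key step (and an unnecessary one); as a plan for the stronger intrinsic statement it is reasonable, but the cocycle computation would actually have to be carried out --- the ingredients are the explicit holonomy factors appearing in the product computation with the vertical Lagrangian in Section \ref{sec:prod_vert} and the sign/shift conventions of Remark \ref{rem:SYZ_signs}.
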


Finally, we vary $\tau\in \cH$ to obtain a global and universal
family version of \eqref{eqn:HF-k-infinity}. 
Define
$$
\mathrm{Flip}_\cH: \Pic^{\sk}(V_\cH)\times_{\cH} V_\cH \to V_\cH\times_{\cH} \Pic^{\sk}(V_\cH), \quad (\tau, L, [z])\mapsto (\tau, [z], L).
$$
Given $w\in \{0,1,\ldots, g\}$,  define a locally free sheaf $\cE^w_{\cH, \sk,\infty}$ of 
$\cO_P$-modules on $P := \Pic^{\sk}(V_\cH)\times_{\cH} V_{\cH}$ by 
\begin{equation}\label{eqn:E-k-infinity}
\cE^w_{\cH, \sk, \infty}=\begin{cases}
  \mathrm{Flip}_\cH^*\cP_k(\cH)^{-1}, &  w=0,\\
    0, & w \neq 0, 
\end{cases}
\end{equation} 
where $\cP_{\sk}(\cH)\to V_{\cH}\times_{\cH} \Pic^{\sk}(V_{\cH})$ is the universal line bundle in Section \ref{sec:universal-picard}. Then the fiber of $\cE^w_{\cH,\sk,\infty}$ over
$(\tau, L, [z])\in P$ is $\Ext^w(L, \cO_{[z]})$. There is an isomorphism 
$$
\Mir^{\sk}_{\cH}\times_{\cH} \Mir^{\infty}_{\cH}: 
\widecheck{P}:= \cM(\hat{\ell}_{\sk})_\cH\times_\cH \cM(\hat{\ell}_\infty)_\cH
\longrightarrow P:= \Pic^{\sk}(V_\cH) \times_\cH V_\cH
$$
of complex manifolds.  We have the following global and universal family version of \eqref{eqn:HF-k-infinity}.

\begin{theorem}  For any $w\in \{0,1,\ldots, g\}$ there is a locally free sheaf 
$\widecheck{\cE}^w_{\cH, \sk, \infty}$ of $\cO_{\widecheck{P}}$-modules on $\widecheck{P}$
whose fiber over $(\tau, \hat{\ell}, \hat{\ell}')\in \widecheck{P}$ is the $w$-th Floer cohomology $HF^w(\hat{\ell}, \hat{\ell}')$, such that 
\begin{equation}\label{eqn:E-k-infinity-tau-mirror-global}
\widecheck{\cE}^w_{\cH, \sk, \infty} \cong 
(\Mir^{\sk}_\cH\times_{\cH} \Mir^{\infty}_\cH)^* \cE^w_{\cH, \sk, \infty}.
\end{equation}
In particular $\widecheck{\cE}^0_{\cH, \sk,\infty}$ is a line bundle and $\widecheck{\cE}^w_{\cH, \sk,\infty}=0$ if $w\neq 0$.
\end{theorem}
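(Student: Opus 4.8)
The plan is to promote the fixed-$\tau$ statement \eqref{eqn:E-k-infinity-tau-mirror} to the universal family over $\cH$ by observing that every ingredient of its proof is already defined in holomorphic families over $\cH$. I would begin with the structural simplification special to this case: as computed in Section \ref{subsubsection:k-infinity}, for any $\tau$ the Lagrangians $\ell_{\sk,b_1}$ and $\ell_{\infty,b_2}$ meet in a single transverse point, which lies in degree $0$, and the Floer differential vanishes for degree reasons. Hence for every $(\tau,\hat{\ell},\hat{\ell}')\in\widecheck{P}$ the group $HF^w(\hat{\ell},\hat{\ell}')$ is one-dimensional when $w=0$ and vanishes when $w\neq 0$. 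This already forces $\widecheck{\cE}^w_{\cH,\sk,\infty}=0$ for $w\neq 0$ at the level of fibers, so only the case $w=0$ carries content, and there the family of generators assembles into a smooth line bundle over $\widecheck{P}$.

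Next I would pin down the holomorphic structure on this line bundle. Following the family Floer/SYZ prescription of Section \ref{sec:SYZ} (as in \cite[Chapter 1]{Fuk02}), a local holomorphic frame of $\widecheck{\cE}^0_{\cH,\sk,\infty}$ is given by the unique intersection point weighted by the exponential of the symplectic area of the strips sweeping it out, together with the holonomy of the connections $\varepsilon_{a_1},\varepsilon_{a_2}$; this is precisely the data that, under $\Mir^{\sk}_{\cH}$ and $\Mir^{\infty}_{\cH}$, reproduces the automorphy factor of the universal line bundle. Since $\Ext^0(L,\cO_{[v]})\cong L^{-1}|_{[v]}$ by \eqref{eqn:Ext-L-skyscraper}, the target sheaf is exactly the fiber of $\mathrm{Flip}_\cH^*\cP_{\sk}(\cH)^{-1}$ over $(\tau,L,[v])$, matching the definition \eqref{eqn:E-k-infinity}.

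I would then invoke the already-established fixed-$\tau$ isomorphism \eqref{eqn:E-k-infinity-tau-mirror}. Because the universal line bundle $\cP_{\sk}(\cH)\to V_\cH\times_\cH\Pic^{\sk}(V_\cH)$ of Section \ref{sec:universal-picard} restricts to $\cP_{\sk}$ on each fiber $V_\tau\times\Pic^{\sk}(V_\tau)$, the sheaf $\cE^0_{\cH,\sk,\infty}=\mathrm{Flip}_\cH^*\cP_{\sk}(\cH)^{-1}$ restricts to $\cE^0_{\tau,\sk,\infty}$; likewise $\Mir^{\sk}_{\cH}\times_{\cH}\Mir^{\infty}_{\cH}$ restricts to $\Mir^{\sk}_\tau\times\Mir^{\infty}_\tau$, since both mirror correspondences are defined uniformly in $\tau$. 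Therefore the pullback $(\Mir^{\sk}_{\cH}\times_{\cH}\Mir^{\infty}_{\cH})^*\cE^0_{\cH,\sk,\infty}$ restricts fiberwise to the right-hand side of \eqref{eqn:E-k-infinity-tau-mirror}, whose fibers are the groups $HF^0(\hat{\ell},\hat{\ell}')$ by \eqref{eqn:HF-k-infinity}. The fiberwise isomorphisms of \eqref{eqn:E-k-infinity-tau-mirror}, being induced in every fiber by the same holomorphic-frame comparison, glue to a global morphism of line bundles over $\widecheck{P}$ that is a fiberwise isomorphism; as both sides are line bundles over the connected complex manifold $\widecheck{P}$ and the comparison map is nowhere vanishing, it is an isomorphism. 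This yields \eqref{eqn:E-k-infinity-tau-mirror-global} together with the ``in particular'' clause.

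The main obstacle is the holomorphicity in the $\tau$-direction implicit in the second paragraph: one must check that the frame comparison between the family Floer line bundle and $\mathrm{Flip}_\cH^*\cP_{\sk}(\cH)^{-1}$ is holomorphic not merely along each fiber $V_\tau\times\Pic^{\sk}(V_\tau)$ (which is the content of the fixed-$\tau$ theorem) but also transversally, as $\tau$ moves in $\cH$ and the complex structures on $\bT^{2g}$ and on $\widecheck{P}$ vary. Concretely, this amounts to verifying that the area-plus-holonomy weight attached to the single generator depends holomorphically on $\tau$ after the identifications $\cM(\hat{\ell}_{\sk})_\cH\cong\Pic^{\sk}(V_\cH)$ and $\cM(\hat{\ell}_\infty)_\cH\cong V_\cH$. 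This is the universal-family form of the SYZ computation of Section \ref{sec:SYZ}, and it is what makes the clutching functions of the two line bundles agree globally over $\cH$ rather than only fiber by fiber.
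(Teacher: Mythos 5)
Your proposal reaches the right conclusion, and its core (your first and third paragraphs) coincides with the paper's implicit argument, which is essentially definitional: one takes $\widecheck{\cE}^w_{\cH,\sk,\infty} := (\Mir^{\sk}_{\cH}\times_{\cH}\Mir^{\infty}_{\cH})^*\cE^w_{\cH,\sk,\infty}$, so that \eqref{eqn:E-k-infinity-tau-mirror-global} holds by construction; the fiber of this pullback over $(\tau,\hat{\ell},\hat{\ell}')$ is $\Ext^w\big(\Mir^{\sk}_\tau(\hat{\ell}),\Mir^{\infty}_\tau(\hat{\ell}')\big)$, which is identified with $HF^w(\hat{\ell},\hat{\ell}')$ by the pointwise computation (single transverse intersection point in degree $0$ with vanishing differential, versus \eqref{eqn:Ext-L-skyscraper} on the B-side, i.e.\ \eqref{eqn:HF-k-infinity}); and the ``in particular'' clause is immediate from the definition \eqref{eqn:E-k-infinity}, since the pullback of a line bundle is a line bundle and the pullback of the zero sheaf is zero. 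Read this way, nothing beyond your first and third paragraphs is needed, and the proof is complete.

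Where you diverge is in your second and fourth paragraphs: you want $\widecheck{\cE}^0_{\cH,\sk,\infty}$ to be constructed \emph{intrinsically} from Floer data (local frames given by the intersection point weighted by area and holonomy) and then proved isomorphic to the pullback. That is a genuinely stronger and more meaningful statement, but as written it is incomplete: the assertion that the area-plus-holonomy weight varies holomorphically in $\tau$ --- equivalently, that the transition functions of the intrinsically defined Floer line bundle agree with those of $\mathrm{Flip}_{\cH}^*\cP_{\sk}(\cH)^{-1}$ over all of $\cH$, not just along each fiber --- is precisely what you call ``the main obstacle,'' and you never discharge it; you only point to Section \ref{sec:SYZ}, which works at a fixed $\tau$. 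This verification is not automatic: the weights in the relevant computation (see Section \ref{sec:prod_vert}) involve $b(x)$ and $a(x)$, which are real-analytic but not holomorphic in the natural coordinates, so the holomorphicity of the frame comparison genuinely requires an argument. The paper sidesteps this entirely because there the holomorphic structure on the Floer side is \emph{defined} through the mirror identification rather than derived from Floer theory. So: if you intend the theorem as stated, drop your second and fourth paragraphs and you are done; if you intend the stronger intrinsic claim, the $\tau$-direction holomorphicity must actually be proved, and as it stands that is a gap.
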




\subsection{Product structures} 
\label{sec: fiber product structure}

In this section, we describe the product structures on Floer complexes between objects of the form $\hat \ell_{\bk,[v]}$. The product map 
\begin{equation}\label{eqn:mu2}\mu^2: CF(\hat{\ell}_{\sk_2,[a_2 + \tau b_2]}, \hat{\ell}_{\sk_3,[a_3 + \tau b_3]})\otimes CF(\hat{\ell}_{\sk_1,[a_1 + \tau b_1]}, \hat{\ell}_{\sk_2,[a_2 + \tau b_2]})\to CF(\hat{\ell}_{\sk_1,[a_1 + \tau b_1]}, \hat{\ell}_{\sk_3,[a_3 + \tau b_3]})
\end{equation}
 is defined by 
\begin{equation}\label{eq: product def}
\mu^2(\varrho_{p_2}p_2,  \varrho_{p_1}p_1) 
 =\sum \limits_{\substack{q\in \ell_{\bk_1,b_1}\cap \ell_{\bk_3,b_3}\\ \ind([u])=0}} \# \cM (q, p_1, p_2; [u]) e^{2\pi\bi \int u^*\omega_\tau} \Upsilon(\varrho_{p_2}, \varrho_{p_1}; [u])q.
\end{equation}
Let us explain the ingredients in the above formula; see \cite[Section 5, Definition 5.8]{ACLLb} for a more in depth introduction of this definition.   The inputs are $\varrho_{p_1}\in \hom((\varepsilon_{a_1})_{p_1}, (\varepsilon_{a_2})_{p_1})$ and  $\varrho_2\in \hom((\varepsilon_{a_2})_{p_2}, (\varepsilon_{a_3})_{p_2})$, where $p_1\in \ell_{\bk_1,b_1}\cap \ell_{\bk_2,b_2}$ and $p_2\in \ell_{\bk_2,b_2}\cap \ell_{\bk_3,b_3}$.  The output is generated by points $q\in \ell_{\bk_3}\cap \ell_{\bk_3}$ for which $p_1, p_2, q$ form a holomorphic triangle. The moduli space $\cM(q, p_1,p_2; [u])$ consists of all homotopy classes $[u]\in \pi_2(\bT^{2g}, \ell_{\bk_1,b_1}\cup \ell_{\bk_2,b_2}\cup \ell_{\bk_3,b_3})$, where the map $u:(\mathbb D, \partial \mathbb D, \{z_0, z_1, z_2\})\to (\bT^{2g}, \ell_{\sk_1, b_1}\cup \ell_{\sk_2, b_2}\cup \ell_{\sk_3, b_3}, \{q, p_1, p_2\})$ is a solution to the Cauchy-Riemann equation $\overline\partial_Ju=0$ for the standard $J$ on $\bT^{2g}\cong\bR^{2g}/\bZ^{2g}$, with boundary conditions on the  Lagrangians $\ell_{\bk_1,b_1}$, $\ell_{\bk_2,b_2}$, and  $\ell_{\bk_3,b_3}$ in the counterclockwise order as shown in Figure \ref{fig: triangle}.   Here, $\mathbb D$ is a disc with three marked points, $z_0$, $z_1$, and $z_2$, which are  mapped to the corresponding
output and input generators involved in the product.   Denote by $\cM(q, p_1, p_2)$ the moduli space all such triangles, independent of the homotopy class $[u]$.  Assuming transversality,
the moduli space of such holomorphic triangles has dimension equal to $\ind([u])$, and the product is
given by the count of the index 0 solutions.  

The exponent $\int u^*\omega_\tau$ is the area of $u$ with respect to the complexified symplectic form $\omega_\tau$.  The factor $\Upsilon(\varrho_{p_2},\varrho_{p_1};[u])\in \hom((\varepsilon_{a_1})_q, (\varepsilon_{a_3})_q)$ is the holonomy of the $U(1)$-connection around the boundary of $u$, composed with the inputs, $\varrho_{p_1}$ and $\varrho_{p_2}$, at the corners, and we will discuss this in more detail later.  Together, the weight $e^{2\pi\bi \int u^*\omega_\tau} \Upsilon(\varrho_{p_2}, \varrho_{p_1}; [u])$ depends only on the homotopy class of $u$ as shown in \cite[Lemma 5.7]{ACLLb}.
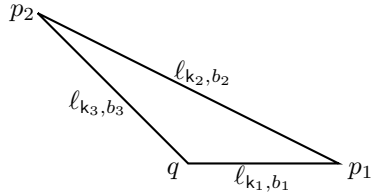
\begin{figure}[h]
\centering
\begin{tikzpicture}

\draw[thick] (-2.5,0.5) -- (-0.5,-1.5) -- (1.5,-1.5) -- (-2.5,0.5);
\node at (-0.7,-1.6) {$q$};
\node at (1.8,-1.6) {$p_1$};
\node at (-2.7,0.5) {$p_2$};
\node at (0.5,-1.7) {$\ell_{\bk_1,b_1}$};
\node at (-0.3, -0.3) {$\ell_{\bk_2,b_2}$};
\node at (-1.7,-0.7) {$\ell_{\bk_3, b_3}$};
\end{tikzpicture}
    \caption{A triangle contributing to $\cM(q,p_1, p_2)$.}
    \label{fig: triangle}
\end{figure}

To compute the $\mu^2$ in Equation \eqref{eqn:mu2}, below let us first figure out a description for the holomorphic triangles contributing to $\cM(q, p_1, p_2)$.

 \begin{lemma}\label{lem: u_m} Given 
 \begin{eqnarray*}
 & p_1\equiv \left( \frac{\lambda_1+b_2-b_1}{\bk_2-\bk_1}, \frac{-\bk_1\lambda_1+\bk_2 b_1-\bk_1b_2}{\bk_2-\bk_1}\right) \in \ell_{\bk_1, b_1}\cap \ell_{\bk_2, b_2},  & \text{for some } \lambda_1\in I_{g,|\sk_2-\sk_1|}= \{0,\ldots, |\bk_2-\bk_1|-1\}^g,\\
 & p_2\equiv \left( \frac{\lambda_2+b_3-b_2}{\bk_3-\bk_2}, \frac{-\bk_2\lambda_2+\bk_3 b_2-\bk_2b_3}{\bk_3-\bk_2}\right) \in \ell_{\bk_2, b_2}\cap \ell_{\bk_3, b_3},& \text{for some } \lambda_2\in I_{g,|\sk_3-\sk_2|}= \{0,\ldots, |\bk_3-\bk_2|-1\}^g,
 \end{eqnarray*} 
 the output of $\mu^2(p_2, p_1)$ is generated by the set of
 \begin{equation}\label{eq: triangle q}
      q\equiv\left(\frac{\lambda_{1}+\lambda_{2}+(b_3-b_1)+(\bk_3-\bk_2)w}{\bk_3-\bk_1}, \  \frac{-\bk_1(\lambda_1+\lambda_2)+\bk_3b_1-\bk_1b_3-\bk_1(\bk_3-\bk_2)w}{\bk_3-\bk_1}\right), 
 \end{equation}
 where $w\in I_{g,|\sk_3-\sk_1|}= \{0, \ldots, |\bk_3-\bk_1|-1\}^g.$
There are infinitely many holomorphic triangles  $u_m$ contributing to the count of $\mu^2(p_2,p_1)$, one for each $m\in \bZ^g$.  (The explicit description of each $u_m$ is given in the proof below.)
\end{lemma}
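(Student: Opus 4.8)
The plan is to reduce the count of $J$-holomorphic triangles in Equation \eqref{eq: product def} to a problem of affine geometry on the universal cover $\bR^{2g}$, using that the Lagrangians $\ell_{\bk_i,b_i}$ are affine and that the standard $J$ on $\bT^{2g}=\bR^{2g}/\bZ^{2g}$ is translation invariant and splits as a product of the standard complex structures on the $g$ factors $\bT^2_j$ with holomorphic coordinate $u_j=\theta_j-\bi r_j$. First I would lift the configuration: each $\ell_{\bk_i,b_i}$ lifts to the disjoint union of parallel affine $g$-planes $\{\theta=b_i-\bk_i r+n : n\in\bZ^g\}$ in $\bR^{2g}$, and any triangle $u$ lifts uniquely to $\tilde u\colon \mathbb D\to\bR^{2g}$ once compatible lifts of its three corners are fixed. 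Because $J$ and all three Lagrangians split as products over the coordinate pairs $(r_j,\theta_j)$, both the equation $\overline\partial_J u=0$ and the enumeration of corners factor over $j=1,\ldots,g$, so it suffices to analyze a single $(r_j,\theta_j)$-factor and then take products.

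In one factor we have three lines in $\bR^2\cong\bC$ of pairwise distinct slopes $-\bk_1,-\bk_2,-\bk_3$. The key rigidity step is that a nonconstant $J$-holomorphic map from the disc with three boundary marked points, respecting the cyclic boundary ordering of Figure \ref{fig: triangle}, is an affine map onto the nondegenerate, correctly oriented triangle spanned by the chosen corner lifts; this follows from the Riemann mapping theorem together with Schwarz reflection across the straight edges (the image is contained in the convex hull of the three vertices, and repeated reflection forces $\tilde u$ to be affine). Consequently holomorphic triangles are in bijection with choices of compatible corner lifts, each such triangle is rigid, i.e. a regular index-$0$ point of $\cM(q,p_1,p_2;[u])$ counted with a sign, and no nonaffine or multiply covered configurations occur.

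Next I would perform the lattice bookkeeping yielding the stated formulas. Taking the displayed representative lifts of $p_1$ and $p_2$ and requiring them to lie on a common lift of $\ell_{\bk_2,b_2}$, the third corner is forced to be the intersection of the lift of $\ell_{\bk_1,b_1}$ through $\tilde p_1$ with the lift of $\ell_{\bk_3,b_3}$ through $\tilde p_2$; solving this $2\times2$ affine system in each factor and reducing modulo $\bZ^g$ produces exactly Equation \eqref{eq: triangle q}, the slope-dependent integer ambiguity in the intercept being recorded by $w\in I_{g,|\bk_3-\bk_1|}$. The residual freedom of translating the common $\ell_{\bk_2,b_2}$-lift by the lattice without moving the corners in $\bT^{2g}$ is a $\bZ^g$ worth of genuinely distinct homotopy classes, indexed by $m\in\bZ^g$, for which I would write down the explicit affine parametrization $u_m$ of the corresponding triangle. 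The two layers of lattice data separate cleanly through Lemma \ref{n-n}, whose unique decomposition $n'=n+\sk''\tn+w$, $n''=n-\sk'\tn$ (with $\sk',\sk''$ here the relevant slope differences) is precisely the combinatorial statement isolating the finite output label $w$ from the infinite family index $m$. The main obstacle is the rigidity and completeness step of the second paragraph — confirming that the affine triangles exhaust the $J$-holomorphic triangles and are all regular — after which verifying that the claimed $q$ lies in $\ell_{\bk_1,b_1}\cap\ell_{\bk_3,b_3}$ and that each $u_m$ solves $\overline\partial_J u_m=0$ with the correct boundary conditions is routine.
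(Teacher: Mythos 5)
Your proposal is correct and follows essentially the same route as the paper's proof: lift to the universal cover $\bR^{2g}$, fix lifts of $\ell_{\bk_1,b_1}$ and $p_1$, trace around the triangle to pin down $\tilde\ell_{\bk_2,b_2}$, then $\tilde p_2$ (whose choice of lift is exactly the index $m\in\bZ^g$), then $\tilde\ell_{\bk_3,b_3}$ and $\tilde q$, and finally perform the Euclidean division $m=(\bk_3-\bk_1)\tilde m+w$ componentwise to extract the finitely many output classes $q_w$, $w\in I_{g,|\sk_3-\sk_1|}$. The only substantive addition is your rigidity paragraph (product splitting of $J$ plus Riemann mapping/Schwarz reflection) showing that $J$-holomorphic triangles are precisely the affine triangles spanned by compatible corner lifts, each rigid and counted once --- a completeness step the paper's proof takes for granted; your attribution of the division step to Lemma \ref{n-n} and your phrasing of the $\bZ^g$-freedom as translating the $\ell_{\bk_2,b_2}$-lift are slightly loose but harmless.
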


\begin{proof}

To express more precisely each triangle that contributes to $\cM(p_1, p_2, q)$, for some output $q$, let us  write the coordinates  of their lifts $\tilde p_1, \tilde p_2, \tilde q$ in the universal cover $\bR^{2g}$.  To do so, we can pick arbitrary lifts for $\ell_{\bk_1,b_1}$ and $p_1$, and then trace through the triangle to determine the lifts for $\ell_{\bk_2,b_2}$, $p_2$, $\ell_{\bk_3,b_3}$, and $q$.  We can arbitrarily pick the lift of $\ell_{\bk_1,b_1}=\{(r,\theta): \theta\equiv b_1-\bk_1 r\}$ to be
\begin{equation}\label{eq: lift lj}
   \tilde  \ell_{\bk_1,b_1}=\{(r,\theta): \theta=b_1-\bk_1 r\}.
\end{equation} Then we can choose the lift of the input point $p_1$ along $\tilde \ell_{k_1,b_1}$ to be 
\begin{equation} \label{eq: lift p1}
    \tilde p_1=\left(r_{p_1}=\frac{\lambda_{1}+b_2-b_1}{\bk_{2}-\bk_1}, \ \theta_{p_1}=b_1-\bk_1 r_{p_1}=b_2-\bk_2 r_{p_1} +m_{\bk_2}\right),  \lambda_{1}\in I_{g, |\sk_2-\sk_1|}.
\end{equation}
Note that we could have chosen $r_{p_1}$ to be $\frac{\lambda_{1}+b_2-b_1}{\bk_2-\bk_1}$ plus any integer, and the choice we make does not matter for this first vertex of the triangle.    Then since $p_1\in \ell_{\bk_1,b_1}\cap \ell_{\bk_2,b_2}$, we see that $\tilde p_1$ determines the lift $\tilde \ell_{\bk_2,b_2}$.  Indeed, we get from the formula for $\theta_{p_1}$ that
\begin{equation} \label{eq: lift lk}
    \tilde \ell_{\bk_2,b_2}=\{(r,\theta): \theta=b_2-\bk_2 r+m_{\bk_2}\}, \quad m_{\bk_2}=(\bk_2-\bk_1)r_{p_1}-(b_2-b_1)=\lambda_{1}.
\end{equation}
 Then the lift of the input point $p_2$ is determined by tracing along the Lagrangian  $\tilde \ell_{\bk_2,b_2}$.  For each $m\in \bZ^g$, we have a triangle with 
\begin{equation}\label{eq: lift p2}
    \tilde p_2=\left(r_{p_2}=\frac{\lambda_{2}+b_3-b_2}{\bk_3-\bk_2}+m, \ \theta_{p_2}=b_2-\bk_2 r_{p_2}+m_{\bk_2} =b_3-\bk_3 r_{p_2}+m_{\bk_3}\right),   \lambda_{2}\in I_{g,|\sk_3-\sk_2|}.
\end{equation}
Then since $p_2\in \ell_{\bk_2,b_2}\cap \ell_{\bk_3,b_3}$, we see that $\tilde p_2$ determine the lift $\tilde \ell_{\bk_3,b_3}$.  Indeed, substituting the formula for $m_\bk$ above and using the formula for $\theta_{p_2}$, we get
\begin{equation} \label{eq: lift ll}
    \tilde \ell_{\bk_3,b_3}=\{(r,\theta): \theta=-\bk_3 r+m_{\bk_3}\}, \quad m_{\bk_3}= (\bk_3-\bk_2)r_{p_2}-(b_3-b_2)+m_{\bk_2}= \lambda_{1}+\lambda_2+m(\bk_3-\bk_2).
\end{equation}
This determines the lift of the output point $\tilde q=(r_q, \theta_q)$. This is because $\theta_q=b_3-\bk_3 r_q+m_{k_3}$ and $\theta_q=b_1-\bk_1 r_q$, so it must be that $r_q=\frac{m_{\bk_3}+b_3-b_1}{\bk_3-\bk_1}$.  So 
\begin{equation}\label{eq: q from p1p2}
\tilde q=\left(r_q=\frac{\lambda_{1}+\lambda_{2}+(b_3-b_1)+(\bk_3-\bk_2)m}{\bk_3-\bk_1}, \ \theta_q=b_3-\bk_3 r_q+m_{\bk_3} = b_1-\bk_1 r_q\right).  
\end{equation} 
One can see that for each choice of $m\in \bZ^g$, there is a triangle in $u_m$.

Writing $m=(\bk_3-\bk_1)\tm+w$, with $\tm \in \bZ^g$ and the remainder $w \in I_{g, |\sk_3-\sk_1|}$, we see that 
\begin{equation}\label{eq: reminder}
r_q=\frac{\lambda_{1}+\lambda_{2}+(b_3-b_1)+(\bk_3-\bk_2)w}{\bk_3-\bk_1}+(\bk_3-\bk_2)\tm\equiv \frac{\lambda_{1}+\lambda_{2}+(b_3-b_1)+(\bk_3-\bk_2)w}{\bk_3-\bk_1},
\end{equation}
which gives the expression for $q$ in Equation \eqref{eq: triangle q}.
\end{proof}

\begin{remark} \label{rmk: triangle ordering} When the slopes of two linear Lagrangians $\ell$ and $\ell'$ are $\sk$ and $\sk'$, respectively, with $\sk\neq \sk'$, then degree of an element in the morphism $CF(\ell, \ell')$ between them is either $0$ (if $\sk<\sk'$) or $g$ (if $\sk>\sk'$) as in explained in Section \ref{subsec:mors_fiber}.  The degree of the output is the sum of the degrees of the inputs. So if $\bk_1<\bk_2$, then either $\bk_1<\bk_2<\bk_3$ or $\bk_3<\bk_1<\bk_2$, and if $\bk_1>\bk_2$, then $\bk_2<\bk_3<\bk_1$.  So in Equation \eqref{eq: triangle area}, $(\bk_2-\bk_1)(\bk_3-\bk_2)(\bk_3-\bk_1)$ is positive.  That is, there are three cases in total, and note that $\sk_3<\sk_1<\sk_2$ and $\sk_2<\sk_3<\sk_1$ can be obtained by cyclic rotating the labels from $\sk_1<\sk_2<\sk_3$.
\end{remark}

\begin{lemma}\label{lem: triangle area}
    With respect to the complexified K\"ahler form $\omega_\tau$, the area of the triangle $u_{m}$ in Lemma \ref{lem: u_m} is  \begin{equation}\label{eq: triangle area}
\int u_{m}^*\omega_{\tau}=\frac{S_m^T\tau S_m}{2(\bk_2-\bk_1)(\bk_3-\bk_2)(\bk_3-\bk_1)},
\end{equation}
where
\begin{equation}\label{eq: Sm}
S_m=(\bk_3-\bk_2)(\lambda_{1}+b_2-b_1)-(\bk_2-\bk_1)(\lambda_{2}+b_3-b_2)-(\bk_2-\bk_1)(\bk_3-\bk_2)m.
\end{equation}
\end{lemma}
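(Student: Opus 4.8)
The plan is to reduce the two-dimensional area integral to a boundary integral via Stokes' theorem, exploiting that $\omega_\tau$ has constant coefficients and is therefore closed on the universal cover $\bR^{2g}$. First I would lift the holomorphic triangle $u_m$ to a map $\widetilde u_m:\mathbb D\to \bR^{2g}$ (possible since $\mathbb D$ is simply connected), noting $\int u_m^*\omega_\tau=\int \widetilde u_m^*\omega_\tau$ because $\omega_\tau$ is pulled back from $\bT^{2g}$. Since $\tau$ is symmetric with constant entries, $\omega_\tau=\sum_{j,k}\tau_{jk}\,dr_j\wedge d\theta_k=d\lambda$ where $\lambda:=\sum_{j,k}\tau_{jk}\,r_j\,d\theta_k$, so Stokes gives $\int \widetilde u_m^*\omega_\tau=\oint_{\partial\mathbb D}\widetilde u_m^*\lambda$, where the boundary is the union of the three arcs lying on the lifts $\widetilde\ell_{\bk_1,b_1}$, $\widetilde\ell_{\bk_2,b_2}$, $\widetilde\ell_{\bk_3,b_3}$ from \eqref{eq: lift lj}, \eqref{eq: lift lk}, \eqref{eq: lift ll}.

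The key simplification is that $\lambda$ restricts to an \emph{exact} form on each Lagrangian. On any lift of $\ell_{\bk,b}$ one has $\theta_k=b_k-\bk r_k+\mathrm{const}$, hence $d\theta_k=-\bk\,dr_k$ and $\lambda|_{\ell_{\bk,b}}=-\bk\sum_{j,k}\tau_{jk}r_j\,dr_k=-\tfrac{\bk}{2}\,d(r^T\tau r)$, using symmetry of $\tau$. Note that the additive constants in $\theta$ (the lattice shifts $m_{\bk_2},m_{\bk_3}$) drop out entirely. Consequently each edge integral depends only on the $r$-coordinates of its two endpoints, so I never need to know that the triangle is affinely filled---only the Floer boundary conditions and the vertices $\widetilde p_1,\widetilde p_2,\widetilde q$ computed in Lemma \ref{lem: u_m}. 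Traversing $\partial\mathbb D$ counterclockwise as $q\to p_1\to p_2\to q$ (the orientation of Figure \ref{fig: triangle}), I would assemble
\[
\oint_{\partial\mathbb D}\lambda=-\tfrac12\big[(\bk_1-\bk_2)\,r_{p_1}^T\tau r_{p_1}+(\bk_2-\bk_3)\,r_{p_2}^T\tau r_{p_2}+(\bk_3-\bk_1)\,r_q^T\tau r_q\big].
\]

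Then I would substitute the vertex data from Lemma \ref{lem: u_m}: writing $v_1:=\lambda_1+b_2-b_1$ and $v_2:=\lambda_2+b_3-b_2+(\bk_3-\bk_2)m$, one has $r_{p_1}=v_1/(\bk_2-\bk_1)$, $r_{p_2}=v_2/(\bk_3-\bk_2)$, and the crucial relation $r_q=(v_1+v_2)/(\bk_3-\bk_1)$, i.e. the numerator of $r_q$ is exactly $v_1+v_2$. Clearing the common denominator $(\bk_2-\bk_1)(\bk_3-\bk_2)(\bk_3-\bk_1)$ and collecting terms, the bracket collapses into a single value of the quadratic form $\tau$, namely $\big((\bk_3-\bk_2)v_1-(\bk_2-\bk_1)v_2\big)^T\tau\big((\bk_3-\bk_2)v_1-(\bk_2-\bk_1)v_2\big)$; this combination is precisely $S_m$ of \eqref{eq: Sm}, which yields \eqref{eq: triangle area}.

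The computation is essentially routine once set up, so the only genuine subtlety---the part I would treat most carefully---is the bookkeeping of orientations and signs. I would invoke Remark \ref{rmk: triangle ordering} to confirm that, in each of the three admissible orderings of $\bk_1,\bk_2,\bk_3$, the product $(\bk_2-\bk_1)(\bk_3-\bk_2)(\bk_3-\bk_1)$ is positive, so that the positivity of $\mathrm{Im}\int u_m^*\omega_\tau$ (forced by $\Omega>0$ together with $J$-holomorphicity) is consistent with the sign produced by Stokes' theorem under the counterclockwise boundary orientation matching Figure \ref{fig: triangle}.
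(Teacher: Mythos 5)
Your proof is correct, but it follows a genuinely different route from the paper's. The paper computes the area directly as a flat Euclidean triangle: it identifies the two spanning vectors \eqref{eq: triangle sides} in $\bR^{2g}$, changes basis to the pair $\vec u=(\Delta r_{1q},0)$, $\vec v=(0,\Delta r_{1q})$ (which are orthogonal with respect to $\omega_\tau$), and multiplies the area $\tfrac12 \Delta r_{1q}^T\tau\Delta r_{1q}$ of that model triangle by the determinant $\tfrac{(\bk_3-\bk_1)(\bk_2-\bk_1)}{\bk_3-\bk_2}$ of the change of basis. You instead write $\omega_\tau=d\lambda$ with $\lambda=\sum_{j,k}\tau_{jk}r_j\,d\theta_k$ on the universal cover, apply Stokes, and use that $\lambda$ restricts to the exact form $-\tfrac{\bk}{2}\,d(r^T\tau r)$ on each affine Lagrangian (symmetry of $\tau$ being essential), so the whole area collapses to vertex data; I have verified that with $v_1=\lambda_1+b_2-b_1$, $v_2=\lambda_2+b_3-b_2+(\bk_3-\bk_2)m$ and the relation $r_q=(v_1+v_2)/(\bk_3-\bk_1)$ from Lemma \ref{lem: u_m}, your boundary sum does reduce to $\bigl((\bk_3-\bk_2)v_1-(\bk_2-\bk_1)v_2\bigr)^T\tau\bigl((\bk_3-\bk_2)v_1-(\bk_2-\bk_1)v_2\bigr)$ over $2(\bk_2-\bk_1)(\bk_3-\bk_2)(\bk_3-\bk_1)$, i.e.\ exactly \eqref{eq: triangle area} with $S_m$ as in \eqref{eq: Sm}. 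What your approach buys: it makes manifest that $\int u_m^*\omega_\tau$ depends only on the boundary conditions and the three vertices, not on the affine filling or its parametrization (a fact the paper obtains separately by citing that such weights depend only on the homotopy class), and the quadratic-form algebra is arguably cleaner. What the paper's approach buys: it is more elementary and visual, requiring no primitive, no lift-and-Stokes step, and no orientation bookkeeping beyond the positivity of the determinant noted in Remark \ref{rmk: triangle ordering}; your sign discussion, while correct (positivity of $\mathrm{Im}\,\tau$ and of $(\bk_2-\bk_1)(\bk_3-\bk_2)(\bk_3-\bk_1)$ does pin down the orientation-induced sign), is the one place where your argument leans on an extra consistency check rather than a direct computation.
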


\begin{proof}

Below we will also use the notation 
\[
\Delta r_{q1}:=-\Delta r_{1q}:=r_{p_1}-r_{q}, \ \Delta r_{12}:=-\Delta r_{21}:=r_{p_2}-r_{p_1}, \text{ and } \Delta r_{2q}:=-\Delta r_{q2}:=r_q-r_{p_2}.
\]
We have 
\begin{equation}\label{eq: Delta r1r2}
\begin{split}
   & \Delta r_{q1}=\frac{\lambda_{1}+b_2-b_1}{\bk_{2}-\bk_1}-\frac{\lambda_{1}+\lambda_{2}+(b_3-b_1)+(\bk_3-\bk_2)m}{\bk_3-\bk_1}=\frac{S_m}{(\bk_2-\bk_1)(\bk_3-\bk_1)},\\
   & \Delta r_{q2}=\frac{\lambda_{2}+b_3-b_2}{\bk_3-\bk_2}+m-\frac{\lambda_{1}+\lambda_{2}+(b_3-b_1)+(\bk_3-\bk_2)m}{\bk_3-\bk_1}=-\frac{S_m}{(\bk_3-\bk_2)(\bk_3-\bk_1)},\\
\end{split}
\end{equation}
where $S_m$ is given in Equation \eqref{eq: Sm}.
Then 
\begin{equation} \label{eq: Delta r12}
  \Delta r_{12} = \Delta r_{q2}-\Delta r_{1q}
     =\left(-\frac{1}{\bk_3-\bk_2}-\frac{1}{\bk_2-\bk_1}\right)
\frac{S_m}{\bk_3-\bk_1}
      = -\frac{S_m}{(\bk_3-\bk_2)(\bk_2-\bk_1)}
\end{equation}

The triangle $u_{\tm}$ is spanned by two vectors in $\bR^{2g}$,
\begin{equation}\label{eq: triangle sides}
 (\Delta r_{q1}, -\bk_1 \Delta r_{q1}) \quad \text{and}\quad
(\Delta r_{q2}, -\bk_3 \Delta r_{q2})=-\frac{\bk_3-\bk_1}{\bk_3-\bk_2}(\Delta r_{q1},-\bk_3\Delta r_{q1}),
\end{equation}
where the formulas for $\Delta r_{q1}$ and $\Delta r_{q2}$ are given in Equation \eqref{eq: Delta r1r2}.
To compute the area of this triangle, let us do a change of basis by considering vectors 
\begin{equation}
\begin{split}
&\vec{u}:=(\Delta r_{1q}, 0)=\left(\frac{S_m}{(\bk_2-\bk_1)(\bk_3-\bk_1)}, 0\right) \\
& \vec{v}:=(0, \Delta r_{1q})=\left(0,\frac{S_m}{(\bk_2-\bk_1)(\bk_3-\bk_1)}\right).
\end{split}
\end{equation}
Then the two vectors in Equation \eqref{eq: triangle sides} spanning the triangle are exactly
\begin{equation}
\vec u-\bk_1 \vec v\quad \text{and}  \quad -\frac{\bk_2-\bk_1}{\bk_3-\bk_2}(\vec u-\bk_3 \vec v).
\end{equation}
The area of this triangle is equal to the area of the triangle spanned by $\vec u$ and $\vec v$ multiplying the determinant of the change of basis matrix in the $2$-dimensional plane spanned by $(\vec u, \vec v)$, which is $\det\begin{bmatrix} 1& -\frac{\bk_2-\bk_1}{\bk_3-\bk_2}\\-\bk_1 &\bk_3\frac{\bk_2-\bk_1}{\bk_3-\bk_2} \end{bmatrix}= \frac{(\bk_3-\bk_1)(\bk_2-\bk_1)}{\bk_3-\bk_2}$. Note that this determinant is always positive due to \cite[Lemma 3.18]{AbouzaidThesis} as stated in Remark \ref{rmk: triangle ordering}.  The vectors $\vec u$ and $\vec v$ are orthogonal with respect to $\omega_\tau$, so the area of the triangle spanned by $\vec u$ and $\vec v$ is easier to compute, which is  \[\frac{1}{2}\Delta r_{1q}^T\tau\Delta r_{1q}=\frac{S_m^T\tau S_m}{2(\bk_3-\bk_1)^2(\bk_3-\bk_1)^2}.\]  Multiplying this by the determinant of the change of basis, i.e. $\frac{(\bk_3-\bk_1)(\bk_2-\bk_1)}{\bk_3-\bk_2}$, gives us Equation \eqref{eq: triangle area} in the statement of this lemma.
\end{proof}

Now we discuss the holonomy factor, which is \begin{equation}
\Upsilon(\varrho_1,\varrho_2; [u_m])=\Hol_{\nabla_{a_3}}(\partial_{p_2\to q}u_m)\circ \varrho_2\circ \Hol_{\nabla_{a_2}}(\partial_{p_1\to p_2}u_m) \circ \varrho_1 \circ \Hol_{\nabla_{a_1}}(\partial_{q\to p_1}u_m)\in \hom((\varepsilon_{a_1})_q, (\varepsilon_{a_3})_q),
\end{equation}
where $\partial_{q\to p_1}u_m$ denotes the path that is the portion of the boundary of $u_m$ in $\ell_{\sk_1,b_1}$ from $q$ to $p_1$, and similarly for $\partial_{p_1\to p_2}u_m$ and $\partial_{p_2\to q}u_m$.  Using the connection 1-form given in Equation \eqref{eq: connection 1-form}, we have 
\[
\Hol_{\nabla_{a_1}}(\partial_{q\to p_1}u_m)=
e^{-\int_{q}^{p_1}2\pi\bi a_1dr} =e^{-2\pi\bi a_1^T  \Delta r_{q1}}  \in \hom\big((\varepsilon_{a_1})_q, (\varepsilon_{a_1})_{p_1}\big).
\]
Similarly, $\Hol_{\nabla_{a_2}}(\partial_{p_1\to p_2}u_m)= e^{-2\pi\bi a_{2}^T \Delta r_{12}}\in \hom\big((\varepsilon_{a_2})_{p_1}, (\varepsilon_{a_2})_{p_2}\big)$ and $\Hol_{\nabla_{a_3}}(\partial_{p_2\to q}u_m)=e^{-2\pi\bi a_{3}^T \Delta r_{2q}}\in \hom\big((\varepsilon_{a_3})_{p_1}, (\varepsilon_{a_3})_{p_2}\big)$.  The composition of these three isomorphisms from parallel transports, along with
  $\varrho_{p_1} \in \hom((\varepsilon_{a_1})_{p_1}, (\varepsilon_{a_2})_{p_1})$ and $\varrho_{p_2}\in \hom((\varepsilon_{a_2})_{p_2}, (\varepsilon_{a_3})_{p_2})$ at the corners, gives 
\begin{equation}\label{eq: holonomy factor}
\Upsilon(\varrho_2, \varrho_1; [u_m])=\exp \left[-2\pi\bi\left(a_{3}^T\Delta r_{2q}+a_{2}^T \Delta r_{12} + a_{1}^T\Delta r_{q1}\right)\right]\varrho_{p_2}\varrho_{p_1}.
\end{equation}
Note that in the above expression, the order of the composition does not matter because we have a rank 1 local system, so it is just a multiplication of complex numbers. 
We can use Equations \eqref{eq: Delta r1r2} and \eqref{eq: Delta r12} to express $\Upsilon(\varrho_2, \varrho_1; [u_m])$ in Equation \eqref{eq: holonomy factor} as 
\begin{equation}
\begin{split}
\Upsilon(\varrho_2, \varrho_1; [u_m])=&\exp \left[-2\pi\bi\Big(a_{3}^T(\bk_2-\bk_1)- a_{2}^T(\bk_3-\bk_1)+ a_{1}^T(\bk_3-\bk_2)\Big)\frac{S_m}{(\bk_2-\bk_1)(\bk_3-\bk_2)(\bk_3-\bk_1)}\right]\varrho_{p_2}\varrho_{p_1}\\
=&\exp \left[2\pi\bi\Big((a_{3}-a_{2})^T(\bk_2-\bk_1)-(a_{2}-a_{1})^T(\bk_3-\bk_2)\Big)\frac{-S_m}{(\bk_2-\bk_1)(\bk_3-\bk_2)(\bk_3-\bk_1)}\right]\varrho_{p_2}\varrho_{p_1}.
\end{split}
\end{equation}

Putting everything together, we have the following product formula.

\begin{proposition}\label{prop:mu 2 on A side}
Given 
 \begin{eqnarray*}
 & p_1\equiv \left( \frac{\lambda_1+b_2-b_1}{\bk_2-\bk_1}, \frac{-\bk_1\lambda_1+\bk_2 b_1-\bk_1b_2}{\bk_2-\bk_1}\right) \in \ell_{\bk_1, b_1}\cap \ell_{\bk_2, b_2},  & \lambda_1\in \{0,\ldots, |\bk_2-\bk_1|-1\}^g,\\
 & p_2\equiv \left( \frac{\lambda_2+b_3-b_2}{\bk_3-\bk_2}, \frac{-\bk_2\lambda_2+\bk_3 b_2-\bk_2b_3}{\bk_3-\bk_2}\right) \in \ell_{\bk_2,b_2}\cap \ell_{\bk_3, b_3},& \lambda_2\in \{0,\ldots, |\bk_3-\bk_2|-1\}^g,
 \end{eqnarray*}
 $\varrho_{p_1}\in \hom((\varepsilon_{a_1})_{p_1}, (\varepsilon_{a_2})_{p_1})\cong \bC$, and  $\varrho_2\in \hom((\varepsilon_{a_2})_{p_2}, (\varepsilon_{a_3})_{p_2})\cong\bC$, the product is 
\begin{equation}\label{eq:mu2_Aside}
\begin{split}
 & \mu^2(\varrho_{p_2}  p_2, \varrho_{p_1} p_1)\\
 &=\varrho_{p_2}\varrho_{p_1}\sum \limits_{m\in \bZ^g} e^{\pi \bi \frac{S_m^T\tau S_m}{(\bk_2-\bk_1)(\bk_3-\bk_2)(\bk_3-\bk_1)}} e^{2\pi\bi\Big((a_{3}-a_{2})^T(\bk_2-\bk_1)-(a_{2}-a_{1})^T(\bk_3-\bk_2)\Big)\frac{-S_m}{(\bk_2-\bk_1)(\bk_3-\bk_2)(\bk_3-\bk_1)}} q_m\\
 &=\varrho_{p_2}\varrho_{p_1}\sum \limits_{w\in I_{g, |\sk_3-\sk_1|}}D^w q_{w}.
 \end{split}
\end{equation}
where
\begin{equation} \label{eq: Dw}
\begin{split}
D^w &=\sum \limits_{\tm\in \bZ^g} e^{\pi \bi \left(\tm-\frac{S_{w}}{(\bk_2-\bk_1)(\bk_3-\bk_2)(\bk_3-\bk_1)}\right)^T(\bk_2-\bk_1)(\bk_3-\bk_2)(\bk_3-\bk_1)\tau\left(\tm- \frac{S_{w}}{(\bk_2-\bk_1)(\bk_3-\bk_2)(\bk_3-\bk_1)}\right)}  \\
& \hspace{18em}
e^{2\pi\bi\Big((a_{3}-a_{2})^T(\bk_2-\bk_1)-(a_{2}-a_{1})^T(\bk_3-\bk_2)\Big)\left(\tm-\frac{S_{w}}{(\bk_2-\bk_1)(\bk_3-\bk_2)(\bk_3-\bk_1)}\right)}\\
& = \vartheta\left[-\frac{S_{w}}{(\bk_2-\bk_1)(\bk_3-\bk_2)(\bk_3-\bk_1)},\left((a_{3} - a_{2})^T(\bk_2-\bk_1)-(a_{2}-a_{1})^T(\bk_3-\bk_2) \right)\right]\\
& \hspace{30em}\Big((\bk_3-\bk_2)(\bk_2-\bk_1)(\bk_3-\bk_1)\tau, 1\Big).
 \end{split}
 \end{equation}
\end{proposition}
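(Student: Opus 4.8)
The plan is to read the product straight off its definition \eqref{eq: product def}, feeding in the three ingredients already assembled above — the enumeration of contributing triangles (Lemma \ref{lem: u_m}), their areas (Lemma \ref{lem: triangle area}), and their holonomies (the computation culminating in \eqref{eq: holonomy factor}) — and then to recognize the resulting lattice sum over $\bZ^g$ as a shifted theta function of the type introduced in Section \ref{sec:shift}.

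First I would settle the combinatorics of the count. By Lemma \ref{lem: u_m} the triangles $u_m$ with corners $q,p_1,p_2$ and boundary on $\ell_{\bk_1,b_1}\cup\ell_{\bk_2,b_2}\cup\ell_{\bk_3,b_3}$ are indexed by $m\in\bZ^g$, and by \eqref{eq: reminder} the output class $q_m\in\bT^{2g}$ depends only on the residue $w\in I_{g,|\sk_3-\sk_1|}$ of $m$ modulo $\bk_3-\bk_1$. I would then argue that each homotopy class $[u_m]$ contains exactly one holomorphic triangle counted with sign $+1$: for the flat $J$ on $\bT^{2g}=\bR^{2g}/\bZ^{2g}$ the maps $u_m$ are affine, hence rigid and regular with $\ind([u_m])=0$, and Remark \ref{rmk: triangle ordering} ensures $(\bk_2-\bk_1)(\bk_3-\bk_2)(\bk_3-\bk_1)>0$, which is exactly the orientation condition making each $u_m$ a genuine (rather than anti-) holomorphic triangle. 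Thus $\#\cM(q_m,p_1,p_2;[u_m])=1$ and \eqref{eq: product def} collapses to a single sum over $m\in\bZ^g$.

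Next I would substitute the two weights. Writing $N=(\bk_2-\bk_1)(\bk_3-\bk_2)(\bk_3-\bk_1)$, Lemma \ref{lem: triangle area} turns $e^{2\pi\bi\int u_m^*\omega_\tau}$ into $\exp\!\big(\pi\bi\,S_m^T\tau S_m/N\big)$, while $\Upsilon(\varrho_{p_2},\varrho_{p_1};[u_m])$ equals $\varrho_{p_2}\varrho_{p_1}$ times the exponential factor read off from \eqref{eq: holonomy factor}; collecting these gives the first line of \eqref{eq:mu2_Aside}. I would then regroup by writing $m=(\bk_3-\bk_1)\tm+w$ with $\tm\in\bZ^g$: the formula \eqref{eq: Sm} gives $S_m=S_w-N\tm$, so $-S_m/N=\tm-S_w/N$, and since $q_m=q_w$ is constant in $\tm$ the double sum factors as $\sum_w\big(\sum_{\tm}\cdots\big)q_w$. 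Using the symmetry of $\tau$ to rewrite the area exponent as $\pi\bi\,N\,(\tm-S_w/N)^T\tau(\tm-S_w/N)$, the inner $\tm$-sum matches term-by-term the series defining $\vartheta[c,d](N\tau,1)$ with $c=-S_w/N$ and $d=(a_3-a_2)^T(\bk_2-\bk_1)-(a_2-a_1)^T(\bk_3-\bk_2)$, i.e.\ it equals $D^w$ of \eqref{eq: Dw}, yielding the second line of \eqref{eq:mu2_Aside}.

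The main obstacle I anticipate is not the bookkeeping but the justification that $\#\cM(q_m,p_1,p_2;[u_m])=1$ with the correct sign: one must confirm transversality and regularity of these affine triangles and pin down the orientation conventions so that each class contributes $+1$, rather than $\pm1$ or a cancelling pair. The positivity in Remark \ref{rmk: triangle ordering} is the crucial input, as it fixes the cyclic ordering of the three boundary arcs to be the holomorphic one. A secondary and routine point is the absolute convergence of the $\tm$-series and the legitimacy of the rearrangement, both of which follow from $N\,\Imag\tau$ being positive definite since $\Imag\tau>0$.
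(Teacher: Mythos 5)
Your proposal is correct and takes essentially the same route as the paper's proof: substitute the area formula of Lemma \ref{lem: triangle area} and the holonomy factor \eqref{eq: holonomy factor} into the definition \eqref{eq: product def} to get the first line of \eqref{eq:mu2_Aside}, then split $m=(\bk_3-\bk_1)\tm+w$ so that $q_m=q_w$ and $-S_m/N=\tm-S_w/N$ (with $N=(\bk_2-\bk_1)(\bk_3-\bk_2)(\bk_3-\bk_1)$), and identify the inner $\tm$-sum with the shifted theta function $D^w$. Your added points — that each class $[u_m]$ contributes exactly one regular triangle with sign $+1$ (using the positivity in Remark \ref{rmk: triangle ordering}) and that the series converges absolutely since $\Imag\tau>0$ — are sound and merely make explicit what the paper leaves implicit in Lemma \ref{lem: u_m} and the definition of $\mu^2$.
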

\begin{proof}Simply substitute Equations \eqref{eq: triangle area} and \eqref{eq: holonomy factor} into the definition, i.e. Equation \eqref{eq: product def}. To get the 2nd equality above, we write $m=(\bk_3-\bk_1)\tm+w$, with $\tm \in \bZ^g$ and the reminder $w \in I_{g, |\sk_3-\sk_1|}$.   Then as pointed out in Equation \eqref{eq: reminder}, $q_m=q_w$ when $m$ and $w$ are related in this way.  Also  note that $\frac{-S_m}{(\bk_2-\bk_1)(\bk_3-\bk_2)(\bk_3-\bk_1)}=\tm- \frac{S_{w}}{(\bk_2-\bk_1)(\bk_3-\bk_2)(\bk_3-\bk_1)}$.  
\end{proof}

The mirror functor $\Phi_\tau:\Fuk(\bT^{2g},\omega_\tau)\to \Coh(V_\tau)$ maps objects  $\hat \ell_{\sk,[z]}$ to $\cL_{\sk,[z]}$, for $\sk\in\bZ$ and $z=a+\tau b\in \bC$.  At first order, as pointed out in Equation \eqref{eq:mirror morphisms}, $\Phi_\tau^1$ is an isomorphism on the morphism complexes 
\begin{equation}\label{eq: Phi^1}
\Phi_\tau^1:CF^*(\hat \ell_{\sk_1, [z_1]}, \hat \ell_{\sk_2, [z_2]})\xrightarrow{\cong} \Ext^*(\cL_{\sk_1,[z_1]}, \cL_{\sk_2,[z_2]})=H^*(V_\tau, \cL_{\sk_2-\sk_1, [z_2-z_1]}).
\end{equation}
Given a generator $p_{\sk_1, b_1, \sk_2,b_2}(\lambda)\in \ell_{\sk_1,b_1}\cap \ell_{\sk_2,b_2}$ of the Floer cochain complex $CF^*(\hat \ell_{\sk_1, [z_1]}, \hat \ell_{\sk_2, [z_2]})$ whose coordinates are given in Equation \eqref{eq:T4_Lag_intersec}, as explained in Section \ref{subsec:mors_fiber}, it is of degree $0$ if $\sk_1<\sk_2$ and of degree $g$ if $\sk_1>\sk_2$.  It's image under $\Phi_\tau^1$ is the section $s_{\sk_2-\sk_1, z_2-z_1, \lambda}\in H^0(V_\tau, \sk_2-\sk_1, [z_2-z_1])=\Ext^0(\cL_{\sk_1,[z_1]}, \cL_{\sk_2,[z_2]})$ if $\sk_1<\sk_2$, or the dual $s^{\sk_1-\sk_2, z_1-z_2, \lambda}\in \Ext^g(\cL_{\sk_1,[z_1]}, \cL_{\sk_2,[z_2]})$ if $\sk_1>\sk_2$.  For the reminder of the section, we show the above isomorphism in Equation \eqref{eq: Phi^1} is compatible with the product $\mu^2$, more precisely stated in Proposition \ref{prop: mu 2 commute} below that $\mu^2$ commutes with the composition of the extension groups under the mirror functor $\Phi_\tau$.  We first state a lemma that is a special case of Proposition \ref{multiplication}.  
\begin{lemma}\label{cor: multn_formula_theta} When $\sk_1<\sk_2<\sk_3$, we have the following multiplication formula for the theta functions
\begin{multline}\label{eq:multiplication 123}\vartheta\left[\frac{\lambda_{2}+b_3-b_2}{\bk_3-\bk_2}, a_{3}-a_{2}\right]\left((\bk_3-\bk_2)\tau, x^{\bk_3-\bk_2}\right) \cdot \vartheta\left[\frac{\lambda_{1}+b_2-b_1}{\bk_2-\bk_1}, a_{2}-a_{1}\right]\left((\bk_2-\bk_1)\tau, x^{\bk_2-\bk_1}\right)\\
=\sum_{w\in I_{g, \sk_3-\sk_1}} D^w\cdot \vartheta\left[\frac{\lambda_1+\lambda_2+b_3-b_1+(\bk_3-\bk_2)w}{\bk_3-\bk_1}, a_{3}-a_{1} \right]\left((\bk_3-\bk_1)\tau, x^{\bk_3-\bk_1}\right)
\end{multline}
with the $D^w$ given in Equation \eqref{eq: Dw}.  Equivalently, this can be written as 
\begin{equation}\label{eq: product formula section}s_{\sk_3-\sk_2, z_3-z_2, \lambda_2}\otimes s_{\sk_2-\sk_1, z_2-z_1, \lambda_1} = \sum_{w\in I_{g,\sk_3-\sk_1}} D^{w} s_{\sk_3-\sk_1, z_3-z_1, \lambda_w},
\end{equation}
where $I_{g, \sk_3-\sk_1}\ni \lambda_w\equiv \lambda_1+\lambda_2+(\sk_3-\sk_2)w \mod (\sk_3-\sk_1)\bZ^g$.
\end{lemma}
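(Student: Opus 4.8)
The plan is to obtain \eqref{eq:multiplication 123} as a direct specialization of the special case \eqref{eqn:multiply-ii} of Proposition \ref{multiplication}, and then to translate that identity into the section identity \eqref{eq: product formula section} using the dictionary \eqref{eq: shifted theta section} between shifted theta functions and sections. First I would apply \eqref{eqn:multiply-ii} under the substitution
\[
\sk'=\sk_3-\sk_2,\quad \sk''=\sk_2-\sk_1,\quad c'=\lambda_2+b_3-b_2,\quad c''=\lambda_1+b_2-b_1,\quad d'=a_3-a_2,\quad d''=a_2-a_1.
\]
The hypothesis $\sk_1<\sk_2<\sk_3$ guarantees that $\sk'$ and $\sk''$ are positive integers, so the proposition applies, and $\sk'+\sk''=\sk_3-\sk_1$. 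With these choices the two factors on the left of \eqref{eqn:multiply-ii} are exactly the two factors on the left of \eqref{eq:multiplication 123}, and the sum on the right is indexed by the same set $I_{g,\sk_3-\sk_1}$, so no reindexing is needed at this stage.

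Next I would verify that the two theta factors in each summand of \eqref{eqn:multiply-ii} reproduce the right-hand side of \eqref{eq:multiplication 123}. The $(\sk'+\sk'')$-factor is $\vartheta\left[\frac{\sk'w+c'+c''}{\sk'+\sk''},d'+d''\right]((\sk'+\sk'')\tau,x^{\sk'+\sk''})$; substituting the values above yields first shift $\frac{\lambda_1+\lambda_2+b_3-b_1+(\sk_3-\sk_2)w}{\sk_3-\sk_1}$, second shift $a_3-a_1$, and modulus $(\sk_3-\sk_1)\tau$, which is precisely the theta factor on the right of \eqref{eq:multiplication 123}. For the coefficient, the remaining factor is $\vartheta\left[\frac{\sk'\sk''w+\sk''c'-\sk'c''}{\sk'\sk''(\sk'+\sk'')},\sk''d'-\sk'd''\right](\sk'\sk''(\sk'+\sk'')\tau,1)$; the one nontrivial point is the short algebraic identity that the numerator $\sk'\sk''w+\sk''c'-\sk'c''$ equals $-S_w$, where $S_w$ is the expression \eqref{eq: Sm} with $m$ replaced by $w$, and that $\sk''d'-\sk'd''=(\sk_2-\sk_1)(a_3-a_2)-(\sk_3-\sk_2)(a_2-a_1)$ matches the shift appearing in \eqref{eq: Dw}. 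Since the denominator is $(\sk_2-\sk_1)(\sk_3-\sk_2)(\sk_3-\sk_1)$, comparison with \eqref{eq: Dw} then identifies this factor with $D^w$, completing the proof of \eqref{eq:multiplication 123}.

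Finally, to pass to \eqref{eq: product formula section}, I would invoke the correspondence \eqref{eq: shifted theta section}: writing each difference as $z_i-z_j=(a_i-a_j)+\tau(b_i-b_j)$, the function $\vartheta\left[\frac{\lambda+(b_i-b_j)}{\sk_i-\sk_j},a_i-a_j\right]((\sk_i-\sk_j)\tau,x^{\sk_i-\sk_j})$ is exactly the section $s_{\sk_i-\sk_j,\,z_i-z_j,\,\lambda}$. Under this dictionary the three theta factors of \eqref{eq:multiplication 123} become $s_{\sk_3-\sk_2,z_3-z_2,\lambda_2}$, $s_{\sk_2-\sk_1,z_2-z_1,\lambda_1}$, and $s_{\sk_3-\sk_1,z_3-z_1,\lambda_w}$, respectively. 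The only point needing care is that the section index $\lambda_w$ is specified only modulo $(\sk_3-\sk_1)\bZ^g$; this is legitimate because, by the integral-shift relation \eqref{eqn:integral-shift} with $d'=0$, altering the first shift of a theta function by an integer vector leaves it unchanged, so shifting $\lambda_w$ by an element of $(\sk_3-\sk_1)\bZ^g$ (which changes $\frac{\lambda_w+b_3-b_1}{\sk_3-\sk_1}$ by an integer vector) does not change $s_{\sk_3-\sk_1,z_3-z_1,\lambda_w}$. This yields \eqref{eq: product formula section}.

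I expect the main obstacle to be purely bookkeeping rather than conceptual: carefully matching the numerator of the first theta argument in Proposition \ref{multiplication} with $-S_w$ from \eqref{eq: Sm}, confirming the agreement of the two shift vectors with \eqref{eq: Dw}, and correctly tracking the reduction of the index modulo $(\sk_3-\sk_1)\bZ^g$. None of these requires new ideas beyond verifying the specialization and applying the already-established invariance \eqref{eqn:integral-shift}.
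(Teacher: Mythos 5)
Your proposal is correct and takes essentially the same route as the paper's own proof: both specialize Proposition \ref{multiplication} (in the form \eqref{eqn:multiply-ii}) with the identical substitutions $\sk'=\sk_3-\sk_2$, $\sk''=\sk_2-\sk_1$, $c'=\lambda_2+b_3-b_2$, $c''=\lambda_1+b_2-b_1$, $d'=a_3-a_2$, $d''=a_2-a_1$, identify the scalar theta factor with $D^w$ (via $\sk'\sk''w+\sk''c'-\sk'c''=-S_w$), and pass to the section identity through the dictionary \eqref{eq: shifted theta section}, using the integral-shift invariance \eqref{eqn:integral-shift} to justify reducing $\lambda_w$ modulo $(\sk_3-\sk_1)\bZ^g$. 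If anything, your write-up makes explicit the bookkeeping (the $-S_w$ identification and the matching of shift vectors with \eqref{eq: Dw}) that the paper's proof leaves to the reader.
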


\begin{proof}
Equation \eqref{eq:multiplication 123} can be obtained by substituting $\bk'$ in Proposition \ref{multiplication} by  $\bk_3-\bk_2$, $\bk''$ by $\bk_2-\bk_1$, $c'$ by $\lambda_{2}+b_3-b_2$, $c''$ by $\lambda_{1}+b_2-b_1$, $d'$ by $a_{3}-a_{2}$, $d''$ by $a_{2}-a_{1}$, $x'$ by $x^{\bk_3-\bk_2}$, and $x''$ by $x^{\bk_2-\bk_1}$.

 The theta function $\vartheta\left[\frac{\lambda_{1}+b_2-b_1}{\bk_2-\bk_1}, a_{2}-a_{1}\right]\left((\bk_2-\bk_1)\tau, x^{\bk_2-\bk_1}\right)$ defines a section  $s_{\sk_2-\sk_1, z_2-z_1, \lambda_1}$, and the theta function $\vartheta\left[\frac{\lambda_{2}+b_3-b_2}{\bk_2-\bk_1}, a_{3}-a_{2}\right]\left((\bk_3-\bk_2)\tau, x^{\bk_3-\bk_2}\right)$ defines a section  $s_{\sk_3-\sk_2, z_3-z_2, \lambda_2}$. 
 
 Similarly,
$\vartheta\left[\frac{\lambda_1+\lambda_2+b_3-b_1+(\sk_3-\sk_2)w}{\bk_3-\bk_1}, a_{3}-a_{1}\right]\left((\bk_3-\bk_1)\tau, x^{\bk_3-\bk_1}\right)$ defines a section  $s_{\sk_3-\sk_1, z_3-z_1, \lambda_w}$, where the above choice for $\lambda_w$ regarding modulo by $(\sk_3-\sk_1)\bZ^g$ is justified because of Equation \eqref{eqn:integral-shift}, that adding an element of $(\sk_3-\sk_1)\bZ^g$ to $\lambda$ leaves the above theta function invariant.  Therefore, Equation \eqref{eq: product formula section} is equivalent to \eqref{eq:multiplication 123}.
\end{proof}

\begin{proposition}\label{prop: mu 2 commute} The following diagram commute
\[\begin{tikzcd}
 CF^{j_2}(\hat \ell_{\sk_2,[z_2]},\hat \ell_{\sk_3,[z_3]}) \otimes CF^{j_1}(\hat \ell_{\sk_1,[z_1]},\hat \ell_{\sk_2,[z_2]}) \arrow[r, "\mu^2"]\arrow[d, "\Phi_\tau\otimes \Phi_\tau"'] & CF^{j_1+j_2}(\hat \ell_{\sk_1,[z_1]},\hat \ell_{\sk_3,[z_3]})\arrow[d,"\Phi_\tau"] \\
 \Ext^{j_2}(\cL_{\sk_2,[z_2]},\cL_{\sk_3,[z_3]}) \otimes \Ext^{j_1}(\cL_{\sk_1,[z_1]},\cL_{\sk_2,[z_2]}) \arrow[r, "\otimes"]& \Ext^{j_1+j_2}(\cL_{\sk_1,[z_1]},\cL_{\sk_3,[z_3]})
 \end{tikzcd}.
 \]
\end{proposition}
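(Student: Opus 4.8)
The plan is to check the diagram on generators, since $\mu^2$ and the Yoneda product are $\bC$-bilinear and $\Phi_\tau$ is $\bC$-linear; thus it suffices to compare $\Phi_\tau\big(\mu^2(p_2,p_1)\big)$ with $\Phi_\tau(p_2)\otimes\Phi_\tau(p_1)$ for a single pair of intersection-point generators. Recall from \eqref{eq: Phi^1} and the paragraph after it that $\Phi_\tau$ sends an intersection point $p_{\sk_i,b_i,\sk_{i+1},b_{i+1}}(\lambda)$ to the section $s_{\sk_{i+1}-\sk_i,\,z_{i+1}-z_i,\,\lambda}$ when $\sk_i<\sk_{i+1}$, and to its Serre dual $s^{\sk_i-\sk_{i+1},\,z_i-z_{i+1},\,\lambda}$ when $\sk_i>\sk_{i+1}$. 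By Remark \ref{rmk: triangle ordering}, a nonzero triangle count forces the three slopes into one of the cyclic orders $\sk_1<\sk_2<\sk_3$, $\sk_3<\sk_1<\sk_2$, or $\sk_2<\sk_3<\sk_1$, and I would organize the verification by these three cases.

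First I would dispose of the core case $\sk_1<\sk_2<\sk_3$, in which $j_1=j_2=0$ and every morphism space is a space of global sections. Here the proof is an immediate comparison of two formulas with literally the same coefficients: Proposition \ref{prop:mu 2 on A side} gives $\mu^2(\varrho_{p_2}p_2,\varrho_{p_1}p_1)=\varrho_{p_2}\varrho_{p_1}\sum_{w} D^w q_w$ with the $D^w$ of \eqref{eq: Dw}, while Lemma \ref{cor: multn_formula_theta}, in the form \eqref{eq: product formula section}, computes the tensor product of the mirror sections as $s_{\sk_3-\sk_2,\,z_3-z_2,\,\lambda_2}\otimes s_{\sk_2-\sk_1,\,z_2-z_1,\,\lambda_1}=\sum_w D^w s_{\sk_3-\sk_1,\,z_3-z_1,\,\lambda_w}$ with the identical $D^w$. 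Applying $\Phi_\tau$ to the A-side output and using $\Phi_\tau(q_w)=s_{\sk_3-\sk_1,\,z_3-z_1,\,\lambda_w}$ gives commutativity; the only points to verify are that the two parametrizations of the output generators by $w\in I_{g,|\sk_3-\sk_1|}$ coincide and that the index identity $\lambda_w\equiv\lambda_1+\lambda_2+(\sk_3-\sk_2)w$ holds, both of which are recorded in \eqref{eq: reminder}.

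Next I would handle the remaining two cyclic orders, in which one input and the output have degree $g$ and the corresponding B-side classes are Serre-dual sections $s^{\bullet}$. On the A-side nothing new is needed: the area \eqref{eq: triangle area} and the holonomy factor depend only on the underlying holomorphic triangle, which is insensitive to a cyclic relabeling of the three Lagrangians, so the same coefficients $D^w$ control $\mu^2$ in all three orders. On the B-side I would invoke Lemma \ref{lem:C_s_dual_calc} together with Remark \ref{rmk:dual product Serre}: they show that the Yoneda composition of a genuine section with a Serre-dual section is governed by precisely the structure constants of the honest section multiplication in Definition \ref{C-s-dual}, equivalently by the $D^w$. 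Thus in each dual case the B-side composition again produces the coefficients $D^w$, and matching the dual bases $\{s^{\bullet}\}$ under $\Phi_\tau$ yields commutativity. (When two of the slopes agree, the relevant degree-$0$ generator is the unit coming from the $\sk_1=\sk_2$ computation, so $\mu^2$ is determined by unitality, which $\Phi_\tau$ respects, while if the associated points differ the morphism space vanishes; these are not genuine triangle products.)

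I expect the main obstacle to be precisely this second step: confirming that the cyclic invariance of the A-side triangle product and the Serre-duality reorganization of the B-side composition produce matching coefficients with no stray scalar. The increasing case is a direct transcription of the two multiplication formulas, but in the dual cases one must track the normalization of the dual basis $\{s^{\sk,z,\lambda}\}$ from Definition \ref{C-s-dual}, the degree-$g$ grading of the relevant intersection points, and the perturbation-dependent and theta-characteristic factors, and verify that they cancel so that the honest-multiplication constants (hence the $D^w$) reappear unchanged.
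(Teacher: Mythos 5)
Your proposal is correct and follows essentially the same route as the paper's proof: split into the three cyclic orderings of the slopes, handle $\sk_1<\sk_2<\sk_3$ by directly matching the coefficients $D^w$ of Proposition \ref{prop:mu 2 on A side} against the theta multiplication formula in Lemma \ref{cor: multn_formula_theta}, and reduce the two degree-$g$ cases to the increasing case via Serre duality using Lemma \ref{lem:C_s_dual_calc} and Remark \ref{rmk:dual product Serre}. Your A-side appeal to cyclic invariance of the triangle counts is just a rephrasing of the paper's statement that the Floer product "similarly dualizes," and your worry about stray scalars in the dual cases is exactly what Lemma \ref{lem:C_s_dual_calc} settles, since it shows the dual-basis structure constants coincide with the honest ones.
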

\begin{proof}  As pointed out in Remark \ref{rmk: triangle ordering}, there are three cases: $\sk_1<\sk_2<\sk_2$ in which case the degrees in the above diagram satisfies $j_1=j_2=0$, $\sk_2<\sk_3<\sk_1$ in which case $j_1=g$ and $j_2=0$, or $\sk_3<\sk_1<\sk_2$ in which case $j_1=0$ and $j_2=g$. 

Let us first discuss the first case, i.e. when $\sk_1<\sk_2<\sk_3$.  Consider generators $p_1\in \ell_{\sk_1, b_1}\cap \ell_{\sk_2,b_2}$ and $p_2\in \ell_{\sk_2, b_2}\cap \ell_{\sk_3,b_3}$ of the respective Floer complexes in the form stated in Proposition \ref{prop:mu 2 on A side}.  Then 
$\Phi_\tau^1(p_1)=s_{\sk_2-\sk_1, z_2-z_1,\lambda_1}$ and $\Phi_\tau^1(p_2)=s_{\sk_3-\sk_2,z_3-z_2, \lambda_2}$. 
Proposition \ref{prop:mu 2 on A side} gives that
\begin{equation}\label{eq:Phi mu2}
\Phi_\tau^1(\mu^2(p_1,p_2))=\Phi_\tau^1\left(\sum_{w\in I_{g, \sk_3-\sk_1}}D^wq_w\right)=\sum_{w\in I_{g, \sk_3-\sk_1}}D^w\Phi_\tau^1(q_w)
=\sum_{w\in I_{g, \sk_3-\sk_1}}D^ws_{\sk_3-\sk_1, z_3-z_1, \lambda_w},
\end{equation}
where $\lambda_w$ is the same as the one given in Lemma \ref{cor: multn_formula_theta}.
Combining Equations  \eqref{eq: product formula section} and \eqref{eq:Phi mu2}, we get 
$\Phi_\tau^1(\mu^2(p_1,p_2))= \Phi_\tau^1(p_2)\otimes \Phi_\tau^1(p_1).$

Below we explain that the other two cases follows from Serre duality, i.e. the morphism complexes satisfy $\Hom^*(A_1, A_2)=\Hom^{g-*}(A_2,A_1)^\vee$ for objects $A_1$  and $A_2$.  Note that Serre duality for $\Ext^*$ is stated in Equation \eqref{eq:B-model Serre} and for $CF^*$ is explained in Section \ref{subsec:mors_fiber}.

When $\sk_2<\sk_3<\sk_1$, we are considering the product 
\[\Ext^{0}(\cL_{\sk_2,[z_2]},\cL_{\sk_3,[z_3]}) \otimes \Ext^{g}(\cL_{\sk_1,[z_1]},\cL_{\sk_2,[z_2]}) \to  \Ext^{g}(\cL_{\sk_1,[z_1]},\cL_{\sk_3,[z_3]}).
\]
By Serre duality, we can replace $\Ext^g(\cL_{\sk_1,[z_1]},\cL_{\sk_2,[z_2]})$ above by $\Ext^0(\cL_{\sk_2,[z_2]},\cL_{\sk_1,[z_1]})^\vee$, and similarly for  $\Ext^{g}(\cL_{\sk_1,[z_1]},\cL_{\sk_3,[z_3]})$.  In turn, the above map induces the following equivalent computation (also see Lemma \ref{lem:C_s_dual_calc} and Remark \ref{rmk:dual product Serre} for a more detailed formula of the product involving the duals)  
\[\Ext^0(\cL_{\sk_3,[z_3]}, \cL_{\sk_1,[z_1]})\otimes \Ext^0(\cL_{\sk_2,[z_2]},\cL_{\sk_3,[z_3]})
\to \Ext^0(\cL_{\sk_2,[z_2]},\cL_{\sk_1,[z_1]}).
\]
Since $\sk_2<\sk_3<\sk_1$, this is the same as the first case and this product matches the Floer product 
\[ CF^{0}(\hat \ell_{\sk_3,[z_3]},\hat \ell_{\sk_1,[z_1]}) \otimes CF^{0}(\hat \ell_{\sk_2,[z_2]},\hat \ell_{\sk_3,[z_3]}) \to CF^{0}(\hat \ell_{\sk_2,[z_2]},\hat \ell_{\sk_1,[z_1]}), 
\]
which similarly dualizes to 
\[ CF^{0}(\hat \ell_{\sk_2,[z_2]},\hat \ell_{\sk_3,[z_3]}) \otimes CF^{g}(\hat \ell_{\sk_1,[z_1]},\hat \ell_{\sk_2,[z_2]}) \to CF^{g}(\hat \ell_{\sk_1,[z_1]},\hat \ell_{\sk_3,[z_3]}). 
\] 

Similarly, when $\sk_3<\sk_1<\sk_2$, the product computation
\[\Ext^{g}(\cL_{\sk_2,[z_2]},\cL_{\sk_3,[z_3]}) \otimes \Ext^{0}(\cL_{\sk_1,[z_1]},\cL_{\sk_2,[z_2]}) \to  \Ext^{g}(\cL_{\sk_1,[z_1]},\cL_{\sk_3,[z_3]}).
\]
can be reduced to that of
\[\Ext^{0}(\cL_{\sk_1,[z_1]},\cL_{\sk_2,[z_2]}) \otimes \Ext^{0}(\cL_{\sk_3,[z_3]},\cL_{\sk_1,[z_1]}) \to  \Ext^{0}(\cL_{\sk_3,[z_3]},\cL_{\sk_2,[z_2]}),
\]
and similarly for $CF^*$.
\end{proof}

When $\sk_1<\sk_2 < \sk_3$, and $j_1=j_2=j_3=0$, the commutative diagram in Proposition \ref{prop: mu 2 commute} can be rewritten as
\[\begin{tikzcd}
 HF^0(\hat \ell_{\mathsf{0}, [0]},\hat \ell_{\sk_3-\sk_2, [z_3-z_2]}) \otimes HF^0(\hat \ell_{\szero, [0]},\hat \ell_{\sk_2-\sk_1, [z_2-z_1]}) \arrow[r, "\mu^2"]\arrow[d, "\Phi_\tau\otimes \Phi_\tau"'] & HF^0(\hat \ell_{\szero,[0]}, \hat \ell_{\sk_3-\sk_1, [z_3-z_1]})\arrow[d,"\Phi_\tau"] \\
 H^0(V_\tau, \cL_{\sk_3-\sk_2, [z_3-z_2]}) \otimes H^0(V_\tau, \cL_{\sk_2-\sk_1, [z_2-z_1]}) \arrow[r, "\otimes"]& H^0(V_\tau, \cL_{\sk_3-\sk_1, [z_3-z_1]})
 \end{tikzcd}.
 \]
Let $\hat{\ell}_{\sk} := \hat{\ell}_{\sk, [0]}$. Then for any $\sk, \sk' >0$, we have a commutative diagram
\[
\begin{tikzcd}
 HF^0(\hat \ell_{\szero},\hat \ell_{\sk}) \otimes HF^0(\hat \ell_{\szero} ,\hat \ell_{\sk'}) \arrow[r, "\mu^2"]\arrow[d, "\Phi_\tau\otimes \Phi_\tau"'] & HF^0(\hat \ell_{\szero}, \hat \ell_{\sk+\sk'})\arrow[d,"\Phi_\tau"] \\
 H^0(V_\tau, \cL_\tau^{\otimes \sk}) \otimes H^0(V_\tau, \cL_\tau^{\otimes \sk'}) \arrow[r, "\otimes"]& H^0(V_\tau, \cL_\tau^{\otimes(\sk +\sk')})
 \end{tikzcd}.
 \]

For each $\tau\in \cH$, we define a graded vector space over $\bC$:
\begin{equation}\label{eqn:widecheck-S-tau-section 4}
\widecheck{S}_\tau := \bigoplus_{\sk\geq 0} HF^0(\hat \ell_\szero, \hat \ell_{\sk}).
\end{equation}
where $\hat{\ell}_{\sk}\subset \bT^{2g}$ and $\deg s =\sk$ if $s\in HF^0(\hat\ell_\szero, \hat\ell_{\sk})$.  
Let 
\begin{equation}
S_\tau =\bigoplus_{d\geq 0} H^0 (V_\tau, \cL_\tau^{\otimes d})
\end{equation}
be the ring of sections defined in Section \ref{sec:categories-of-B-branes}. We obtain the following analogue of 
Theorem 1.1 of \cite{AbouzaidThesis}. 

\begin{theorem}\label{thm:ring-isomorphism} $(\widecheck{S}_\tau, \mu^2)$ is a graded commutative $\bC$-algebra, and 
$$
\Phi_\tau: (\widecheck{S}_\tau, \mu^2) \lra (S_\tau, \otimes)
$$
is an isomorphism of graded commutative $\bC$-algebras. 
\end{theorem}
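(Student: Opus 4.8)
The plan is to deduce the statement as a transport-of-structure consequence of the results already assembled, taking $(S_\tau,\otimes)$ as the model algebra and transferring its structure along $\Phi_\tau$. First I would record that $(S_\tau,\otimes)=\bigoplus_{\sk\ge 0}H^0(V_\tau,\cL^{\otimes\sk})$ is manifestly a graded commutative $\bC$-algebra: the product is the tensor product of global sections of powers of the ample line bundle $\cL$, which is associative and commutative because multiplication of sections of line bundles on a variety is, and the unit is the constant section $1\in H^0(V_\tau,\cO_{V_\tau})=H^0(V_\tau,\cL^{\otimes 0})\cong\bC$. Since every summand lives in cohomological degree $w=0$, the Koszul sign is always $+1$, so here ``graded commutative'' coincides with ordinary commutativity and no signs intervene. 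This is the structure I then pull back through $\Phi_\tau$, invoking the special case $L=\cL$ of Corollary \ref{cor:coreHMS}.

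Next I would assemble the two inputs that make $\Phi_\tau$ an isomorphism of algebras. On each graded piece, the map $\Phi_\tau^1\colon HF^0(\hat\ell_\szero,\hat\ell_\sk)\xrightarrow{\ \cong\ }H^0(V_\tau,\cL^{\otimes\sk})$ of \eqref{eq: Phi^1} is a $\bC$-linear isomorphism (for $\sk>0$ this is the $\sk_1<\sk_2$ case of the morphism computation, and for $\sk=0$ both sides are $\bC$), so $\Phi_\tau$ is a degree-preserving $\bC$-linear isomorphism $\widecheck S_\tau\to S_\tau$. The compatibility with products is exactly the specialization of Proposition \ref{prop: mu 2 commute} displayed in the commutative diagram preceding the statement, obtained by setting $\sk_1=\szero$ and $[z_i]=[0]$: here one uses translation invariance of the morphism spaces (the A-side analogue of the semi-homogeneity \eqref{semi-homogeneous}, which identifies $CF(\hat\ell_\sk,\hat\ell_{\sk+\sk'})\cong HF^0(\hat\ell_\szero,\hat\ell_{\sk'})$ and $\Ext(\cL^{\otimes\sk},\cL^{\otimes(\sk+\sk')})\cong H^0(V_\tau,\cL^{\otimes\sk'})$) to reinterpret the composition $\mu^2$ as the graded product on $\widecheck S_\tau$. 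The upshot is the identity $\Phi_\tau(\mu^2(s,t))=\Phi_\tau(s)\otimes\Phi_\tau(t)$ for all homogeneous $s,t$.

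Finally I would conclude by transport of structure. The displayed identity says precisely that $\mu^2(s,t)=\Phi_\tau^{-1}\bigl(\Phi_\tau(s)\otimes\Phi_\tau(t)\bigr)$; since $\Phi_\tau$ is a graded linear bijection and $(S_\tau,\otimes)$ is associative, commutative and unital, the transported product $\mu^2$ on $\widecheck S_\tau$ is automatically associative and commutative, with unit $\Phi_\tau^{-1}(1)\in HF^0(\hat\ell_\szero,\hat\ell_\szero)$, the Floer-theoretic identity morphism of $\hat\ell_\szero$. This makes $(\widecheck S_\tau,\mu^2)$ a graded commutative $\bC$-algebra and $\Phi_\tau$ an isomorphism of such, proving the theorem. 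The one point needing care — and the only genuine obstacle, since the disc counts and the multiplication formula are already carried out in Proposition \ref{prop:mu 2 on A side} and Proposition \ref{prop: mu 2 commute} — is the bookkeeping in the previous paragraph: I must check that the object-wise statement of Proposition \ref{prop: mu 2 commute} globalizes to a single, well-defined product on $\widecheck S_\tau$, i.e. that the translation identifications are mutually compatible across all $\sk,\sk'$. Once this is verified, associativity is free (inherited from $\otimes$ via $\Phi_\tau$), so there is no need to check the $A_\infty$ pentagon directly; the vanishing of $\mu^1$ between distinct positive slopes is what ensures the Floer-cohomology product agrees with the strict composition used above.
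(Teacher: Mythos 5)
Your proposal is correct and follows essentially the same route as the paper: the paper also obtains the theorem by specializing Proposition \ref{prop: mu 2 commute} to $j_1=j_2=0$ and $[z_i]=[0]$, rewriting the resulting diagram via the translation identifications $HF^0(\hat\ell_{\sk'},\hat\ell_{\sk+\sk'})\cong HF^0(\hat\ell_\szero,\hat\ell_{\sk})$ (compatible with $\Phi_\tau^1$, since both sides have generators indexed by the same $\lambda\in I_{g,\sk}$ mapping to the same theta sections), and then transporting the manifest graded commutative algebra structure of $(S_\tau,\otimes)$ through the bijection $\Phi_\tau$. Your explicit attention to the unit, to associativity being inherited rather than checked via $A_\infty$ relations, and to the mutual compatibility of the translation identifications makes precise what the paper leaves implicit, but it is the same argument.
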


\begin{remark}
The ring $(\widecheck{S}_\tau, \mu^2)$ has been studied in \cite{Zaslow05, Aldi-Zaslow, Aldi09}.
\end{remark}

\subsection{Categories of A-branes and homological mirror symmetry}\label{sec:cateogries-of-A-branes} 
Given $\tau\in \cH_\tau$,  we recall from Section \ref{sec:A-branes}, some categories of A-branes in $(\bT^{2g},\omega_\tau)$.
We consider strictly unobstructed Lagrangians so that the differential of the Lagrangian Floer complex squared to zero and the Lagrangian Floer cohomology groups $HF^*$ are defined. 
Moreover, we consider affine Lagrangians (which are in particular strictly unobstructed) so that $HF^*$ is defined not only over a Novikov ring/field but is defined over $\bC$, so that we may study global mirror symmetry over the moduli of complex structures on $V_\tau$.

\begin{itemize}
\item Let $\Fuk_{\aff}(\bT^{2g},\omega_\tau)$ be the category whose objects are affine Lagrangians in $(\bT^{2g},\omega_\tau)$ equipped with flat $U(1)$-connections and their shifts, and 
$\Hom(\hat{\ell}[j], \hat{\ell}'[j']) = HF^*(\hat{\ell}, \hat{\ell}')[j'-j]$, the Lagrangian 
intersection Floer cohomology group with complex coefficients, which is a graded
vector space over $\bC$. 
\item Let $\cA_\tau$ be the full subcategory of $\Fuk_{\aff}(\bT^{2g},\omega_\tau)$ whose objects
are $\{ \hat{\ell}_{\bk,[v]}[j] \mid \bk, j\in \bZ, [v]\in V_\tau^+\}$, i.e.  SYZ mirrors of objects in the category $\cB_\tau$ defined in Section \ref{sec:categories-of-B-branes}.

\item Given any line bundle $L \in \Pic(V_\tau)$ with $c_1(L)=\omega_{V_\tau}$, let $\cA_L$ be the full subcategory of $\Fuk_{\aff}(\bT^{2g}, \omega_\tau)$ whose objects are SYZ mirrors of objects in $\cB_L$ defined in Section \ref{sec:categories-of-B-branes}.
\end{itemize}

We have the following version of homological mirror symmetry at the level of cohomologies. 
\begin{theorem} \label{thm:HMS}
For every $\tau\in \cH$, the functor
\begin{equation} \label{eqn:A-to-B}
\Mir_\tau: \cA_\tau \lra \cB_\tau
\end{equation}
sending $\hat{\ell}_{\sk, [z]}[j]$ to its SYZ mirror $\cL_{\sk, [z]}[j]= \cL^{\otimes \sk}\otimes \bbL_{[z]} [j]$, where
$\sk, j\in \bZ$ and $[z]\in V_\tau^+$, is an equivalence of categories. Therefore, we have 
a fully faithful embedding
\begin{equation}\label{eqn:A-to-DCoh}
\Phi_\tau : \cA_\tau \longrightarrow D^b \Coh(V_\tau)
\end{equation}
of cohomological categories. In particular, the product structures match under $\Phi_\tau$: 
\begin{equation}
\Phi_\tau \circ \mu^2_{\cA_\tau} = \mu^2_{D^b\Coh(V_\tau)} \circ \Phi_\tau.
\end{equation}
\end{theorem}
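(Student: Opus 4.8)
The plan is to assemble Theorem \ref{thm:HMS} from the cohomological-level computations already established in the preceding subsections. The functor $\Mir_\tau$ is defined on objects by sending $\hat{\ell}_{\sk,[z]}[j]$ to $\cL_{\sk,[z]}[j]$, and on morphisms by the isomorphism $\Phi_\tau^1$ of Equation \eqref{eq: Phi^1}. The first step is to verify that $\Mir_\tau$ is a well-defined functor of cohomological (i.e. $\bC$-linear graded) categories: this requires that $\Phi_\tau^1$ be compatible with the compositions (the $\mu^2$ products), which is exactly the content of Proposition \ref{prop: mu 2 commute}. Since the shift $[j]$ is handled formally on both sides and $\Mir_\tau$ respects it by construction, functoriality reduces to the commuting square already proved.

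Next I would establish that $\Mir_\tau$ is \emph{fully faithful}. Full faithfulness means that for every pair of objects the map on Hom-spaces is an isomorphism. By definition $\Hom_{\cA_\tau}(\hat{\ell}_{\sk,[z]}[j], \hat{\ell}_{\sk',[z']}[j']) = HF^*(\hat{\ell}_{\sk,[z]}, \hat{\ell}_{\sk',[z']})[j'-j]$ and $\Hom_{\cB_\tau}(\cL_{\sk,[z]}[j], \cL_{\sk',[z']}[j']) = \Ext^*(\cL_{\sk,[z]}, \cL_{\sk',[z']})[j'-j]$. So I need a graded isomorphism $HF^*(\hat{\ell}_{\sk,[z]}, \hat{\ell}_{\sk',[z']}) \cong \Ext^*(\cL_{\sk,[z]}, \cL_{\sk',[z']})$ for all $\sk,\sk',[z],[z']$. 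This is precisely what was computed case-by-case in Section \ref{subsec:mors_fiber}: Equation \eqref{eq:mirror-morphisms-HF} handles $\sk\neq \sk'$, and Equation \eqref{eqn:finishing} handles $\sk=\sk'$ (including the subtle point that the Hamiltonian-perturbed Floer cohomology vanishes when $[a_1]\neq[a_2]$, matching the vanishing $\Ext$ group). Since these isomorphisms are exactly the maps $\Phi_\tau^1$, full faithfulness follows by collecting the three cases. I would then note that $\Mir_\tau$ is \emph{essentially surjective} (indeed bijective on objects) because every object of $\cB_\tau$ is of the form $\cL_{\sk,[z]}[j] = \cL^{\otimes\sk}\otimes\bbL_{[z]}[j]$ and is the image of $\hat{\ell}_{\sk,[z]}[j]$; this is immediate from the definition of $\cA_\tau$ in Section \ref{sec:cateogries-of-A-branes} as the category of SYZ mirrors of objects of $\cB_\tau$. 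A fully faithful, essentially surjective functor of cohomological categories is an equivalence, giving the first assertion.

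The fully faithful embedding $\Phi_\tau : \cA_\tau \to D^b\Coh(V_\tau)$ is then obtained by postcomposing the equivalence $\Mir_\tau : \cA_\tau \to \cB_\tau$ with the inclusion $\cB_\tau \hookrightarrow D^b\Coh(V_\tau)$; the inclusion is fully faithful since $\cB_\tau$ is a full subcategory, and a composite of fully faithful functors is fully faithful. The compatibility of product structures, $\Phi_\tau \circ \mu^2_{\cA_\tau} = \mu^2_{D^b\Coh(V_\tau)} \circ \Phi_\tau$, is again just Proposition \ref{prop: mu 2 commute}, now read inside the ambient derived category where $\mu^2_{D^b\Coh(V_\tau)}$ is the Yoneda/composition product on $\Ext$ groups that restricts to the tensor product $\otimes$ on sections.

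The main obstacle is entirely concentrated in Proposition \ref{prop: mu 2 commute}, which I am permitted to assume here but which carries the real content: matching the Floer product $\mu^2$ (a sum over holomorphic triangles, computed via theta-function identities in Proposition \ref{prop:mu 2 on A side} and Lemma \ref{cor: multn_formula_theta}) with the tensor product of sections on the B-side. Given that proposition, the remaining work is organizational bookkeeping — checking that the three slope-ordering cases of Remark \ref{rmk: triangle ordering} are covered, that the degree-zero vs.\ degree-$g$ assignments are consistent with the grading on both sides, and that the Serre-duality reductions of Proposition \ref{prop: mu 2 commute} correctly handle the cases where some input has degree $g$. No genuinely new argument is required beyond invoking the already-established morphism-level isomorphisms and the product compatibility.
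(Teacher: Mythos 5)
Your proposal is correct and takes essentially the same route as the paper: the paper's own proof is the one-line observation that the theorem follows from \eqref{eq:mirror morphisms}, \eqref{eqn:finishing}, and Proposition \ref{prop: mu 2 commute}, which are precisely the ingredients you invoke. The extra categorical bookkeeping you supply (functoriality via the product compatibility, case-by-case full faithfulness, bijectivity on objects, and postcomposition with the full inclusion $\cB_\tau\hookrightarrow D^b\Coh(V_\tau)$) is just an expanded write-up of that same argument.
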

\begin{proof} This follows from \eqref{eq:mirror morphisms}, \eqref{eqn:finishing}, and Proposition \ref{prop: mu 2 commute}.
\end{proof}

Since $\cA_L$ and $\cB_L$ are full subcategories of $\cA_\tau$ and $\cB_\tau$, respectively, we have the following corollary of Theorem \ref{thm:HMS}.
\begin{corollary} \label{cor:coreHMS} 
For every $\tau\in \cH$, and any $L =\cL\otimes \bbL_{[z]} \in \Pic^{\mathsf{1}}(V_\tau)$, 
the functor 
\begin{equation} \label{eqn:A-to-B-L-section 4}
\Mir_{\tau,L}: \cA_L \lra \cB_L
\end{equation}
sending $\hat{\ell}_{\sk,[kz]}$ to 
$L^{\otimes k} = \cL_{\sk, [kz]}$
is an equivalence of categories. 
Therefore, we have
a fully faithful embedding
\begin{equation}\label{eqn:AL-to-DCoh-section 4}
\Phi_{\tau,L} : \cA_L \longrightarrow D^b \Coh(V_\tau)
\end{equation}
of cohomological categories. In particular, the product structures match under $\Phi_{\tau, L}$: 
\begin{equation}
\Phi_{\tau, L}\circ \mu^2_{\cA_L} = \mu^2_{D^b\Coh(V_\tau)} \circ \Phi_{\tau,L}.
\end{equation}
\end{corollary}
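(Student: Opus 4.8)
The plan is to obtain this corollary as a purely formal consequence of Theorem \ref{thm:HMS}, exploiting only that $\cA_L \subset \cA_\tau$ and $\cB_L \subset \cB_\tau$ are \emph{full} subcategories, together with the semi-homogeneity relation \eqref{semi-homogeneous}. First I would verify that the restriction of the equivalence $\Mir_\tau \colon \cA_\tau \to \cB_\tau$ to the full subcategory $\cA_L$ actually lands in $\cB_L$. By construction the objects of $\cA_L$ are the SYZ mirrors $\hat{\ell}_{\sk,[\sk z]}[j]$ (for $\sk, j \in \bZ$) of the objects of $\cB_L$, and $\Mir_\tau$ sends $\hat{\ell}_{\sk,[\sk z]}[j]$ to $\cL_{\sk,[\sk z]}[j]$. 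Applying \eqref{semi-homogeneous} with $L = \cL \otimes \bbL_{[z]}$ gives
\[
L^{\otimes \sk} = (\cL \otimes \bbL_{[z]})^{\otimes \sk} = \cL^{\otimes \sk} \otimes \bbL_{[\sk z]} = \cL_{\sk,[\sk z]},
\]
so the image object $\cL_{\sk,[\sk z]}[j]$ is precisely $L^{\otimes \sk}[j]$, an object of $\cB_L$. Hence $\Mir_\tau$ restricts to a well-defined functor $\Mir_{\tau,L} \colon \cA_L \to \cB_L$ that is a bijection on the indexing set $(\sk, j)$ and, in particular, surjective on objects.

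Next I would argue that $\Mir_{\tau,L}$ is an equivalence. Full faithfulness is inherited for free: because $\cA_L$ and $\cB_L$ are full subcategories, their $\Hom$-spaces coincide with those computed in $\cA_\tau$ and $\cB_\tau$, and Theorem \ref{thm:HMS} asserts that $\Mir_\tau$ is an isomorphism on every such $\Hom$-space. Combined with surjectivity on objects this yields essential surjectivity, so $\Mir_{\tau,L}$ is an equivalence of categories. Composing with the inclusion $\cB_L \hookrightarrow D^b\Coh(V_\tau)$ then produces the fully faithful embedding $\Phi_{\tau,L} \colon \cA_L \to D^b\Coh(V_\tau)$.

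Finally, the matching of product structures $\Phi_{\tau,L} \circ \mu^2_{\cA_L} = \mu^2_{D^b\Coh(V_\tau)} \circ \Phi_{\tau,L}$ is just the restriction of the corresponding identity in Theorem \ref{thm:HMS} to triples of objects lying in $\cA_L$; since $\mu^2_{\cA_L}$ and the target product are by definition the restrictions of $\mu^2_{\cA_\tau}$ and the tensor product on $D^b\Coh(V_\tau)$, no new computation is required. There is essentially no genuine obstacle here beyond the object-level bookkeeping: the only step demanding care is the invocation of \eqref{semi-homogeneous} to identify $L^{\otimes \sk}$ with $\cL_{\sk,[\sk z]}$, which is exactly what guarantees that $\cA_L$ and $\cB_L$ correspond object-for-object under $\Mir_\tau$ and hence that the restriction remains an equivalence.
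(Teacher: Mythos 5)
Your proposal is correct and takes essentially the same route as the paper: the paper derives Corollary \ref{cor:coreHMS} in one line from Theorem \ref{thm:HMS} precisely because $\cA_L$ and $\cB_L$ are full subcategories of $\cA_\tau$ and $\cB_\tau$, which is exactly your restriction argument. Your explicit use of \eqref{semi-homogeneous} to identify $L^{\otimes \sk}$ with $\cL_{\sk,[\sk z]}$ just spells out the object-level bookkeeping the paper leaves implicit.
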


Recall from Section \ref{sec:categories-of-B-branes} that $\langle \cB_L\rangle = D^b\Coh(V_\tau)$, so Corollary \ref{cor:coreHMS} is a ``core HMS"  
\cite[Definition 1.6]{PS23} at the cohomological level.

\subsection{Example of a product computation involving a vertical Lagrangian}\label{sec:prod_vert} 

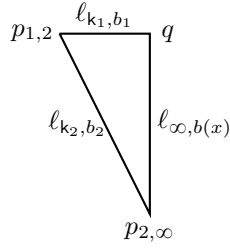
\begin{figure}[ht]
\centering
\begin{tikzpicture}[scale=1.2]
\draw [thick](0,0)--(1,0)--(1,-2)--(0,0);
\node at (-0.3,0) {$p_{1,2}$};
\node at (1.2,0) {$q$};
\node at (1, -2.2) {$p_{2,\infty}$};
\node at (0.5,0.2) {$\ell_{\bk_1,b_1}$};
\node at (0.2, -1) {$\ell_{\bk_2,b_2}$};
\node at (1.5,-1) {$\ell_{\infty,{b(x)}}$};
\end{tikzpicture}
    \caption{A triangle contributing to $\cM(q, p_{2,\infty}, p_{1,2})$.}
    \label{fig: straight triangle}
\end{figure}

The goal of this section is just to do an example of computing a product that involves a vertical Lagrangian of the form defined in Section \ref{sec: fiber vertical Lagrangian}, in particular we compute the product
\begin{equation}\mu^2: CF(\hat{\ell}_{\sk_2,[a_2 + \tau b_2]}, \hat{\ell}_{\infty,[a(x) + \tau b(x)]})\otimes CF(\hat{\ell}_{\sk_1,[a_1 + \tau b_1]}, \hat{\ell}_{\sk_2,[a_2 + \tau b_2]})\to CF(\hat{\ell}_{\sk_1,[a_1 + \tau b_1]}, \hat{\ell}_{\infty,[a(x) + \tau b(x)]}),
\end{equation}
where $\sk_1<\sk_2$, $b(x):=\frac{1}{2\pi}\Omega^{-1}\log |x|$, and $a(x):=-\frac{1}{2\pi}\arg(x)-Bb(x)$ defined by 
\begin{equation}\label{eq: vertical triangle product}
\mu^2(\varrho_{p_{2, \infty}}p_{p_{2, \infty}},  \varrho_{p_{1,2}}p_{p_{1,2}}) =\sum \limits_{\substack{q\in \ell_{\bk_1,b_1}\cap \ell_{\infty,b(x)}\\ \ind([u])=0}} \# \cM (q, p_{2,\infty}, p_{1,2}; [u]) e^{2\pi\bi \int u^*\omega_\tau} \Upsilon(\varrho_{p_{2,\infty}}, \varrho_{p_{1,2}};[u])q.
\end{equation}

\begin{remark}\label{rem:SYZ_signs} This A-side product we are computing in this section is mirror to the composition 
$\mathcal L_{\bk_1, [a_1+\tau b_1]} \to \mathcal L_{\bk_2, [a_2+\tau b_2]} \to \mathcal O_x.$
This composition can be represented by $\vartheta\left[\frac{\lambda_{p_{1,2}}+b_2-b_1}{\bk_2-\bk_1}, a_{2}-a_{1}\right]\left((\bk_2-\bk_1)\tau, x^{\bk_2-\bk_1}\right)$, which is a section in $\hom(\mathcal L_{\bk_1, [a_1+\tau b_1]}, \mathcal L_{\bk_2, [a_2+\tau b_2]})=H^0(V_\tau, \mathcal L_{\bk_2-\bk_1, [a_2-a_1+ \tau (b_2-b_1)]})$ together with evaluation at $x.$
The skyscraper sheaf $\mathcal O_{x}$ (or $\mathcal O\to \mathcal L\to \mathcal O_{x}$) is mirror to $\hat{\ell}_{\infty, [a(x)+\tau b(x)]}=\hat{\ell}_{\infty, [-z]}$, where $z=-a(x)-\tau b(x)$, $b(x):=\frac{1}{2\pi}\Omega^{-1}\log |x|$, and $a(x):=-\frac{1}{2\pi}\arg(x)-Bb(x)$, so  $x=e^{2\pi\bi z}$. 
\end{remark}

Similar to what we did in Section \ref{sec: fiber product structure}, we first describe the holomorphic triangles in the moduli space above, then we discuss the computation of the area of the triangles with respect to the complexified  K\"{a}her form, and finally we compute the holonomy factor.  

To describe the triangles, we again  consider lifts in the universal cover $\bR^{2g}$.  To do so, we can pick an arbitrary lifts for $\ell_{\bk_1,b_1}$ and $p_{1,2}$, and then trace through the triangle to determine the lifts for $\ell_{\sk_2,b_2}$, $p_{2,\infty}$, $\ell_{\infty,b(x)}$, and $q$.  We can arbitrarily pick the lift of $\ell_{\bk_1,b_1}=\{(r,\theta): \theta\equiv b_1-\bk_1 r\}$ to be
\begin{equation}\label{eq: lift l1}
   \tilde  \ell_{\bk_1,b_1}=\{(r,\theta): \theta=b_1-\bk_1 r\}.
\end{equation} Then the lift of  $\ell_{\sk_2, b_2}$ is 
\begin{equation}\label{eq: lift l2}
\tilde  \ell_{\bk_2,b_2}=\{(r,\theta): \theta=b_2-\bk_2 r+m_{\bk_2}\}
\end{equation} for some $m_{\bk_2}\in \bZ^g$. 
We can choose the lift of the input point $p_{1,2}$ along $\tilde \ell_{\sk_1,b_1}$ to be 
\begin{equation} \label{eq: lift p12}
    \tilde p_{1,2}=\left(r_{p_{1,2}}=\frac{\lambda_{p_{1,2}}+b_2-b_1}{\bk_{2}-\bk_1}+m, \ \theta_{p_{1,2}}=b_1-\bk_1 r_{p_{1,2}}=b_2-\bk_2 r_{p_{1,2}} +m_{\bk_2}\right),  \lambda_{p_{1,2}}\in \{0,\ldots, |\bk_2-\bk_1|-1\}^g,
\end{equation}
hence $m_{\bk_2} = (\bk_2-\bk_1)r_{p_{1,2}}-(b_2-b_1)=\lambda_{p_{1,2}}+(\bk_2-\bk_1)m$.
This determines the lifts of $p_{2,\infty}$ and $q$ to be
\begin{equation}
\tilde p_{2,\infty}=\Big(b(x), \ b_2-\bk_2 b(x)+m_{\bk_2} \Big)
\end{equation}
\begin{equation}
\tilde q=\Big(b(x), \ b_1-\bk_1 b(x) \Big).
\end{equation}
So, we can see that for each $m\in \bZ^g$, there is a triangle $u_m$ in the moduli space $\cM (q, p_{2,\infty}, p_{1,2})$ that contributes to the product in Equation \eqref{eq: vertical triangle product}.

Now we compute the area $\int u_m^*\omega_\tau$ of reach triangle $u_m$ with respect to the complexified K\"{a}hler form.  Denote by
\[
\Delta:=\Delta_{\bk_1,b_1 \bk_2,b_2, x, m}:=b(x)-m-\frac{\lambda_{p_{1,2}}+b_2-b_1}{\bk_2-\bk_1}.
\]
The triangle is half of the parallelogram spanned by 
\[
\left(\Delta, -\bk_2\Delta\right)\text{ and } \Big( \Delta, -\bk_1\Delta \Big).
\]
Like we did in the proof of Lemma \ref{lem: triangle area}, in the 2-plane spanned by the above two vectors, we make a change of variable from these two vectors to $(\Delta, 0)$ and $(0, \Delta)$.  Then the area $\int u_m^*\omega_\tau$ of the triangle $u_m$ is equal to  $\frac{1}{2}\Delta^T\tau \Delta$ multiplying the determinant of the change of variable matrix $\det \begin{pmatrix}1& 1 \\ -\bk_2 & -\bk_1\end{pmatrix}=\bk_2-\bk_1$. So the triangle's area is
\begin{equation} \label{eq: vertical triangle area}
\int u_m^*\omega_\tau= \frac{1}{2}(\bk_2-\bk_1)\left(b(x)-m-\frac{\lambda_{p_{1,2}}+b_2-b_1}{\bk_2-\bk_1} \right)^T\tau\left(b(x)-m-\frac{\lambda_{p_{1,2}}+b_2-b_1}{\bk_2-\bk_1}\right)
\end{equation}

Next we compute the holonomy term 
\begin{equation}\label{eq: honolomy factor vertical triangle def}
\Upsilon_(\varrho_{p_{2,\infty}}, \varrho_{p_{1,2}};[u_m])=\Hol_{\nabla_{a(x)}}(\partial_{p_{2,\infty}\to q}u_m)\circ \varrho_{p_2,\infty}\circ \Hol_{\nabla_{a_{2}}}(\partial_{p_{1,2}\to p_{2,\infty}}u_m) \circ \varrho_{p_{1,2}} \circ \Hol_{\nabla_{a_{1}}}(\partial_{q\to p_{1,2}}u_m).
\end{equation}
Using the connection 1-form given in Equation \eqref{eq: connection 1-form}, i.e. $\nabla_a=d+2\pi\bi adr$, we have 
\[
\Hol_{\nabla_{a_1}}(\partial_{q\to p_{1,2}}u_m)=
e^{-\int_{q}^{p_{1,2}}2\pi\bi a_1dr} =\exp \left[- 2\pi\bi a_{1}^T\left(\frac{\lambda_{p_{1,2}}+b_2-b_1}{\bk_2-\bk_1}+m-b(x)\right)\right], 
\]
and 
\[
\Hol_{\nabla_{a_2}}(\partial_{p_{1,2}\to p_{2,\infty}}u_m)=
e^{-\int_{p_{1,2}}^{p_{2,\infty}}2\pi\bi a_2dr} =\exp \left[ - 2\pi\bi a_{2}^T\left(b(x)-m-\frac{\lambda_{p_{1,2}}+b_2-b_1}{\bk_2-\bk_1}\right)\right] 
\]
Using the connection 1-form given in Equation \eqref{eq: connection 1-form vertical},i.e. $\nabla_{a(x)}=d+2\pi\bi a(x)d\theta$, we have
\[
\Hol_{\nabla_{a(x)}}(\partial_{p_{2,\infty}\to q} u_m)=
e^{-\int_{p_{2,\infty}}^{q}2\pi\bi a(x)d\theta} =\exp \left[ -2\pi\bi a(x)^T (\bk_2-\bk_1) \left(b(x)-m-\frac{\lambda_{p_{1,2}}+b_2-b_1}{\bk_2-\bk_1}\right)\right].
\]
Putting them together and continuing with Equation \eqref{eq: honolomy factor vertical triangle def}, we get 
\begin{equation}\label{eq: holonomy factor vertical triangle}
\Upsilon(\varrho_{p_{2,\infty}}, \varrho_{p_{1,2}}; [u_m])
=\varrho_{p_{2,\infty}}\varrho_{p_{1,2}}\exp 2\pi\bi\left[-\Big(a_{2}-a_{1}+({\bk_2-\bk_1})a(x)\Big)^T \left(b(x)-m-\frac{\lambda_{p_{1,2}}+b_2-b_1}{\bk_2-\bk_1}\right)\right].
\end{equation}

Having described the triangles $u_m$, $m\in \bZ^g$, that contributes to the output, and calculated the areas $\int u_m^*\omega_\tau$ in Equation \eqref{eq: vertical triangle area} and the holonomy factor in Equation \eqref{eq: holonomy factor vertical triangle}, we can now compute the product defined in Equation \eqref{eq: vertical triangle product},
\begin{equation}\label{eq: mu2_with_infty}
\begin{split}
& \mu^2(\varrho_{p_{2, \infty}}p_{p_{2, \infty}},  \varrho_{p_{1,2}}p_{p_{1,2}})
\\
= &\ \varrho_{p_{2,\infty}}\varrho_{p_{1,2}}\sum_{m\in\bZ^g}e^{\pi i (\bk_2-\bk_1)\left(b(x)-m-\frac{\lambda_{p_{1,2}}+b_2-b_1}{\bk_2-\bk_1} \right)^T\tau\left(b(x)-m-\frac{\lambda_{p_{1,2}}+b_2-b_1}{\bk_2-\bk_1}\right)} \\
& \hspace{3in} e^{-2\pi\bi\big(a_{2}-a_{1}+({\bk_2-\bk_1})a(x)\big)^T \left(b(x)-m-\frac{\lambda_{p_{1,2}}+b_2-b_1}{\bk_2-\bk_1}\right)}q\\
= \ &\varrho_{p_{2,\infty}}\varrho_{p_{1,2}}\sum_{m\in\bZ^g} e^{\pi i (\bk_2-\bk_1)b(x)^T\tau b(x)} e^{-2\pi\bi (\bk_2-\bk_1)b(x)^T\tau\left(m+\frac{\lambda_{p_{1,2}}+b_2-b_1}{\bk_2-\bk_1}\right)}\\
& \hspace{1.5in}  e^{\pi i (\bk_2-\bk_1) \left(m+\frac{\lambda_{p_{1,2}}+b_2-b_1}{\bk_2-\bk_1} \right)^T \tau\left(m+\frac{\lambda_{p_{1,2}}+b_2-b_1}{\bk_2-\bk_1} \right)} \\
& \hspace{2in} e^{-2\pi\bi \big((a_{2}-a_{1})+(\bk_2-\bk_1)a(x)\big)^T b(x)}  e^{2\pi\bi(a_{2}-a_{1}+(\bk_2-\bk_1)a(x))^T \left(m+\frac{\lambda_{p_{1,2}}+b_2-b_1}{\bk_2-\bk_1}\right)}q\\
\stackrel{\text{\textcircled{\raisebox{-0.9pt}{1}}
} }{=}\ &\varrho_{p_{2,\infty}}\varrho_{p_{1,2}} e^{\pi i (\bk_2-\bk_1)b(x)^T\tau b(x)} e^{-2\pi\bi \big((a_{2}-a_{1})+(\bk_2-\bk_1)a(x)
\big)^T b(x)}  \\
& \cdot \sum_{m\in\bZ^g} x^{(\bk_2-\bk_1)\left(m+\frac{\lambda_{p_{1,2}}+b_2-b_1}{\bk_2-\bk_1}\right)}e^{\pi i (\bk_2-\bk_1) \left(m+\frac{\lambda_{p_{1,2}}+b_2-b_1}{\bk_2-\bk_1} \right)^T \tau\left(m+\frac{\lambda_{p_{1,2}}+b_2-b_1}{\bk_2-\bk_1} \right)}e^{2\pi\bi\left(a_{2}-a_{1}\right)^T \left(m+\frac{\lambda_{p_{1,2}}+b_2-b_1}{\bk_2-\bk_1}\right)} q \\
\stackrel{\text{\textcircled{\raisebox{-0.9pt}{2}}}}{=}\ &\varrho_{p_{2,\infty}}\varrho_{p_{1,2}} e^{\pi i (\bk_2-\bk_1)b(x)^T\tau b(x)} e^{-2\pi\bi \big((a_{2}-a_{1})+(\bk_2-\bk_1)a(x)\big)^T b(x)}\\
&\hspace{2in}
\cdot \vartheta\left[\frac{\lambda_{p_{1,2}}+b_2-b_1}{\bk_2-\bk_1}, a_{2}-a_{1} \right]\left((\bk_2-\bk_1)\tau, x^{\bk_2-\bk_1}\right)q.
\end{split}
\end{equation}
For more explanation of the calculation above, the equality labeled by \textcircled{\raisebox{-0.9pt}{2}} is obtained by taking all terms not involving $m+\frac{\lambda_{p_{1,2}}+b_2-b_1}{\bk_2-\bk_1}$ out in front of the summation sign, and then remaining terms sum up to give a multi-theta function. Below we explain how to obtain the equality labeled by \textcircled{\raisebox{-0.9pt}{1}}, which involves rewriting the following term in the calculation as
\begin{eqnarray*}
& &  e^{-2\pi\bi (\bk_2-\bk_1)\left(b(x)^T\tau\left(m+\frac{\lambda_{p_{1,2}}+b_2-b_1}{\bk_2-\bk_1}\right)\right)}\\
& =& e^{-i (\bk_2-\bk_1)\left(\Omega^{-1}\log|x|\right)^T\tau\left(m+\frac{\lambda_{p_{1,2}}+b_2-b_1}{\bk_2-\bk_1}\right)} \\
&=& e^{-i (\bk_2-\bk_1)(\log|x|)^T\Omega^{-1}(B+i\Omega)\left(m+\frac{\lambda_{p_{1,2}}+b_2-b_1}{\bk_2-\bk_1}\right)} \\
&=& e^{-i (\bk_2-\bk_1)(\log|x|)^T\Omega^{-1}B\left(m+\frac{\lambda_{p_{1,2}}+b_2-b_1}{\bk_2-\bk_1}\right)}  e^{ (\bk_2-\bk_1)(\log|x|)^T\left(m+\frac{\lambda_{p_{1,2}}+b_2-b_1}{\bk_2-\bk_1}\right)} \\
&=& e^{-i (\bk_2-\bk_1)(\log|x|)^T\Omega^{-1}B\left(m+\frac{\lambda_{p_{1,2}}+b_2-b_1}{\bk_2-\bk_1}\right)}  |x|^{ (\bk_2-\bk_1)\left(m+\frac{\lambda_{p_{1,2}}+b_2-b_1}{\bk_2-\bk_1}\right)}\\
&=& e^{-2\pi\bi (\bk_2-\bk_1)B b(x)\left(m+\frac{\lambda_{p_{1,2}}+b_2-b_1}{\bk_2-\bk_1}\right)}  |x|^{ (\bk_2-\bk_1)\left(m+\frac{\lambda_{p_{1,2}}+b_2-b_1}{\bk_2-\bk_1}\right)}.
\end{eqnarray*}
Furthermore, since $a(x)+Bb(x)=-\frac{1}{2\pi}\arg(x)$, we have 
\[
|x|^{ (\bk_2-\bk_1)\left(m+\frac{\lambda_{p_{1,2}}+b_2-b_1}{\bk_2-\bk_1}\right)} e^{-2\pi\bi(\bk_2-\bk_1)(a(x)+Bb(x))^T\left(m+\frac{\lambda_{p_{1,2}}+b_2-b_1}{\bk_2-\bk_1}\right)}
= x^{(\bk_2-\bk_1)\left(m+\frac{\lambda_{p_{1,2}}+b_2-b_1}{\bk_2-\bk_1}\right)}.
\]

Note that the result of the product computation in Equation \eqref{eq: mu2_with_infty} fits the mirror description pointed out in  Remark \ref{rem:SYZ_signs}.  The factor in front of the theta function in Equation \eqref{eq: mu2_with_infty} is due to the choice of the section of the line bundle in $H^0(V_\tau, \mathcal L_{\bk_2-\bk_1, [a_2-a_1+ \tau (b_2-b_1)]})$.

\bibliographystyle{amsalpha}
\bibliography{glob_gen2_hms}

\end{document}